\newcommand{\lyxdot}{.}
\numberwithin{equation}{section}
\numberwithin{figure}{section}
\theoremstyle{plain}
\newtheorem{thm}{\protect\theoremname}[section]
\theoremstyle{remark}
\newtheorem{rem}[thm]{\protect\remarkname}
\theoremstyle{plain}
\newtheorem{prop}[thm]{\protect\propositionname}
\theoremstyle{definition}
\newtheorem{defn}[thm]{\protect\definitionname}
\theoremstyle{plain}
\newtheorem{lem}[thm]{\protect\lemmaname}
\theoremstyle{remark}
\newtheorem*{notation*}{\protect\notationname}
\theoremstyle{remark}
\newtheorem{notation}[thm]{\protect\notationname}
\theoremstyle{plain}
\newtheorem{cor}[thm]{\protect\corollaryname}
\theoremstyle{remark}
\newtheorem*{acknowledgement*}{\protect\acknowledgementname}
\setlist[itemize]{leftmargin=*}
\setlist[enumerate]{leftmargin=*}
\DeclareFontFamily{U}{matha}{\hyphenchar\font45}
\DeclareFontShape{U}{matha}{m}{n}{
      <5> <6> <7> <8> <9> <10> gen * matha
      <10.95> matha10 <12> <14.4> <17.28> <20.74> <24.88> matha12
      }{}
\DeclareSymbolFont{matha}{U}{matha}{m}{n}
\DeclareFontFamily{U}{mathx}{\hyphenchar\font45}
\DeclareFontShape{U}{mathx}{m}{n}{
      <5> <6> <7> <8> <9> <10>
      <10.95> <12> <14.4> <17.28> <20.74> <24.88>
      mathx10
      }{}
\DeclareSymbolFont{mathx}{U}{mathx}{m}{n}
\DeclareMathDelimiter{\vvvert}{0}{matha}{"7E}{mathx}{"17}
\DeclareMathAlphabet{\scal}{U}{dutchcal}{m}{n}
\numberwithin{equation}{section}
\def\th@plain{\thm@notefont{}\itshape}
\def\th@definition{\thm@notefont{}\normalfont}
\providecommand{\acknowledgementname}{Acknowledgement}
\providecommand{\corollaryname}{Corollary}
\providecommand{\definitionname}{Definition}
\providecommand{\lemmaname}{Lemma}
\providecommand{\notationname}{Notation}
\providecommand{\propositionname}{Proposition}
\providecommand{\remarkname}{Remark}
\providecommand{\theoremname}{Theorem}
\begin{document}
\title[Energy Landscape and Metastability of 3D Ising/Potts Models]{Energy Landscape and Metastability of Stochastic Ising and Potts Models on Three-dimensional
Lattices Without External Fields}
\author{Seonwoo Kim and Insuk Seo}
\address{S. Kim. Department of Mathematical Sciences, Seoul National University,
Republic of Korea.}
\email{ksw6leta@snu.ac.kr}
\address{I. Seo. Department of Mathematical Sciences and R.I.M., Seoul National
University, Republic of Korea.}
\email{insuk.seo@snu.ac.kr}
\begin{abstract}
In this study, we investigate the energy landscape of the Ising and
Potts models on fixed and finite but large three-dimensional (3D)
lattices where no external field exists and quantitatively characterize
the metastable behavior of the associated Glauber dynamics in the
very low temperature regime. Such analyses for the models with non-zero
external magnetic fields have been extensively performed over the
past two decades; however, models without external fields remained
uninvestigated. Recently, the corresponding investigation has been
conducted for the two-dimensional (2D) model without an external field,
and in this study, we further extend these successes to the 3D model,
which has a far more complicated energy landscape than the 2D one.
In particular, we provide a detailed description of the highly complex
plateau structure of saddle configurations between ground states and
then analyze the typical behavior of the Glauber dynamics thereon.
Thus, we acheive a quantitatively precise analysis of metastability,
including the Eyring--Kramers law, the Markov chain model reduction,
and a full characterization of metastable transition paths. 
\end{abstract}

\maketitle
\begin{figure}
\begin{centering}
\includegraphics[width=11cm]{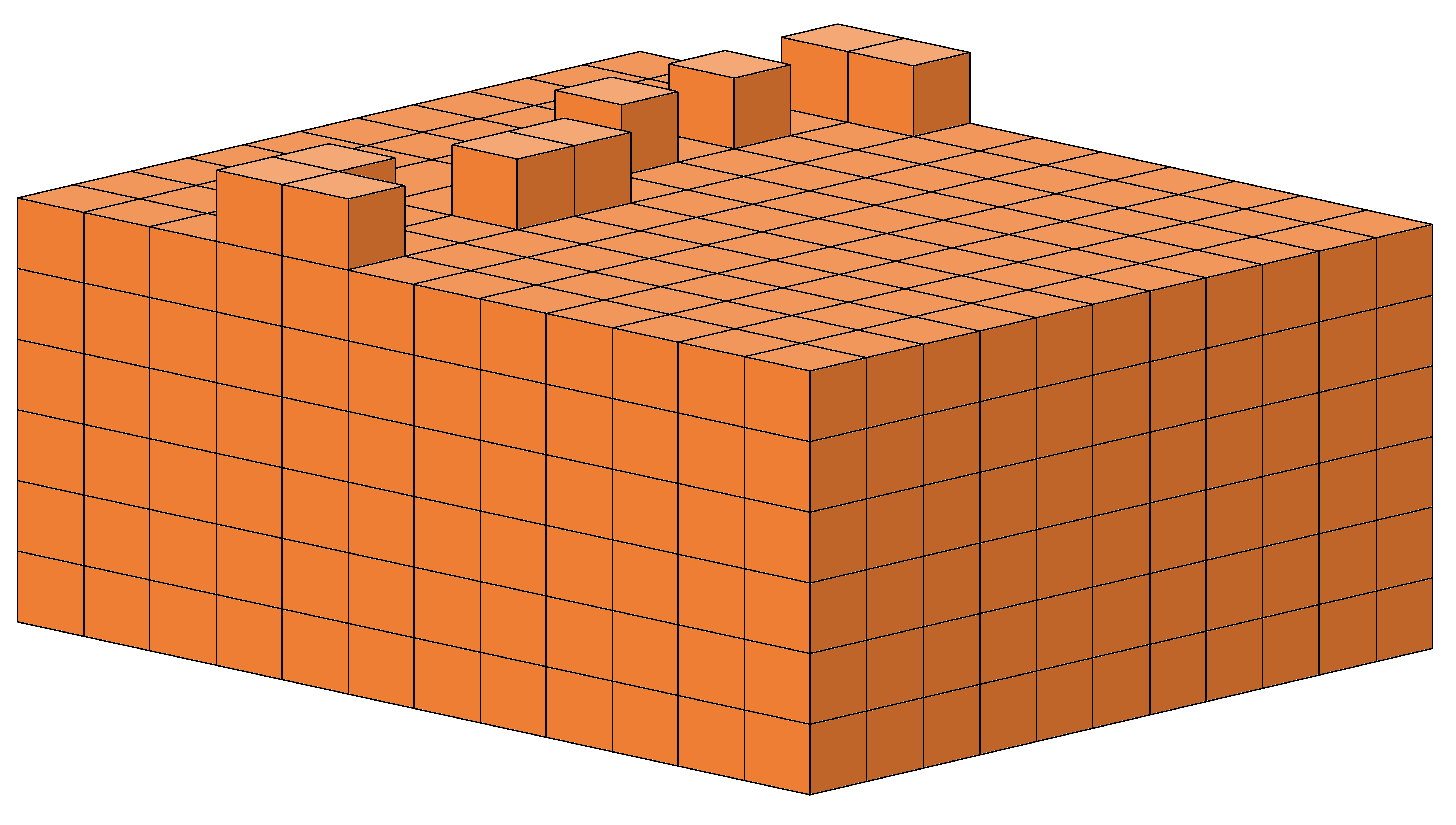}
\par\end{centering}
\medskip
\centering{}\textit{Example of a three-dimensional saddle configuration}
\end{figure}

\thispagestyle{empty}

\newpage

\section{\label{sec1}Introduction}

Metastability is a ubiquitous phenomenon that arises when a stochastic
system has several locally stable sets; it is observed in a wide class
of models, e.g., in the small random perturbations of dynamical systems
(e.g., \cite{BEGK,BGK,FW RPDS,LMS DT,LS NR2,LeeS NR1,LeeS NR2,LPM,RezSeo}),
interacting particle systems consisting of sticky particles (e.g.,
\cite{BL ZRP,BDG,Kim,KS1,LMS cri,LMS res,OhRez,Seo NRZRP}), and spin
systems in the low temperature regime (e.g., \cite{AC,BAC,BBI,BdenH meta,BdenHN,BdenHS,BM,CGOV,CO,Kim2,LL,LL 19,LS Potts,Lee,NS Ising1,NS Ising2,NZ}).
Numerous important works are not listed here; we direct the references
of the monographs \cite{BdenH meta,OV meta}, which provide a comprehensive
introduction to this broad topic.

\subsubsection*{Metastable behaviors of stochastic Ising and Potts models}

In this study, we consider the metastability of the stochastic Ising
and Potts models evolving according to Metropolis--Hastings-type
Glauber dynamics on a large, but fixed three-dimensional (3D) lattice.
For such models, the Gibbs invariant measure is exponentially concentrated
on monochromatic configurations (i.e., the configurations consisting
of a single spin, which are the ground states of the Ising and Potts
Hamiltonians) in the very low temperature regime. Hence, in such regimes,
the dynamics exhibits metastable behavior between the monochromatic
configurations: It starts from a monochromatic configuration, remains
in a certain neighborhood of the starting configuration for an exponentially
long time, and finally overcomes the energy barrier between monochromatic
configurations to reach another monochromatic one.

Several mathematical questions persist regarding the metastable behavior
explained above. For instance, in the transition from one monochromatic
configuration to another, the mean transition time, the asymptotic
law of the rescaled transition time, and the typical transition paths
are all points of interest. We are also interested in the characterization
of the energy barrier and the saddle configurations that realize this
energy barrier via optimal paths between monochromatic configurations.
The final issue is particularly important and challenging for the
model considered in the present article and has remained open for
a long time. It is also important to estimate the mixing time or spectral
gap of the associated dynamics; this allows us to measure the effects
of metastable behavior on the global mixing properties of the associated
Markovian dynamics. \emph{In this article, we answer all these questions
for the stochastic Ising and Potts models on finite three-dimensional
lattices in the absence of external fields.}

\subsubsection*{Model with non-zero external field}

The first rigorous mathematical treatment of the metastable behavior
of the Ising model was performed in \cite{NS Ising1,NS Ising2}, where
the authors considered the Ising model on a two-dimensional (2D) lattice
in the presence of a \emph{non-zero} external field. These studies
verified that the transition from a metastable monochromatic configuration
to a stable one is essentially equivalent to the formation of a certain
type of critical droplet. From this observation, precise information
regarding the transition path was obtained, as well as large deviation-type
estimates for the transition time and mixing behavior associated with
the Metropolis--Hastings dynamics. This result was extended to the
3D Ising model presented in \cite{AC,BAC}. Similar results for four-
or higher-dimensional models remain to be found, because the variational
problems related to the analysis of the energy landscape and critical
droplet are highly complicated. 

In \cite{BM}, the aforementioned analyses were further refined via
the \textit{potential-theoretic approach} developed in \cite{BEGK}.
In \cite{BM}, the authors obtained the Eyring--Kramers law for the
transition time between monochromatic configurations, as well as the
spectral gap of the associated dynamics. This new technology does
not provide information on the transition path; however, it provides
precise asymptotics for the mean metastable transition time and spectral
gap. The same model on growing lattice boxes, rather than fixed ones,
was investigated in \cite{BdenHS}, and the Kawasaki-type (instead
of Glauber-type) dynamics for the same model were studied in \cite{BdenHN}.

\subsubsection*{Model without external field}

When studying the metastability of the stochastic Ising model with
a non-zero external field (as described above), the crucial object
is the critical droplet, which provides a sharp saddle structure for
the energy landscape. However, in the zero external field case, the
critical droplet does not exist. Instead, the saddle structure is
flat, structurally complex, and composed of a large set of saddle
configurations. This is the crucial challenge in the zero external
field case, which has left the problem unsolved for a long time. \textit{In
the present study, we solve this problem by comprehensively analyzing
the energy landscape.}

Recently, \cite{NZ} analyzed for the first time the 2D Ising and
Potts models in the absence of external fields. More precisely, they
characterized (1) the energy barrier between ground states and (2)
the deepest metastable valleys in the landscape. Using the energy
landscape results and a general tool referred to as the \textit{pathwise
approach} to metastability (developed in \cite{CGOV,CNSoh,MNOS,NZB}),
they obtained large deviation-type results for the metastable behaviors
of the 2D models in the absence of external fields. 

In \cite{KS 2D}, which is a companion article of the present one,
we improved on the refinement of results in the previous studies for
the 2D model using the potential-theoretic approach, thereby making
the following contributions:
\begin{itemize}
\item the Eyring--Kramers law for metastable transitions between monochromatic
configurations,
\item the Markov chain model reduction of metastable behavior (cf. \cite{Landim MMC}
for a comprehensive review on this method), and
\item the full characterization of typical transition paths.
\end{itemize}
To this end, we derive a highly detailed analysis of the energy landscape
and characterize \textbf{\emph{all}} saddle configurations. In particular,
we comprehensively and precisely describe the large and complicated
saddle structure of the model. Our analysis is sufficiently accurate
to allow the transition paths between ground states to be characterized
explicitly.

\subsubsection*{Main achievement}

In the current article, \textbf{\emph{we extend all these analyses
to the 3D Ising and Potts models by combining the pathwise approach
and the potential-theoretic approach.}} Indeed, the energy landscape
of the 3D model is significantly more complicated than that of the
2D model. For both the 2D and 3D models, there are numerous saddle
configurations between ground states, and they form a plateau structure.
For the 2D model, at least the bulk part of this plateau structure
is relatively simple, because each saddle configuration can only move
forward or backward to reach another saddle configuration. In contrast,
for the 3D model, we cannot expect such a simplification, because
there exist certain configurations for which the legitimate movements
between saddle configurations can occur in a substantially more complex
manner. We refer to the figure at the front page for an example of
a highly complicated saddle configuration in the 3D case (which should
be characterized in some way to answer all the questions above). Readers
who are familiar with the results on the non-zero external field model
can notice from this figure that the saddle configurations for the
zero external model may not have a clear structure as in the non-zero
external field case.

\subsubsection*{Approximation method to metastability}

In our companion paper \cite{KS 2D}, we introduced a new approximation
method to prove the Eyring--Kramers law and Markov chain model reduction.
This method relies on the approximation of the equilibrium potential
function (refer to Section \ref{sec3.1} for the precise definition)
in a Sobolev space defined via the Dirichlet norm associated with
the Markov chain. It is robust and particularly suitable if the energy
landscape is too complex to apply the potential-theoretic approach
\cite{BEGK} via variational principles (the Dirichlet and Thomson
principles), because it effectively avoids these variational principles
via an approximation in the Sobolev space. We apply this method to
the 3D model to achieve our main result.

The main mathematical difficulty of applying this method lies in the
fact that we must construct a test function that accurately approximates
the equilibrium potential function so that we can obtain the precise
Sobolev norm. For this procedure, we need a comprehensive understanding
of the whole energy landscape regarding the metastable transitions.
Thus, compared with the 2D model, the corresponding construction for
the 3D model is far more complicated. Overcoming this difficulty is
the main contribution of the present study.

\section{\label{sec2}Main Results}

\subsection{\label{sec2.1}Models}

In this subsection, we introduce the stochastic Ising and Potts models
on a fixed 3D lattice and review their basic features.

\subsubsection*{Ising and Potts models}

We fix three positive integers $K\le L\le M$. Then, we denote by
\[
\Lambda=\llbracket1,\,K\rrbracket\times\llbracket1,\,L\rrbracket\times\llbracket1,\,M\rrbracket
\]
the 3D lattice box. We use the notation $\llbracket a,\,b\rrbracket=[a,\,b]\cap\mathbb{Z}$
throughout this article. We impose either open or periodic boundary
conditions upon the lattice box $\Lambda$. For the latter boundary
condition, we can write 
\begin{equation}
\Lambda=\mathbb{T}_{K}\times\mathbb{T}_{L}\times\mathbb{T}_{M}\;,\label{e_torus}
\end{equation}
where $\mathbb{T}_{k}=\mathbb{Z}/(k\mathbb{Z})$ represents the discrete
one-dimensional torus.

For an integer $q\ge2$, we use $S=\{1,\,\dots,\,q\}$ to represent
the set of spins and $\mathcal{X}=S^{\Lambda}$ to represent the space
of spin configurations in the 3D box $\Lambda$. We express a configuration
$\sigma\in\mathcal{X}$ as $\sigma=(\sigma(x))_{x\in\Lambda}$, where
$\sigma(x)\in S$ represents the spin of $\sigma$ at site $x\in\Lambda$.

For $x,\,y\in\Lambda$, we write $x\sim y$ if they are neighboring
sites; that is, $\Vert x-y\Vert=1$ where $\Vert\cdot\Vert$ denotes
the Euclidean distance in $\Lambda$. With this notation, we define
the Hamiltonian $H:\mathcal{X}\rightarrow\mathbb{R}$ as
\begin{equation}
H(\sigma)=\sum_{\{x,\,y\}\subseteq\Lambda:\,x\sim y}\mathbf{1}\{\sigma(x)\ne\sigma(y)\}-h\sum_{x\in\Lambda}\sigma(x)\;\;\;\;;\;\sigma\in\mathcal{X}\;,\label{e_Ham}
\end{equation}
where $h\in\mathbb{R}$ denotes the magnitude of the external magnetic
field. Thus, the first summation on the right-hand side represents
the spin--spin interactions, and the second one corresponds to the
effect of the external magnetic field. We use $\mu_{\beta}(\cdot)$
to denote the Gibbs measure on $\mathcal{X}$ associated with the
Hamiltonian $H$ at inverse temperature $\beta>0$; that is, 
\begin{equation}
\mu_{\beta}(\sigma)=\frac{1}{Z_{\beta}}e^{-\beta H(\sigma)}\;\;\;\;;\;\sigma\in\mathcal{X}\;,\label{e_mu}
\end{equation}
where $Z_{\beta}=\sum_{\zeta\in\mathcal{X}}e^{-\beta H(\zeta)}$ is
the partition function. The random spin configuration on box $\Lambda$
corresponds to the probability measure $\mu_{\beta}(\cdot)$ on $\mathcal{X}$;
it is referred to as the \textit{Ising model} if $q=2$ and the \textit{Potts
model} if $q\ge3$. Henceforth, we treat $q$ as a fixed parameter.
Our primary concern is the metastability analyses of these models
as $\beta\rightarrow\infty$ under Metropolis--Hastings dynamics,
which will be defined precisely below.
\begin{rem}[Results for non-zero external field]
 Comprehensive analyses of the energy landscape and the metastability
of the Ising model with a non-zero external field i.e., $h\neq0$,
were performed in \cite{BM,NS Ising1,NS Ising2} for the 2D case,
and in \cite{AC,BAC} for the 3D one. For these models, the characterization
of the \textit{critical droplet} comprehensively explains the metastable
behavior. We remark that analysis for cases of more than three dimensions
has yet to be undertaken, because the energy landscape is too complex
to allow critical droplets to be characterized. Recently, the 2D Potts
model with an external field toward one specific spin has been studied
\cite{BGN,BGN Neg,BGN Pos}.
\end{rem}

In this study, we consider the zero external field case (i.e., $h=0$);
thus, \textbf{\textit{we henceforth assume that}} \textbf{\textit{$h=0$.}}
This case differs from those involving non-zero external fields, in
the sense that the energy landscape is not characterized by critical
droplets. Instead, we must tackle a large and complex landscape of
saddle configurations via complicated combinatorial and probabilistic
arguments.

\subsubsection*{Ground states}

For each $a\in S$, denote by $\mathbf{s}_{a}\in\mathcal{X}$ the
monochromatic configuration in which all spins are $a$, i.e., $\mathbf{s}_{a}(x)=a$
for all $x\in\Lambda$. We write 
\begin{equation}
\mathcal{S}=\{\mathbf{s}_{1},\,\dots,\,\mathbf{s}_{q}\}\;.\label{e_S}
\end{equation}
It is precisely upon $\mathcal{S}$ that the Hamiltonian $H(\cdot)$
attains its minimum $0$; hence, $\mathcal{S}$ represents the set
of ground states of the model. Accordingly, we obtain the following
characterization of the partition function $Z_{\beta}$ that appears
in \eqref{e_mu}, as well as the Gibbs measure $\mu_{\beta}$ as $\beta\rightarrow\infty$. 
\begin{thm}
\label{t_Zbest}We have\footnote{For two collections $(a_{\beta})_{\beta>0}=(a_{\beta}(K,\,L,\,M))_{\beta>0}$
and $(b_{\beta})_{\beta>0}=(b_{\beta}(K,\,L,\,M))_{\beta>0}$ of real
numbers, we write $a_{\beta}=O_{\beta}(b_{\beta})$ if there exists
some $C=C(K,\,L,\,M)>0$ such that
\[
|a_{\beta}|\le Cb_{\beta}\text{ for all }\beta>0\text{ and }K,\,L,\,M\;.
\]
}
\begin{equation}
Z_{\beta}=q+O_{\beta}(e^{-3\beta})\;.\label{e_Zbest}
\end{equation}
Thus, we obtain
\[
\lim_{\beta\rightarrow\infty}\mu_{\beta}(\mathbf{s})=\frac{1}{q}\text{ for all }\mathbf{s}\in\mathcal{S}\;\;\;\;\text{and}\;\;\;\;\lim_{\beta\rightarrow\infty}\mu_{\beta}(\mathcal{S})=1\;.
\]
\end{thm}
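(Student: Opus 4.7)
The plan is to reduce both the partition-function asymptotic and the two limits to a single energy-gap statement, namely that every non-monochromatic configuration satisfies $H(\sigma)\ge 3$. Granting this gap for a moment, I would simply write
\[
Z_\beta \;=\; \sum_{\sigma\in\mathcal{S}} e^{-\beta H(\sigma)} + \sum_{\sigma\notin\mathcal{S}} e^{-\beta H(\sigma)} \;\le\; q + (q^{|\Lambda|}-q)\,e^{-3\beta},
\]
which is exactly $q+O_\beta(e^{-3\beta})$ with a constant allowed to depend on $(K,L,M)$ through $q^{|\Lambda|}$. The remaining two limits are then immediate: for $\mathbf{s}\in\mathcal{S}$ the identity $\mu_\beta(\mathbf{s})=1/Z_\beta$ gives $\mu_\beta(\mathbf{s})\to 1/q$, and summing over $\mathcal{S}$ yields $\mu_\beta(\mathcal{S})=q/Z_\beta\to 1$ as $\beta\to\infty$.

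To establish the energy gap $H(\sigma)\ge 3$, my approach is to decompose the Hamiltonian (with $h=0$) through color classes. For each spin $c\in S$ and each $\sigma\in\mathcal{X}$, set
\[
A_c(\sigma) \;=\; \{\,x\in\Lambda : \sigma(x) = c\,\},
\]
and let $\partial A$ denote the set of edges of the lattice graph $\Lambda$ with exactly one endpoint in $A$. Each disagreeing edge $\{x,y\}$ belongs to the edge boundary of precisely two color classes, namely those of $\sigma(x)$ and $\sigma(y)$, so a double counting gives
\[
2H(\sigma) \;=\; \sum_{c\in S} |\partial A_c(\sigma)|.
\]
When $\sigma$ is non-monochromatic at least two spins appear in $\sigma$, so at least two of the sets $A_c(\sigma)$ are simultaneously nonempty proper subsets of $\Lambda$.

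The only nontrivial ingredient is therefore the edge-isoperimetric bound $|\partial A|\ge 3$ for every nonempty proper $A\subsetneq\Lambda$; plugging this into the previous display yields $2H(\sigma)\ge 6$ and hence $H(\sigma)\ge 3$. For a genuinely three-dimensional $\Lambda$ the bound is standard: under open boundary conditions the minimum degree of the lattice graph is $3$, realized at the eight corners, and the graph is moreover $3$-edge-connected, which one may verify via Menger's theorem by exhibiting three edge-disjoint monotone lattice paths between any prescribed $x\in A$ and $y\in A^c$, one for each ordering of the three coordinate axes; under periodic boundary conditions every vertex has degree $6$ and one in fact obtains $|\partial A|\ge 6$. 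Verifying this edge-isoperimetric inequality is the main---and essentially the only---substantive obstacle; everything else in the argument is bookkeeping.
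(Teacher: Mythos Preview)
Your argument is correct and matches the paper's approach exactly: the paper's proof is even terser, simply asserting $H(\sigma)\ge 3$ for $\sigma\notin\mathcal{S}$ as a fact and reading off both displays from the definition of $Z_\beta$ and $\mu_\beta$. Your isoperimetric justification supplies the detail the paper omits; the only quibble is that your Menger sketch for $3$-edge-connectivity under open boundary conditions breaks down when $x$ and $y$ agree in some coordinate (the three ``monotone'' paths then collapse onto one another), though the underlying claim is standard for $K,L,M\ge 2$ and is easily repaired by detouring through an auxiliary vertex.
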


\begin{proof}
The estimate \eqref{e_Zbest} of the partition function comes directly
from the expression of the partition function given right after \eqref{e_mu}
and the fact that $H(\sigma)\ge3$ for $\sigma\notin\mathcal{S}$.
The second assertion of the theorem is directly derived from the first
one and the expression \eqref{e_mu} of $\mu_{\beta}$.
\end{proof}

\subsubsection*{Metropolis--Hastings dynamics and metastability}

We give a continuous version of the Metropolis--Hastings dynamics,
which is the standard heat-bath Glauber dynamics used for studying
the metastability of the Ising model \cite{NS Ising1}. For $x\in\Lambda$
and $a\in S$, we use $\sigma^{x,\,a}\in\mathcal{X}$ to denote the
configuration obtained from $\sigma$ by updating the spin at site
$x$ to $a$. Then, the continuous version of the Metropolis--Hastings
dynamics is defined as a continuous-time Markov chain $\{\sigma_{\beta}(t)\}_{t\ge0}$
on $\mathcal{X}$, whose transition rates are given by
\[
r_{\beta}(\sigma,\,\zeta)=\begin{cases}
e^{-\beta[H(\zeta)-H(\sigma)]_{+}} & \text{if }\zeta=\sigma^{x,\,a}\ne\sigma\text{ for some }x\in\Lambda\text{ and }a\in S\;,\\
0 & \text{otherwise}\;,
\end{cases}
\]
where $[\alpha]_{+}=\max\,\{\alpha,\,0\}$. We notice from this definition
of the rate $r_{\beta}(\cdot,\,\cdot)$ that the Metropolis--Hastings
dynamics tends to lower the energy, particularly when $\beta$ is
large, because the jump rate from one configuration to another one
with higher energy is exponentially small, whereas the jump rate to
another one with lower or equal energy is $1$. We let $\mathbb{P}_{\sigma}^{\beta}$
and $\mathbb{E}_{\sigma}^{\beta}$ represent the law and expectation,
respectively, of the process $\sigma_{\beta}(\cdot)$ starting from
$\sigma$. 

For $\sigma,\,\zeta\in\mathcal{X}$, we write $\sigma\sim\zeta$ if
$r_{\beta}(\sigma,\,\zeta)>0$. Note that $\sigma\sim\zeta$ if and
only if $\zeta\sim\sigma$, and that the relation $\sigma\sim\zeta$
does not depend on $\beta$. A crucial observation regarding the rate
$r_{\beta}(\cdot,\,\cdot)$ defined above is that 
\begin{equation}
\mu_{\beta}(\sigma)\,r_{\beta}(\sigma,\,\zeta)=\mu_{\beta}(\zeta)\,r_{\beta}(\zeta,\,\sigma)=\begin{cases}
\min\,\{\mu_{\beta}(\sigma),\,\mu_{\beta}(\zeta)\} & \text{if }\sigma\sim\zeta\;,\\
0 & \text{otherwise\;.}
\end{cases}\label{e_muprop}
\end{equation}
From this detailed balance condition, we observe that the invariant
measure for the Metropolis--Hastings dynamics $\sigma_{\beta}(\cdot)$
is $\mu_{\beta}(\cdot)$ and that $\{\sigma_{\beta}(t)\}_{t\ge0}$
is reversible with respect to $\mu_{\beta}(\cdot)$. We also note
that the Markov chain $\sigma_{\beta}(\cdot)$ is irreducible.

In view of Theorem \ref{t_Zbest}, we anticipate that the process
$\sigma_{\beta}(\cdot)$ will exhibit \textit{metastable behavior
between ground states, provided that $\beta$ is sufficiently large}.
More precisely, the process $\sigma_{\beta}(\cdot)$ starting from
configuration $\mathbf{s}\in\mathcal{S}$ remains in a certain neighborhood
of $\mathbf{s}$ for a sufficiently long time, and then undergoes
a rare but rapid transition to another ground state. Our main concern
is to precisely analyze such metastability of the stochastic Ising
and Potts models under the Metropolis--Hastings dynamics (defined
above) in the very low temperature regime; that is, when $\beta\rightarrow\infty$.
We explain these results in the following subsection.
\begin{rem}
We employ the continuous-time dynamics (as applied in numerous previous
studies) because it offers a simpler presentation than the corresponding
discrete dynamics (as demonstrated in \cite{BAC,BM,NZ}), for which
the jump probability is given by
\begin{equation}
p_{\beta}(\sigma,\,\zeta)=\begin{cases}
\frac{1}{q|\Lambda|}\,e^{-\beta[H(\zeta)-H(\sigma)]_{+}} & \text{if }\zeta=\sigma^{x,\,a}\ne\sigma\text{ for some }x\in\Lambda\;,\;a\in S\;,\\
1-\sum_{x\in\Lambda,\,a\in S:\,\sigma^{x,a}\ne\sigma}p_{\beta}(\sigma,\,\sigma^{x,\,a}) & \text{if }\zeta=\sigma\;,\\
0 & \text{otherwise}\;.
\end{cases}\label{e_discrete}
\end{equation}
However, our computations can be applied to this model as well. See
also Remark \ref{r_discrete}.
\end{rem}

\subsection{\label{sec2.2}Main results: large deviation-type results}

Hereafter, we explain our results regarding the metastability of the
stochastic Ising and Potts models. In the current subsection, we explain
the large deviation-type results obtained for the metastable behavior.

\subsubsection*{Energy barrier between ground states}

First, we introduce the energy barrier associated with the Ising and
Potts models considered in this study. This is important for the analysis
of metastable behaviors, in that the Metropolis--Hastings dynamics
must overcome this energy barrier to make a transition from one ground
state to another.

A sequence of configurations $(\omega_{t})_{t=0}^{T}=(\omega_{0},\,\omega_{1},\,\dots,\,\omega_{T})\subseteq\mathcal{X}$
for some integer $T\ge0$ is called a \textit{path} if $\omega_{t}\sim\omega_{t+1}$
(i.e., $r_{\beta}(\omega_{t},\,\omega_{t+1})>0$) for all $t\in\llbracket0,\,T-1\rrbracket$.
We say that this path connects two configurations $\sigma$ and $\zeta$
if $\omega_{0}=\sigma$ and $\omega_{T}=\zeta$ or vice versa. The
\textsl{communication height} between two configurations $\sigma,\,\zeta\in\mathcal{X}$\footnote{By writing $a,\,b\in A$, we implicitly state that $a$ and $b$ are
different.} is defined as
\[
\Phi(\sigma,\,\zeta)=\min_{(\omega_{t})_{t=0}^{T}}\,\max_{t\in\llbracket0,\,T\rrbracket}\,H(\omega_{t})\;,
\]
where the minimum is taken over all paths connecting $\sigma$ and
$\zeta$. Moreover, for two disjoint subsets $\mathcal{P}$ and $\mathcal{Q}$
of $\mathcal{X}$, we define
\[
\Phi(\mathcal{P},\,\mathcal{Q})=\min_{\sigma\in\mathcal{P},\,\zeta\in\mathcal{Q}}\,\Phi(\sigma,\,\zeta)\;.
\]
Then, we define
\[
\Gamma=\Gamma(K,\,L,\,M)=\Phi(\mathbf{s},\,\mathbf{s}')\;\;\;\;;\;\mathbf{s},\,\mathbf{s}'\in\mathcal{S}\;.
\]
Note that $\Phi(\mathbf{s},\,\mathbf{s}')$ does not depend on the
selections of $\mathbf{s},\,\mathbf{s}'\in\mathcal{S}$, owing to
the model symmetry. Additionally, note that $\Gamma$ represents the
\textit{energy barrier} between ground states, because the dynamics
must overcome this energy level to make a transition from one ground
state to another. 

To characterize the energy barrier, we must check the maximum energy
of all paths connecting the ground states. Thus, the energy barrier
is a global feature of the energy landscape, and characterizing it
is a non-trivial task. For the current model, we can identify the
exact value of the energy barrier. Recall that we assumed $K\le L\le M$.
\begin{thm}
\label{t_energy barrier}For all sufficiently large $K$, it holds
that
\begin{equation}
\Gamma=\begin{cases}
2KL+2K+2 & \text{under periodic boundary conditions}\;,\\
KL+K+1 & \text{under open boundary conditions}\;.
\end{cases}\label{e_Gamma}
\end{equation}
\end{thm}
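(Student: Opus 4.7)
The plan is to prove both inequalities, $\Gamma \leq $ the right-hand side and $\Gamma \geq $ the right-hand side, of \eqref{e_Gamma}. The first is a direct construction while the second is a combinatorial isoperimetric argument; the hypothesis that $K$ is sufficiently large serves only to rule out low-dimensional degeneracies in which the optimal droplet shape would be different.

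\textbf{Upper bound.} First, I would exhibit an explicit reference path $(\omega_t)_{t=0}^T$ of single-spin flips from one ground state $\mathbf{s}_a$ to another $\mathbf{s}_b$ and verify by direct bookkeeping that $\max_t H(\omega_t)$ equals the claimed value. The construction exploits the ordering $K \le L \le M$: nucleate a single $b$-site; extend it to a bar of length $K$ wrapping the shortest direction (under periodic BC) or placed along a corner edge (under open BC); fill the bar row by row in the $L$-direction to obtain a full $K \times L \times 1$ slab, whose interface energy is $2KL$ (top and bottom faces) under periodic BC or $KL$ (a single face, with the slab placed against a face of the box) under open BC; and then iterate the same procedure layer by layer in the $M$-direction until reaching $\mathbf{s}_b$. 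For each single flip, $\Delta H = n_a - n_b$ where $n_a$ and $n_b$ denote the $a$- and $b$-neighbor counts of the flipped site within $\Lambda$. Tracking $\Delta H$ step by step shows that the maximum energy is attained exactly when nucleating the first site of the \emph{second} row of a new layer lying on top of an existing slab: this site has two $b$-neighbors (the slab immediately below and the adjacent first-row bar in the $L$-direction), giving $\Delta H = +2$ above the ``slab plus full first-row-bar'' configuration of energy $2KL + 2K$ (resp.\ $KL + K$). The resulting maximum is $2KL + 2K + 2$ (resp.\ $KL + K + 1$), as required.

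\textbf{Lower bound.} Since $h = 0$, $H(\sigma)$ equals the cardinality of the edge boundary between the different monochromatic regions of $\sigma$. For any path $(\omega_t)_{t=0}^T$ from $\mathbf{s}_a$ to $\mathbf{s}_b$, the number of $b$-sites changes by exactly one per step from $0$ to $|\Lambda| = KLM$, so every intermediate integer value is attained at some step. I would then locate a step $t^*$ whose $b$-region has cardinality in a critical window just above the slab volume $KL$ and apply a sharp three-dimensional isoperimetric inequality on $\Lambda$ to conclude $H(\omega_{t^*}) \geq$ the claimed value.

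\textbf{Main obstacle.} The hard part is proving the sharp isoperimetric estimate for critical-size droplets. The na\"ive bound yielding only the slab value $2KL$ (resp.\ $KL$) is elementary, but obtaining the additional nucleation surplus $2K+2$ (resp.\ $K+1$) requires a careful case analysis over all configurations that extend a slab by a small positive volume: one must identify the minimizers as ``slab plus bar plus protuberance'' and rule out alternative candidate shapes. Unlike the 2D case treated in \cite{KS 2D, NZ}, where the relevant minimizers are just bars with protuberances, the 3D minimizers can take complicated ``tablet plus bar plus protuberance'' forms, as illustrated by the figure on the front page. Enumerating and controlling these configurations is precisely the detailed energy-landscape analysis that the introduction identifies as the technical heart of this paper.
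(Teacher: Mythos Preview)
Your upper bound via an explicit canonical path is correct and matches the paper (Definition~\ref{d_canpath} and Proposition~\ref{p_Eub}).

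Your lower bound strategy has a genuine gap: a single-volume isoperimetric bound cannot yield $\Gamma$, because for \emph{every} volume $V\in\llbracket 1,\,KLM-1\rrbracket$ there is a configuration with $\|\sigma\|_b=V$ and $H(\sigma)<\Gamma$. At your proposed critical volume $V=KL+K+1$, for instance, take one full slab on floor~$1$ together with a near-square 2D cluster of area $K+1$ on floor~$2$; by the decomposition of Lemma~\ref{l_decH} this has energy $2KL+O(\sqrt K)<\Gamma$. Even more simply, at $V=KL+1$ the configuration ``slab plus one site'' has energy $2KL+4<\Gamma$. The same phenomenon already occurs in 2D (a square of area $K$ has perimeter $O(\sqrt K)\ll 2K$), so the 2D proofs in \cite{NZ,KS 2D} are not isoperimetric either; the ``bars with protuberances'' you cite are the \emph{saddle} configurations along optimal paths, not the perimeter minimizers at a fixed volume.

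The paper's lower bound (Proposition~\ref{p_Elb}) is instead a \emph{path} argument that fibers the 3D problem over the known 2D landscape. One picks $\sigma$ on the path with $\|\sigma\|_1=\lfloor KLM/2\rfloor+1$ and classifies each floor $\sigma^{(m)}$ by which 2D valley $\mathcal{V}^a=\mathcal{N}^{\mathrm{2D}}(\mathbf{s}_a^{\mathrm{2D}})$ it lies in. A case analysis on the counts $p_1,p_2,r$ of floors in $\mathcal{V}^1$, $\mathcal{V}^2$, and $\Delta^{\mathrm{2D}}$ shows that either $H(\sigma)\ge\Gamma$ already, or $\sigma$ has at least $\mathfrak{m}_K$ floors in each of $\mathcal{V}^1,\mathcal{V}^2$ and few monochromatic pillars. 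In the latter situation the key Lemma~\ref{l_Hlb2} tracks the first time along the remaining path that some floor escapes its 2D valley: at (or just before) that moment, that floor contributes $H^{\mathrm{2D}}\ge 2K+2$ while the pillar terms still contribute essentially $2KL$, forcing $H\ge\Gamma$. The obstruction is thus dynamic---some floor must cross the 2D barrier $\Gamma^{\mathrm{2D}}=2K+2$ while all pillars are already charged---rather than a static shape constraint at any single volume.
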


\begin{rem}
\label{r_K}Our arguments state that this theorem holds for $K\ge2829$,
where the threshold $2829$ may be sub-optimal (cf. Remark \ref{r_Kcond}).
However, the optimality of this threshold is a minor issue, because
our main concern is the spin system on large boxes.\textit{ Henceforth,
we assume that $K$ satisfies this condition, i.e., $K\ge2829$.}
\end{rem}

Theorem \ref{t_energy barrier} is proved in Section \ref{sec8}.
\begin{rem}
Several remarks regarding the previous theorem are in order.
\begin{enumerate}
\item Note that Theorem \ref{t_energy barrier} does not depend on the value
of $q$, because in the transition from $\mathbf{s}_{a}$ to $\mathbf{s}_{b}$
for $a,\,b\in S$, no spins besides $a$ and $b$ play a significant
role. 
\item Suppose temporarily that $\Gamma_{d}$ is the energy barrier, defined
in the same way as above, subjected to Ising/Potts models defined
on a $d$-dimensional lattice box of size $K_{1}\times\cdots\times K_{d}$
with $K_{1}\le\cdots\le K_{d}$. Then, we expect that $\Gamma_{d}=2+2\sum_{n=1}^{d-1}\prod_{i=1}^{n}K_{i}$
under periodic boundary conditions and $\Gamma_{d}=1+\sum_{n=1}^{d-1}\prod_{i=1}^{n}K_{i}$
under open boundary conditions for all $d\ge2$. Notice that the case
of $d=2$ is handled in \cite[Theorem 1.1]{NZ} and the case of $d=3$
is handled in Theorem \ref{t_energy barrier}. We leave the verification
of this conjecture for the case of $d\ge4$ as a future research problem.
\end{enumerate}
\end{rem}

\subsubsection*{Comparison with non-zero external field case }

We conclude this energy barrier discussion by comparing our results
for the zero external field case with those for the non-zero external
field case obtained in \cite{NS Ising1} and \cite{BAC} for the Ising
model (i.e., $q=2$) in two or three dimensions, respectively. More
precisely, they showed that the energy barrier is given by (under
some technical assumptions regarding $h$) 
\begin{align*}
\Gamma_{2}(h)= & \,4\ell_{h}-h[\ell_{h}(\ell_{h}-1)+1]\;,\\
\Gamma_{3}(h)= & \,2m_{h}(2m_{h}-\delta_{h}-1)+2(m_{h}-\delta_{h})(m_{h}-1)+4\ell_{h}\\
 & \,-h[m_{h}(m_{h}-\delta_{h})(m_{h}-1)+\ell_{h}(\ell_{h}-1)+1]\;,
\end{align*}
where $\Gamma_{d}$ represents the $d$-dimensional energy barrier,
$\ell_{h}=\lceil2/|h|\rceil$, $m_{h}=\lceil4/|h|\rceil$, and $\delta_{h}\in\{0,\,1\}$
is a constant depending only on $h$ (provided that the lattice is
sufficiently large). We refer to \cite[Chapter 17]{BdenH meta} for
details. These energy barriers are characterized by the energy of
the critical droplet, and \textit{their values do not depend on the
size of the box but are determined solely by the magnitude $h$ of
the external field.} This is primarily because the size of the critical
droplet is determined solely by $|h|$, and the size of the box plays
no role provided that the box is sufficiently large to contain a single
droplet. In contrast, the zero external field case does not feature
such a critical droplet; hence, the magnitude of the energy barrier
depends crucially on the box size. This is the key difference between
the zero external field and non-zero external field cases.

\subsubsection*{Large deviation-type results based on pathwise approach}

Here, we explain the large deviation-type analysis of the metastable
behavior of the Metropolis--Hastings dynamics. These results can
be obtained via the pathwise approach developed in \cite{CGOV}, provided
that we can analyze the model energy landscape to a certain degree
of precision. We refer to the monograph \cite{OV meta} for an extensive
summary of the pathwise approach. This approach allows us to analyze
the metastability from three different perspectives: transition time,
spectral gap, and mixing time. All these quantities are crucial for
quantifying the metastable behavior. First, we explicitly define them
as follows:
\begin{itemize}
\item For $\mathcal{A}\subseteq\mathcal{X}$, we denote by $\tau_{\mathcal{A}}=\inf\,\{t\ge0:\sigma_{\beta}(t)\in\mathcal{A}\}$
the hitting time of the set $\mathcal{A}$. If $\mathcal{A}=\{\sigma\}$
is a singleton, we write $\tau_{\{\sigma\}}=\tau_{\sigma}$. 
\item For $\mathbf{s}\in\mathcal{S}$, we write $\breve{\mathbf{s}}=\mathcal{S}\setminus\{\mathbf{s}\}$.
Then, our primary concern is the hitting time $\tau_{\breve{\mathbf{s}}}$
or $\tau_{\mathbf{s}'}$ for $\mathbf{s}'\in\breve{\mathbf{s}}$ when
the dynamics starts from $\mathbf{s}\in\mathcal{S}$. We refer to
this as the \textit{(metastable) transition time}, because it expresses
the time required for a transition to proceed from the ground state
to another one.
\item The \textit{mixing time} corresponding to the level $\epsilon\in(0,\,1)$
is defined as
\[
t_{\beta}^{\mathrm{mix}}(\epsilon)=\min\,\big\{\,t\ge0:\max_{\sigma\in\mathcal{X}}\,\Vert\mathbb{P}_{\sigma}^{\beta}\,[\sigma_{\beta}(t)\in\cdot]-\mu_{\beta}(\cdot)\Vert_{\textrm{TV}}\le\epsilon\,\big\}\;,
\]
where $\Vert\cdot\Vert_{\textrm{TV}}$ represents the total variation
distance between measures (cf. \cite[Chapter 4]{LPW MCMT}).
\item We denote by $\lambda_{\beta}$ the \textit{spectral gap} of the Metropolis--Hastings
dynamics defined in Section \ref{sec2.1}.
\end{itemize}
The 2D version of the following theorem was established in \cite{NZ}
using the refined pathwise approach developed in \cite{CNSoh,MNOS,NZB}.
We extend their results to the 3D model.
\begin{thm}
\label{t_LDT results}The following statements hold.
\begin{enumerate}
\item \textbf{(Transition time)} For all $\mathbf{s},\,\mathbf{s}'\in\mathcal{S}$
and $\epsilon>0$, we have
\begin{align}
 & \lim_{\beta\rightarrow\infty}\mathbb{P}_{\mathbf{s}}^{\beta}\,[e^{\beta(\Gamma-\epsilon)}<\tau_{\breve{\mathbf{s}}}\le\tau_{\mathbf{s}'}<e^{\beta(\Gamma+\epsilon)}]=1\;,\label{e_nz1}\\
 & \lim_{\beta\rightarrow\infty}\frac{1}{\beta}\log\mathbb{E}_{\mathbf{s}}^{\beta}\,[\tau_{\breve{\mathbf{s}}}]=\lim_{\beta\rightarrow\infty}\frac{1}{\beta}\log\mathbb{E}_{\mathbf{s}}^{\beta}\,[\tau_{\mathbf{s}'}]=\Gamma\;.\label{e_nz2}
\end{align}
Moreover, under $\mathbb{P}_{\mathbf{s}}^{\beta}$, as $\beta\rightarrow\infty$,
\begin{equation}
\frac{\tau_{\breve{\mathbf{s}}}}{\mathbb{E}_{\mathbf{s}}^{\beta}\,[\tau_{\breve{\mathbf{s}}}]}\;,\;\frac{\tau_{\mathbf{s}'}}{\mathbb{E}_{\mathbf{s}}^{\beta}\,[\tau_{\mathbf{s}'}]}\rightharpoonup\mathrm{Exp}(1)\;,\label{e_nz3}
\end{equation}
where $\mathrm{Exp}(1)$ is the exponential random variable with a
mean value of $1$.
\item \textbf{(Mixing time)} For all $\epsilon\in(0,\,1/2)$, the mixing
time satisfies 
\[
\lim_{\beta\rightarrow\infty}\frac{1}{\beta}\log t_{\beta}^{\mathrm{mix}}(\epsilon)=\Gamma\;.
\]
\item \textbf{(Spectral gap)} There exist two constants $0<c_{1}\le c_{2}$
such that 
\[
c_{1}\,e^{-\beta\Gamma}\le\lambda_{\beta}\le c_{2}\,e^{-\beta\Gamma}\;.
\]
\end{enumerate}
\end{thm}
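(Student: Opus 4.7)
The plan is to obtain Theorem \ref{t_LDT results} as a consequence of the general large deviation machinery provided by the refined pathwise approach (the sequence of works \cite{CGOV,CNSoh,MNOS,NZB}), which delivers precisely the three statements above---the LDT asymptotics with asymptotic exponentiality for the transition time, the log-scale identification of the mixing time, and the two-sided exponential bound on the spectral gap---once two model-specific ingredients are in hand: (i) the sharp value of the communication height $\Gamma = \Phi(\mathbf{s},\mathbf{s}')$ between ground states, and (ii) a uniform \emph{recurrence estimate} to $\mathcal{S}$ with gain at least one. Item (i) is exactly the content of Theorem \ref{t_energy barrier}, so the mathematical work of the present theorem reduces entirely to (ii). This is the same structural reduction used in \cite{NZ} for the two-dimensional case, and the proofs of the three bullet points thereafter follow the 2D scheme verbatim.

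\textbf{Key steps.} To formalise (ii), for $\sigma \in \mathcal{X}\setminus\mathcal{S}$ define the stability level
\[
V^{*}(\sigma) \;=\; \min_{\omega}\,\max_{t}\,H(\omega_{t}) \;-\; H(\sigma),
\]
where the minimum ranges over paths $\omega$ starting at $\sigma$ and ending in $\{\zeta : H(\zeta) < H(\sigma)\}$. The goal is to establish $V^{*}(\sigma) \le \Gamma - 1$ for every such $\sigma$. Granting this, I would then: (a) feed (i) and (ii) into the general theorems of \cite{MNOS,NZB} to obtain the transition-time statements \eqref{e_nz1}--\eqref{e_nz3} for every pair $\mathbf{s},\mathbf{s}' \in \mathcal{S}$, where the symmetry between ground states handles the equivalence of $\tau_{\breve{\mathbf{s}}}$ and $\tau_{\mathbf{s}'}$; (b) derive the spectral gap bounds by combining (ii) with the fact, immediate from Theorem \ref{t_Zbest}, that $\mu_{\beta}(\mathbf{s}) \to 1/q$ for each $\mathbf{s} \in \mathcal{S}$, so no single basin dominates---the upper bound comes from the Rayleigh quotient with the test function $\mathbf{1}_{\mathcal{V}_{\mathbf{s}_{1}}}$ on the basin of $\mathbf{s}_{1}$, while the matching lower bound comes from a Poincar\'e inequality inside each basin (whose local mixing is controlled by $V^{*} \le \Gamma - 1$) combined with the bottleneck analysis of \cite{NZB}; and (c) deduce the mixing-time identity from the spectral gap via the standard relations $\lambda_{\beta}^{-1} \le t_{\beta}^{\mathrm{mix}}(\epsilon) \le \lambda_{\beta}^{-1}\log(1/(\epsilon\,\mu_{\min}))$, noting that $\log(1/\mu_{\min}) = O_{\beta}(\beta)$ and that the $q$-fold symmetry prevents collapse of the mixing time.

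\textbf{Main obstacle.} The hard step is unambiguously (ii). In three dimensions a non-monochromatic $\sigma$ may carry a genuinely complicated mosaic of clusters of the $q$ spins whose interfaces form the slab, tube, and plateau structures emphasised in the introduction and illustrated by the front-page figure. To bound $V^{*}(\sigma)$ uniformly one must exhibit, for every such $\sigma$, an explicit downhill path whose peak lies strictly below $H(\sigma) + \Gamma$. Following the template of \cite{NZ} but adapted to 3D, I would proceed by a geometric case analysis tied to the energy-landscape results developed for Theorem \ref{t_energy barrier}: one shows that either (a) $\sigma$ contains a spin cluster whose surface area is strictly below the critical threshold $2KL$ (periodic case) or $KL$ (open case), in which case the cluster can be peeled off by a sequence of single-flip moves whose energy profile remains at most $H(\sigma) + \Gamma - 1$; or (b) $\sigma$ lies in a $\Gamma$-plateau, in which case a ``sliding'' move that elongates one monochromatic slab at the expense of a competing one decreases the energy without ever raising it above $H(\sigma)$. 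The combinatorial book-keeping required to enumerate and control all such configurations---without the simplification available in 2D, where saddles can essentially only move forward or backward---is precisely the technical heart of the paper and is carried out in the sections underlying Theorem \ref{t_energy barrier}.
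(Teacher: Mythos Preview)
Your overall strategy matches the paper exactly: the paper's proof (Section \ref{sec8.3}) simply feeds Theorem \ref{t_energy barrier} and the recurrence estimate into the general machinery of \cite{NZB}, then refers to \cite[Section 3]{NZ} for the verbatim derivation of all three items.

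Where you diverge is in your account of ingredient (ii) and its difficulty. You frame the recurrence bound as requiring a geometric case analysis on $\sigma$---sub-critical clusters to be peeled off versus plateau configurations handled by sliding moves---and you claim this combinatorics is ``the technical heart of the paper'' carried out ``in the sections underlying Theorem \ref{t_energy barrier}.'' That is not what happens. The recurrence estimate is Proposition \ref{p_depth}, proved independently and briefly in Section \ref{sec6.4}, with no case analysis on the shape of $\sigma$ at all. The argument is a single uniform construction: pick spins $a,b$ both present in $\sigma$, take any canonical path $(\omega_t)_{t=0}^{KLM}$ from $\mathbf{s}_a$ to $\mathbf{s}_b$ arranged so that the first site flipped already carries spin $b$ in $\sigma$, and define $\widetilde{\omega}_t$ by overwriting the sites of $\sigma$ with spin $b$ in the same order. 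The increment $H(\widetilde{\omega}_t)-H(\sigma)$ is then controlled via the Hamiltonian decomposition (Lemma \ref{l_decH}): the floor contribution is at most $2K+2$ by the imported 2D bound (Lemma \ref{l_depth2}), and the pillar contribution is at most $2(KL-1)$ since overwriting a connected block of a 1D configuration raises its energy by at most $2$, with one pillar contributing zero thanks to the choice of the first flip. This gives $\Phi(\sigma,\mathcal{S})-H(\sigma)\le \Gamma-2$ directly.

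So you have mislocated the difficulty. Ingredient (ii) is short and clean; the hard combinatorics of Sections \ref{sec7}--\ref{sec8} go entirely into ingredient (i), namely the lower bound $\Phi(\mathbf{s},\mathbf{s}')\ge\Gamma$ of Proposition \ref{p_Elb}. Your proposed peeling/sliding dichotomy for (ii) may be salvageable, but it is both harder to make rigorous in 3D and unnecessary given the uniform construction above.
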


\begin{rem}
\label{r_LDT results}The above theorem holds under both open and
periodic boundary conditions. 
\end{rem}

Theorem \ref{t_LDT results} states that the metastable transition
time, mixing time, and inverse spectral gap become exponentially large
as $\beta\rightarrow\infty$, and their exponential growth rates are
determined by the energy barrier $\Gamma$.

The robust methodology developed in \cite{CNSoh,MNOS,NZB} implies
that characterizing the energy barrier between ground states and identifying
all the deepest valleys suffice (up to several technical issues) to
confirm the results presented in Theorem \ref{t_LDT results}. In
\cite{NZ}, the authors performed corresponding analyses of the energy
landscape; then, they used this robust methodology to prove Theorem
\ref{t_LDT results} for two dimensions. We perform the corresponding
analysis of the energy landscape for the 3D model as well in Sections
\ref{sec6}, \ref{sec7}, and \ref{sec8}. The proof of Theorem \ref{t_LDT results}
is given in Section \ref{sec8.3}. Analysis of the energy landscape
is far more difficult than that of the 2D one considered in \cite{KS 2D}
for several reasons. Details are presented at the beginning of Section
\ref{sec6}.

\subsubsection*{Characterization of transition path}

Our analysis of the energy landscape is sufficiently precise to characterize
all the possible transition paths between ground states in a high
level of detail. The \textit{transition paths} are rigorously defined
in Definition \ref{d_transpath}; we do not present explicit definitions
here, because we would have to define a large amount of notation.
The following theorem asserts that, with dominating probability, the
Metropolis--Hastings dynamics evolves along one of the transition
paths when a transition occurs from one ground state to another.
\begin{thm}
\label{t_transpath}For all $\mathbf{s}\in\mathcal{S}$, we have\footnote{A collection $(a_{\beta})_{\beta>0}=(a_{\beta}(K,\,L,\,M))_{\beta>0}$
of real numbers is written as $a_{\beta}=o_{\beta}(1)$ if 
\[
\lim_{\beta\rightarrow\infty}a_{\beta}=0\text{ for all }K,\,L,\,M\;.
\]
} 
\begin{align*}
 & \mathbb{P}_{\mathbf{s}}^{\beta}\,\big[\,\exists0<t_{1}<\cdots<t_{N}<\tau_{\breve{\mathbf{s}}}\text{ such that }(\sigma_{\beta}(t_{n}))_{n=1}^{N}\text{ is a transition path between }\mathbf{s}\text{ and }\breve{\mathbf{s}}\,\big]\\
 & =1-o_{\beta}(1)\;.
\end{align*}
\end{thm}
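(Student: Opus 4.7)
The plan is to combine the refined description of the energy landscape developed in Sections \ref{sec6}--\ref{sec8} with standard confinement estimates for the Metropolis--Hastings dynamics in the low-temperature regime. First I would establish a \emph{confinement event}: with probability $1-o_\beta(1)$, the trajectory $(\sigma_\beta(t))_{0\le t\le \tau_{\breve{\mathbf{s}}}}$ starting from $\mathbf{s}$ never visits a configuration with energy strictly greater than $\Gamma$. This follows from a reversibility-based Markov inequality, since the expected occupation time of $\{\sigma:H(\sigma)>\Gamma\}$ up to any deterministic horizon $e^{\beta(\Gamma+\epsilon)}$ is bounded by a constant times $\mu_\beta(\{H>\Gamma\})\, e^{\beta(\Gamma+\epsilon)}$, while by Theorem \ref{t_LDT results}(1) we have $\tau_{\breve{\mathbf{s}}}<e^{\beta(\Gamma+\epsilon)}$ with overwhelming probability.

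Conditional on this confinement event, the sample path is confined to the sub-level set $\{H\le\Gamma\}$. The structural classification carried out in Sections \ref{sec6}--\ref{sec7} enumerates every configuration that can appear on an optimal route and describes the allowed adjacencies between them, and Definition \ref{d_transpath} identifies precisely which sequences of configurations in this sub-level set constitute a legitimate transition path from $\mathbf{s}$ to $\breve{\mathbf{s}}$. Letting $t_1<\cdots<t_N$ be the successive times at which $\sigma_\beta(\cdot)$ enters a new configuration along the first excursion that reaches $\breve{\mathbf{s}}$, one would then verify that the recorded sequence $(\sigma_\beta(t_n))_{n=1}^N$ matches Definition \ref{d_transpath}.

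To organise the successive excursions, I would apply the strong Markov property at each return to $\mathbf{s}$ before $\tau_{\breve{\mathbf{s}}}$. By the lower bound in \eqref{e_nz1}, there is enough time for at least one such excursion to succeed; combined with a uniform-in-$\beta$ lower bound on the conditional probability that an excursion leaving the basin of $\mathbf{s}$ completes a legitimate transition path, a Borel--Cantelli-type argument yields the claim. The uniform positive lower bound on this conditional probability is obtained from comparison of jump rates across the flat plateau at energy $\Gamma$, essentially as in the companion paper \cite{KS 2D}.

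The main obstacle will be the second step: showing that an arbitrary sample path confined to $\{H\le\Gamma\}$ that connects $\mathbf{s}$ to $\breve{\mathbf{s}}$ actually induces a transition path in the sense of Definition \ref{d_transpath}. As emphasised in the introduction and illustrated by the front figure, the 3D plateau admits complicated branching saddle configurations at which the dynamics can execute lateral motions that do not a priori correspond to any canonical traversal. Ruling out such wandering --- and in particular excluding degenerate excursions that enter dead-end branches of the plateau and return, or that glide sideways along the saddle set --- requires the full combinatorial classification of saddles and their adjacencies in Sections \ref{sec6}--\ref{sec7}. This structural input is the crucial ingredient that replaces the much simpler 2D picture used in \cite{NZ,KS 2D}, and it is what makes the argument work in three dimensions.
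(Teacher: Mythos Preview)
Your two-step scheme --- confinement to $\{H\le\Gamma\}$ with high probability, then a structural analysis of confined paths --- is exactly the paper's approach, but you have obscured it with an unnecessary probabilistic layer in Step~3. The paper's argument is short: if $\widehat{\tau}$ is the hitting time of $\{H\ge\Gamma+1\}$, then $\mathbb{P}_{\mathbf{s}}^{\beta}[\widehat{\tau}<e^{\beta(\Gamma+1/2)}]=o_\beta(1)$ by the Freidlin--Wentzell estimate \cite[Theorem~3.2]{NZB}, and together with $\tau_{\breve{\mathbf{s}}}<e^{\beta(\Gamma+\epsilon)}$ from Theorem~\ref{t_LDT results}(1) this gives $\tau_{\breve{\mathbf{s}}}<\widehat{\tau}$ with probability $1-o_\beta(1)$. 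On that event the jump record up to $\tau_{\breve{\mathbf{s}}}$ is a $\Gamma$-path from $\mathbf{s}$ to $\breve{\mathbf{s}}$, and the paper then invokes the \emph{purely deterministic} Theorem~\ref{t_trans}: every $\Gamma$-path connecting $\mathcal{S}(A)$ and $\mathcal{S}(B)$ contains a transition path (Definition~\ref{d_transpath}) as a sub-path. That sub-path furnishes the required times $t_1<\cdots<t_N$.

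Your excursion decomposition, Borel--Cantelli, and ``uniform positive lower bound on the success probability of an excursion'' are therefore not needed --- and a merely positive lower bound would not deliver the $1-o_\beta(1)$ conclusion anyway. The phenomenon you flagged as the main obstacle (wandering through dead-ends, sideways motion on the plateau) is precisely what Theorem~\ref{t_trans} settles, and it does so deterministically: however the dynamics moves within $\{H\le\Gamma\}$, a transition sub-path is guaranteed. A minor point: your occupation-time route to confinement would also need an extra step (a holding-time lower bound, to pass from small expected occupation to small hitting probability); the paper's hitting-time LDP estimate is more direct.
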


The characterization of the transition paths and the proof of this
theorem are given in Section \ref{sec9.4}. 

\subsection{\label{sec2.3}Main results: Eyring--Kramers law and Markov chain
model reduction}

The following results constitute more quantitative analyses of the
metastable behavior obtained using potential-theoretic methods. In
particular, we obtain the Eyring--Kramers law (which is a considerable
refinement of \eqref{e_nz2}) and the Markov chain model reduction
of metastable behavior in the sense of \cite{BL TM,BL TM2}.

For these results, we require an accurate understanding of the energy
landscape and the behavior of the Metropolis--Hastings dynamics on
a large set of saddle configurations between ground states. We conduct
these analyses in Sections \ref{sec9} and \ref{sec10}.

We further remark that the quantitative results given below depend
on the selection of boundary condition, in contrast to Theorems \ref{t_LDT results}
and \ref{t_transpath} (cf. Remark \ref{r_LDT results}). For brevity,
we assume periodic boundary conditions throughout this subsection.
We can treat the open boundary case in a similar manner; the results
and a sketch of the proof are presented in Section \ref{sec11}.

\subsubsection*{Eyring--Kramers law}

The following result constitutes a refinement of \eqref{e_nz2} (and
hence of \eqref{e_nz3}) that allows us to pin down the sub-exponential
prefactor associated with the large deviation-type exponential estimates
of the mean transition time between ground states.
\begin{thm}
\label{t_EK}There exists a constant $\kappa=\kappa(K,\,L,\,M)>0$
such that for all $\mathbf{s},\,\mathbf{s}'\in\mathcal{S}$,
\begin{equation}
\mathbb{E}_{\mathbf{s}}^{\beta}\,[\tau_{\breve{\mathbf{s}}}]=(1+o_{\beta}(1))\,\frac{\kappa}{q-1}\,e^{\Gamma\beta}\;\;\;\;\text{and}\;\;\;\;\mathbb{E}_{\mathbf{s}}^{\beta}\,[\tau_{\mathbf{s}'}]=(1+o_{\beta}(1))\,\kappa\,e^{\Gamma\beta}\;.\label{e_EK}
\end{equation}
Moreover, the constant $\kappa$ satisfies 
\begin{align}
\lim_{K\rightarrow\infty}\kappa(K,\,L,\,M) & =\begin{cases}
1/8 & \text{if }K<L<M\;,\\
1/16 & \text{if }K=L<M\text{ or }K<L=M\;,\\
1/48 & \text{if }K=L=M\;.
\end{cases}\label{e_kappaest}
\end{align}
\end{thm}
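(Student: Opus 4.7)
My plan is to prove the Eyring--Kramers law through the potential-theoretic framework, deploying the Sobolev-approximation method introduced in the companion paper \cite{KS 2D} and combining it with the energy-landscape analysis carried out in Sections \ref{sec9} and \ref{sec10}. The starting point is the classical capacity/hitting-time identity
\[
\mathbb{E}_{\mathbf{s}}^{\beta}[\tau_{\breve{\mathbf{s}}}]=\frac{1}{\mathrm{cap}_{\beta}(\mathbf{s},\breve{\mathbf{s}})}\sum_{\sigma\in\mathcal{X}}\mu_{\beta}(\sigma)\,h^{*}_{\mathbf{s},\breve{\mathbf{s}}}(\sigma),
\]
with $h^{*}_{\mathbf{s},\breve{\mathbf{s}}}$ the equilibrium potential of $\mathbf{s}$ with respect to $\breve{\mathbf{s}}$. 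The numerator is handled easily: Theorem \ref{t_Zbest} gives $\mu_{\beta}(\mathbf{s})=(1+o_{\beta}(1))/q$, and the bound $H(\sigma)\ge 3$ for $\sigma\notin\mathcal{S}$ used in its proof shows that the contribution of non-ground-state configurations is $O_{\beta}(e^{-3\beta})$, so the numerator equals $(1+o_{\beta}(1))/q$. The entire task therefore reduces to a sharp estimate of $\mathrm{cap}_{\beta}(\mathbf{s},\breve{\mathbf{s}})$.

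For the capacity, I would construct an explicit test function $\widetilde{h}:\mathcal{X}\to[0,1]$ with $\widetilde{h}(\mathbf{s})=1$ and $\widetilde{h}|_{\breve{\mathbf{s}}}=0$, designed to interpolate linearly along a reaction coordinate parametrizing the growth and shrinkage of an optimal slab on each typical transition tube identified in Theorem \ref{t_transpath}, and extended by $0$ or $1$ on the remaining valleys according to which ground state they lie in. The Dirichlet principle yields the upper bound $\mathrm{cap}_{\beta}(\mathbf{s},\breve{\mathbf{s}})\le D_{\beta}(\widetilde{h})$. For the matching lower bound I would \emph{not} invoke the Thomson principle, since producing a near-optimal flow through the intricate 3D plateau is prohibitive; instead, following \cite{KS 2D}, I would verify that $\widetilde{h}$ approximates the true equilibrium potential in the Dirichlet-norm sense on and near the plateau. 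This robust Sobolev-style argument bypasses any flow construction and yields the sharp two-sided estimate.

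Computing $D_{\beta}(\widetilde{h})$ then amounts to a combinatorial enumeration of optimal saddle orbits, weighted by local escape multiplicities. Since Theorem \ref{t_energy barrier} pins down $\Gamma=2KL+2K+2$ as the energy of slab-type critical configurations with $K\times L$ cross-section, the enumeration collapses to a symmetry count whose dominant ingredients are (i) the number of coordinate directions admitting an optimal cross-section ($1$ when $K<L<M$ or $K=L<M$, $2$ when $K<L=M$, $3$ when $K=L=M$), (ii) a factor of $2$ from the two ends of the slab along its growth direction, and (iii) the dihedral symmetry order of the cross-section ($4$ for a rectangle, $8$ for a square $K=L$). The three asymptotic totals $1\cdot 2\cdot 4=8$, $1\cdot 2\cdot 8=16$ (equivalently $2\cdot 2\cdot 4=16$), and $3\cdot 2\cdot 8=48$ reproduce the denominators of $\kappa$ in \eqref{e_kappaest}. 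Finally, the second identity in \eqref{e_EK} follows from spin-permutation symmetry: each $\mathbf{s}'\in\breve{\mathbf{s}}$ is the first hit ground state with asymptotic probability $1/(q-1)$ starting from $\mathbf{s}$, so $\mathbb{E}_{\mathbf{s}}^{\beta}[\tau_{\mathbf{s}'}]=(1+o_{\beta}(1))(q-1)\,\mathbb{E}_{\mathbf{s}}^{\beta}[\tau_{\breve{\mathbf{s}}}]$.

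The hard part will be the Dirichlet-norm verification that $\widetilde{h}$ is near-harmonic on the saddle plateau, which requires substantially more than what suffices for the Dirichlet upper bound alone. As stressed in Section \ref{sec1}, the 3D plateau contains configurations (like the one on the front page) offering several independent forward-escape options, whereas in 2D progress along the reaction coordinate is essentially one-dimensional. Choosing interpolation values at these multi-escape saddles so that $\widetilde{h}$ truly tracks the equilibrium potential in Dirichlet norm demands complete control of the saddle graph and its local escape statistics --- precisely what Sections \ref{sec9} and \ref{sec10} are designed to provide. This is where the 3D analysis genuinely advances beyond the 2D treatment of \cite{KS 2D}.
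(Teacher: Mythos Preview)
Your approach is essentially the paper's: reduce to a sharp capacity estimate via the mean--hitting-time formula, and obtain that estimate by the $H^{1}$-approximation method of \cite{KS 2D} (Proposition \ref{p_H1approx} and Theorem \ref{t_Cap}), with the test function built from the energy-landscape analysis of Sections \ref{sec9}--\ref{sec10}. Two points deserve sharpening.

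First, your symmetry-count heuristic correctly predicts the \emph{asymptotic} values in \eqref{e_kappaest}, but the paper does not derive $\kappa$ that way. The constant is defined as $\kappa=(q-1)\mathfrak{c}(1)$ with $\mathfrak{c}(n)=\mathfrak{b}(n)+\mathfrak{e}(n)+\mathfrak{e}(q-n)$ (cf.\ \eqref{e_b}--\eqref{e_kappa}); here $\mathfrak{b}$ is an explicit bulk constant built from the 2D prefactor $\kappa^{\mathrm{2D}}$ and the factors in the $\Upsilon$-map, while each $\mathfrak{e}$ is the inverse of a capacity for an auxiliary Markov chain $Z^{A}$ on the edge region (Definition \ref{d_EAMc}). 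The asymptotics \eqref{e_kappaest} then follow because the edge constants satisfy $\mathfrak{e}(n)\le K^{-1/3}\to 0$ (Proposition \ref{p_eest}) and $\kappa^{\mathrm{2D}}\to 1/4$ or $1/8$; the dihedral-symmetry picture is a pleasant mnemonic but not the actual computation.

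Second, your deduction of the second identity in \eqref{e_EK} has a gap: knowing that each $\mathbf{s}'\in\breve{\mathbf{s}}$ is the first ground state hit with probability $1/(q-1)$ does not by itself give $\mathbb{E}_{\mathbf{s}}^{\beta}[\tau_{\mathbf{s}'}]\sim (q-1)\,\mathbb{E}_{\mathbf{s}}^{\beta}[\tau_{\breve{\mathbf{s}}}]$. The paper closes this via the strong Markov property: define the successive times $J_{n}$ of visiting a \emph{new} ground state, show $\mathbb{E}[J_{n+1}-J_{n}]\sim \mathbb{E}[\tau_{\breve{\mathbf{s}}}]$, observe that $n(\mathbf{s}')=\min\{n:\sigma_{\beta}(J_{n})=\mathbf{s}'\}$ is geometric with mean $q-1$ and independent of the increments, and conclude by Wald's identity.
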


In particular, the quantity $\mathbb{E}_{\mathbf{s}}^{\beta}\,[\tau_{\breve{\mathbf{s}}}]$
represents the mean time required to jump from $\mathbf{s}$ to another
ground state; hence, the first formula of \eqref{e_EK} corresponds
to the so-called \textit{Eyring--Kramers law} for the Metropolis--Hastings
dynamics.
\begin{rem}
Here, we make several comments regarding Theorem \ref{t_EK}.
\begin{enumerate}
\item Although we do not present the exact formula for the constant $\kappa$
in the theorem, they can be explicitly expressed in terms of potential-theoretic
notions relevant to a random walk defined in a complicated space (cf.
\eqref{e_c} and \eqref{e_kappa} for the formulas). This random walk
is vague (cf. Proposition \ref{p_eest}) compared with the corresponding
random walk identified in \cite[Proposition 6.22]{KS 2D} for the
2D model, which reflects the complexity of the energy landscape of
the 3D model compared with that of the 2D one.
\item The constant $\kappa$ is model-dependent. For different Glauber dynamics
(even with identical boundary conditions), this constant may differ.
\item If $K<L<M$, the transition between ground states must occur in a
specific direction; meanwhile, if $K=L<M$ or $K<L=M$, there are
two possible directions for the transition. If $K=L=M$, there are
six possible directions. This explains the dependence of the asymptotics
of $\kappa$ on the relationships among $K$, $L$, and $M$.
\end{enumerate}
\end{rem}

The proof of Theorem \ref{t_EK} is conducted via the potential-theoretic
approach, which originates from \cite{BEGK}. Using this approach,
we can estimate the mean transition time $\mathbb{E}_{\mathbf{s}}^{\beta}\,[\tau_{\breve{\mathbf{s}}}]$
by obtaining a precise estimate of the \textit{capacity} between ground
states (cf. \cite[Proposition 6.10]{BL TM}). This estimate is typically
obtained from variational principles for capacities, such as the Dirichlet
and Thomson principles. In contrast, we use the $H^{1}$-approximation
technique developed in our companion article \cite{KS 2D}, which
considerably simplifies the proof but still points out the gist of
the logical structure needed to estimate the capacity.

To this end, we require precise analyses of the energy landscape and
the behavior of the underlying metastable processes on a certain neighborhood
of saddle configurations between metastable sets. In most other models
for which the Eyring--Kramers law can be obtained via such robust
strategies, the energy landscape is relatively simple; hence, the
landscape only marginally presents serious mathematical issues. However,
in the current model, the saddle consists of a very large collection
of saddle configurations, which form a complex structure. Analyzing
this structure is a highly complicated task; moreover, it is difficult
to assess the behavior of the dynamics in the neighborhood of this
large set with adequate precision. The achievement of these tasks
is one of the main contributions of this study. We emphasize here
that the $H^{1}$-approximation technique, which is used in the proof
of the main results in a critical manner, is particularly handy for
models with complicated landscapes, such as the one considered in
this study.

\subsubsection*{Markov chain model reduction of metastable behavior}

Because the transitions between ground states occur successively,
analyzing all these transitions together is also an important problem
in the study of metastability. The general method used is Markov chain
model reduction \cite{BL TM,BL TM2,BL MG}. In this methodology, one
proves that the metastable process (accelerated by a certain scale)
converges, in a suitable sense, to a Markov chain on the set of metastable
sets. For our model, the target Markov chain must be a Markov chain
on the collection of ground states, because each ground state corresponds
to a metastable set. 

To explain this result in the context of our model, we introduce trace
process on ground states. In view of Theorem \ref{t_EK}, we must
accelerate the process by a factor $e^{\Gamma\beta}$ to observe transitions
between ground states in the ordinary time scale; hence, let us denote
by $\widehat{\sigma}_{\beta}(t)=\sigma_{\beta}(e^{\Gamma\beta}t)$,
$t\ge0$ the accelerated process. Then, we define a random time $T(t)$,
$t\ge0$ as 
\[
T(t)=\int_{0}^{t}\mathbf{1}\{\widehat{\sigma}_{\beta}(u)\in\mathcal{S}\}du\;\;\;\;;\;t\ge0\;,
\]
which measures the amount of time (up to $t$) the accelerated process
spends on the ground states. Let $S(\cdot)$ be the generalized inverse
of $T(\cdot)$; that is, 
\[
S(t)=\sup\,\{u\ge0:T(u)\le t\}\;\;\;\;;\;t\ge0\;.
\]
Then, the \textit{(accelerated) trace process} $\{X_{\beta}(t)\}_{t\ge0}$
on the set $\mathcal{S}$ of ground states is defined by
\begin{equation}
X_{\beta}(t)=\widehat{\sigma}_{\beta}(S(t))\text{ for }t\ge0\;.\label{e_trace}
\end{equation}
We observe that the trace process $X_{\beta}(\cdot)$ is obtained
from the accelerated process $\widehat{\sigma}_{\beta}(\cdot)$ by
turning off the clock whenever it is not on a ground state; thus,
the process $X_{\beta}(\cdot)$ extracts information regarding the
hopping dynamics on ground states. It is well known that the trace
process $X_{\beta}(\cdot)$ is a continuous-time, irreducible Markov
chain on $\mathcal{S}$; see \cite[Proposition 6.1]{BL TM} for a
rigorous proof.

Here, in view of the second estimate of \eqref{e_EK}, we define the
limiting Markov chain $\{X(t)\}_{t\ge0}$ on $\mathcal{S}$, which
expresses the asymptotic behavior of the accelerated process $\widehat{\sigma}_{\beta}(\cdot)$
between the ground states as a continuous-time Markov chain with jump
rate

\begin{equation}
r_{X}(\mathbf{s},\,\mathbf{s}')=\kappa^{-1}\text{ for all }\mathbf{s},\,\mathbf{s}'\in\mathcal{S}\;.\label{e_LMC}
\end{equation}

\begin{thm}
\label{t_MC}The following statements hold.
\begin{enumerate}
\item The law of the Markov chain $X_{\beta}(\cdot)$ converges to that
of the limiting Markov chain $X(\cdot)$ as $\beta\rightarrow\infty$,
in the usual Skorokhod topology.
\item It holds that 
\[
\lim_{\beta\rightarrow\infty}\max_{\mathbf{s}\in\mathcal{S}}\,\mathbb{E}_{\mathbf{s}}^{\beta}\,\Big[\,\int_{0}^{t}\mathbf{1}\{\widehat{\sigma}_{\beta}(u)\notin\mathcal{S}\}du\,\Big]=0\;.
\]
\end{enumerate}
\end{thm}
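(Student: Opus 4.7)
The plan is to invoke the general Markov chain model reduction framework of Beltrán--Landim \cite{BL TM, BL MG}. Within that framework, the joint convergence of the trace process $X_{\beta}(\cdot)$ to the limit chain $X(\cdot)$, together with the negligibility of time spent off the ground states, follows once we verify two ingredients: (i) the mean jump rates of the accelerated trace process between each pair $(\mathbf{s},\,\mathbf{s}')$ converge to $r_{X}(\mathbf{s},\,\mathbf{s}')=\kappa^{-1}$; and (ii) the expected fraction of time $\widehat{\sigma}_{\beta}(\cdot)$ spends outside $\mathcal{S}$ up to a fixed horizon vanishes as $\beta\to\infty$. Once these are in hand, \cite[Theorem 2.12]{BL TM} (or equivalently the martingale version in \cite{BL MG}) delivers the Skorokhod convergence asserted in part (1), and condition (ii) is itself exactly the content of part (2).

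For ingredient (i), I would express the jump rates of $X_{\beta}(\cdot)$ via capacities. The standard potential-theoretic identity $\mathbb{E}_{\mathbf{s}}^{\beta}[\tau_{\breve{\mathbf{s}}}]=\mu_{\beta}(\mathbf{s})/\mathrm{cap}_{\beta}(\mathbf{s},\,\breve{\mathbf{s}})\cdot(1+o_{\beta}(1))$ (cf.\ \cite[Proposition 6.10]{BL TM}), combined with the first estimate of Theorem \ref{t_EK}, yields that the total rate at which the accelerated process $\widehat{\sigma}_{\beta}(\cdot)$ leaves $\mathbf{s}$ is asymptotic to $(q-1)/\kappa$. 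The full permutation symmetry of $H$ in the spin labels (which relies on $h=0$) forces $\mathrm{cap}_{\beta}(\mathbf{s},\,\mathbf{s}')$ to depend only on the unordered pair $\{\mathbf{s},\,\mathbf{s}'\}$, so this total exit rate splits evenly among the $q-1$ alternative ground states, giving the claimed $r_{X}(\mathbf{s},\,\mathbf{s}')=\kappa^{-1}$ in \eqref{e_LMC}.

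For ingredient (ii), the strategy is to bound
\[
\mathbb{E}_{\mathbf{s}}^{\beta}\Big[\int_{0}^{t}\mathbf{1}\{\widehat{\sigma}_{\beta}(u)\notin\mathcal{S}\}du\Big]=\int_{0}^{t}\mathbb{P}_{\mathbf{s}}^{\beta}[\widehat{\sigma}_{\beta}(u)\notin\mathcal{S}]\,du
\]
uniformly in $u\in[0,\,t]$. A reversibility-based Schwarz inequality (cf.\ \cite[Lemma 2.2]{BL TM}) yields a pointwise bound of the form $\mathbb{P}_{\mathbf{s}}^{\beta}[\widehat{\sigma}_{\beta}(u)\in\mathcal{A}]\lesssim\mu_{\beta}(\mathcal{A})/\mu_{\beta}(\mathbf{s})$ once $u$ exceeds a short transient. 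Combined with $\mu_{\beta}(\mathcal{X}\setminus\mathcal{S})=O(e^{-3\beta})$ from Theorem \ref{t_Zbest} and $\mu_{\beta}(\mathbf{s})\to 1/q$, the integrand is exponentially small on that regime. The contribution of the initial transient, during which the process inside the basin of $\mathbf{s}$ has not yet equilibrated, must be shown to be of order $o(1)$ after acceleration by $e^{\Gamma\beta}$.

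The main obstacle will be the transient part of (ii): one must quantify the local relaxation of $\sigma_{\beta}(\cdot)$ inside the basin of a ground state on a time scale much shorter than $e^{\Gamma\beta}$, so that the reversibility bound takes over well before time $t\,e^{\Gamma\beta}$. The cleanest route is to combine the $H^{1}$-approximation of the equilibrium potential constructed in Sections \ref{sec9} and \ref{sec10} (which already underlies the proof of Theorem \ref{t_EK}) with a renewal decomposition of excursions from $\mathbf{s}$ to $\breve{\mathbf{s}}$: each excursion realizes the capacity ratio in (i), and the associated visit time to $\mathcal{X}\setminus\mathcal{S}$ within one excursion is of polynomial order in $\beta$, whereas the expected number of excursions in $[0,\,t]$ is $O(1)$ in the accelerated time scale. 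This yields both (i) and (ii) simultaneously and closes the proof.
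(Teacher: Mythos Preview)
Your approach to part (1) is essentially the same as the paper's: compute the trace-process jump rate via capacities (using \cite[Proposition~6.10]{BL~TM}) and invoke the spin-permutation symmetry to split the total exit rate $(q-1)/\kappa$ evenly among the $q-1$ targets. The paper applies Theorem~\ref{t_Cap} directly rather than going through Theorem~\ref{t_EK}, but this is the same computation.

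For part (2), however, you have substantially overcomplicated the argument. The ``main obstacle'' you identify --- controlling the initial transient before the Schwarz-type bound kicks in --- does not exist. The paper uses the elementary change-of-measure inequality
\[
\mathbb{P}_{\mathbf{s}}^{\beta}\,[\sigma_{\beta}(u)\notin\mathcal{S}]\;\le\;\frac{1}{\mu_{\beta}(\mathbf{s})}\,\mathbb{P}_{\mu_{\beta}}^{\beta}\,[\sigma_{\beta}(u)\notin\mathcal{S}]\;=\;\frac{\mu_{\beta}(\mathcal{X}\setminus\mathcal{S})}{\mu_{\beta}(\mathbf{s})}\;,
\]
which holds for \emph{every} $u>0$ (simply because $\mathbf{1}_{\{\mathbf{s}\}}\le\mu_{\beta}(\mathbf{s})^{-1}\mu_{\beta}$ as measures and $\mu_{\beta}$ is stationary). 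Integrating in $u$ over $[0,t]$ and applying Theorem~\ref{t_Zbest} finishes part~(2) in two lines. No local-relaxation estimate, no renewal decomposition, and no $H^{1}$-approximation are needed here. Incidentally, your claim that the time spent in $\mathcal{X}\setminus\mathcal{S}$ during a single excursion is ``of polynomial order in $\beta$'' is not correct: sub-valleys along the transition have depths up to $\Gamma-2$ (cf.\ Proposition~\ref{p_depth}), so these excursion times are themselves exponential in $\beta$, just of smaller order than $e^{\Gamma\beta}$.
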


The second part of this theorem implies that the accelerated process
spends a negligible amount of time in the set $\mathcal{X}\setminus\mathcal{S}$.
Therefore, the trace process $X_{\beta}(\cdot)$ of $\widehat{\sigma}_{\beta}(\cdot)$
on the set $\mathcal{S}$, which is essentially obtained by neglecting
the excursion of $\widehat{\sigma}_{\beta}(\cdot)$ on the set $\mathcal{X}\setminus\mathcal{S}$,
is indeed a reasonable object for approximating the process $\widehat{\sigma}_{\beta}(\cdot)$.
Combining this observation with the first part of the theorem implies
that the limiting Markov chain $X(\cdot)$ describes the successive
metastable transitions of the Metropolis--Hastings dynamics.
\begin{rem}
The proofs of Theorems \ref{t_EK} and \ref{t_MC} are based on the
potential-theoretic argument, and we present the arguments in Section
\ref{sec3}. We conjecture that these results also hold for the cases
of more than three dimensions.
\end{rem}

\begin{rem}
Temporarily, we denote by $E_{\mathbf{s}}$ the law of the limiting
Markov chain $X(\cdot)$ starting at $\mathbf{s}\in\mathcal{S}$.
Theorem \ref{t_MC} is consistent with Theorem \ref{t_EK}, in that
for any $\mathbf{s}'\in\breve{\mathbf{s}}$, we have $E_{\mathbf{s}}\,[\tau_{\mathbf{s}'}]=\kappa$. 
\end{rem}

\begin{rem}[Discrete Metropolis--Hastings dynamics]
\label{r_discrete} The only difference in the discrete dynamics
defined by \eqref{e_discrete} is that it is $q|\Lambda|$ times slower
than the continuous dynamics (in the average sense). Therefore, Theorems
\ref{t_energy barrier}, \ref{t_LDT results}, and \ref{t_transpath}\textbf{
}are valid for this dynamics without any modification. Theorems \ref{t_EK}
and \ref{t_MC} hold provided that we replace the constant $\kappa$
with $\kappa'=q|\Lambda|\,\kappa$. The rigorous verification of the
result proceeds in a similar way; thus, we do not repeat it here.
\end{rem}

\subsubsection*{Outlook of proofs of main results}

To prove Theorems \ref{t_energy barrier} and \ref{t_LDT results},
which fall into the category of pathwise-type metastability results,
we investigate the energy landscape of the Ising/Potts models on the
3D lattice $\Lambda$, as described in Sections \ref{sec6}, \ref{sec7},
and \ref{sec8}. Along the investigation, we present proofs of Theorems
\ref{t_energy barrier} and \ref{t_LDT results} in Section \ref{sec8}.
Then, we proceed to the proofs of Theorems \ref{t_EK} and \ref{t_MC},
which require more accurate analyses of the energy landscape than
the previous theorems. These detailed analyses are presented in Section
\ref{sec9}, and as a byproduct we present the proof of Theorem \ref{t_transpath}
in Section \ref{sec9.4}. Then, we present the proofs of Theorems
\ref{t_EK} and \ref{t_MC} in Section \ref{sec10}.

\subsubsection*{Non-reversible models}

The stochastic system considered in this study is the continuous-time
Metropolis--Hastings spin-updating dynamics, which is reversible
with respect to the Gibbs measure $\mu_{\beta}(\cdot)$. In fact,
as in our companion paper \cite{KS 2D}, we can consider various dynamics
with invariant measure $\mu_{\beta}(\cdot)$ but are non-reversible
with respect to this measure. Since the approximation method and the
pathwise approach used in the proof of the main results presented
above are robust and can be used in the non-reversible setting as
well, we can analyze the 3D version of the non-reversible models introduced
in \cite{KS 2D} for the 2D model and obtain similar results. However,
for simplicity (as analysis of the energy landscape of the 3D model
is very complicated itself), we decided not to include the non-reversible
content in the current article. Readers who are interested in non-reversible
generalizations can refer to \cite[Sections 2.2 and 5]{KS 2D} for
details.

\section{\label{sec3}Outline of the Proof}

In this section, we provide a brief summary of proof of the main results.
We emphasize again that in the remainder of this article (except in
Section \ref{sec11}), we assume periodic boundary conditions; that
is, $\Lambda=\mathbb{T}_{K}\times\mathbb{T}_{L}\times\mathbb{T}_{M}$.
In addition, we always assume that $K$ satisfies the condition given
in Remark \ref{r_K}. 

We reduce the proofs of Theorems \ref{t_EK} and \ref{t_MC} (which
are the final destinations of the current article) to an estimate
of the capacity between ground states (cf. Theorem \ref{t_Cap}),
and then we reduce the proof of this capacity estimate to the construction
of a certain test function (cf. Proposition \ref{p_H1approx}) which
is a proper approximation of the equilibrium potential function defined
in \eqref{e_eqp}. The construction and verification of Proposition
\ref{p_H1approx} are done in Section \ref{sec10}. This procedure
takes into advantage all the information on the energy landscape,
analyzed in Sections \ref{sec6}-\ref{sec9}. 

General strategy to prove such results, which works also in non-reversible
cases, was developed in our companion article \cite[Section 4]{KS 2D}.
Thus, we state here only the essential ingredients in a self-contained
manner and refer the interested readers to \cite[Section 4]{KS 2D}
for more detail.

\subsection{\label{sec3.1}Capacity estimate and proof of Theorems \ref{t_EK}
and \ref{t_MC}}

The \textit{Dirichlet form} $D_{\beta}(\cdot)$ associated with the
(reversible) Metropolis--Hastings dynamics $\sigma_{\beta}(\cdot)$
is given by, for $f:\mathcal{X}\rightarrow\mathbb{R}$,
\begin{equation}
D_{\beta}(f)=\frac{1}{2}\sum_{\sigma,\,\zeta\in\mathcal{X}}\mu_{\beta}(\sigma)\,r_{\beta}(\sigma,\,\zeta)\,\{f(\zeta)-f(\sigma)\}^{2}\;.\label{e_Diri}
\end{equation}
An alternative expression for the Dirichlet form is given as
\begin{equation}
D_{\beta}(f)=\langle f,\,-\mathcal{L}_{\beta}f\rangle_{\mu_{\beta}}\;,\label{e_Diriinnprod}
\end{equation}
where $\langle\cdot,\,\cdot\rangle_{\mu_{\beta}}$ is the inner product
on $L^{2}(\mu_{\beta})$ and $\mathcal{L}_{\beta}$ is the generator
of the original process, that is,
\begin{equation}
(\mathcal{L}_{\beta}f)(\sigma)=\sum_{\zeta\in\mathcal{X}}r_{\beta}(\sigma,\,\zeta)\,[f(\zeta)-f(\sigma)]\;.\label{e_gen}
\end{equation}
For two disjoint and non-empty subsets $\mathcal{P}$ and $\mathcal{Q}$
of $\mathcal{X}$, the \textit{equilibrium potential} between $\mathcal{P}$
and $\mathcal{Q}$ is the function $h_{\mathcal{P},\,\mathcal{Q}}^{\beta}:\mathcal{X}\rightarrow\mathbb{R}$
defined by 
\begin{equation}
h_{\mathcal{P},\,\mathcal{Q}}^{\beta}(\sigma)=\mathbb{P}_{\sigma}^{\beta}\,[\tau_{\mathcal{P}}<\tau_{\mathcal{Q}}]\;.\label{e_eqp}
\end{equation}
By definition, it readily follows that $h_{\mathcal{P},\,\mathcal{Q}}^{\beta}\equiv1\text{ on }\mathcal{P}$
and $h_{\mathcal{P},\,\mathcal{Q}}^{\beta}\equiv0\text{ on }\mathcal{Q}$.
Then, we define the \textit{capacity} between $\mathcal{P}$ and $\mathcal{Q}$
as
\begin{equation}
\mathrm{Cap}_{\beta}(\mathcal{P},\,\mathcal{Q})=D_{\beta}(h_{\mathcal{P},\,\mathcal{Q}}^{\beta})\;.\label{e_Capdef}
\end{equation}
It is well known that the equilibrium potential is the unique solution
to the following equation:
\begin{equation}
\begin{cases}
f\equiv1 & \text{on }\mathcal{P}\;,\\
f\equiv0 & \text{on }\mathcal{Q}\;,\\
\mathcal{L}_{\beta}f\equiv0 & \text{on }\mathcal{X}\setminus(\mathcal{P}\cup\mathcal{Q})\;.
\end{cases}\label{e_eqpotsol}
\end{equation}

Next, we define the constant $\kappa=\kappa(K,\,L,\,M)$ that appears
in Theorems \ref{t_EK} and \ref{t_MC}.
\begin{itemize}
\item Let $\mathfrak{m}_{K}=\lfloor K^{2/3}\rfloor$, where $\lfloor\alpha\rfloor$
is the biggest integer not bigger than $\alpha$, and let $\kappa^{\mathrm{2D}}=\kappa^{\mathrm{2D}}(K,\,L)$
be the constant that appeared in \cite[(4.13)]{KS 2D}, which is defined
later in \eqref{e_kappa2Ddef} explicitly and satisfies
\begin{equation}
\lim_{K\to\infty}\kappa^{\mathrm{2D}}(K,\,L)=\begin{cases}
1/4 & \text{if }K<L\;,\\
1/8 & \text{if }K=L\;.
\end{cases}\label{e_kappa2Dprop-1}
\end{equation}
Then, for $n\in\llbracket1,\,q-1\rrbracket$, the bulk constant $\mathfrak{b}(n)$
is defined explicitly as 
\begin{equation}
\mathfrak{b}(n)=\begin{cases}
\frac{1}{n(q-n)}\cdot\frac{M-2\mathfrak{m}_{K}}{2M}\cdot\kappa^{\mathrm{2D}}(K,\,L) & \text{if }K<L<M\;,\\
\frac{1}{n(q-n)}\cdot\frac{M-2\mathfrak{m}_{K}}{2M}\cdot\kappa^{\mathrm{2D}}(K,\,L) & \text{if }K=L<M\;,\\
\frac{1}{n(q-n)}\cdot\frac{M-2\mathfrak{m}_{K}}{4M}\cdot\kappa^{\mathrm{2D}}(K,\,L) & \text{if }K<L=M\;,\\
\frac{1}{n(q-n)}\cdot\frac{M-2\mathfrak{m}_{K}}{6M}\cdot\kappa^{\mathrm{2D}}(K,\,L) & \text{if }K=L=M\;.
\end{cases}\label{e_b}
\end{equation}
\item The edge constant $\mathfrak{e}(n)$, $n\in\llbracket1,\,q-1\rrbracket$,
is defined in \eqref{e_e3def}. Furthermore, it is verified in Proposition
\ref{p_eest} that 
\begin{equation}
0<\mathfrak{e}(n)\le\frac{1}{K^{1/3}}\text{ for all }n\in\llbracket1,\,q-1\rrbracket\;.\label{e_e}
\end{equation}
\item Then, for $n\in\llbracket1,\,q-1\rrbracket$, we define the constant
\begin{equation}
\mathfrak{c}(n)=\mathfrak{b}(n)+\mathfrak{e}(n)+\mathfrak{e}(q-n)\;.\label{e_c}
\end{equation}
We remark that by definition, $\mathfrak{b}(n)=\mathfrak{b}(q-n)$
for $n\in\llbracket1,\,q-1\rrbracket$; therefore, we have $\mathfrak{c}(n)=\mathfrak{c}(q-n)$.
Finally, we define the constant $\kappa$ that appears in Theorem
\ref{t_EK} as 
\begin{equation}
\kappa=(q-1)\mathfrak{c}(1)\;.\label{e_kappa}
\end{equation}
\end{itemize}
For $A\subseteq S$, we define (cf. \eqref{e_S})
\[
\mathcal{S}(A)=\{\mathbf{s}_{a}:a\in A\}\;.
\]
A pair $(A,\,B)$ of two subsets $A$ and $B$ of $S$ is referred
to as a \textit{proper partition} of $S$ if $A$ and $B$ are non-empty
subsets of $S$ satisfying $A\cup B=S$ and $A\cap B=\emptyset$.
Our aim is to estimate the capacity between $\mathcal{S}(A)$ and
$\mathcal{S}(B)$ for proper partitions $(A,\,B)$ of $S$. The following
theorem expresses the key capacity estimate:
\begin{thm}
\label{t_Cap}It holds for any proper partition $(A,\,B)$ of $S$
that 
\begin{equation}
\mathrm{Cap}_{\beta}(\mathcal{S}(A),\,\mathcal{S}(B))=\frac{1+o_{\beta}(1)}{q\mathfrak{c}(|A|)}\,e^{-\Gamma\beta}\;,\label{e_Cap}
\end{equation}
where $\mathfrak{c}(|A|)$ is the constant defined in \eqref{e_c}.
\end{thm}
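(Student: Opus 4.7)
The plan is to reduce the capacity estimate \eqref{e_Cap} to the construction of a single well-chosen test function and then to evaluate its Dirichlet energy. By \eqref{e_Diri}--\eqref{e_Capdef}, $\mathrm{Cap}_{\beta}(\mathcal{S}(A),\mathcal{S}(B))=D_{\beta}(h_{\mathcal{S}(A),\mathcal{S}(B)}^{\beta})$, and the Dirichlet principle characterizes this as the minimum of $D_{\beta}(f)$ over all $f$ with $f\equiv 1$ on $\mathcal{S}(A)$ and $f\equiv 0$ on $\mathcal{S}(B)$. Following the strategy of the companion paper \cite{KS 2D}, it suffices to exhibit a test function $\widetilde{h}$ with the correct boundary values whose Dirichlet energy satisfies
\[
D_{\beta}(\widetilde{h})=\frac{1+o_{\beta}(1)}{q\mathfrak{c}(|A|)}\,e^{-\Gamma\beta}\;,
\]
together with an error bound showing that the residual $\widetilde{h}-h_{\mathcal{S}(A),\mathcal{S}(B)}^{\beta}$ is negligible in the Dirichlet ($H^{1}$) norm. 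This pair of assertions is exactly the content of the forthcoming Proposition \ref{p_H1approx}, and my proposal is to carry it out via the $H^{1}$-approximation framework introduced in \cite{KS 2D}.

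The shape of $\widetilde{h}$ is dictated by the energy-landscape analysis performed in Sections \ref{sec6}--\ref{sec9}. Outside the saddle plateau, $\widetilde{h}$ should take essentially constant values on the basins of attraction of the ground states --- namely approximately $1$ on the valleys around $\mathbf{s}_{a}$ for $a\in A$ and approximately $0$ on those for $a\in B$. Nontrivial interpolation is confined to configurations at energy exactly $\Gamma$ that lie on optimal paths connecting $\mathcal{S}(A)$ to $\mathcal{S}(B)$, since by \eqref{e_Diri} only configurations of this height, weighted by $\mu_{\beta}$, contribute to leading order.

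The Dirichlet energy then splits into two regimes that mirror the definition $\mathfrak{c}(n)=\mathfrak{b}(n)+\mathfrak{e}(n)+\mathfrak{e}(q-n)$. In the bulk regime, the saddles are planar slab-like configurations cutting the torus in the shortest $K$-direction; translating such a slab along the long $M$-direction realizes essentially a one-dimensional random walk whose effective conductance per step is the $2$D capacity constant $\kappa^{\mathrm{2D}}(K,L)$ governing the cost of an elementary slab shift. Combined with the counting factor $(M-2\mathfrak{m}_{K})/(2M)$, or its $4M$, $6M$ analogues that correspond to the inequivalent slab directions when $K=L$ and/or $L=M$, and with the combinatorial weight $1/[n(q-n)]$ for the color partition, this produces precisely the bulk term $\mathfrak{b}(n)$; the outer factor $1/q$ in \eqref{e_Cap} stems from $\mu_{\beta}(\mathbf{s}_{a})\to 1/q$. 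In the edge regime, the slab is being nucleated or absorbed at one of its two ends, and the dynamics is intrinsically three-dimensional; the two ends contribute independently and yield the two edge terms $\mathfrak{e}(n)$ and $\mathfrak{e}(q-n)$, defined as capacities of the reduced random walk of \eqref{e_e3def}.

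The main obstacle will be the edge contribution. In contrast to the $2$D case of \cite{KS 2D} in which the saddle plateau admits only forward/backward transitions, the $3$D saddle configurations near a slab endpoint branch in several non-equivalent ways (cf. the front-page figure), so harmonically extending $\widetilde{h}$ across the full edge structure demands a careful combinatorial enumeration of these configurations together with a companion analysis of the reduced random walk. The $H^{1}$-approximation framework is precisely what makes this tractable: we do not need to solve the Dirichlet problem exactly on the saddle plateau, but only well enough to meet the Dirichlet-norm tolerance allowed by Proposition \ref{p_H1approx}. Once the construction of $\widetilde{h}$ is carried out in Section \ref{sec10}, summation of the bulk and edge contributions yields the prefactor $1/[q\mathfrak{c}(|A|)]$ and establishes \eqref{e_Cap}.
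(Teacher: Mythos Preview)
Your proposal is correct and follows essentially the same approach as the paper: reduce Theorem \ref{t_Cap} to Proposition \ref{p_H1approx} (the two assertions $D_{\beta}(\widetilde{h})=(1+o_{\beta}(1))\,e^{-\Gamma\beta}/(q\mathfrak{c}(|A|))$ and $D_{\beta}(h-\widetilde{h})=o_{\beta}(e^{-\Gamma\beta})$), with the test function $\widetilde{h}$ built from a bulk part governed by $\kappa^{\mathrm{2D}}$ and two edge parts governed by the reduced capacities $\mathfrak{e}(\cdot)$. The only cosmetic difference is that the paper closes the argument via the triangle inequality for the seminorm $D_{\beta}(\cdot)^{1/2}$ rather than invoking the Dirichlet variational principle, but this yields the same two-sided estimate.
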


We explain the strategy used to prove this theorem in Section \ref{sec3.2}.
Here, we conclude the proofs of Theorems \ref{t_EK} and \ref{t_MC}
by assuming Theorem \ref{t_Cap}.
\begin{proof}[Proof of Theorem \ref{t_EK}]
 By \cite[Proposition 6.10]{BL TM}, we have the following formula
for the mean transition time:
\[
\mathbb{E}_{\mathbf{s}}^{\beta}\,[\tau_{\breve{\mathbf{s}}}]=\frac{1}{\mathrm{Cap}_{\beta}(\mathbf{s},\,\breve{\mathbf{s}})}\sum_{\sigma\in\mathcal{X}}\mu_{\beta}(\sigma)\,h_{\mathbf{s},\,\breve{\mathbf{s}}}^{\beta}(\sigma)\;.
\]
Using Theorem \ref{t_Zbest} and the fact that $h_{\mathbf{s},\,\breve{\mathbf{s}}}^{\beta}(\mathbf{s})=1$
and $h_{\mathbf{s},\,\breve{\mathbf{s}}}^{\beta}\equiv0$ on $\breve{\mathbf{s}}$,
we can rewrite the last summation as 
\[
\frac{1}{q}+o_{\beta}(1)+\sum_{\sigma\in\mathcal{X}\setminus\mathcal{S}}\mu_{\beta}(\sigma)\,h_{\mathbf{s},\,\breve{\mathbf{s}}}^{\beta}(\sigma)=\frac{1}{q}+o_{\beta}(1)\;,
\]
where the identity follows from the trivial bound $|h_{\mathbf{s},\,\breve{\mathbf{s}}}^{\beta}|\le1$
(cf. \eqref{e_eqp}). Summing up the computations above and applying
Theorem \ref{t_Cap}, we obtain 
\begin{equation}
\mathbb{E}_{\mathbf{s}}^{\beta}\,[\tau_{\breve{\mathbf{s}}}]=\frac{1}{\mathrm{Cap}_{\beta}(\mathbf{s},\,\breve{\mathbf{s}})}\,\Big[\,\frac{1}{q}+o_{\beta}(1)\,\Big]=(1+o_{\beta}(1))\,\frac{\kappa}{q-1}\,e^{\Gamma\beta}\;.\label{e_EKpf1}
\end{equation}

We next address the second estimate of \eqref{e_EK}. Assume that
the process $\sigma_{\beta}(\cdot)$ starts at $\mathbf{s}$ and that
$\mathbf{s}\neq\mathbf{s}'$. We define a sequence of stopping times
$(J_{n})_{n=0}^{\infty}$ by $J_{0}=0$ and 
\[
J_{n+1}=\inf\,\{t\ge J_{n}:\sigma_{\beta}(t)\in\mathcal{S}\setminus\sigma_{\beta}(J_{n})\}\;\;\;\;;\;n\ge0\;.
\]
In other words, $(J_{n})_{n=0}^{\infty}$ is the sequence of random
times at which the process $\sigma_{\beta}(\cdot)$ visits a new ground
state. By \eqref{e_EKpf1} and the strong Markov property, we have
for all $n\ge0$ that
\begin{equation}
\mathbb{E}_{\mathbf{s}}^{\beta}\,[J_{n+1}-J_{n}]=(1+o_{\beta}(1))\,\frac{\kappa}{q-1}\,e^{\Gamma\beta}\;.\label{e_EKpf2}
\end{equation}
Then, we define 
\[
n(\mathbf{s}')=\inf\,\{n\ge0:\sigma_{\beta}(J_{n})=\mathbf{s}'\}
\]
such that $\tau_{\mathbf{s}'}=J_{n(\mathbf{s}')}$; thus, we can write
\begin{equation}
\tau_{\mathbf{s}'}=\sum_{i=0}^{n(\mathbf{s}')-1}(J_{i+1}-J_{i})\;.\label{e_EKpf3}
\end{equation}
Note that because we have assumed $\mathbf{s}\neq\mathbf{s}'$, it
holds that $n(\mathbf{s}')\ge1$. By symmetry, we observe that $n(\mathbf{s}')$
is a geometric random variable with success probability $\frac{1}{q-1}$
that is independent of the sequence $(J_{n})_{n=0}^{\infty}$. Thus,
we get from \eqref{e_EKpf2} and \eqref{e_EKpf3} that 
\[
\mathbb{E}_{\mathbf{s}}^{\beta}\,[\tau_{\mathbf{s}'}]=(1+o_{\beta}(1))\,\frac{\kappa}{q-1}\,e^{\Gamma\beta}\times(q-1)=(1+o_{\beta}(1))\,\kappa\,e^{\Gamma\beta}\;.
\]
Finally, from \eqref{e_kappa2Dprop-1}, \eqref{e_b}, \eqref{e_e},
and \eqref{e_c}, we can easily see that $\kappa$ satisfies the asymptotics
\eqref{e_kappaest}. This completes the proof.
\end{proof}
Next, we consider Theorem \ref{t_MC}. The general methodology used
to prove this type of Markov chain model reduction, based on potential-theoretic
computations, was developed in \cite{BL TM,BL TM2}. Our proof also
uses the potential-theoretic approach; however, the computation is
slightly simpler because the metastable sets are singletons. Before
stating the proof, we remark that two alternative approaches are available
for the Markov chain model reduction in the context of metastability:
an approach based on the Poisson equation \cite{LMS DT,LS NR2,OhRez,RezSeo},
and one based on the resolvent equation \cite{LMS res,LeeS NR2}.
\begin{proof}[Proof of Theorem \ref{t_MC}]
 We first consider part (1). We denote by $r_{\beta}^{\mathrm{tr}}:\mathcal{S}\times\mathcal{S}\rightarrow[0,\,\infty)$
the transition rate of the trace process $X_{\beta}(\cdot)$. In view
of the rate \eqref{e_LMC} of the limiting Markov chain, it suffices
to prove that $r_{\beta}^{\mathrm{tr}}(\mathbf{s},\,\mathbf{s}')=(1+o_{\beta}(1))\,\frac{1}{\kappa}$
for all $\mathbf{s},\,\mathbf{s}'\in\mathcal{S}$. Since $r_{\beta}(\mathbf{s},\,\mathbf{s}')$
does not depend on the selections of $\mathbf{s},\,\mathbf{s}'\in\mathcal{S}$
by the symmetry of the model, it remains to prove that 
\begin{equation}
r_{\beta}^{\mathrm{tr}}(\mathbf{s},\,\breve{\mathbf{s}})=(1+o_{\beta}(1))\,\frac{q-1}{\kappa}\;\;\;\;\text{for all }\mathbf{s}\in\mathcal{S}\;.\label{e_MCpf1}
\end{equation}
We denote by $\mathbf{E}_{\mathbf{s}}^{\beta}$ the law of the trace
process $X_{\beta}(\cdot)$ starting at $\mathbf{s}$. Then, 
\begin{equation}
\frac{1}{r_{\beta}^{\mathrm{tr}}(\mathbf{s},\,\breve{\mathbf{s}})}=\mathbf{E}_{\mathbf{s}}^{\beta}\,[\tau_{\breve{\mathbf{s}}}]=e^{-\Gamma\beta}\,\mathbb{E}_{\mathbf{s}}^{\beta}\,\Big[\,\int_{0}^{\tau_{\breve{\mathbf{s}}}}\mathbf{1}\{\sigma_{\beta}(t)\in\mathcal{S}\}dt\,\Big]\;,\label{e_MCpf2}
\end{equation}
where the factor $e^{-\Gamma\beta}$ is included because we accelerated
the process by the factor $e^{\Gamma\beta}$ when defining the trace
process; the integrand $\mathbf{1}\{\sigma_{\beta}(t)\in\mathcal{S}\}$
arises because the trace process is obtained from the accelerated
process by turning off the clock when the process resides outside
$\mathcal{S}$. Then, by \cite[Proposition 6.10]{BL TM}, we can write
\[
\mathbb{E}_{\mathbf{s}}^{\beta}\,\Big[\,\int_{0}^{\tau_{\breve{\mathbf{s}}}}\mathbf{1}\{\sigma_{\beta}(t)\in\mathcal{S}\}dt\,\Big]=\frac{1}{\mathrm{Cap}_{\beta}(\mathbf{s},\,\breve{\mathbf{s}})}\sum_{\sigma\in\mathcal{X}}\mu_{\beta}(\sigma)\,\mathbf{1}\{\sigma\in\mathcal{S}\}\,h_{\mathbf{s},\,\breve{\mathbf{s}}}^{\beta}(\sigma)=\frac{\mu_{\beta}(\mathbf{s})}{\mathrm{Cap}_{\beta}(\mathbf{s},\,\breve{\mathbf{s}})}\;,
\]
where the second identity follows from the fact that $h_{\mathbf{s},\,\breve{\mathbf{s}}}^{\beta}(\mathbf{s})=1$
and $h_{\mathbf{s},\,\breve{\mathbf{s}}}^{\beta}\equiv0$ on $\breve{\mathbf{s}}$.
Therefore, by Theorems \ref{t_Zbest} and \ref{t_Cap}, we obtain
\[
\mathbb{E}_{\mathbf{s}}^{\beta}\,\Big[\,\int_{0}^{\tau_{\breve{\mathbf{s}}}}\mathbf{1}\{\sigma_{\beta}(t)\in\mathcal{S}\}dt\,\Big]=(1+o_{\beta}(1))\,\frac{\kappa}{q-1}\,e^{\Gamma\beta}\;.
\]
Inserting this into \eqref{e_MCpf2} yields \eqref{e_MCpf1}.

Here, we address part (2). Denote by $\mathbb{P}_{\mu_{\beta}}^{\beta}$
the law of the Metropolis--Hastings dynamics $\sigma_{\beta}(\cdot)$
for which the initial distribution is $\mu_{\beta}$. Then, for any
$u>0$, we obtain
\begin{equation}
\mathbb{P}_{\mathbf{s}}^{\beta}\,[\sigma_{\beta}(u)\notin\mathcal{S}]\le\frac{1}{\mu_{\beta}(\mathbf{s})}\,\mathbb{P}_{\mu_{\beta}}^{\beta}\,[\sigma_{\beta}(u)\notin\mathcal{S}]=\frac{\mu_{\beta}(\mathcal{X}\setminus\mathcal{S})}{\mu_{\beta}(\mathbf{s})}\;,\label{e_MCpf3}
\end{equation}
where the final identity holds because $\mu_{\beta}$ is the invariant
distribution. Therefore by the Fubini theorem, 
\[
\mathbb{E}_{\mathbf{s}}^{\beta}\,\Big[\,\int_{0}^{t}\mathbf{1}\{\widehat{\sigma}_{\beta}(u)\notin\mathcal{S}\}du\,\Big]=\int_{0}^{t}\mathbb{P}_{\mathbf{s}}^{\beta}\,[\sigma_{\beta}(e^{\Gamma\beta}u)\notin\mathcal{S}]du\le t\cdot\frac{\mu_{\beta}(\mathcal{X}\setminus\mathcal{S})}{\mu_{\beta}(\mathbf{s})}\;,
\]
which vanishes as $\beta\rightarrow\infty$ by Theorem \ref{t_Zbest}.
\end{proof}

\subsection{\label{sec3.2}$H^{1}$-approximation of equilibrium potential and
proof of Theorem \ref{t_Cap}}

We fix a proper partition $(A,\,B)$ of $S$, and explain the general
strategy to prove Theorem \ref{t_Cap}, that is, to estimate the capacity
$\mathrm{Cap}_{\beta}(\mathcal{S}(A),\,\mathcal{S}(B))$. 

The methodology explained here is based on \cite[Section 4.5]{KS 2D},
in which it is demonstrated that finding a suitable $H^{1}$-approximation
of the equilibrium potential $h_{\mathcal{S}(A),\,\mathcal{S}(B)}^{\beta}$
between $\mathcal{S}(A)$ and $\mathcal{S}(B)$ is sufficient to establish
the capacity estimate. The following proposition states this result.
\begin{prop}[$H^{1}$-approximation of the equilibrium potential]
\label{p_H1approx} For any proper partition $(A,\,B)$ of $S$,
there exists a function $\widetilde{h}=\widetilde{h}_{\mathcal{S}(A),\,\mathcal{S}(B)}^{\beta}:\mathcal{X}\rightarrow\mathbb{R}$
such that the following properties hold.
\begin{enumerate}
\item The function $\widetilde{h}$ approximates $h_{\mathcal{S}(A),\,\mathcal{S}(B)}^{\beta}$
in the sense that
\begin{equation}
D_{\beta}(h_{\mathcal{S}(A),\,\mathcal{S}(B)}^{\beta}-\widetilde{h})=o_{\beta}(e^{-\beta\Gamma})\;.\label{e_H1approx1}
\end{equation}
\item It holds that
\begin{equation}
D_{\beta}(\widetilde{h})=\frac{1+o_{\beta}(1)}{q\mathfrak{c}(|A|)}\,e^{-\Gamma\beta}\;.\label{e_H1approx2}
\end{equation}
\end{enumerate}
\end{prop}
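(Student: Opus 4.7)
The plan is to build $\widetilde{h}$ by hand from the combinatorial description of the energy landscape developed in Sections \ref{sec6}--\ref{sec9} and then to verify the two bounds \eqref{e_H1approx1} and \eqref{e_H1approx2} separately. First I would decompose $\mathcal{X} = V_A \cup V_B \cup P \cup R$ (disjoint), where $V_A$ and $V_B$ are the unions of the metastable valleys attached to $\mathcal{S}(A)$ and $\mathcal{S}(B)$ respectively, $P$ is the plateau of saddle configurations at energy $\Gamma$ that connects these valleys, and $R$ collects the remaining configurations, all of which lie strictly above level $\Gamma$ and hence have $\mu_\beta$-weight at most $O_\beta(e^{-(\Gamma+1)\beta})$. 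I would then set $\widetilde{h}\equiv 1$ on $V_A$, $\widetilde{h}\equiv 0$ on $V_B$, and extend $\widetilde{h}$ to $R$ by an arbitrary constant on each connected component, so that every edge touching $R$ contributes at most $o_\beta(e^{-\Gamma\beta})$ to the Dirichlet form.

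The heart of the proof is the choice of $\widetilde{h}$ on the plateau $P$. Guided by the constants $\mathfrak{b}(|A|)$, $\mathfrak{e}(|A|)$, $\mathfrak{e}(q-|A|)$, I would split $P$ into a bulk region $P_b$, on which every configuration is slab-like with a 2D cross-section of the type handled in \cite{KS 2D}, and two edge regions $P_A,P_B$ consisting of the configurations through which the process enters the valleys of $\mathcal{S}(A)$ or $\mathcal{S}(B)$. Projecting modulo the plateau symmetries gives a reduced weighted graph with three resistance blocks in series of effective resistances $\mathfrak{b}(|A|)$, $\mathfrak{e}(|A|)$ and $\mathfrak{e}(q-|A|)$, each of whose edges carries the weight $(1+o_\beta(1))\,q^{-1}e^{-\Gamma\beta}$ coming from $\mu_\beta(\sigma)\,r_\beta(\sigma,\zeta)$ on the plateau. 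I would then define $\widetilde{h}$ on $P$ to be the pullback of the harmonic function on this reduced graph with Dirichlet data $1$ on the $V_A$-side terminal and $0$ on the $V_B$-side terminal. A direct computation of the Dirichlet form then gives
\begin{equation*}
D_\beta(\widetilde{h}) = \frac{1+o_\beta(1)}{q\,[\mathfrak{b}(|A|) + \mathfrak{e}(|A|) + \mathfrak{e}(q-|A|)]}\,e^{-\Gamma\beta} = \frac{1+o_\beta(1)}{q\,\mathfrak{c}(|A|)}\,e^{-\Gamma\beta},
\end{equation*}
which is exactly \eqref{e_H1approx2}.

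For \eqref{e_H1approx1}, since $h^\beta_{\mathcal{S}(A),\mathcal{S}(B)}$ and $\widetilde{h}$ agree on $\mathcal{S}(A)\cup \mathcal{S}(B)$, the standard identity $D_\beta(h-\widetilde{h}) = -\langle h-\widetilde{h}, \mathcal{L}_\beta \widetilde{h}\rangle_{\mu_\beta}$ reduces the estimate to bounding $\mathcal{L}_\beta\widetilde{h}$ in the appropriate weighted norm. By construction $\mathcal{L}_\beta \widetilde{h}$ vanishes at every interior vertex of the reduced graph (so in particular at every configuration of $V_A$, $V_B$, $R$ and at all non-boundary configurations of $P$); the only non-zero contributions come from the finitely many configurations where $P$ meets $V_A\cup V_B\cup R$ or where the pullback fails to be literally harmonic. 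Using the trivial bound $|h-\widetilde{h}|\le 1$, the small $\mu_\beta$-weight $O_\beta(e^{-\Gamma\beta})$ of these boundary configurations, and Cauchy--Schwarz, one then gets $D_\beta(h-\widetilde{h}) = o_\beta(e^{-\Gamma\beta})$, as required.

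The main obstacle, and the reason why Sections \ref{sec6}--\ref{sec9} account for the bulk of the paper, is the very definition of the reduced graph on $P$. In 2D the edge region is essentially one-dimensional and the bulk is a simple chain, so the harmonic function can be written down explicitly; in 3D, as emphasized in the introduction and illustrated by the front-page figure, an edge saddle configuration can have several legitimate neighbors simultaneously, so the reduced graph on $P_A$ and $P_B$ is genuinely high-dimensional and the constant $\mathfrak{e}(|A|)$ admits only an implicit definition together with the bound $\mathfrak{e}(|A|)\le K^{-1/3}$ of \eqref{e_e}. The delicate point is therefore not the formal computation of $D_\beta(\widetilde{h})$ once the reduced graph is fixed, but showing that this reduced graph genuinely controls the equilibrium-potential dynamics on the whole plateau up to an $o_\beta(1)$ multiplicative error, uniformly over the highly non-product geometry of the 3D saddle set.
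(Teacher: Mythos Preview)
Your overall strategy---build $\widetilde{h}$ as the pullback of a harmonic function on a reduced graph whose three series blocks have effective resistances $\mathfrak{b}(|A|)$, $\mathfrak{e}(|A|)$, $\mathfrak{e}(q-|A|)$, then verify \eqref{e_H1approx2} by direct computation---is indeed the paper's approach in Section~\ref{sec10}. The paper makes this concrete by taking $\widetilde{h}$ on the bulk to be an affine rescaling of the 2D test function $\widetilde{h}^{\mathrm{2D}}$ evaluated at the single non-monochromatic floor, and on each edge region to be an affine rescaling of the equilibrium potential $\mathfrak{h}^A$ of the auxiliary chain of Definition~\ref{d_EAMc}; your description of this as ``pullback of the harmonic function on the reduced graph'' is accurate at the heuristic level.

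Your argument for \eqref{e_H1approx1}, however, has a genuine gap. The identity you write reduces the task to estimating $\langle h-\widetilde{h},\mathcal{L}_\beta\widetilde{h}\rangle_{\mu_\beta}$, but the trivial bound $|h-\widetilde{h}|\le 1$ together with $\mu_\beta(\sigma)=O_\beta(e^{-\Gamma\beta})$ at boundary configurations gives only $O_\beta(e^{-\Gamma\beta})$, not $o_\beta(e^{-\Gamma\beta})$. Two ingredients are missing. First, the plateau is \emph{not} purely at energy $\Gamma$: it contains the intermediate sub-valleys $\mathcal{N}(\mathcal{R}_i^{A,B})$, $i\in\llbracket\mathfrak{m}_K,M-\mathfrak{m}_K\rrbracket$, at energy $\Gamma-2K-2$ (cf.\ Remark~\ref{r_energy}), on which $h$ takes unknown values strictly between $0$ and $1$, so $h-\widetilde{h}$ cannot be controlled pointwise there and these configurations do not fit into your $V_A$, $V_B$, $P$ or $R$. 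Second, and crucially, the paper's construction is arranged so that $\sum_{\sigma\in\mathcal{N}(\mathcal{R}_i^{A,B})}\mu_\beta(\sigma)(-\mathcal{L}_\beta\widetilde{h})(\sigma)=o_\beta(e^{-\Gamma\beta})$ on every such sub-valley (Lemmas~\ref{l_div4}--\ref{l_div6}), an exact cancellation coming from the symmetry of the test function across each regular slab, while Lemma~\ref{l_eqpot} shows $h$ is $o_\beta(1)$-close to a constant on each sub-valley. Combining these turns each sub-valley contribution into $(\text{unknown constant})\cdot o_\beta(e^{-\Gamma\beta})+o_\beta(1)\cdot O_\beta(e^{-\Gamma\beta})=o_\beta(e^{-\Gamma\beta})$; the only surviving term is $\mathcal{N}(\mathcal{S}(A))$, where $h\approx 1$, and this reproduces $D_\beta(\widetilde{h})$ exactly (Lemma~\ref{l_div7}), yielding \eqref{e_H1approx1} via \eqref{e_Dbhh}. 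Your Cauchy--Schwarz shortcut bypasses this cancellation mechanism and therefore cannot close the argument.
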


\begin{rem}
The following statements are remarks on the previous proposition. 
\begin{enumerate}
\item Since the (square root of the) Dirichlet form can be regarded as an
$H^{1}$-seminorm, by \eqref{e_H1approx1}, the test function $\widetilde{h}$
approximates $h_{\mathcal{S}(A),\,\mathcal{S}(B)}^{\beta}$ in the
$H^{1}$-sense.
\item Property \eqref{e_H1approx2} is the one that should be satisfied
by the equilibrium potential, provided that Theorem \ref{t_Cap} holds
in view of \eqref{e_Capdef}. 
\item Proposition \ref{p_H1approx} has a simpler form compared to the original
one \cite[Proposition 4.4]{KS 2D}, because here we only need to consider
the case when $(A,\,B)$ is a proper \emph{partition} of $S$. 
\item Finding the test function $\widetilde{h}$ requires precise information
on the energy landscape and a deep insight into typical patterns of
the Metropolis--Hastings dynamics in a suitable neighborhood of saddle
configurations. We derive this in Sections \ref{sec6}-\ref{sec9}.
Then, the construction of the test function $\widetilde{h}$ and the
proof of Proposition \ref{p_H1approx} are given in Section \ref{sec10}.
\end{enumerate}
\end{rem}

Finally, provided that Proposition \ref{p_H1approx} holds, we prove
Theorem \ref{t_Cap}.
\begin{proof}[Proof of Theorem \ref{t_Cap}]
 By the triangle inequality for the seminorm $D_{\beta}(\cdot)^{1/2}$,
it holds that
\[
D_{\beta}(\widetilde{h})^{1/2}-D_{\beta}(h_{\mathcal{S}(A),\,\mathcal{S}(B)}^{\beta}-\widetilde{h})^{1/2}\le D_{\beta}(h_{\mathcal{S}(A),\,\mathcal{S}(B)}^{\beta})^{1/2}\le D_{\beta}(\widetilde{h})^{1/2}+D_{\beta}(h_{\mathcal{S}(A),\,\mathcal{S}(B)}^{\beta}-\widetilde{h})^{1/2}\;.
\]
Hence, by \eqref{e_H1approx1} and \eqref{e_H1approx2}, we obtain
\[
D_{\beta}(h_{\mathcal{S}(A),\,\mathcal{S}(B)}^{\beta})=\frac{1+o_{\beta}(1)}{q\mathfrak{c}(|A|)}\,e^{-\Gamma\beta}\;.
\]
By \eqref{e_Capdef}, the last display completes the proof.
\end{proof}
Hence, to prove the main results given in Theorems \ref{t_EK} and
\ref{t_MC}, it remains to prove Proposition \ref{p_H1approx}. The
proof is given in Section \ref{sec10}.

\section{\label{sec4}Neighborhood of Configurations}

In this section, we introduce several notions of neighborhoods of
configurations, which are analogues of the same concepts defined in
\cite[Section 6.1]{KS 2D}. These notions will be crucially used in
the characterization of energy landscape and in the construction of
test objects.

For $c\in\mathbb{R}$, a path $(\omega_{t})_{t=0}^{T}$ in $\mathcal{X}$
is called a \textit{$c$-path} if it is a path in the sense of Section
\ref{sec2.2}, and moreover satisfies $H(\omega_{t})\le c$ for all
$t\in\llbracket0,\,T\rrbracket$. Moreover, we say that this path
is in $\mathcal{P}\subseteq\mathcal{X}$ if $\omega_{t}\in\mathcal{P}$
for all $t\in\llbracket0,\,T\rrbracket$.
\begin{defn}[Neighborhood of configurations]
\label{d_nbd}
\begin{enumerate}
\item For $\sigma\in\mathcal{X}$, the \textit{neighborhood} $\mathcal{N}(\sigma)$
and the \textit{extended neighborhood} $\widehat{\mathcal{N}}(\sigma)$
are defined as 
\begin{align*}
\mathcal{N}(\sigma) & =\{\zeta\in\mathcal{X}:\text{There exists a }(\Gamma-1)\text{-path }(\omega_{t})_{t=0}^{T}\text{ connecting }\sigma\text{ and }\zeta\}\;\text{and}\\
\widehat{\mathcal{N}}(\sigma) & =\{\zeta\in\mathcal{X}:\text{There exists a }\Gamma\text{-path }(\omega_{t})_{t=0}^{T}\text{ connecting }\sigma\text{ and }\zeta\}\;.
\end{align*}
We set $\mathcal{N}(\sigma)=\emptyset$ (resp. $\widehat{\mathcal{N}}(\sigma)=\emptyset$)
if $H(\sigma)>\Gamma-1$ (resp. $H(\sigma)>\Gamma$). Then for $\mathcal{P}\subseteq\mathcal{X}$,
we define 
\[
\mathcal{N}(\mathcal{P})=\bigcup_{\sigma\in\mathcal{P}}\mathcal{N}(\sigma)\;\;\;\;\text{and}\;\;\;\;\widehat{\mathcal{N}}(\mathcal{P})=\bigcup_{\sigma\in\mathcal{P}}\mathcal{\widehat{\mathcal{N}}}(\sigma)\;.
\]
\item Let $\mathcal{Q}\subseteq\mathcal{X}$. For $\sigma\in\mathcal{X}$
such that $\sigma\notin\mathcal{Q}$, we define
\[
\widehat{\mathcal{N}}(\sigma\,;\,\mathcal{Q})=\{\zeta\in\mathcal{X}:\text{There exists a }\Gamma\text{-path in }\mathcal{X}\setminus\mathcal{Q}\text{ connecting }\sigma\text{ and }\zeta\}\;.
\]
As before, we set $\widehat{\mathcal{N}}(\sigma\,;\,\mathcal{Q})=\emptyset$
if $H(\sigma)>\Gamma$. Then for $\mathcal{P}\subseteq\mathcal{X}$
disjoint with $\mathcal{Q}$, define 
\[
\widehat{\mathcal{N}}(\mathcal{P}\,;\,\mathcal{Q})=\bigcup_{\sigma\in\mathcal{P}}\widehat{\mathcal{N}}(\sigma\,;\,\mathcal{Q})\;.
\]
\end{enumerate}
\end{defn}

With this notation, by the definition of $\Gamma$, it holds that
$\mathcal{N}(\mathbf{s})\cap\mathcal{N}(\mathbf{s}')=\emptyset$ and
$\widehat{\mathcal{N}}(\mathbf{s})=\widehat{\mathcal{N}}(\mathbf{s}')$
for any $\mathbf{s},\,\mathbf{s}'\in\mathcal{S}$. Moreover, in the
spirit of the large deviation principle, the only configurations relevant
to the study of metastability are the ones in $\widehat{\mathcal{N}}(\mathcal{S})$.
Hence, it is crucial to understand the structure of the set $\widehat{\mathcal{N}}(\mathcal{S})$.
That is the content of Proposition \ref{p_typ}.

We conclude this section with an elementary lemma which will be used
in several instances of our discussion. The proof is well explained
in \cite[Lemma A.1]{KS 2D}, and thus we omit the detail.
\begin{lem}
\label{l_set}Suppose that $\mathcal{P}$ and $\mathcal{Q}$ are disjoint
subsets of $\mathcal{X}$. Then, it holds that 
\[
\widehat{\mathcal{N}}(\mathcal{P}\cup\mathcal{Q})=\widehat{\mathcal{N}}(\mathcal{Q}\,;\,\mathcal{P})\cup\widehat{\mathcal{N}}(\mathcal{P}\,;\,\mathcal{Q})\;.
\]
\end{lem}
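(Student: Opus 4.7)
The plan is to verify the two set inclusions that make up the claimed identity separately. Both reduce to direct unpacking of Definition \ref{d_nbd}, the nontrivial direction being a standard last-visit argument for paths.

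For the inclusion $\widehat{\mathcal{N}}(\mathcal{Q};\mathcal{P})\cup\widehat{\mathcal{N}}(\mathcal{P};\mathcal{Q})\subseteq\widehat{\mathcal{N}}(\mathcal{P}\cup\mathcal{Q})$, I simply note that a $\Gamma$-path in $\mathcal{X}\setminus\mathcal{P}$ issued from a point of $\mathcal{Q}$ is, a fortiori, a $\Gamma$-path in $\mathcal{X}$ issued from a point of $\mathcal{P}\cup\mathcal{Q}$, so this direction is tautological.

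For the reverse inclusion, I would fix $\zeta\in\widehat{\mathcal{N}}(\mathcal{P}\cup\mathcal{Q})$ and select a witness $\Gamma$-path $(\omega_t)_{t=0}^{T}$ oriented so that $\omega_0\in\mathcal{P}\cup\mathcal{Q}$ and $\omega_T=\zeta$. Define
\[
t^\star=\max\{\,t\in\llbracket0,\,T\rrbracket:\omega_t\in\mathcal{P}\cup\mathcal{Q}\,\}\;,
\]
which is well-defined since $t=0$ is a candidate. The truncated sequence $(\omega_t)_{t=t^\star}^{T}$ is still a $\Gamma$-path, it connects $\omega_{t^\star}$ to $\zeta$, and by maximality of $t^\star$ none of $\omega_{t^\star+1},\dots,\omega_T$ lies in $\mathcal{P}\cup\mathcal{Q}$. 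If $\omega_{t^\star}\in\mathcal{P}$, then by disjointness of $\mathcal{P}$ and $\mathcal{Q}$ the entire truncated path lies in $\mathcal{X}\setminus\mathcal{Q}$, yielding $\zeta\in\widehat{\mathcal{N}}(\omega_{t^\star};\mathcal{Q})\subseteq\widehat{\mathcal{N}}(\mathcal{P};\mathcal{Q})$; the case $\omega_{t^\star}\in\mathcal{Q}$ is symmetric and places $\zeta$ in $\widehat{\mathcal{N}}(\mathcal{Q};\mathcal{P})$.

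There is essentially no obstacle here beyond careful bookkeeping. The only edge case worth checking is when $\zeta$ itself belongs to $\mathcal{P}\cup\mathcal{Q}$, forcing $t^\star=T$, in which case the truncated path degenerates to the single point $\zeta$; since $\zeta$ lies in exactly one of $\mathcal{P}$ and $\mathcal{Q}$ by disjointness, it trivially lies in the complement of the other, and the same dichotomy concludes the proof.
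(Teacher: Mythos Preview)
Your proof is correct and is precisely the standard last-visit argument one expects here; the paper itself omits the proof and defers to \cite[Lemma A.1]{KS 2D}, where the same elementary argument is carried out. There is nothing to add.
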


\section{\label{sec5}Review of Two-dimensional Model}

In this section, we recall some crucial 2D results on the energy landscape
from \cite[Sections 6, 7 and Appendices B, C]{KS 2D}, which are needed
in our investigation of the 3D model. Since all the results that appear
in the current section are proved in \cite{KS 2D}, we refer to the
proofs therein.
\begin{notation*}
Greek letters $\eta$ and $\xi$ are used to denote the spin configurations
of the 2D model, while letters $\sigma$ and $\zeta$ are used to
denote the 3D configurations. We use the superscript $\mathrm{2D}$
to stress the notation for the 2D model; for example, we shall denote
by $H^{\mathrm{2D}}(\cdot)$ the Hamiltonian of the 2D model to distinguish
with $H(\cdot)$ which denotes the Hamiltonian of the 3D model. 
\end{notation*}

\subsection{2D stochastic Ising and Potts models with periodic boundary conditions}

We denote by $\Lambda^{\mathrm{2D}}=\mathbb{T}_{K}\times\mathbb{T}_{L}$
the 2D lattice with periodic boundary conditions. Recall that $S=\{1,\,2,\,\dots,\,q\}$
denotes the set of spins, and denote by $\mathcal{X}^{\mathrm{2D}}=S^{\Lambda^{\mathrm{2D}}}$
the space of spin configurations on the 2D lattice. Then, the 2D Ising/Potts
Hamiltonian function $H^{\mathrm{2D}}:\mathcal{X}^{\mathrm{2D}}\rightarrow\mathbb{R}$
(without external field) is defined by 
\begin{equation}
H^{\mathrm{2D}}(\eta)=\sum_{\{x,\,y\}\subseteq\Lambda^{\mathrm{2D}}:\,x\sim y}\mathbf{1}\{\eta(x)\ne\eta(y)\}\;\;\;\;;\;\eta\in\mathcal{X}^{\mathrm{2D}}\;.\label{e_Ham2D}
\end{equation}
We denote by $\mathbf{s}_{a}^{\mathrm{2D}}$, $a\in S$ the 2D monochromatic
configurations of spin $a$, that is, $\mathbf{s}_{a}^{\mathrm{2D}}(x)=a$
for all $x\in\Lambda^{\mathrm{2D}}$. Then, it is straightforward
that the ground states of this Hamiltonian is also the monochromatic
configurations, i.e., the collection $\mathcal{S}^{\mathrm{2D}}$
of the ground states is given as

\[
\mathcal{S}^{\mathrm{2D}}=\{\mathbf{s}_{1}^{\mathrm{2D}},\,\mathbf{s}_{2}^{\mathrm{2D}},\,\dots,\,\mathbf{s}_{q}^{\mathrm{2D}}\}\;.
\]
Then, we write $\mu_{\beta}^{\mathrm{2D}}(\cdot)$ the associated
2D Gibbs measure, i.e., 
\[
\mu_{\beta}^{\mathrm{2D}}(\eta)=\frac{1}{Z_{\beta}^{\mathrm{2D}}}e^{-\beta H^{\mathrm{2D}}(\eta)}\;\;\;\;;\;\eta\in\mathcal{X}^{\mathrm{2D}}\;.
\]
Here, $Z_{\beta}^{\mathrm{2D}}$ is the 2D partition function with
the property that (cf. \cite[Theorem 2.1]{KS 2D})
\begin{equation}
\lim_{\beta\to\infty}Z_{\beta}^{\mathrm{2D}}=q\;.\label{e_Zbest2D}
\end{equation}
In the 2D model, we also consider the continuous-time Metropolis--Hastings
dynamics whose transition rate is defined as
\[
r_{\beta}^{\mathrm{2D}}(\eta,\,\xi)=\begin{cases}
e^{-\beta[H^{\mathrm{2D}}(\xi)-H^{\mathrm{2D}}(\eta)]_{+}} & \text{if }\xi=\eta^{x,\,a}\ne\eta\text{ for some }x\in\Lambda^{\mathrm{2D}}\text{ and }a\in S\;,\\
0 & \text{otherwise}\;.
\end{cases}
\]
This 2D stochastic Ising/Potts model is thoroughly analyzed in our
companion article \cite{KS 2D}. The remainder of this section presents
a review of our analysis.

\subsection{\label{sec5.2}Energy barrier and canonical transition paths}

It is verified in \cite[Theorem 2.1]{NZ} that the energy barrier
between the ground states of the 2D model is given by 
\[
\Gamma^{\mathrm{2D}}=2K+2\;.
\]
Then, by replacing $\Gamma$ that appears in Definition \ref{d_nbd}
with $\Gamma^{\mathrm{2D}}$, we get two types of neighborhoods $\mathcal{N}^{\mathrm{2D}}$
and $\widehat{\mathcal{N}}^{\mathrm{2D}}$ for the 2D model. In this
subsection, we explain a class of natural optimal transition paths
that achieve this energy level. These paths are denoted as canonical
paths. To define these paths, we first define the so-called canonical
configurations. We note that the constructions given here is a brief
survey of \cite[Section 6.2]{KS 2D}.

\subsubsection*{Canonical configurations}

The following notation is used throughout the article (also for the
3D model).
\begin{notation}
\label{n_frakS}Suppose that $N\ge2$ is a positive integer.
\begin{itemize}
\item Define $\mathfrak{S}_{N}$ as the collection of connected subsets
of $\mathbb{T}_{N}$. For example, if $N=6$, few examples of the
elements of $\mathfrak{S}_{6}$ are $\emptyset$, $\{2,\,3\}$, $\{5,\,6,\,1\}$,
$\{4,\,5,\,6,\,1,\,2\}$, $\mathbb{T}_{6}$, etc.
\item For $P,\,P'\in\mathfrak{S}_{N}$, we write $P\prec P'$ if $P\subseteq P'$
and $|P'|=|P|+1$.
\item A sequence $(P_{m})_{m=0}^{N}$ of sets in $\mathfrak{S}_{N}$ is
called an \textit{increasing sequence} if it satisfies 
\[
\emptyset=P_{0}\prec P_{1}\prec\cdots\prec P_{N}=\mathbb{T}_{N}
\]
so that $|P_{m}|=m$ for all $m\in\llbracket0,\,N\rrbracket$.
\end{itemize}
\end{notation}

We first introduce the pre-canonical configurations which are illustrated
in Figure \ref{fig5.1}.

\begin{figure}
\includegraphics[width=13cm]{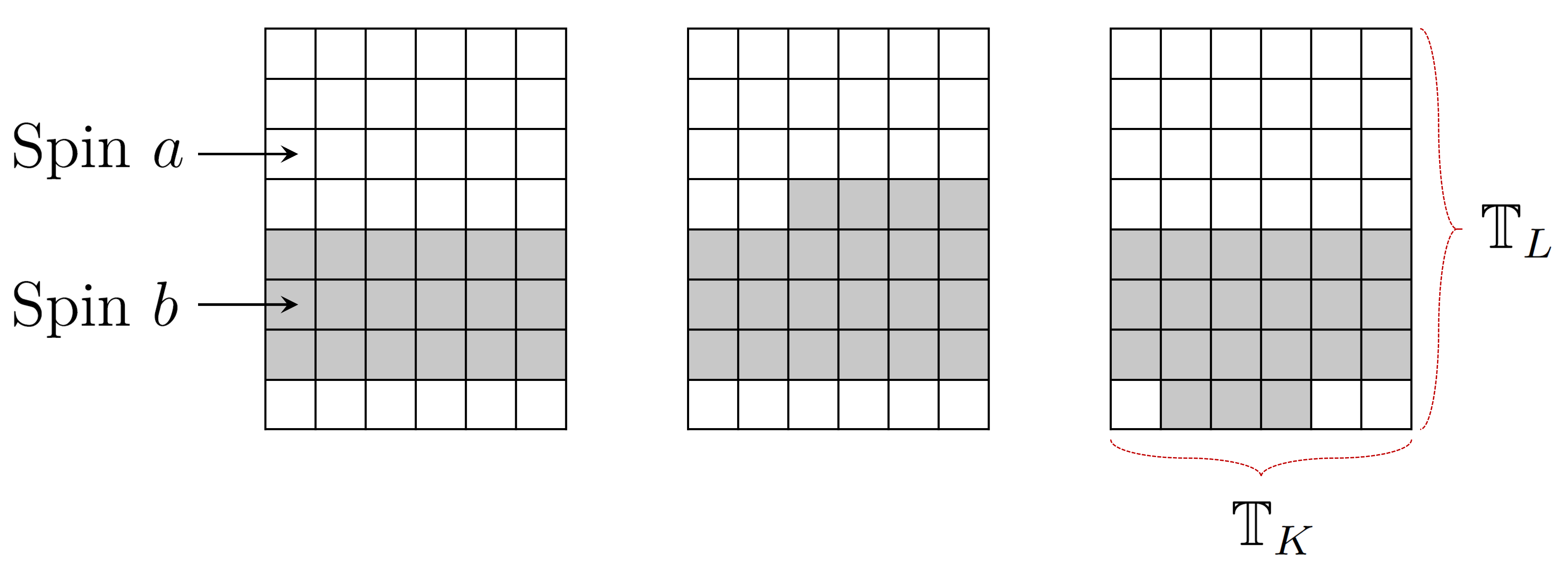}\caption{\label{fig5.1}\textbf{2D pre-canonical configurations.} These configurations
illustrate $\xi_{2,\,3}^{a,\,b}$, $\xi_{2,\,3;\,3,\,4}^{a,\,b,\,+}$,
and $\xi_{2,\,3;\,2,\,3}^{a,\,b,\,-}$, respectively.}
\end{figure}

\begin{defn}[2D pre-canonical configurations]
\label{d_precanreg2} Fix two spins $a,\,b\in S$.
\begin{itemize}
\item For $\ell\in\mathbb{T}_{L}$ and $v\in\llbracket0,\,L\rrbracket$,
we denote by $\xi_{\ell,\,v}^{a,\,b}\in\mathcal{X}^{\mathrm{2D}}$
the configuration whose spins are $b$ on
\[
\mathbb{T}_{K}\times\{\ell+n\in\mathbb{T}_{L}:n\in\llbracket0,\,v-1\rrbracket\subseteq\mathbb{Z}\}\;.
\]
and $a$ on the remainder.
\item For $\ell\in\mathbb{T}_{L}$, $v\in\llbracket0,\,L-1\rrbracket$,
$k\in\mathbb{T}_{K}$, and $h\in\llbracket0,\,K\rrbracket$, we denote
by $\xi_{\ell,\,v;\,k,\,h}^{a,\,b,\,+}\in\mathcal{X}^{\mathrm{2D}}$
the configuration whose spins are $b$ on
\[
\big\{\,x\in\Lambda^{\mathrm{2D}}:\xi_{\ell,\,v}^{a,\,b}(x)=b\,\big\}\cup\big[\,\{k+n\in\mathbb{T}_{K}:n\in\llbracket0,\,h-1\rrbracket\subseteq\mathbb{Z}\}\times\{\ell+v\}\,\big]
\]
and $a$ on the remainder. Similarly, $\xi_{\ell,\,v;\,k,\,h}^{a,\,b,\,-}\in\mathcal{X}^{\mathrm{2D}}$
is the configuration whose spins are $b$ on
\[
\big\{\,x\in\Lambda^{\mathrm{2D}}:\xi_{\ell,\,v}^{a,\,b}(x)=b\,\big\}\cup\big[\,\{k+n\in\mathbb{T}_{K}:n\in\llbracket0,\,h-1\rrbracket\subseteq\mathbb{Z}\}\times\{\ell-1\}\,\big]
\]
and $a$ on the remainder. The configurations defined here are \textit{2D
pre-canonical configurations}.
\end{itemize}
\end{defn}

Based on this definition, the 2D canonical and regular configurations
are defined.
\begin{defn}[2D canonical and regular configurations]
\label{d_canreg2} Fix $a,\,b\in S$. The definitions are slightly
different for the case of $K<L$ and the case of $K=L$. 
\begin{itemize}
\item \textbf{(Case $K<L$) }Collection $\mathcal{C}^{a,\,b,\,\mathrm{2D}}$
of \textit{2D canonical configurations} between $\mathbf{s}_{a}^{\mathrm{2D}}$
and $\mathbf{s}_{b}^{\mathrm{2D}}$ is defined by
\[
\mathcal{C}^{a,\,b,\,\mathrm{2D}}=\bigcup_{\ell\in\mathbb{T}_{L}}\bigcup_{v\in\llbracket0,\,L\rrbracket}\{\xi_{\ell,\,v}^{a,\,b}\}\cup\bigcup_{\ell\in\mathbb{T}_{L}}\bigcup_{v\in\llbracket0,\,L-1\rrbracket}\bigcup_{k\in\mathbb{T}_{K}}\bigcup_{h\in\llbracket1,\,K-1\rrbracket}\{\xi_{\ell,\,v;\,k,\,h}^{a,\,b,\,+},\,\xi_{\ell,\,v;\,k,\,h}^{a,\,b,\,-}\}\;.
\]
Then, the collection of \textit{canonical configurations} is given
as
\begin{equation}
\mathcal{C}^{\mathrm{2D}}=\bigcup_{a,\,b\in S}\mathcal{C}^{a,\,b,\,\mathrm{2D}}\;.\label{e_C2D}
\end{equation}
Similarly,
\[
\mathcal{R}_{v}^{a,\,b,\,\mathrm{2D}}=\bigcup_{\ell\in\mathbb{T}_{L}}\{\xi_{\ell,\,v}^{a,\,b}\}\;\;\;\;\text{and}\;\;\;\;\mathcal{Q}_{v}^{a,\,b,\,\mathrm{2D}}=\bigcup_{\ell\in\mathbb{T}_{L}}\bigcup_{k\in\mathbb{T}_{K}}\bigcup_{h\in\llbracket1,\,K-1\rrbracket}\{\xi_{\ell,\,v;\,k,\,h}^{a,\,b,\,\pm}\}\;,
\]
and then define $\mathcal{R}_{v}^{\mathrm{2D}}=\bigcup_{a,\,b\in S}\mathcal{R}_{v}^{a,\,b,\,\mathrm{2D}}$
and $\mathcal{Q}_{v}^{\mathrm{2D}}=\bigcup_{a,\,b\in S}\mathcal{Q}_{v}^{a,\,b,\,\mathrm{2D}}$.
A configuration in $\mathcal{R}_{v}^{\mathrm{2D}}$ is called a \textit{2D
regular configuration}.
\item \textbf{(Case $K=L$)} Define an operator $\Theta:\mathcal{X}^{\mathrm{2D}}\rightarrow\mathcal{X}^{\mathrm{2D}}$
as a transpose operator, i.e., 
\begin{equation}
(\Theta(\sigma))(k,\,\ell)=\sigma(\ell,\,k)\;\;\;\;;\;k\in\mathbb{T}_{K}\text{ and }\ell\in\mathbb{T}_{L}\;.\label{e_transpose}
\end{equation}
Denote temporarily by $\widetilde{\mathcal{C}}^{a,\,b,\,\mathrm{2D}}$
the collection $\mathcal{C}^{a,\,b,\,\mathrm{2D}}$ defined in the
case of $K<L$ above. Then for $a,\,b\in S$, we define the collections
of \textit{\emph{2D canonical configurations}} between $\mathbf{s}_{a}^{\mathrm{2D}}$
and $\mathbf{s}_{b}^{\mathrm{2D}}$ as
\[
\mathcal{C}^{a,\,b,\,\mathrm{2D}}=\widetilde{\mathcal{C}}^{a,\,b,\,\mathrm{2D}}\cup\Theta(\widetilde{\mathcal{C}}^{a,\,b,\,\mathrm{2D}})\;\;\;\;\text{and}\;\;\;\;\mathcal{C}^{\mathrm{2D}}=\bigcup_{a,\,b\in S}\mathcal{C}^{a,\,b,\,\mathrm{2D}}\;.
\]
Similarly, we may define the collections $\mathcal{R}_{v}^{a,\,b,\,\mathrm{2D}}$,
$\mathcal{R}_{v}^{\mathrm{2D}}$, $\mathcal{Q}_{v}^{a,\,b,\,\mathrm{2D}}$,
and $\mathcal{Q}_{v}^{\mathrm{2D}}$.
\end{itemize}
\end{defn}

\subsubsection*{Canonical paths}

Now, we explain natural optimal paths between monochromatic configurations
(illustrated in Figure \ref{fig5.2}) that consist of canonical configurations.

\begin{figure}
\includegraphics[width=14cm]{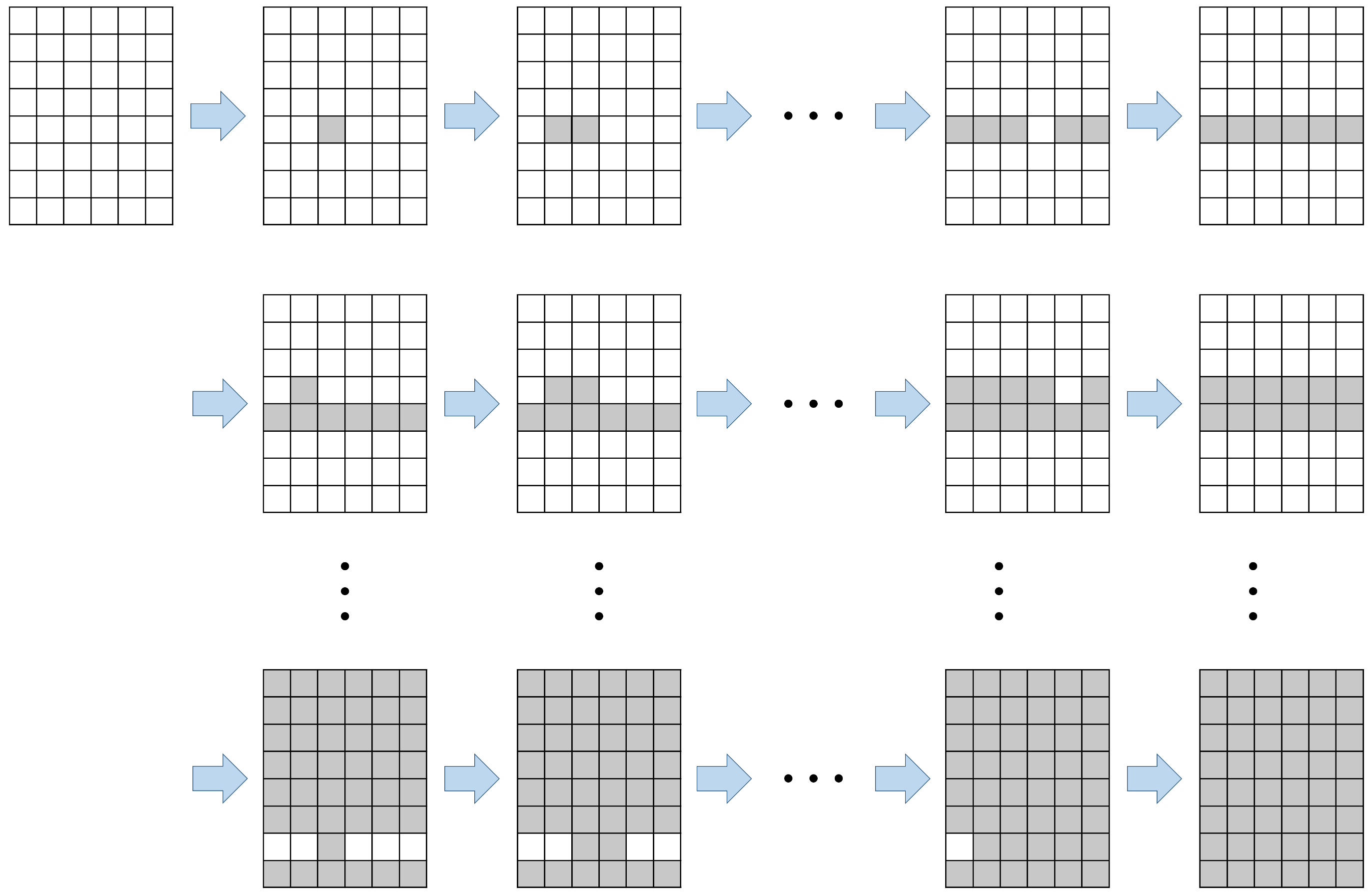}\caption{\label{fig5.2}\textbf{Example of a 2D canonical path from $\mathbf{s}_{a}^{\mathrm{2D}}$
to $\mathbf{s}_{b}^{\mathrm{2D}}$.}}
\end{figure}

\begin{defn}[2D canonical paths]
\label{d_canpath2} The definition below relies on Notation \ref{n_frakS}. 
\begin{enumerate}
\item For $P,\,P'\in\mathfrak{S}_{L}$ with $P\prec P'$, a sequence $(A_{k})_{k=0}^{K}$
of subsets of $\Lambda^{\mathrm{2D}}$ is a \textit{standard sequence}
connecting $\mathbb{T}_{K}\times P$ and $\mathbb{T}_{K}\times P'$
if there exists an increasing sequence $(Q_{k})_{k=0}^{K}$ in $\mathfrak{S}_{K}$
such that 
\[
A_{k}=(\mathbb{T}_{K}\times P)\cup\big[\,Q_{k}\times(P'\setminus P)\,\big]\;\;\;\;;\;k\in\llbracket0,\,K\rrbracket\;.
\]
\item A sequence $(A_{n})_{n=0}^{KL}$ of subsets of $\Lambda^{\mathrm{2D}}$
is a \textit{standard sequence} connecting $\emptyset$ and $\Lambda^{\mathrm{2D}}$
if there exists an increasing sequence $(P_{\ell})_{\ell=0}^{L}$
in $\mathfrak{S}_{L}$ such that $A_{K\ell}=\mathbb{T}_{K}\times P_{\ell}$
for all $\ell\in\llbracket0,\,L\rrbracket$, and furthermore for each
$\ell\in\llbracket0,\,L-1\rrbracket$ the subsequence $(A_{k})_{k=K\ell}^{K(\ell+1)}$
is a standard sequence connecting $\mathbb{T}_{K}\times P_{\ell}$
and $\mathbb{T}_{K}\times P_{\ell+1}$.
\item For $a,\,b\in S$, a sequence~$(\omega_{n})_{n=0}^{KL}$ of 2D configurations
is called a \textit{pre-canonical path} from $\mathbf{s}_{a}^{\mathrm{2D}}$
to $\mathbf{s}_{b}^{\mathrm{2D}}$ if there exists a standard sequence
$(A_{n})_{n=0}^{KL}$ connecting $\emptyset$ and $\Lambda^{\mathrm{2D}}$
such that 
\[
\omega_{n}(x)=\begin{cases}
a & \text{if }x\notin A_{n}\;,\\
b & \text{if }x\in A_{n}\;.
\end{cases}
\]
\item Moreover, a sequence~$(\omega_{n})_{n=0}^{KL}$ of 2D configurations
is called a \textit{canonical path }(cf. Figure \ref{fig5.2}) connecting
$\mathbf{s}_{a}^{\mathrm{2D}}$ and $\mathbf{s}_{b}^{\mathrm{2D}}$
if there exists a pre-canonical path $(\widetilde{\omega}_{n})_{n=0}^{KL}$
such that 
\begin{enumerate}
\item \textbf{(Case $K<L$)} $\omega_{n}=\widetilde{\omega}_{n}$ for all
$n\in\llbracket0,\,KL\rrbracket$,
\item \textbf{(Case $K=L$)} $\omega_{n}=\widetilde{\omega}_{n}$ for all
$n\in\llbracket0,\,KL\rrbracket$ or $\omega_{n}=\Theta(\widetilde{\omega}_{n})$
for all $n\in\llbracket0,\,KL\rrbracket$.
\end{enumerate}
\end{enumerate}
\end{defn}

It holds that $H^{\mathrm{2D}}(\eta)\le2K+2$ for all $\eta\in\mathcal{C}^{a,\,b,\,\mathrm{2D}}$
and
\begin{equation}
H^{\mathrm{2D}}(\eta)=\begin{cases}
2K & \text{if }\eta\in\mathcal{R}_{v}^{a,\,b,\,\mathrm{2D}}\text{ for }v\in\llbracket1,\,L-1\rrbracket\;,\\
2K+2 & \text{if }\eta\in\mathcal{Q}_{v}^{a,\,b,\,\mathrm{2D}}\text{ for }v\in\llbracket1,\,L-2\rrbracket\;.
\end{cases}\label{e_2Denergy}
\end{equation}
Moreover, the following lemma is immediate.
\begin{lem}[{\cite[Lemma 6.12]{KS 2D}}]
\label{l_canpath2} For a 2D canonical path $(\omega_{n})_{n=0}^{KL}$
connecting $\mathbf{s}_{a}^{\mathrm{2D}}$ and $\mathbf{s}_{b}^{\mathrm{2D}}$,
it holds that
\[
\max_{n\in\llbracket0,\,KL\rrbracket}H^{\mathrm{2D}}(\omega_{n})=\Gamma^{\mathrm{2D}}=2K+2\;.
\]
\end{lem}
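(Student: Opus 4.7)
The plan is to establish the equality as two matching bounds. The lower bound $\max_{n} H^{\mathrm{2D}}(\omega_n) \ge \Gamma^{\mathrm{2D}}$ is immediate from the definition of the communication height: since the canonical path connects the two distinct ground states $\mathbf{s}_a^{\mathrm{2D}}$ and $\mathbf{s}_b^{\mathrm{2D}}$, the maximum of $H^{\mathrm{2D}}$ along the path must be at least $\Phi^{\mathrm{2D}}(\mathbf{s}_a^{\mathrm{2D}},\mathbf{s}_b^{\mathrm{2D}}) = \Gamma^{\mathrm{2D}} = 2K+2$. The substantive part of the lemma is the matching upper bound, which I would obtain by classifying every configuration along the path according to its position within the underlying standard sequence and bounding its Hamiltonian accordingly.

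First I would reduce to the case of a pre-canonical path. In the square case $K=L$, the alternative possibility $\omega_n = \Theta(\widetilde{\omega}_n)$ is handled identically because the Hamiltonian $H^{\mathrm{2D}}$ is invariant under the transpose operator $\Theta$ defined in \eqref{e_transpose}. So assume $(\omega_n)_{n=0}^{KL}$ is a pre-canonical path associated with a standard sequence $(A_n)_{n=0}^{KL}$ and an increasing sequence $(P_\ell)_{\ell=0}^{L}$ in $\mathfrak{S}_L$. I then partition the index range $\llbracket 0,KL\rrbracket$ into the consecutive blocks $\llbracket K\ell,K(\ell+1)\rrbracket$ for $\ell\in\llbracket 0,L-1\rrbracket$ and analyze each block separately.

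At a block endpoint $n=K\ell$, the configuration $\omega_{K\ell}$ has its $b$-region equal to $\mathbb{T}_K\times P_\ell$, so it is either monochromatic when $\ell\in\{0,L\}$ (with $H^{\mathrm{2D}}=0$) or a regular configuration of type $\xi_{\cdot,\ell}^{a,b}$ in $\mathcal{R}_\ell^{a,b,\mathrm{2D}}$ when $0<\ell<L$ (with $H^{\mathrm{2D}}=2K$ by \eqref{e_2Denergy}). At an interior index $n=K\ell+k$ with $0<k<K$, the configuration is of type $\xi_{\cdot,\ell;\cdot,k}^{a,b,\pm}$. When $1\le\ell\le L-2$, this belongs to $\mathcal{Q}_\ell^{a,b,\mathrm{2D}}$ and its energy equals exactly $2K+2$, directly from \eqref{e_2Denergy}.

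The only cases not covered by \eqref{e_2Denergy} are the two boundary blocks $\ell=0$ and $\ell=L-1$ with $0<k<K$, which I would handle by a short direct edge count. For $\ell=0$ the $b$-region is a single arc $Q_k\times\{r\}$ of length $k$ in a monochromatic $a$-background, so counting disagreement edges along its boundary yields $H^{\mathrm{2D}}(\omega_n)=2k+2\le 2K$. For $\ell=L-1$, the roles of $a$ and $b$ are exchanged, giving a single $a$-arc of length $K-k$ in a monochromatic $b$-background, hence $H^{\mathrm{2D}}(\omega_n)=2(K-k)+2\le 2K$. Combining all three cases, $H^{\mathrm{2D}}(\omega_n)\le 2K+2$ for every $n$, with equality attained whenever $1\le\ell\le L-2$ and $0<k<K$, which concludes the proof. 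No step presents a real obstacle; the only mild point of care is that \eqref{e_2Denergy} is stated only for bulk indices, so the two boundary blocks must be treated by the elementary edge count described above.
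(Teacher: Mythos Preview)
Your proof is correct and follows exactly the route the paper intends. The paper does not actually prove this lemma: it records the bound $H^{\mathrm{2D}}(\eta)\le 2K+2$ for all $\eta\in\mathcal{C}^{a,b,\mathrm{2D}}$ together with the exact values \eqref{e_2Denergy}, declares the lemma ``immediate,'' and cites \cite[Lemma~6.12]{KS 2D}. Your argument supplies precisely the details behind that word: the block decomposition, the use of \eqref{e_2Denergy} for the bulk indices, and the short edge count for the two boundary blocks $\ell=0$ and $\ell=L-1$ not covered by \eqref{e_2Denergy}. One small remark: your lower bound via $\Phi^{\mathrm{2D}}=\Gamma^{\mathrm{2D}}$ is fine here since the paper imports $\Gamma^{\mathrm{2D}}=2K+2$ from \cite{NZ}, but your final sentence already gives the self-contained alternative (equality is attained at any interior index with $1\le\ell\le L-2$), which avoids even the appearance of circularity.
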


\subsubsection*{Comment on depth of valleys}

We conclude this subsection with an application of Definition \ref{d_canpath2}
and Lemma \ref{l_canpath2} that is crucially used later to calculate
the 3D valley depths.
\begin{lem}[{\cite[Lemma B.4]{KS 2D}}]
\label{l_depth2} Let $\eta\in\mathcal{X}^{\mathrm{2D}}$ and $a\in S$.
For any standard sequence $(A_{k})_{k=0}^{KL}$ of sets connecting
$\emptyset$ and $\Lambda^{\mathrm{2D}}$ and for $n\in\llbracket0,\,KL\rrbracket$,
we define $\omega_{n}\in\mathcal{X}^{\mathrm{2D}}$ as
\[
\omega_{n}(x)=\begin{cases}
a & \text{if }x\in A_{n}\;,\\
\eta(x) & \text{if }x\in\Lambda^{\mathrm{2D}}\setminus A_{n}\;.
\end{cases}
\]
Then, we have that $H^{\mathrm{2D}}(\omega_{n})\le H^{\mathrm{2D}}(\eta)+\Gamma^{\mathrm{2D}}$
for all $n\in\llbracket0,\,KL\rrbracket$.
\end{lem}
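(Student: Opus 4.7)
The plan is to bound the energy increment $H^{\mathrm{2D}}(\omega_n) - H^{\mathrm{2D}}(\eta)$ by the size $|\partial A_n|$ of the edge boundary of $A_n$ in $\Lambda^{\mathrm{2D}}$, and then verify the isoperimetric-type estimate $|\partial A_n| \le \Gamma^{\mathrm{2D}} = 2K+2$ using only the structural description of a standard sequence given in Definition \ref{d_canpath2}. Both halves are elementary; the only subtlety sits in the second one, because of the periodic boundary conditions.

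For the energy comparison, I would decompose the nearest-neighbor pairs in $\Lambda^{\mathrm{2D}}$ according to their position relative to $A_n$. Pairs with both endpoints outside $A_n$ contribute identically to $H^{\mathrm{2D}}(\omega_n)$ and $H^{\mathrm{2D}}(\eta)$; pairs with both endpoints in $A_n$ contribute $0$ to $H^{\mathrm{2D}}(\omega_n)$ (both spins equal $a$) and a non-negative amount to $H^{\mathrm{2D}}(\eta)$, hence do not increase the difference; and for each boundary pair $\{x,y\}$ with $x \in A_n$ and $y \notin A_n$, one has $\mathbf{1}\{a \ne \eta(y)\} - \mathbf{1}\{\eta(x) \ne \eta(y)\} \le 1$. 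Summing over all nearest-neighbor pairs yields
\[
H^{\mathrm{2D}}(\omega_n) - H^{\mathrm{2D}}(\eta) \le |\partial A_n|.
\]

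For the perimeter bound, write $A_n = (\mathbb{T}_K \times P_\ell) \cup (Q_m \times \{r\})$ as in Definition \ref{d_canpath2}, where $P_\ell \prec P_{\ell+1}$ in $\mathfrak{S}_L$, $Q_m \in \mathfrak{S}_K$ with $|Q_m| = m$, and $r$ is the unique element of $P_{\ell+1} \setminus P_\ell$. In the generic case, when $P_\ell$ is a proper arc of $\mathbb{T}_L$ and $P_{\ell+1}$ remains a proper arc, the vertical boundary of $A_n$ contributes exactly $2K$ edges ($K$ on the side of $P_\ell$ opposite to $r$, $m$ edges between row $r$ and the row outside $P_{\ell+1}$ above the sites in $Q_m$, and $K-m$ edges between row $r$ and $P_\ell$ above the sites outside $Q_m$), while the horizontal boundary contributes at most $2$ edges in row $r$; this gives $|\partial A_n| \le 2K+2$. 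In every remaining degenerate case, namely $P_\ell = \emptyset$, or $P_{\ell+1} = \mathbb{T}_L$, or $Q_m \in \{\emptyset, \mathbb{T}_K\}$, a similar direct count gives $|\partial A_n| \le 2K$. Combined with the preceding display, this closes the proof.

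The main obstacle is the perimeter count in the wrap-around case $P_{\ell+1} = \mathbb{T}_L$, where the new row $r$ is adjacent to both ends of the arc $P_\ell$ on the torus, so the edges between row $r$ and its two neighboring rows must be reorganized to avoid double-counting internal edges; but once this is done correctly, the resulting perimeter is strictly smaller than $2K+2$, and the sharp value $\Gamma^{\mathrm{2D}}$ is attained only in the generic configuration.
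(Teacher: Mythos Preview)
The paper does not prove this lemma; it is quoted from the companion article \cite{KS 2D} (as Lemma~B.4 there), and Section~\ref{sec5} explicitly defers all proofs of the 2D results to that reference. Your argument is correct and is the natural one: the energy increment $H^{\mathrm{2D}}(\omega_n)-H^{\mathrm{2D}}(\eta)$ is bounded by the edge perimeter $|\partial A_n|$, and the explicit description $A_n=(\mathbb{T}_K\times P_\ell)\cup(Q_m\times\{r\})$ from Definition~\ref{d_canpath2} gives $|\partial A_n|\le 2K+2$ via the short case analysis you sketch (generic slab with protuberance: $K+(K-m)+m=2K$ vertical edges plus at most $2$ horizontal; the degenerate cases $P_\ell=\emptyset$, $P_{\ell+1}=\mathbb{T}_L$, or $Q_m\in\{\emptyset,\mathbb{T}_K\}$ all yield $\le 2K$).
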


In Lemma \ref{l_depth2}, we have $\omega_{KL}=\mathbf{s}_{a}^{\mathrm{2D}}\in\mathcal{S}^{\mathrm{2D}}$
which implies that every $\eta\in\mathcal{X}^{\mathrm{2D}}$ is connected
to each ground state in $\mathcal{S}^{\mathrm{2D}}$ with maximum
energy $H^{\mathrm{2D}}(\eta)+\Gamma^{\mathrm{2D}}$. This fact implies
that the maximum depth of valleys in the 2D energy landscape is $\Gamma^{\mathrm{2D}}$.

It can be further proved that only the valleys containing the ground
states have maximum depth $\Gamma^{\mathrm{2D}}$, and all the other
valleys have depth strictly less than $\Gamma^{\mathrm{2D}}$. Indeed,
this is a necessary condition for the pathwise approach technique
to metastability; however, this level of precision is not necessarily
needed in our investigation of the 3D energy landscape. Thus, we do
not go further into this direction and refer the interested readers
to \cite[Theorem 2.1-(ii)]{NZ}.

\subsection{Saddle structure}

Crucial configurations in the description of the saddle structure
of the 2D model is the so-called typical configurations, which turn
out to be the elements of the extended neighborhood $\widehat{\mathcal{N}}^{\mathrm{2D}}(\mathcal{S}^{\mathrm{2D}})$
(cf. Proposition \ref{p_typ2prop} below). We present in Figure \ref{fig5.3}
an illustration of the saddle structure explained in this subsection.

\begin{figure}
\includegraphics[width=14cm]{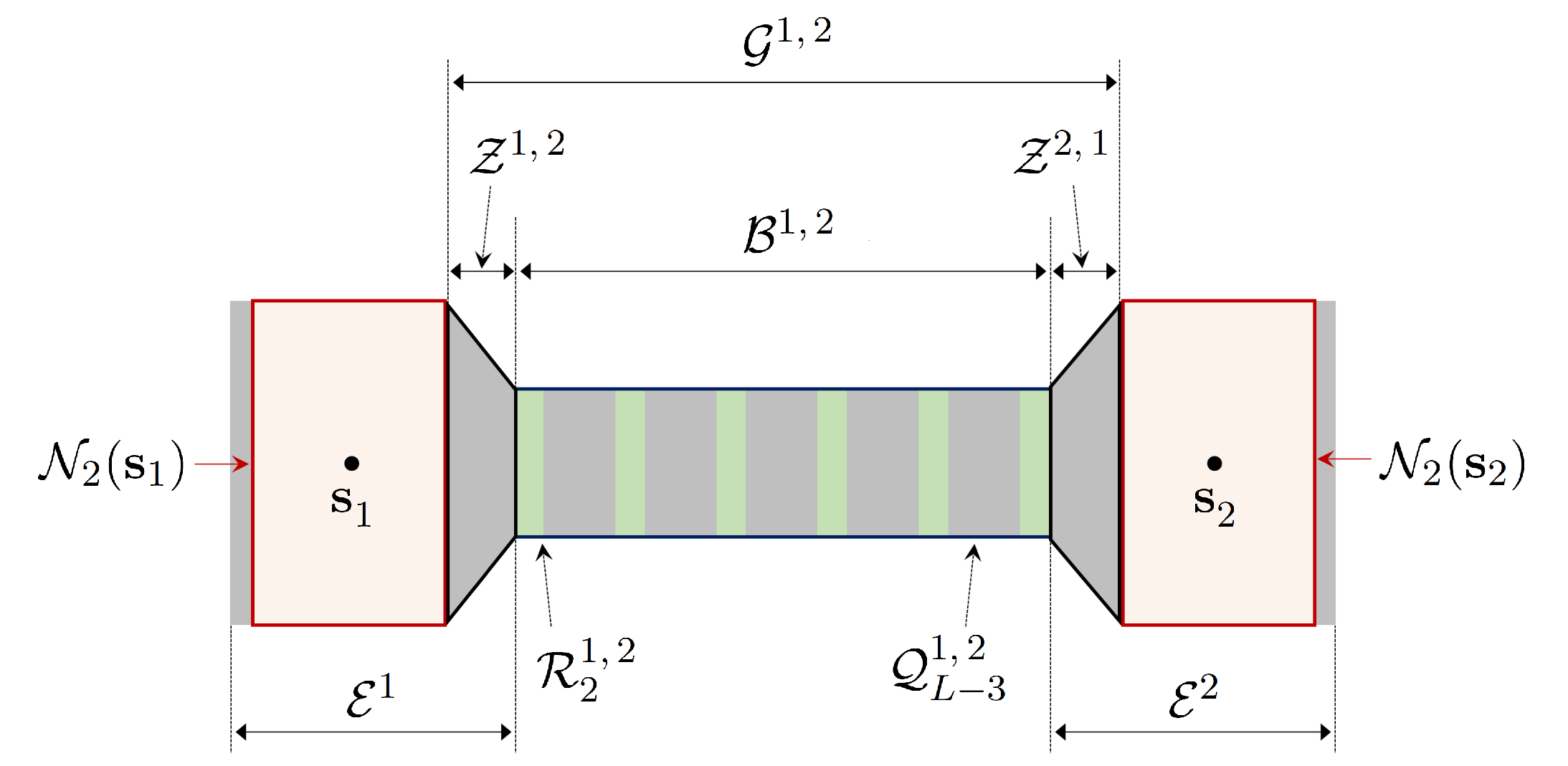}\caption{\label{fig5.3}\textbf{Saddle structure of the 2D Ising model with
$S=\{1,\,2\}$.} For simplicity, we drop the superscripts $\mathrm{2D}$
in this figure.}
\end{figure}

\begin{defn}[2D typical configurations]
\label{d_typ2} There are two different types of typical configurations:
the bulk and edge typical configurations. 
\begin{itemize}
\item For $a,\,b\in S$, the collection of \textit{bulk typical configurations}
(between $\mathbf{s}_{a}^{\mathrm{2D}}$ and $\mathbf{s}_{b}^{\mathrm{2D}}$)
is defined by
\begin{equation}
\mathcal{B}^{a,\,b,\,\mathrm{2D}}=\bigcup_{v\in\llbracket2,\,L-2\rrbracket}\mathcal{R}_{v}^{a,\,b,\,\mathrm{2D}}\cup\bigcup_{v\in\llbracket2,\,L-3\rrbracket}\mathcal{Q}_{v}^{a,\,b,\,\mathrm{2D}}\;.\label{e_Bab}
\end{equation}
Then, we write $\mathcal{B}^{\mathrm{2D}}=\bigcup_{a,\,b\in S}\mathcal{B}^{a,\,b,\,\mathrm{2D}}$.
\item Next, define 
\begin{equation}
\mathcal{B}_{\Gamma}^{a,\,b,\,\mathrm{2D}}=\bigcup_{v\in\llbracket2,\,L-3\rrbracket}\mathcal{Q}_{v}^{a,\,b,\,\mathrm{2D}}\;\;\;\;\text{and}\;\;\;\;\mathcal{B}_{\Gamma}^{\mathrm{2D}}=\bigcup_{a,\,b\in S}\mathcal{B}_{\Gamma}^{a,\,b,\,\mathrm{2D}}=\bigcup_{a,\,b\in S}\bigcup_{v\in\llbracket2,\,L-3\rrbracket}\mathcal{Q}_{v}^{a,\,b,\,\mathrm{2D}}\;.\label{e_BabGamma}
\end{equation}
Then, for $a\in S$, the collection of \textit{edge typical configurations}
with respect to $\mathbf{s}_{a}^{\mathrm{2D}}$ is defined by
\begin{equation}
\mathcal{E}^{a,\,\mathrm{2D}}=\widehat{\mathcal{N}}^{\mathrm{2D}}(\mathbf{s}_{a}^{\mathrm{2D}}\,;\,\mathcal{B}_{\Gamma}^{\mathrm{2D}})\;.\label{e_Ea}
\end{equation}
Finally, we write $\mathcal{E}^{\mathrm{2D}}=\bigcup_{a\in S}\mathcal{E}^{a,\,\mathrm{2D}}$.
\end{itemize}
\end{defn}

Then, the following crucial proposition provides the picture of the
saddle structure of the 2D model. We shall provide a similar result
for the 3D model in Proposition \ref{p_typ}.
\begin{prop}[{\cite[Proposition 6.16]{KS 2D}}]
\label{p_typ2prop}
\begin{enumerate}
\item For spins $a,\,b,\,c\in S$, we have
\[
\mathcal{E}^{a,\,\mathrm{2D}}\cap\mathcal{E}^{b,\,\mathrm{2D}}=\emptyset\;,\;\;\;\mathcal{E}^{a,\,\mathrm{2D}}\cap\mathcal{B}^{a,\,b,\,\mathrm{2D}}=\mathcal{R}_{2}^{a,\,b,\,\mathrm{2D}}\;\;\;\text{and}\;\;\;\mathcal{E}^{a,\,\mathrm{2D}}\cap\mathcal{B}^{b,\,c,\,\mathrm{2D}}=\emptyset\;.
\]
\item It holds that $\mathcal{E}^{\mathrm{2D}}\cup\mathcal{B}^{\mathrm{2D}}=\widehat{\mathcal{N}}^{\mathrm{2D}}(\mathcal{S}^{\mathrm{2D}})$.
\end{enumerate}
\end{prop}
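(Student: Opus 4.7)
The plan is to address Parts (1) and (2) in tandem by describing the $\Gamma$-connected component structure of $\mathcal{X}^{\mathrm{2D}}$ relative to the bulk-saddle set $\mathcal{B}_{\Gamma}^{\mathrm{2D}}$, leveraging three tools: the canonical paths of Definition \ref{d_canpath2} (which achieve the energy level $\Gamma^{\mathrm{2D}} = 2K+2$ by Lemma \ref{l_canpath2}), the energy values recorded in \eqref{e_2Denergy}, and the decomposition supplied by Lemma \ref{l_set}.

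For Part (1), the point is that $\mathcal{E}^{a,\mathrm{2D}}$ is by definition the $\Gamma$-connected component of $\mathbf{s}_{a}^{\mathrm{2D}}$ in $\mathcal{X}^{\mathrm{2D}} \setminus \mathcal{B}_{\Gamma}^{\mathrm{2D}}$. I would first show that the only saddle configurations realizing the communication height $\Gamma^{\mathrm{2D}}$ between two distinct ground states $\mathbf{s}_{a}^{\mathrm{2D}}$ and $\mathbf{s}_{b}^{\mathrm{2D}}$ are rectangle-with-protuberance configurations belonging to $\mathcal{B}_{\Gamma}^{\mathrm{2D}}$, which immediately yields the disjointness $\mathcal{E}^{a,\mathrm{2D}} \cap \mathcal{E}^{b,\mathrm{2D}} = \emptyset$. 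For the second equality, I would use a canonical-path prefix (a path that grows a $b$-rectangle of width $2$ inside an $a$-sea) to exhibit an explicit $\Gamma$-path from $\mathbf{s}_{a}^{\mathrm{2D}}$ to any element of $\mathcal{R}_{2}^{a,b,\mathrm{2D}}$ avoiding $\mathcal{B}_{\Gamma}^{\mathrm{2D}}$; conversely, any configuration in $\mathcal{R}_{v}^{a,b,\mathrm{2D}}$ with $v \ge 3$ or in $\mathcal{Q}_{v}^{a,b,\mathrm{2D}}$ with $v \ge 2$ is separated from $\mathbf{s}_{a}^{\mathrm{2D}}$ by the saddle layer $\mathcal{Q}_{2}^{a,b,\mathrm{2D}} \subseteq \mathcal{B}_{\Gamma}^{\mathrm{2D}}$, which I would verify through a perimeter count using \eqref{e_2Denergy}. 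The third equality follows because a configuration in $\mathcal{B}^{b,c,\mathrm{2D}}$ with $a \notin \{b,c\}$ contains no $a$-spins, so any $\Gamma$-path from $\mathbf{s}_{a}^{\mathrm{2D}}$ to such a configuration carries out a full ground-state-type transition and must therefore cross a bulk saddle.

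For Part (2), I would apply Lemma \ref{l_set} with $\mathcal{P} = \mathcal{B}_{\Gamma}^{\mathrm{2D}}$ and $\mathcal{Q} = \mathcal{S}^{\mathrm{2D}}$. Since every element of $\mathcal{B}_{\Gamma}^{\mathrm{2D}}$ lies on some canonical path and hence satisfies $\mathcal{B}_{\Gamma}^{\mathrm{2D}} \subseteq \widehat{\mathcal{N}}^{\mathrm{2D}}(\mathcal{S}^{\mathrm{2D}})$ by Lemma \ref{l_canpath2}, we obtain
\[
\widehat{\mathcal{N}}^{\mathrm{2D}}(\mathcal{S}^{\mathrm{2D}}) = \widehat{\mathcal{N}}^{\mathrm{2D}}(\mathcal{S}^{\mathrm{2D}} \cup \mathcal{B}_{\Gamma}^{\mathrm{2D}}) = \mathcal{E}^{\mathrm{2D}} \cup \widehat{\mathcal{N}}^{\mathrm{2D}}(\mathcal{B}_{\Gamma}^{\mathrm{2D}}\,;\,\mathcal{S}^{\mathrm{2D}})\;.
\]
It then suffices to prove the two inclusions $\mathcal{B}^{\mathrm{2D}} \subseteq \widehat{\mathcal{N}}^{\mathrm{2D}}(\mathcal{S}^{\mathrm{2D}})$ (which is immediate by taking a canonical-path prefix ending at the desired bulk configuration) and $\widehat{\mathcal{N}}^{\mathrm{2D}}(\mathcal{B}_{\Gamma}^{\mathrm{2D}}\,;\,\mathcal{S}^{\mathrm{2D}}) \subseteq \mathcal{B}^{\mathrm{2D}} \cup \mathcal{E}^{\mathrm{2D}}$. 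For the second, I would analyze the admissible moves from a $\mathcal{Q}_{v}^{a,b,\mathrm{2D}}$ configuration at energy $\le \Gamma^{\mathrm{2D}}$ that do not collapse to a ground state: extending or retracting the protuberance row (keeping us in $\mathcal{B}_{\Gamma}^{\mathrm{2D}}$ or landing in $\mathcal{R}_{v}^{a,b,\mathrm{2D}}$ or $\mathcal{R}_{v+1}^{a,b,\mathrm{2D}}$), or initiating a new protuberance elsewhere.

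The main obstacle is the combinatorial classification underlying the last inclusion: one must rule out every ``exotic'' low-energy configuration (for example, L-shaped or multi-component $b$-clusters in an $a$-sea) that could conceivably be $\Gamma$-reachable from a bulk saddle. This ultimately rests on a discrete isoperimetric inequality asserting that any non-rectangular $b$-cluster of sufficient mass has perimeter strictly larger than the rectangular competitor and therefore exceeds the energy cap $\Gamma^{\mathrm{2D}}$. This is the technical heart of the argument, and it is exactly the combinatorial content carried out in \cite{KS 2D}; in the present article we invoke it as a black box.
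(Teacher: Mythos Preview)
The paper does not prove this proposition at all: it is quoted verbatim from the companion article \cite{KS 2D} (see the blanket disclaimer at the start of Section~\ref{sec5}), so there is no ``paper's own proof'' to compare against. Your proposal is therefore not competing with anything in the present article, and in its final paragraph you in fact arrive at the same conclusion the paper reaches, namely that the combinatorial core must be imported from \cite{KS 2D}.

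That said, one point in your sketch of Part~(1) is inaccurate and would mislead a reader. You claim that ``the only saddle configurations realizing the communication height $\Gamma^{\mathrm{2D}}$ between two distinct ground states are rectangle-with-protuberance configurations belonging to $\mathcal{B}_{\Gamma}^{\mathrm{2D}}$.'' This is false: the sets $\mathcal{Z}^{a,b,\mathrm{2D}}$ in Definition~\ref{d_gate2} consist precisely of energy-$\Gamma^{\mathrm{2D}}$ configurations that lie \emph{outside} $\mathcal{B}_{\Gamma}^{\mathrm{2D}}$ (see \eqref{e_Zab2def}), and they are genuine saddle configurations in the gateway $\mathcal{G}^{a,b,\mathrm{2D}}$. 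What the disjointness $\mathcal{E}^{a,\mathrm{2D}}\cap\mathcal{E}^{b,\mathrm{2D}}=\emptyset$ actually requires is the stronger path-level statement that every $\Gamma^{\mathrm{2D}}$-path from $\mathbf{s}_a^{\mathrm{2D}}$ to $\mathbf{s}_b^{\mathrm{2D}}$ must \emph{visit} $\mathcal{B}_{\Gamma}^{\mathrm{2D}}$, not that all saddle configurations lie in $\mathcal{B}_{\Gamma}^{\mathrm{2D}}$. This path-level statement is again part of the combinatorial analysis in \cite{KS 2D} (closely tied to Lemma~\ref{l_gate2} and the characterization in Proposition~\ref{p_2lowE}), so your overall conclusion---that the argument bottoms out in \cite{KS 2D}---is correct, but the intermediate justification you give for disjointness does not stand on its own.
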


\subsubsection*{Gateway configurations}

Next, we introduce the gateway configurations. 
\begin{defn}[2D gateway configurations]
\label{d_gate2} Fix $a,\,b\in S$. Define
\begin{align}
\mathcal{Z}^{a,\,b,\,\mathrm{2D}}:=\{\eta\in\mathcal{X}^{\mathrm{2D}}: & \;\text{There exists a path }(\omega_{t})_{t=0}^{T}\text{ in }\mathcal{X}^{\mathrm{2D}}\setminus\mathcal{B}_{\Gamma}^{\mathrm{2D}}\text{ with }T\ge1\text{ such that }\nonumber \\
 & \;\omega_{0}\in\mathcal{R}_{2}^{a,\,b,\,\mathrm{2D}}\;,\;\omega_{T}=\eta\text{ and }H^{\mathrm{2D}}(\omega_{t})=\Gamma^{\mathrm{2D}}\text{ for all }t\in\llbracket1,\,T\rrbracket\}\;.\label{e_Zab2def}
\end{align}
Intuitively, this set is the collection of saddle configurations between
$\mathcal{R}_{2}^{a,\,b,\,\mathrm{2D}}$ and $\mathbf{s}_{a}^{\mathrm{2D}}$.
Then, we recall the 2D gateway configurations \cite[Section B.5]{KS 2D}.
The gateway between $\mathbf{s}_{a}^{\mathrm{2D}}$ and $\mathbf{s}_{b}^{\mathrm{2D}}$
is denoted as
\begin{equation}
\mathcal{G}^{a,\,b,\,\mathrm{2D}}=\mathcal{Z}^{a,\,b,\,\mathrm{2D}}\cup\mathcal{B}^{a,\,b,\,\mathrm{2D}}\cup\mathcal{Z}^{b,\,a,\,\mathrm{2D}}\;,\label{e_gate2def}
\end{equation}
which is a decomposition of $\mathcal{G}^{a,\,b,\,\mathrm{2D}}$.
A configuration belonging to $\mathcal{G}^{a,\,b,\,\mathrm{2D}}$
is called a\textit{ gateway configuration} between $\mathbf{s}_{a}^{\mathrm{2D}}$
and $\mathbf{s}_{b}^{\mathrm{2D}}$.
\end{defn}

Here, $\mathcal{G}^{a,\,b,\,\mathrm{2D}}$ is named the collection
of gateway configurations because of the following lemma, which indicates
that it indeed contains the saddle configurations between $\mathbf{s}_{a}^{\mathrm{2D}}$
and $\mathbf{s}_{b}^{\mathrm{2D}}$.
\begin{lem}[{\cite[Lemma B.10]{KS 2D}}]
\label{l_gate2} For $a,\,b\in S$, suppose that two 2D configurations
$\eta$ and $\xi$ satisfy 
\[
\eta\in\mathcal{G}^{a,\,b,\,\mathrm{2D}}\;,\;\xi\notin\mathcal{G}^{a,\,b,\,\mathrm{2D}}\;,\;\eta\sim\xi\;,\;\text{and}\;H^{\mathrm{2D}}(\xi)\le\Gamma^{\mathrm{2D}}\;.
\]
Then, we have either $\xi\in\mathcal{N}^{\mathrm{2D}}(\mathbf{s}_{a}^{\mathrm{2D}})$
and $\eta\in\mathcal{Z}^{a,\,b,\,\mathrm{2D}}$ or $\xi\in\mathcal{N}^{\mathrm{2D}}(\mathbf{s}_{b}^{\mathrm{2D}})$
and $\eta\in\mathcal{Z}^{b,\,a,\,\mathrm{2D}}$. In particular, $\eta\notin\mathcal{B}^{a,\,b,\,\mathrm{2D}}$.
\end{lem}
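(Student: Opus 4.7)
The plan is to establish the lemma by case analysis on which of $\mathcal{Z}^{a,b,\mathrm{2D}}$, $\mathcal{B}^{a,b,\mathrm{2D}}$, $\mathcal{Z}^{b,a,\mathrm{2D}}$ contains $\eta$, combined with an enumeration of all single-spin-flip neighbors $\xi$ satisfying $H^{\mathrm{2D}}(\xi)\le\Gamma^{\mathrm{2D}}=2K+2$. A useful preliminary observation: at any site $x$ whose four neighbors all carry spins from $\{a,b\}$, flipping $x$ to a third color $c\in S\setminus\{a,b\}$ raises the energy by at least $3$. So whenever $\eta$ has no third-colored region, every admissible neighbor $\xi$ is obtained by an $a\!\leftrightarrow\! b$ flip.

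I would first dispose of the ``in particular'' clause. Suppose $\eta\in\mathcal{B}^{a,b,\mathrm{2D}}$, so $\eta$ is either $\xi^{a,b}_{\ell,v}$ with $v\in\llbracket 2,L-2\rrbracket$ or $\xi^{a,b,\pm}_{\ell,v;k,h}$ with $v\in\llbracket 2,L-3\rrbracket$, and involves only the spins $a,b$. A site-by-site enumeration of the $a\!\leftrightarrow\! b$ flips (bulk sites, boundary sites, tip and kink sites of the protuberance) shows that the resulting $\xi$ lies either in $\mathcal{B}^{a,b,\mathrm{2D}}$ itself, or in the ``frontier'' sets $\mathcal{Q}^{a,b,\mathrm{2D}}_1, \mathcal{Q}^{a,b,\mathrm{2D}}_{L-2}$. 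For the frontier configurations one verifies, by exhibiting an explicit one-step level-$\Gamma^{\mathrm{2D}}$ move (remaining outside $\mathcal{B}^{\mathrm{2D}}_\Gamma$) to $\mathcal{R}^{a,b,\mathrm{2D}}_2$ or to $\mathcal{R}^{b,a,\mathrm{2D}}_2=\mathcal{R}^{a,b,\mathrm{2D}}_{L-2}$, that they belong to $\mathcal{Z}^{a,b,\mathrm{2D}}$ or $\mathcal{Z}^{b,a,\mathrm{2D}}$ respectively. In every sub-case $\xi\in\mathcal{G}^{a,b,\mathrm{2D}}$, contradicting the hypothesis.

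Next I would treat $\eta\in\mathcal{Z}^{a,b,\mathrm{2D}}$ (the case $\eta\in\mathcal{Z}^{b,a,\mathrm{2D}}$ follows by the $a\leftrightarrow b$ symmetry). By \eqref{e_Zab2def}, $H^{\mathrm{2D}}(\eta)=\Gamma^{\mathrm{2D}}$ and there is a level-$\Gamma^{\mathrm{2D}}$ path in $\mathcal{X}^{\mathrm{2D}}\setminus\mathcal{B}^{\mathrm{2D}}_\Gamma$ from some $\omega_0\in\mathcal{R}^{a,b,\mathrm{2D}}_2$ to $\eta$. If $H^{\mathrm{2D}}(\xi)=\Gamma^{\mathrm{2D}}$, then either $\xi\notin\mathcal{B}^{\mathrm{2D}}_\Gamma$, in which case appending $\xi$ to this path gives $\xi\in\mathcal{Z}^{a,b,\mathrm{2D}}$ (contradiction); or $\xi\in\mathcal{B}^{\mathrm{2D}}_\Gamma$, in which case a local flip analysis around $\eta$ (using the preliminary observation to exclude third-spin flips) forces the two spins of $\xi$ to be exactly $\{a,b\}$, placing $\xi\in\mathcal{B}^{a,b,\mathrm{2D}}$ (contradiction). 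Hence $H^{\mathrm{2D}}(\xi)\le\Gamma^{\mathrm{2D}}-1$, and the remaining claim is $\xi\in\mathcal{N}^{\mathrm{2D}}(\mathbf{s}^{\mathrm{2D}}_a)$. For this, I would combine a geometric description of $\mathcal{Z}^{a,b,\mathrm{2D}}$---every such configuration has a small $b$-region (at most a two-row strip, possibly with a short protuberance) in a sea of $a$ and no third-color region---with an explicit monotone ``shrink to $\mathbf{s}^{\mathrm{2D}}_a$'' path that flips the $b$-sites of $\xi$ to $a$ in an order adapted to this geometry; the small $b$-mass then ensures the energy along this shrinking path never exceeds $\Gamma^{\mathrm{2D}}-1$.

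The main obstacle is the geometric classification of $\mathcal{Z}^{a,b,\mathrm{2D}}$ invoked in the final step: one must maintain an invariant along every defining path of \eqref{e_Zab2def} controlling both the shape of the $b$-region (its perimeter, mass, and row profile) and the absence of third-color germs. Lemma \ref{l_depth2} is not directly strong enough, since the generic standard sequence only yields a $(H(\xi)+\Gamma^{\mathrm{2D}})$-bound along the way; the shrinking path must be tailored to the specific strip-with-protuberance structure in order to beat the threshold $\Gamma^{\mathrm{2D}}-1$. The preliminary observation controls the third-color content (any germ would cost $+3$ while the defining path stays at level $\Gamma^{\mathrm{2D}}$), while monotonicity arguments on the $b$-perimeter across each flip of the defining path enforce the strip structure.
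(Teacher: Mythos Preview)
The paper does not prove this lemma: it is quoted verbatim from the companion article \cite[Lemma~B.10]{KS 2D}, and Section~\ref{sec5} explicitly says that all results in that section are proved in \cite{KS 2D}. So there is no in-paper proof to compare against; your task is really to reproduce a proof that belongs to the companion paper.

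Your case split along the decomposition $\mathcal{G}^{a,b,\mathrm{2D}}=\mathcal{Z}^{a,b,\mathrm{2D}}\cup\mathcal{B}^{a,b,\mathrm{2D}}\cup\mathcal{Z}^{b,a,\mathrm{2D}}$ is the natural structure, and you correctly identify that the real content is a geometric classification of $\mathcal{Z}^{a,b,\mathrm{2D}}$. However, your ``preliminary observation'' is wrong as stated. If $\eta$ has only spins $a,b$ and $x$ has $k$ neighbours of the opposite spin, then flipping $\eta(x)$ to a third colour $c$ changes the energy by $4-k$, not by at least $3$. For instance, at an endpoint of a length-$2$ protuberance in $\xi^{a,b,+}_{\ell,v;k,2}$ one has $k=2$, and at the single protuberance site of $\xi^{a,b,+}_{\ell,v;k,1}$ one has $k=3$, giving increments $+2$ and $+1$ respectively. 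The conclusion you need still holds in all the cases that arise, but only because those sites sit in configurations already at energy $2K+2$, so even $+1$ overshoots $\Gamma^{\mathrm{2D}}$; your argument should be rewritten as ``flipping to a third colour makes all four incident edges bad, so the new energy is at least $2K+2+1$ unless the site has all four neighbours of the opposite spin, which never occurs in these configurations''. You also use this observation to argue that $\mathcal{Z}^{a,b,\mathrm{2D}}$ contains only $\{a,b\}$-valued configurations; that is true, but it is not a one-line consequence of the flip bound---it has to be proved by induction along the defining path in \eqref{e_Zab2def}, which is exactly the ``invariant'' you flag as the main obstacle.

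In short: the scaffolding is right, but the third-colour bound needs to be stated and used correctly, and the inductive control of $\mathcal{Z}^{a,b,\mathrm{2D}}$ (two-colouredness plus the strip-with-protuberance shape) has to be carried out rather than asserted before the $\mathcal{B}^{a,b,\mathrm{2D}}$ and $H^{\mathrm{2D}}(\xi)<\Gamma^{\mathrm{2D}}$ sub-cases can be closed.
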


We note that the construction of regular, canonical, typical, and
gateway configurations, as well as canonical paths for the 2D model,
will be extended to the 3D model in the remainder of the article. 

\subsection{Test function}

We also recall the 2D test function defined in \cite[Section 7]{KS 2D}.
Although the construction therein was carried out for both Ising and
Potts models, we only need the objects for the Ising model in this
article. Hence, in this subsection, \textbf{\emph{we assume that $q=2$.}}

Recall that we always assume $K\le L$. We recall a constant
\begin{equation}
\kappa^{\mathrm{2D}}=\kappa^{\mathrm{2D}}(K,\,L)\label{e_kappa2Ddef}
\end{equation}
from \cite[(4.13)]{KS 2D}, which plays the role of $\kappa$ in the
current article and also satisfies
\begin{equation}
\lim_{K\to\infty}\kappa^{\mathrm{2D}}(K,\,L)=\begin{cases}
1/4 & \text{if }K<L\;,\\
1/8 & \text{if }K=L\;.
\end{cases}\label{e_kappa2Dprop}
\end{equation}
In \cite[Definition 7.2]{KS 2D}, a test function $\widetilde{h}^{\mathrm{2D}}:\mathcal{X}^{\mathrm{2D}}\rightarrow\mathbb{R}$
(corresponding to $\widetilde{h}$ of the 3D model introduced in Proposition
\ref{p_H1approx}) is constructed as an $H^{1}$-approximation of
the equilibrium potential between two ground states. We proclaim that
this function is crucially used in the construction of the 3D test
function $\widetilde{h}$. In the proof of Proposition \ref{p_H1approx},
some estimates of $\widetilde{h}^{\mathrm{2D}}$ are crucially used.
The next estimate is used in the proof of \eqref{e_H1approx2}.
\begin{prop}[{\cite[Proposition C.1]{KS 2D}}]
\label{p_testfcn2} There exists a function $\widetilde{h}^{\mathrm{2D}}:\mathcal{X}^{\mathrm{2D}}\rightarrow\mathbb{R}$
such that
\begin{align*}
\sum_{\{\eta,\,\xi\}\subseteq\mathcal{X}^{\mathrm{2D}}:\,\{\eta,\,\xi\}\cap\mathcal{G}^{1,\,2,\,\mathrm{2D}}\ne\emptyset}\mu_{\beta}^{\mathrm{2D}}(\eta)\,r_{\beta}^{\mathrm{2D}}(\eta,\,\xi)\,\{\widetilde{h}^{\mathrm{2D}}(\xi)-\widetilde{h}^{\mathrm{2D}}(\eta)\}^{2} & =\frac{1+o_{\beta}(1)}{2\kappa^{\mathrm{2D}}}\,e^{-\Gamma^{\mathrm{2D}}\beta}\;.
\end{align*}
\end{prop}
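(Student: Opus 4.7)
The plan is to adopt the test function $\widetilde{h}^{\mathrm{2D}}$ explicitly constructed in \cite[Definition 7.2]{KS 2D} for the Ising case $q=2$, and then show that, because its non-trivial variation is concentrated within $\mathcal{G}^{1,\,2,\,\mathrm{2D}}$, the restricted sum in the statement captures the full Dirichlet form of $\widetilde{h}^{\mathrm{2D}}$ up to a negligible error. By design, $\widetilde{h}^{\mathrm{2D}}$ takes the value $1$ on all of $\mathcal{E}^{1,\,\mathrm{2D}}$, the value $0$ on all of $\mathcal{E}^{2,\,\mathrm{2D}}$, interpolates along the bulk typical configurations $\mathcal{B}^{1,\,2,\,\mathrm{2D}}$ by a harmonic profile in the strip index $v$ (with a 1D random-walk correction near the saddle layer $\mathcal{Q}_{v}^{1,\,2,\,\mathrm{2D}}$ that produces the constant $\kappa^{\mathrm{2D}}$), and is extended as a $\{0,\,1\}$-valued function on the remaining configurations. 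The full 2D Dirichlet form of $\widetilde{h}^{\mathrm{2D}}$ is computed in \cite[Section 7]{KS 2D} to equal $(1+o_{\beta}(1))/(2\kappa^{\mathrm{2D}})\cdot e^{-\Gamma^{\mathrm{2D}}\beta}$, which is precisely the target value.

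Given the total estimate, the proof reduces to checking that the complementary sum
\[
R_{\beta}:=\sum_{\{\eta,\,\xi\}\subseteq\mathcal{X}^{\mathrm{2D}}\setminus\mathcal{G}^{1,\,2,\,\mathrm{2D}}}\mu_{\beta}^{\mathrm{2D}}(\eta)\,r_{\beta}^{\mathrm{2D}}(\eta,\,\xi)\,\{\widetilde{h}^{\mathrm{2D}}(\xi)-\widetilde{h}^{\mathrm{2D}}(\eta)\}^{2}
\]
is $o_{\beta}(e^{-\Gamma^{\mathrm{2D}}\beta})$. I would split edges contributing to $R_{\beta}$ into two types. Type (i) consists of edges with both endpoints in $\widehat{\mathcal{N}}^{\mathrm{2D}}(\mathcal{S}^{\mathrm{2D}})\setminus\mathcal{G}^{1,\,2,\,\mathrm{2D}}$; by Proposition \ref{p_typ2prop} and the inclusion $\mathcal{B}^{1,\,2,\,\mathrm{2D}}\subseteq\mathcal{G}^{1,\,2,\,\mathrm{2D}}$, such endpoints lie in $\mathcal{E}^{1,\,\mathrm{2D}}\cup\mathcal{E}^{2,\,\mathrm{2D}}$. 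Lemma \ref{l_gate2} rules out any edge joining $\mathcal{E}^{1,\,\mathrm{2D}}$ to $\mathcal{E}^{2,\,\mathrm{2D}}$ at energy at most $\Gamma^{\mathrm{2D}}$, because concatenating such an edge with the relevant $\Gamma^{\mathrm{2D}}$-paths in the edge-typical sets would yield a $\Gamma^{\mathrm{2D}}$-path from $\mathbf{s}_{1}^{\mathrm{2D}}$ to $\mathbf{s}_{2}^{\mathrm{2D}}$ avoiding $\mathcal{B}_{\Gamma}^{\mathrm{2D}}$, contradicting the role of $\mathcal{B}_{\Gamma}^{\mathrm{2D}}$ as the essential saddle layer. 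Hence both endpoints lie in a single $\mathcal{E}^{a,\,\mathrm{2D}}$, on which $\widetilde{h}^{\mathrm{2D}}$ is constant; the contribution is $0$. Type (ii) edges have at least one endpoint outside $\widehat{\mathcal{N}}^{\mathrm{2D}}(\mathcal{S}^{\mathrm{2D}})$, hence at energy at least $\Gamma^{\mathrm{2D}}+1$. Using the 2D detailed balance identity $\mu_{\beta}^{\mathrm{2D}}(\eta)\,r_{\beta}^{\mathrm{2D}}(\eta,\,\xi)=(Z_{\beta}^{\mathrm{2D}})^{-1}e^{-\beta\max\{H^{\mathrm{2D}}(\eta),\,H^{\mathrm{2D}}(\xi)\}}$, the uniform bound $|\widetilde{h}^{\mathrm{2D}}|\le1$, and \eqref{e_Zbest2D}, each Type (ii) edge contributes at most $Ce^{-\beta(\Gamma^{\mathrm{2D}}+1)}$; since the total number of edges in $\mathcal{X}^{\mathrm{2D}}$ is $\beta$-independent, the aggregate contribution is $O_{\beta}(e^{-\beta(\Gamma^{\mathrm{2D}}+1)})=o_{\beta}(e^{-\Gamma^{\mathrm{2D}}\beta})$.

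The main obstacle is the geometric disjointness underlying Type (i), namely that no sub-$\Gamma^{\mathrm{2D}}$ edge connects $\mathcal{E}^{1,\,\mathrm{2D}}$ and $\mathcal{E}^{2,\,\mathrm{2D}}$. This is precisely what Lemma \ref{l_gate2} furnishes, since it identifies $\mathcal{Z}^{1,\,2,\,\mathrm{2D}}\cup\mathcal{Z}^{2,\,1,\,\mathrm{2D}}\subseteq\mathcal{G}^{1,\,2,\,\mathrm{2D}}$ as the exact boundary layer through which any legitimate descent into a different valley must pass. Once this separation is invoked, the remainder is a routine energy-barrier bookkeeping, and the proposition follows by combining $R_{\beta}=o_{\beta}(e^{-\Gamma^{\mathrm{2D}}\beta})$ with the full Dirichlet form estimate from \cite[Section 7]{KS 2D}.
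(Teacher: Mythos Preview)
The paper does not supply its own proof of this proposition: it is imported verbatim from the companion article \cite{KS 2D} (see the disclaimer at the start of Section~\ref{sec5}), so there is no in-paper argument to compare against. Your reconstruction---take the test function of \cite[Definition 7.2]{KS 2D}, invoke the full Dirichlet-form computation $D_{\beta}^{\mathrm{2D}}(\widetilde h^{\mathrm{2D}})=(1+o_\beta(1))/(2\kappa^{\mathrm{2D}})\,e^{-\Gamma^{\mathrm{2D}}\beta}$ from \cite[Section 7]{KS 2D}, and show the off-gateway remainder $R_\beta$ is $o_\beta(e^{-\Gamma^{\mathrm{2D}}\beta})$---is exactly the kind of reduction one expects, and the splitting into Type~(i)/(ii) edges is correct.

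One refinement: your claim that $\widetilde h^{\mathrm{2D}}\equiv 1$ on \emph{all} of $\mathcal{E}^{1,\mathrm{2D}}$ and $\equiv 0$ on \emph{all} of $\mathcal{E}^{2,\mathrm{2D}}$ is stronger than what the argument actually uses, and you should be careful since $\mathcal{E}^{a,\mathrm{2D}}$ overlaps $\mathcal{G}^{1,2,\mathrm{2D}}$ (at $\mathcal{R}_2^{a,b,\mathrm{2D}}$ and $\mathcal{Z}^{a,b,\mathrm{2D}}$, cf.\ Proposition~\ref{p_typ2prop}(1) and Definition~\ref{d_gate2}). What your Type~(i) step really needs is that $\widetilde h^{\mathrm{2D}}$ is constant on each connected component of $\widehat{\mathcal{N}}^{\mathrm{2D}}(\mathcal{S}^{\mathrm{2D}})\setminus\mathcal{G}^{1,2,\mathrm{2D}}$; your disjointness argument (via Proposition~\ref{p_typ2prop}(1) rather than Lemma~\ref{l_gate2}, which concerns gateway--to--non-gateway edges) correctly shows these components sit inside a single $\mathcal{E}^{a,\mathrm{2D}}$, and the \cite{KS 2D} construction is indeed constant there. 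With that adjustment the sketch is complete.
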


The next one is crucially used in the proof of \eqref{e_H1approx1}.
\begin{prop}[{\cite[Lemmas 7.10-7.16]{KS 2D}}]
\label{p_testfcn2.2}
\begin{enumerate}
\item For all $\eta\in\mathcal{X}^{\mathrm{2D}}\setminus\mathcal{N}^{\mathrm{2D}}(\mathcal{S}^{\mathrm{2D}})$,
it holds that
\[
\sum_{\xi\in\mathcal{X}^{\mathrm{2D}}}\mu_{\beta}^{\mathrm{2D}}(\eta)\,r_{\beta}^{\mathrm{2D}}(\eta,\,\xi)\,[\widetilde{h}^{\mathrm{2D}}(\eta)-\widetilde{h}^{\mathrm{2D}}(\xi)]=o_{\beta}(e^{-\Gamma^{\mathrm{2D}}\beta})\;.
\]
\item We have that
\begin{align*}
\sum_{\eta\in\mathcal{N}^{\mathrm{2D}}(\mathbf{s}_{1}^{\mathrm{2D}})}\,\sum_{\xi\in\mathcal{X}^{\mathrm{2D}}}\mu_{\beta}^{\mathrm{2D}}(\eta)\,r_{\beta}^{\mathrm{2D}}(\eta,\,\xi)\,[\widetilde{h}^{\mathrm{2D}}(\eta)-\widetilde{h}^{\mathrm{2D}}(\xi)] & =(1+o_{\beta}(1))\times\frac{1}{2\kappa^{\mathrm{2D}}}\,e^{-\Gamma^{\mathrm{2D}}\beta}\;,\\
\sum_{\eta\in\mathcal{N}^{\mathrm{2D}}(\mathbf{s}_{2}^{\mathrm{2D}})}\,\sum_{\xi\in\mathcal{X}^{\mathrm{2D}}}\mu_{\beta}^{\mathrm{2D}}(\eta)\,r_{\beta}^{\mathrm{2D}}(\eta,\,\xi)\,[\widetilde{h}^{\mathrm{2D}}(\eta)-\widetilde{h}^{\mathrm{2D}}(\xi)] & =-(1+o_{\beta}(1))\times\frac{1}{2\kappa^{\mathrm{2D}}}\,e^{-\Gamma^{\mathrm{2D}}\beta}\;.
\end{align*}
\end{enumerate}
\end{prop}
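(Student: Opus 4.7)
The plan is to construct $\widetilde{h}^{\mathrm{2D}}$ explicitly as an $H^{1}$-approximation of the 2D equilibrium potential between $\mathbf{s}_{1}^{\mathrm{2D}}$ and $\mathbf{s}_{2}^{\mathrm{2D}}$ and then verify parts (1) and (2) by direct generator computation. First, I would impose the Dirichlet boundary conditions $\widetilde{h}^{\mathrm{2D}}\equiv1$ on $\mathcal{N}^{\mathrm{2D}}(\mathbf{s}_{1}^{\mathrm{2D}})$ and $\widetilde{h}^{\mathrm{2D}}\equiv0$ on $\mathcal{N}^{\mathrm{2D}}(\mathbf{s}_{2}^{\mathrm{2D}})$, matching \eqref{e_eqpotsol}. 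On $\mathcal{G}^{1,\,2,\,\mathrm{2D}}=\mathcal{Z}^{1,\,2,\,\mathrm{2D}}\cup\mathcal{B}^{1,\,2,\,\mathrm{2D}}\cup\mathcal{Z}^{2,\,1,\,\mathrm{2D}}$, I would assign values by solving a Dirichlet problem for a reduced Markov chain supported on the saddle plateau: simple interpolation through the regular strips $\mathcal{R}_{v}^{1,\,2,\,\mathrm{2D}}$, and a more delicate prescription on $\mathcal{Z}^{a,\,b,\,\mathrm{2D}}$ that resolves the combinatorial geometry near the edge-typical sets $\mathcal{E}^{a,\,\mathrm{2D}}$. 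Remaining configurations are extended by a convenient rule and contribute only at subleading order.

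For part (1), I would split $\mathcal{X}^{\mathrm{2D}}\setminus\mathcal{N}^{\mathrm{2D}}(\mathcal{S}^{\mathrm{2D}})$ into three regions. When $H^{\mathrm{2D}}(\eta)>\Gamma^{\mathrm{2D}}$, the Gibbs weight $\mu_{\beta}^{\mathrm{2D}}(\eta)=O(e^{-(\Gamma^{\mathrm{2D}}+1)\beta})$ combined with $|\widetilde{h}^{\mathrm{2D}}|\le1$ and the bounded number of neighbors immediately yields a contribution of $o(e^{-\Gamma^{\mathrm{2D}}\beta})$. When $\eta\in\mathcal{G}^{1,\,2,\,\mathrm{2D}}\setminus\mathcal{N}^{\mathrm{2D}}(\mathcal{S}^{\mathrm{2D}})$, Lemma \ref{l_gate2} and Proposition \ref{p_typ2prop} confine the relevant neighbors to $\mathcal{G}^{1,\,2,\,\mathrm{2D}}\cup\mathcal{N}^{\mathrm{2D}}(\mathcal{S}^{\mathrm{2D}})$, and the local balance equations built into the construction of $\widetilde{h}^{\mathrm{2D}}$ cancel the leading term. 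For $\eta\in\widehat{\mathcal{N}}^{\mathrm{2D}}(\mathcal{S}^{\mathrm{2D}})\setminus(\mathcal{N}^{\mathrm{2D}}(\mathcal{S}^{\mathrm{2D}})\cup\mathcal{G}^{1,\,2,\,\mathrm{2D}})$, Proposition \ref{p_typ2prop} together with the explicit descriptions of $\mathcal{E}^{a,\,\mathrm{2D}}$ and $\mathcal{Z}^{a,\,b,\,\mathrm{2D}}$ show this region is vacuous for the Ising case $q=2$ we are treating here.

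For part (2), I would invoke the global telescoping identity $\sum_{\eta,\,\xi}\mu_{\beta}^{\mathrm{2D}}(\eta)\,r_{\beta}^{\mathrm{2D}}(\eta,\,\xi)[\widetilde{h}^{\mathrm{2D}}(\eta)-\widetilde{h}^{\mathrm{2D}}(\xi)]=0$, which follows from the detailed balance \eqref{e_muprop} and antisymmetrization under swapping $\eta\leftrightarrow\xi$. Combined with part (1), this forces the contributions from $\mathcal{N}^{\mathrm{2D}}(\mathbf{s}_{1}^{\mathrm{2D}})$ and $\mathcal{N}^{\mathrm{2D}}(\mathbf{s}_{2}^{\mathrm{2D}})$ to sum to $o(e^{-\Gamma^{\mathrm{2D}}\beta})$, so it suffices to pin down one of them. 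To do so, I would use Lemma \ref{l_gate2} to restrict the $\mathcal{N}^{\mathrm{2D}}(\mathbf{s}_{1}^{\mathrm{2D}})$-sum to edges into $\mathcal{Z}^{1,\,2,\,\mathrm{2D}}$, recognize the resulting bilinear expression as one-half of the restricted Dirichlet form treated by Proposition \ref{p_testfcn2}, and read off the value $\frac{1}{2\kappa^{\mathrm{2D}}}\,e^{-\Gamma^{\mathrm{2D}}\beta}(1+o_{\beta}(1))$. The hardest step is the precise tuning of $\widetilde{h}^{\mathrm{2D}}$ on the edge sets $\mathcal{Z}^{a,\,b,\,\mathrm{2D}}$ so that the reduced Markov chain's balance equations hold at leading order and its effective capacity matches $\frac{1}{2\kappa^{\mathrm{2D}}}\,e^{-\Gamma^{\mathrm{2D}}\beta}$; this combinatorial fine-tuning is the substance of the arguments in \cite[Section 7 and Appendix C]{KS 2D}.
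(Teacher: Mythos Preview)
Your overall plan is sound and broadly mirrors the structure of the arguments in \cite{KS 2D} (as one can infer from the parallel 3D computations in Section~\ref{sec10} of this paper, especially Lemmas~\ref{l_div1}--\ref{l_div7}). Two points deserve correction.

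First, in part~(1), your assertion that $\widehat{\mathcal{N}}^{\mathrm{2D}}(\mathcal{S}^{\mathrm{2D}})\setminus\bigl(\mathcal{N}^{\mathrm{2D}}(\mathcal{S}^{\mathrm{2D}})\cup\mathcal{G}^{1,2,\mathrm{2D}}\bigr)$ is vacuous is not supported by Proposition~\ref{p_typ2prop}: that result gives $\widehat{\mathcal{N}}^{\mathrm{2D}}(\mathcal{S}^{\mathrm{2D}})=\mathcal{E}^{\mathrm{2D}}\cup\mathcal{B}^{\mathrm{2D}}$, but $\mathcal{E}^{a,\mathrm{2D}}$ may contain configurations at energy $\Gamma^{\mathrm{2D}}$ that are dead-ends not lying in $\mathcal{Z}^{a,b,\mathrm{2D}}$, hence not in $\mathcal{G}^{1,2,\mathrm{2D}}$. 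The fix is easy: extend $\widetilde{h}^{\mathrm{2D}}\equiv1$ to all of $\mathcal{E}^{1,\mathrm{2D}}$ and $\equiv0$ to all of $\mathcal{E}^{2,\mathrm{2D}}$, after which such dead-ends contribute nothing at leading order. But the region is not literally empty, and you should not claim it is.

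Second, and more substantively, your part~(2) argument misidentifies the link to Proposition~\ref{p_testfcn2}. The boundary sum you want is \emph{linear} in $\widetilde{h}^{\mathrm{2D}}$, not quadratic, so it is not ``one-half of the restricted Dirichlet form''. The correct bridge is the self-adjoint identity $D_{\beta}^{\mathrm{2D}}(\widetilde{h}^{\mathrm{2D}})=\sum_{\eta}\widetilde{h}^{\mathrm{2D}}(\eta)\sum_{\xi}\mu_{\beta}^{\mathrm{2D}}(\eta)\,r_{\beta}^{\mathrm{2D}}(\eta,\xi)\bigl[\widetilde{h}^{\mathrm{2D}}(\eta)-\widetilde{h}^{\mathrm{2D}}(\xi)\bigr]$; combined with $\widetilde{h}^{\mathrm{2D}}\equiv1$ on $\mathcal{N}^{\mathrm{2D}}(\mathbf{s}_{1}^{\mathrm{2D}})$, $\equiv0$ on $\mathcal{N}^{\mathrm{2D}}(\mathbf{s}_{2}^{\mathrm{2D}})$, part~(1), and the $\beta$-independence of $|\mathcal{X}^{\mathrm{2D}}|$, this yields the boundary sum equal to $D_{\beta}^{\mathrm{2D}}(\widetilde{h}^{\mathrm{2D}})+o(e^{-\Gamma^{\mathrm{2D}}\beta})$, with no factor of one-half. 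This route does work once stated correctly. The approach actually taken in \cite{KS 2D}, paralleling Lemma~\ref{l_div7} here, is different: it computes the boundary sum directly as the capacity of the reduced edge Markov chain via the flux identity $\mathrm{cap}=\sum_{\sigma\in\text{source}}\sum_{\zeta}r(\sigma,\zeta)[\mathfrak{h}(\sigma)-\mathfrak{h}(\zeta)]$, without passing through the Dirichlet form at all.
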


\subsection{Auxiliary results}

In this subsection, we summarize two auxiliary results of the 2D model
that are crucially used in our arguments.

\subsubsection*{Bridges, crosses and a bound on 2D Hamiltonian}

For a configuration $\eta\in\mathcal{X}^{\mathrm{2D}}$, a \textit{bridge},
which is a \textit{horizontal }or \textit{vertical bridge}, is a row
or column, respectively, in which all spins are the same. If a bridge
consists of spin $a\in S$, we call this bridge an $a$-bridge. Then,
we denote by $B_{a}(\eta)$ the number of $a$-bridges with respect
to $\eta$. A \textit{cross} (resp. \textit{$a$-cross}) is the union
of a horizontal bridge and a vertical bridge (resp. $a$-bridges).
With this notation, we have the following lower bound.
\begin{lem}[{\cite[Lemma B.2]{KS 2D}}]
\label{l_H2lb} It holds that 
\[
H^{\mathrm{2D}}(\eta)\ge2\Big[\,K+L-\sum_{a\in S}B_{a}(\eta)\,\Big]\;.
\]
\end{lem}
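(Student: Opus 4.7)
My plan is to split the Hamiltonian by edge orientation, analyze each row and each column independently as a coloring of a one-dimensional torus, and then recombine the two estimates via a parity observation.

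First I would decompose $H^{\mathrm{2D}}(\eta) = H^{h}(\eta) + H^{v}(\eta)$, where $H^{h}(\eta)$ counts the disagreeing horizontal edges (endpoints sharing the same second coordinate) and $H^{v}(\eta)$ counts the disagreeing vertical edges. Correspondingly I split $B_a(\eta) = B^{h}_a(\eta) + B^{v}_a(\eta)$ into the number of horizontal and vertical $a$-bridges, so that $\sum_{a\in S} B^{h}_a(\eta) \le L$ and $\sum_{a\in S} B^{v}_a(\eta) \le K$.

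The core step is the following parity remark: for each fixed $y \in \mathbb{T}_L$, the restriction of $\eta$ to the row $\mathbb{T}_K \times \{y\}$ is a coloring of the cycle $\mathbb{T}_K$, and on any cycle coloring the number of disagreeing edges is even, since walking once around the cycle must return to the original spin. Therefore each row contributes either $0$ to $H^{h}(\eta)$ (precisely when it is a bridge) or at least $2$. Summing over all $L$ rows yields
\[
H^{h}(\eta) \;\ge\; 2\bigg(L - \sum_{a \in S} B^{h}_a(\eta)\bigg).
\]
The very same argument applied to each of the $K$ columns gives
\[
H^{v}(\eta) \;\ge\; 2\bigg(K - \sum_{a \in S} B^{v}_a(\eta)\bigg).
\]
Adding the two bounds and using the decomposition $B_a(\eta) = B^{h}_a(\eta) + B^{v}_a(\eta)$ produces the claimed inequality.

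There is no real obstacle in this proof; the argument is a two-line parity count and the only ingredient beyond bookkeeping is the elementary fact that any $S$-valued coloring of a cycle has an even number of edge disagreements. This is also the place where periodic boundary conditions are used in an essential way (an open row would only give a lower bound of $1$ instead of $2$ for a non-constant row, which would not suffice for the factor of $2$ in the statement).
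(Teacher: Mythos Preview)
Your proof is correct. The paper does not give its own proof of this lemma but cites \cite[Lemma B.2]{KS 2D}; your row/column cycle-parity argument is exactly the standard proof of this bound and is presumably what appears in the cited reference.
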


\subsubsection*{Characterization of configurations with low energy}

Let $a\in S$. For $\eta\in\mathcal{X}^{\mathrm{2D}}$ and $\sigma\in\mathcal{X}$
(a 3D configuration), we write 
\begin{equation}
\|\eta\|_{a}=\sum_{x\in\Lambda^{\mathrm{2D}}}\mathbf{1}\{\eta(x)=a\}\;\;\;\;\text{and}\;\;\;\;\|\sigma\|_{a}=\sum_{x\in\Lambda}\mathbf{1}\{\sigma(x)=a\}\;.\label{e_spinnum}
\end{equation}
The following proposition characterizes all the 2D configurations
with energy less than $\Gamma^{\mathrm{2D}}$.
\begin{prop}[{\cite[Proposition B.3]{KS 2D}}]
\label{p_2lowE} Suppose that $\eta\in\mathcal{X}^{\mathrm{2D}}$
satisfies $H^{\mathrm{2D}}(\eta)<\Gamma^{\mathrm{2D}}$. Then, $\eta$
satisfies exactly one of the following properties.
\begin{itemize}
\item \textbf{\textup{(L1)}} There exist $a,\,b\in S$ and $v\in\llbracket2,\,L-2\rrbracket$
such that $\eta\in\mathcal{R}_{v}^{a,\,b,\,\mathrm{2D}}$. Here, $\mathcal{N}^{\mathrm{2D}}(\eta)=\{\eta\}$.
\item \textbf{\textup{(L2)}} There exist $a,\,b\in S$ such that $\eta\in\mathcal{R}_{1}^{a,\,b,\,\mathrm{2D}}$.
In this case, $\mathcal{N}^{\mathrm{2D}}(\eta)=\mathcal{N}^{\mathrm{2D}}(\mathbf{s}_{a}^{\mathrm{2D}})$.
\item \textbf{\textup{(L3)}} For some $a\in S$, $\eta$ has an $a$-cross.
Then, $\mathcal{N}^{\mathrm{2D}}(\eta)=\mathcal{N}^{\mathrm{2D}}(\mathbf{s}_{a}^{\mathrm{2D}})$
and
\begin{equation}
\sum_{b\ne a}\|\eta\|_{b}\le\frac{H^{\mathrm{2D}}(\eta)^{2}}{16}\le\frac{(2K+1)^{2}}{16}\;.\label{e_2lowE}
\end{equation}
\end{itemize}
\end{prop}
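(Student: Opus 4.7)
My plan is to classify $\eta$ by a bridge-count argument followed by a case analysis on the orientations of the bridges. Since $H^{\mathrm{2D}}(\eta)$ is integer-valued and bounded by $2K+1$, Lemma~\ref{l_H2lb} yields $\sum_{a\in S}B_a(\eta)\ge L$. First, suppose $\eta$ admits both a horizontal $a$-bridge and a vertical $b$-bridge; since they meet at a single site we have $a=b$, placing $\eta$ in case~(L3). Second, suppose $\eta$ admits only horizontal bridges; then the bridge count is at most $L$, so equality holds and every row is monochromatic, and $\eta$ is encoded by a cyclic row-spin sequence. Writing $t$ for the number of cyclic row-transitions, $H^{\mathrm{2D}}(\eta)=Kt$, parity on the torus forces $t$ even, and $Kt<2K+2$ forces $t\le 2$; since $t=0$ would produce vertical bridges, $t=2$. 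Hence $\eta\in\mathcal{R}_v^{a,b,\mathrm{2D}}$ for some $v\in\llbracket 1,L-1\rrbracket$, which yields (L1) when $v\in\llbracket 2,L-2\rrbracket$ and (L2) when $v\in\{1,L-1\}$. The ``only vertical'' case gives $H^{\mathrm{2D}}(\eta)=Lt$, forcing $2L<2K+2$ and hence $K=L$; after transposition this reduces to the horizontal case and the resulting configuration belongs to $\mathcal{R}_v^{a,b,\mathrm{2D}}$ by the $K=L$ clause of Definition~\ref{d_canreg2}. Mutual exclusivity is immediate since any non-monochromatic regular strip carries no cross.

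Next I verify the auxiliary claims. For (L1), a direct bookkeeping of single-spin flips on $\eta\in\mathcal{R}_v^{a,b,\mathrm{2D}}$ with $2\le v\le L-2$ shows the energy jumps from $2K$ to $2K+2$ or $2K+4$, so no $(\Gamma^{\mathrm{2D}}-1)$-path can leave $\eta$, giving $\mathcal{N}^{\mathrm{2D}}(\eta)=\{\eta\}$. For (L2), where $\eta\in\mathcal{R}_1^{a,b,\mathrm{2D}}$, I construct a $(\Gamma^{\mathrm{2D}}-1)$-path from $\eta$ to $\mathbf{s}_a^{\mathrm{2D}}$ by first flipping any single $b$ in the lone $b$-row (the energy stays at $2K$) and then iteratively erasing $b$'s from an endpoint of the shrinking $b$-segment, each step decreasing the energy by $2$; this yields $\mathcal{N}^{\mathrm{2D}}(\eta)=\mathcal{N}^{\mathrm{2D}}(\mathbf{s}_a^{\mathrm{2D}})$. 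For the counting bound~\eqref{e_2lowE} in~(L3), cutting the torus along the $a$-cross produces a $(K-1)\times(L-1)$ rectangle in which the non-$a$ set $N$ lives with no toroidal wrap-around; the planar isoperimetric inequality gives at least $4\sqrt{|N|}$ edges between $N$ and its complement, each of which is a disagreeing edge contributing to $H^{\mathrm{2D}}(\eta)$, and rearrangement yields $|N|\le H^{\mathrm{2D}}(\eta)^2/16$.

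The main obstacle will be the identity $\mathcal{N}^{\mathrm{2D}}(\eta)=\mathcal{N}^{\mathrm{2D}}(\mathbf{s}_a^{\mathrm{2D}})$ in case~(L3), which requires a path from $\eta$ to $\mathbf{s}_a^{\mathrm{2D}}$ whose energy stays below $\Gamma^{\mathrm{2D}}-1=2K+1$. My plan is to induct on $|N|$: at each step I would select an extreme non-$a$ spin, for example the rightmost non-$a$ spin in the topmost row of the rectangle containing such a spin. By extremality and the surrounding $a$-cross, this spin has at least $k\ge 2$ neighbors of spin $a$. If $j\in[0,4-k]$ counts its non-$a$ neighbors whose spin coincides with its own, flipping it to $a$ changes the energy by exactly $j-k$, and since $j\le 4-k\le 2\le k$ this change is non-positive. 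Iterating therefore reaches $\mathbf{s}_a^{\mathrm{2D}}$ without ever exceeding $H^{\mathrm{2D}}(\eta)\le 2K+1$, and reversibility of $(\Gamma^{\mathrm{2D}}-1)$-paths yields the identity. The delicate point is ensuring that the extremal selection remains well-defined throughout the induction, in particular when the non-$a$ region has multiple components; I would handle this by treating the $a$-cross sites as a fixed $a$-frame surrounding the dissected rectangle, so that every non-empty non-$a$ subset of the rectangle possesses a well-defined topmost-rightmost spin with the required two $a$-neighbors.
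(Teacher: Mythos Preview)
The paper does not supply its own proof of this proposition: it is quoted verbatim from the companion article \cite[Proposition~B.3]{KS 2D}, and Section~\ref{sec5} explicitly says ``since all the results that appear in the current section are proved in \cite{KS 2D}, we refer to the proofs therein.'' So there is nothing in the present paper to compare against directly.

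That said, your argument is correct and is essentially the standard one. The classification via Lemma~\ref{l_H2lb} forcing $\sum_a B_a(\eta)\ge L$, followed by the dichotomy ``some cross exists'' versus ``all bridges horizontal (or all vertical)'', is the natural route and matches the approach in \cite{KS 2D}. Your verification of $\mathcal{N}^{\mathrm{2D}}(\eta)=\{\eta\}$ in (L1), the explicit erasure path in (L2), and the topmost-rightmost peeling argument in (L3) are all sound; in (L3) the key observation that the chosen corner site has $k\ge 2$ neighbors of spin $a$ (one above, one to the right, each either inside the rectangle or on the $a$-cross) guarantees the energy is non-increasing along the path, and the cross is preserved so the induction goes through regardless of how many components the non-$a$ set has. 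The isoperimetric step is also fine: cutting the torus along the cross embeds the non-$a$ set $N$ into a planar grid, the edge boundary of $N$ in $\mathbb{Z}^2$ coincides with the number of $(a,\text{non-}a)$ edges in $\eta$, and the planar edge-isoperimetric inequality (via the projection bound $|\partial_e N|\ge 2|\pi_1(N)|+2|\pi_2(N)|\ge 4\sqrt{|N|}$) gives exactly $\sum_{b\ne a}\|\eta\|_b\le H^{\mathrm{2D}}(\eta)^2/16$.
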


\section{\label{sec6}Canonical Configurations and Paths}

Analyzing the energy landscape of the 3D model is far more complex
than that of the 2D model; below, we briefly list the main differences
between them that serve to complexify the problem.
\begin{enumerate}
\item In the 2D model, the energy of the gateway configuration is either
$\Gamma^{\mathrm{2D}}$ or $\Gamma^{\mathrm{2D}}-2$. Thus, a $\Gamma^{\mathrm{2D}}$-path
on the gateway configurations does not have the freedom to move. On
the other hand, in the 3D model, the energy of the gateway configuration
ranges from $\Gamma-2K-2$ to $\Gamma$. This implies that the behavior
of a $\Gamma$-path around a gateway configuration of energy $\Gamma-2K-2$
(which is a regular configuration) cannot be characterized precisely.
\item In the 2D model, a $\Gamma^{\mathrm{2D}}$-path from $\mathbf{s}_{a}^{\mathrm{2D}}$
to $\mathbf{s}_{b}^{\mathrm{2D}}$ must visit a configuration in $\mathcal{R}_{2}^{a,\,b,\,\mathrm{2D}}$.
Then, it successively visits $\mathcal{R}_{3}^{a,\,b,\,\mathrm{2D}}$,
..., $\mathcal{R}_{L-2}^{a,\,b,\,\mathrm{2D}}$ and finally arrives
at $\mathbf{s}_{b}^{\mathrm{2D}}$. Remarkably, this path does not
need to visit a configuration in $\mathcal{R}_{1}^{a,\,b,\,\mathrm{2D}}$
and in $\mathcal{R}_{L-1}^{a,\,b,\,\mathrm{2D}}$; this fact essentially
arises from the features of the 2D geometry. In the 3D model, we observe
a similar phenomenon. To explain this, let us temporarily denote by
$\mathcal{R}_{v}^{a,\,b}$, $v\in\llbracket1,\,L-1\rrbracket$ the
collection of 3D configurations such that there are $v$ consecutive
$K\times L$ slabs of spins $b$ and such that the spins at the remaining
sites are $a$. Then, there exists an integer $n=n_{K,\,L,\,M}$ such
that any $\Gamma$-path connecting $\mathbf{s}_{a}$ and $\mathbf{s}_{b}$
must successively visit configurations in $\mathcal{R}_{n}^{a,\,b}$,
$\mathcal{R}_{n+1}^{a,\,b}$, $\dots,$ $\mathcal{R}_{M-n}^{a,\,b}$
but need not visit $\mathcal{R}_{i}^{a,\,b}$ for $i\in\llbracket1,\,n-1\rrbracket$
and $i\in\llbracket M-n+1,\,M-1\rrbracket$. In the 2D model, the
number corresponding to this $n=n_{K,\,L,\,M}$ is $2$. We guess
that in the 3D model, $n\sim K^{1/2}$; however, we cannot determine
the exact value of $n$. This fact reveals the complex structure of
the energy landscape in the 3D model. Instead, we prove below (cf.
Propositions \ref{p_nKLMlb} and \ref{p_Elb}) that 
\[
\lfloor K^{1/2}\rfloor\le n\le\lfloor K^{2/3}\rfloor\;.
\]
Fortunately, this bound suffices to complete our analysis without
identifying the exact value of $n$.
\item In the 2D model, the $\mathcal{N}^{\mathrm{2D}}$-neighborhoods are
fully characterized in Proposition \ref{p_2lowE}; meanwhile, in the
3D case, we cannot obtain such a specific and simple result. We overcome
the absence of this result by using the 2D result obtained in Proposition
\ref{p_2lowE}, through suitably applying it to the analysis of the
3D model. \textit{Indeed, this absence is a crucial difficulty in
extending the analysis to the four- or higher-dimensional models.}
\item Because of the aforementioned complexity of the energy landscape,
the transition may encounter a dead-end with energy $\Gamma$, even
in the bulk part of the transition; this is not the case in the 2D
model. Therefore, another technical challenge is that of carefully
characterizing these dead-ends and appropriately excluding them from
the computation. 
\end{enumerate}
As explained above, the energy landscape of the 3D model is more complex
than that of the 2D one, and we are unable to present a complete description
of the energy landscape for the former. Nevertheless, we analyze the
landscape with the precision required to prove our main results.

In Section \ref{sec6}, we introduce canonical configurations and
paths. Their definitions are direct generalizations of those in the
2D model. Then, we explain several applications of these canonical
objects.

We first collect several notation which will be frequently used throughout
the remainder of the article.

\begin{figure}
\includegraphics[width=13cm]{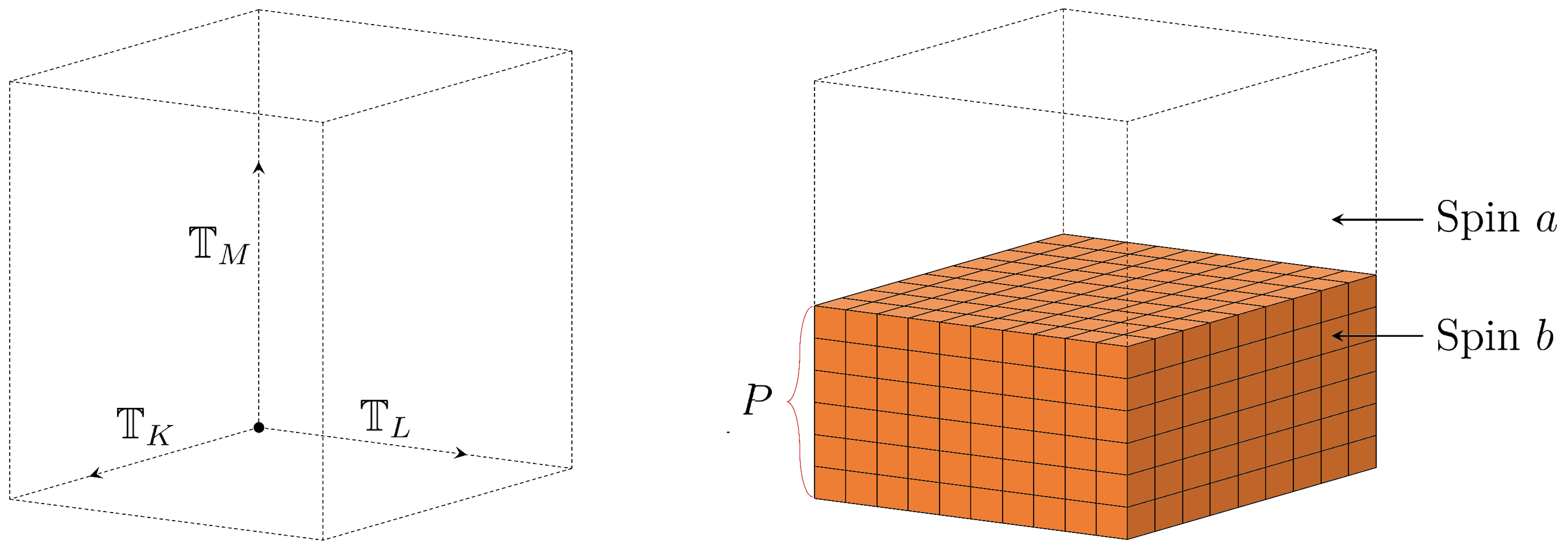}

\caption{\label{fig6.1}\textbf{Figures on Notation \ref{n_can}. }This form
of figure is used throughout the remainder of the article to illustrate
a 3D configuration consisting of two types of spins only. The large
dotted box denotes $\Lambda=\mathbb{T}_{K}\times\mathbb{T}_{L}\times\mathbb{T}_{M}$.
The orange unit boxes denote the sites with spin $b$, and the empty
part denotes the cluster of spin $a$. For some cases when we only
concern the shape of the cluster of spin $b$ (e.g. in Figure \ref{fig6.2}),
we omit the dotted box representing $\Lambda$.}
\end{figure}

\begin{notation}
\label{n_can}We refer to Figure \ref{fig6.1}\footnote{In fact, this figure and all the 3D figures below contradict our assumption
that $K\ge2829$. However, we believe that there will be absolutely
no confusion with these figures which only provide simple illustrations
of complicated notions.} for an illustration of the notation below.
\begin{itemize}
\item For $m\in\mathbb{T}_{M}$, the slab $\mathbb{T}_{K}\times\mathbb{T}_{L}\times\{m\}\subseteq\Lambda$
is called an \textit{$m$-th floor}. For each configuration $\sigma\in\mathcal{X}$,
we denote by $\sigma^{(m)}$ the configuration of $\sigma$ at the
$m$-th floor, i.e., 
\begin{equation}
\sigma^{(m)}(k,\,\ell)=\sigma(k,\,\ell,\,m)\;\;\;\;;\;k\in\mathbb{T}_{K}\;,\;\ell\in\mathbb{T}_{L}\;.\label{e_sigmam}
\end{equation}
Thus, $\sigma^{(m)}\in\mathcal{X}^{\mathrm{2D}}$ is a spin configuration
in $\Lambda^{\mathrm{2D}}=\mathbb{T}_{K}\times\mathbb{T}_{L}.$
\item For $a,\,b\in S$ and $P\subseteq\mathbb{T}_{M}$, we denote by $\sigma_{P}^{a,\,b}\in\mathcal{X}$
the configuration satisfying 
\begin{equation}
\sigma_{P}^{a,\,b}(k,\,\ell,\,m)=b\cdot\mathbf{1}\{m\in P\}+a\cdot\mathbf{1}\{m\notin P\}\;.\label{e_sigmaP}
\end{equation}
\end{itemize}
\end{notation}

\subsection{\label{sec6.1}Canonical configurations}

The following notation is used frequently.
\begin{notation}
\label{n_upe}We first introduce several maps on $\mathcal{X}$. If
$K=L$, we define a bijection $\Theta^{(12)}:\mathcal{X}\rightarrow\mathcal{X}$
as the map switching the first and second coordinates, i.e., for all
$\sigma\in\mathcal{X}$ and $(k,\,\ell,\,m)\in\Lambda$, 
\[
(\Theta^{(12)}(\sigma))(k,\,\ell,\,m)=\sigma(\ell,\,k,\,m)\;.
\]
If $L=M$, we can similarly define a bijection $\Theta^{(23)}$ on
$\mathcal{X}$ switching the second and third coordinates. Finally,
for the case of $K=L=M$, we can even define the bijection $\Theta^{(13)}$
on $\mathcal{X}$ switching the first and third coordinates.

Then, for $\mathcal{A}\subseteq\mathcal{X}$, we define $\Upsilon(\mathcal{A})$
as 
\[
\Upsilon(\mathcal{A})=\begin{cases}
\mathcal{A} & \text{if }K<L<M\;,\\
\mathcal{A}\cup\Theta^{(12)}(\mathcal{A}) & \text{if }K=L<M\;,\\
\mathcal{A}\cup\Theta^{(23)}(\mathcal{A}) & \text{if }K<L=M\;,\\
\begin{aligned} & \mathcal{A}\cup\Theta^{(12)}(\mathcal{A})\cup\Theta^{(23)}(\mathcal{A})\cup\Theta^{(13)}(\mathcal{A})\\
 & \cup(\Theta^{(12)}\circ\Theta^{(23)})(\mathcal{A})\cup(\Theta^{(23)}\circ\Theta^{(12)})(\mathcal{A})
\end{aligned}
 & \text{if }K=L=M\;.
\end{cases}
\]
Note that the set $\Upsilon(\mathcal{A})$ for the case of $K=L=M$
denotes the set of all configurations obtained by permuting the coordinates
of the configurations in $\mathcal{A}$.
\end{notation}

Now, we define canonical configurations of our 3D model. 

\begin{figure}
\includegraphics[width=14.5cm]{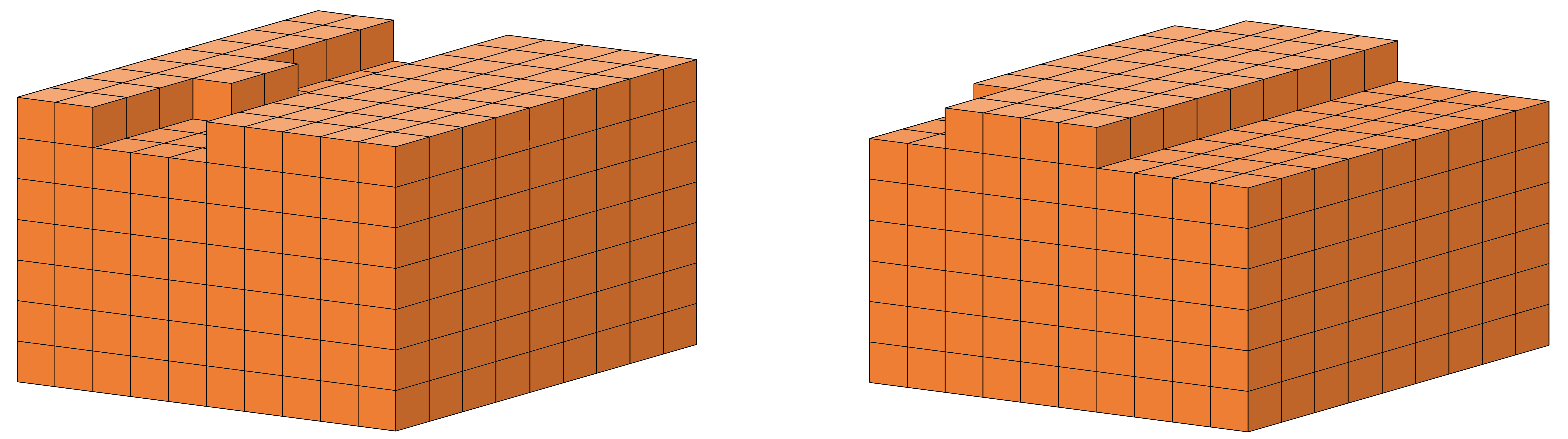}

\caption{\label{fig6.2}\textbf{Canonical configurations. }These two configurations
belong to $\mathcal{C}_{6}^{a,\,b}$ (if the orange boxes represent
the sites with spin $b$ as in Figure \ref{fig6.1}), since the 2D
configurations at the $7$-th floor are 2D canonical configurations
$\xi_{6,\,7;\,5,\,2}^{a,\,b,\,+}$ and $\xi_{3,\,4;\,2,\,6}^{a,\,b,\,-}$,
respectively.}
\end{figure}

\begin{defn}[Canonical configurations]
\label{d_can} We refer to Figure \ref{fig6.2} for a visualization
of the objects introduced below. Recall Notation \ref{n_frakS}.
\begin{enumerate}
\item We first introduce some building blocks in the definition of canonical
and gateway configurations. For $a,\,b\in S$ and $P,\,Q\in\mathfrak{S}_{M}$
with $P\prec Q$, we define $\widetilde{\mathcal{C}}_{P,\,Q}^{a,\,b}\subseteq\mathcal{X}$
as
\[
\sigma\in\widetilde{\mathcal{C}}_{P,\,Q}^{a,\,b}\;\;\;\Leftrightarrow\;\;\;\begin{cases}
\sigma^{(m)}=\mathbf{s}_{b}^{\mathrm{2D}} & \text{if }m\in P\;,\\
\sigma^{(m)}=\mathbf{s}_{a}^{\mathrm{2D}} & \text{if }m\in Q^{c}\;,\\
\sigma^{(m)}\in\mathcal{C}^{a,\,b,\,\mathrm{2D}} & \text{if }m\in Q\setminus P\;,
\end{cases}
\]
where the 2D objects are defined in Section \ref{sec5.2}. Then, we
set\textbf{
\begin{equation}
\mathcal{C}_{P,\,Q}^{a,\,b}=\Upsilon(\widetilde{\mathcal{C}}_{P,\,Q}^{a,\,b})\;.\label{e_can}
\end{equation}
}We then define, for $i\in\llbracket0,\,M-1\rrbracket$, 
\begin{align}
\mathcal{C}_{i}^{a,\,b} & =\bigcup_{P,\,Q\in\mathfrak{S}_{M}:\,|P|=i\text{ and }P\prec Q}\mathcal{C}_{P,\,Q}^{a,\,b}\;\;\;\;\text{and}\;\;\;\;\mathcal{C}^{a,\,b}=\bigcup_{i=0}^{M-1}\mathcal{C}_{i}^{a,\,b}\;.\label{e_can2}
\end{align}
Finally, for a proper partition $(A,\,B)$ of $S$, we write
\[
\mathcal{C}_{i}^{A,\,B}=\bigcup_{a\in A}\bigcup_{b\in B}\mathcal{C}_{i}^{a,\,b}\;\;\;\;\text{and}\;\;\;\;\mathcal{C}^{A,\,B}=\bigcup_{a\in A}\bigcup_{b\in B}\mathcal{C}^{a,\,b}\;.
\]
A configuration belonging to $\mathcal{C}^{a,\,b}$ for some $a,\,b\in S$
is called a \textit{canonical configuration }between $\mathbf{s}_{a}$
and $\mathbf{s}_{b}$. 
\end{enumerate}
\end{defn}

In view of the definition above, the role of the map $\Upsilon$ is
clear. When $K<L<M$ there is only one direction of transition, if
$K=L<M$ or $K<L=M$ there are $2=2!$ possible directions, while
if $K=L=M$ there are $6=3!$ possible directions. The map $\Upsilon$
reflects this observation into the definition. Next, let us define
regular configurations which are the special ones among the canonical
configurations. 
\begin{defn}[Regular configurations]
 For $a,\,b\in S$ and $P\in\mathfrak{S}_{M}$, recall the configuration
$\sigma_{P}^{a,\,b}$ from \eqref{e_sigmaP} and define
\begin{equation}
\widetilde{\mathcal{R}}_{i}^{a,\,b}=\{\sigma_{P}^{a,\,b}:P\in\mathfrak{S}_{M},\;|P|=i\}\;\;\;\;;\;i\in\llbracket0,\,M\rrbracket\;.\label{e_reg1}
\end{equation}
Note that $\widetilde{\mathcal{R}}_{i}^{a,\,b}$ is a collection of
configurations consisting of spins $a$ and $b$ only, where spins
$a$ and $b$ are located at slabs $\mathbb{T}_{K}\times\mathbb{T}_{L}\times(\mathbb{T}_{M}\setminus P)$
and $\mathbb{T}_{K}\times\mathbb{T}_{L}\times P$, respectively, for
$P\in\mathfrak{S}_{M}$ with $|P|=i$. Then, define (cf. Notation
\ref{n_upe}) 
\begin{gather}
\mathcal{R}_{i}^{a,\,b}=\Upsilon(\widetilde{\mathcal{R}}_{i}^{a,\,b})\;.\label{e_reg2}
\end{gather}
A configuration belonging to $\mathcal{R}_{i}^{a,\,b}$ for some $i\in\llbracket0,\,M\rrbracket$
is called a \textit{regular configuration}. Clearly, we have $\mathcal{R}_{0}^{a,\,b}=\{\mathbf{s}_{a}\}$
and $\mathcal{R}_{M}^{a,\,b}=\{\mathbf{s}_{b}\}$. For a proper partition
$(A,\,B)$ of $S$, we write
\begin{equation}
\mathcal{R}_{i}^{A,\,B}=\bigcup_{a\in A}\bigcup_{b\in B}\mathcal{R}_{i}^{a,\,b}\;.\label{e_reg3}
\end{equation}
\end{defn}

\subsection{\label{sec6.2}Energy of canonical configurations}

One can compute the energy of canonical configurations readily by
elementary computations, but we provide a more systematic approach
which will be frequently used in later computations. To this end,
we first introduce a notation.
\begin{notation}
\label{n_sigmakl}For $(k,\,\ell)\in\mathbb{T}_{K}\times\mathbb{T}_{L}$,
we denote by $\sigma^{\langle k,\,\ell\rangle}\in S^{\mathbb{T}_{M}}$
the configuration of $\sigma$ on the $(k,\,\ell)$-th pillar $\{k\}\times\{\ell\}\times\mathbb{T}_{M}$,
i.e., 
\begin{equation}
\sigma^{\langle k,\,\ell\rangle}(m)=\sigma(k,\,\ell,\,m)\;\;\;;\;m\in\mathbb{T}_{M}\;.\label{e_sigmakl}
\end{equation}
The energy of the one-dimensional (1D) configuration $\sigma^{\langle k,\,\ell\rangle}$
is denoted by 
\begin{equation}
H^{\mathrm{1D}}(\sigma^{\langle k,\,\ell\rangle})=\sum_{m\in\mathbb{T}_{M}}\mathbf{1}\{\sigma(k,\,\ell,\,m)\neq\sigma(k,\,\ell,\,m+1)\}\;.\label{e_Ham1D}
\end{equation}
\end{notation}

In the following lemma, we decompose the 3D energy into lower-dimensional
ones. 
\begin{lem}
\label{l_decH}For each $\sigma\in\mathcal{X}$, it holds that 
\begin{equation}
H(\sigma)=\sum_{m\in\mathbb{T}_{M}}H^{\mathrm{2D}}(\sigma^{(m)})+\sum_{(k,\,\ell)\in\mathbb{T}_{K}\times\mathbb{T}_{L}}H^{\mathrm{1D}}(\sigma^{\langle k,\,\ell\rangle})\;.\label{e_decH}
\end{equation}
\end{lem}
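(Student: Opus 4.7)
The plan is purely combinatorial: partition the edge set of $\Lambda$ into horizontal (intra-floor) and vertical (inter-floor) edges, then recognize each sum on the right-hand side of \eqref{e_decH} as the count of disagreeing spins on one piece of the partition.

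More precisely, I would first note that since $h=0$, the Hamiltonian \eqref{e_Ham} reduces to $H(\sigma) = \sum_{\{x,y\} \subseteq \Lambda:\, x \sim y} \mathbf{1}\{\sigma(x) \ne \sigma(y)\}$. Every edge $\{x,y\}$ with $x \sim y$ in $\Lambda = \mathbb{T}_K \times \mathbb{T}_L \times \mathbb{T}_M$ falls into exactly one of two disjoint classes: either $x = (k,\ell,m)$ and $y = (k',\ell',m)$ share the same third coordinate, in which case the edge lies entirely in the $m$-th floor; or $x = (k,\ell,m)$ and $y = (k,\ell,m+1)$ share the same first two coordinates, in which case the edge lies entirely in the $(k,\ell)$-th pillar. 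This dichotomy holds because neighboring sites in the $\ell^2$-sense differ in exactly one coordinate by $1$ (mod $K$, $L$, or $M$).

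Summing $\mathbf{1}\{\sigma(x) \ne \sigma(y)\}$ over the first class and recalling the definition \eqref{e_Ham2D} of $H^{\mathrm{2D}}$ together with \eqref{e_sigmam} gives exactly $\sum_{m \in \mathbb{T}_M} H^{\mathrm{2D}}(\sigma^{(m)})$; summing over the second class and recalling \eqref{e_Ham1D} together with \eqref{e_sigmakl} gives exactly $\sum_{(k,\ell) \in \mathbb{T}_K \times \mathbb{T}_L} H^{\mathrm{1D}}(\sigma^{\langle k,\ell \rangle})$. Adding the two yields \eqref{e_decH}.

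There is no real obstacle here; the only thing to be mildly careful about is that the decomposition of edges is indeed a partition, with no double-counting and no missing edges, which is immediate from the product structure of $\Lambda$. (Under open boundary conditions the same proof would work verbatim once the tori $\mathbb{T}_k$ are replaced by intervals $\llbracket 1,k\rrbracket$ and the Hamiltonians $H^{\mathrm{2D}}, H^{\mathrm{1D}}$ are interpreted accordingly.)
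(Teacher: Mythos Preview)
Your proposal is correct and follows essentially the same approach as the paper: both partition the edges of $\Lambda$ into intra-floor edges (where the third coordinate is fixed) and inter-floor edges (where the first two coordinates are fixed), and then identify the two resulting sums with $\sum_m H^{\mathrm{2D}}(\sigma^{(m)})$ and $\sum_{(k,\ell)} H^{\mathrm{1D}}(\sigma^{\langle k,\ell\rangle})$ respectively.
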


\begin{proof}
We can write $H(\sigma)$ as 
\begin{align*}
 & \sum_{m\in\mathbb{T}_{M}}\Big[\,\sum_{k\in\mathbb{T}_{K}}\sum_{\ell\in\mathbb{T}_{L}}\mathbf{1}\{\sigma(k+1,\,\ell,\,m)\neq\sigma(k,\,\ell,\,m)\}+\mathbf{1}\{\sigma(k,\,\ell+1,\,m)\neq\sigma(k,\,\ell,\,m)\}\,\Big]\\
 & +\sum_{k\in\mathbb{T}_{K}}\sum_{\ell\in\mathbb{T}_{L}}\Big[\,\sum_{m\in\mathbb{T}_{M}}\mathbf{1}\{\sigma(k,\,\ell,\,m)\neq\sigma(k,\,\ell,\,m+1)\}\,\Big]\;.
\end{align*}
The first and second lines correspond to the first and second terms
at the right-hand side of \eqref{e_decH}, respectively.
\end{proof}
Based on the previous expression, we deduce the following proposition. 
\begin{prop}[Energy of canonical configurations]
\label{p_energy} The following properties hold.
\begin{enumerate}
\item For each canonical configuration $\sigma$, we have $H(\sigma)\le\Gamma$.
\item For each configuration $\sigma\in\mathcal{C}_{i}^{a,\,b}$ for some
$a,\,b\in S$ and $i\in\llbracket1,\,M-2\rrbracket$, we have 
\[
H(\sigma)\in\llbracket\Gamma-2K-2,\,\Gamma\rrbracket\;.
\]
\end{enumerate}
\end{prop}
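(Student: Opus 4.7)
The plan is to read the energy off the decomposition of Lemma~\ref{l_decH},
\[
H(\sigma) \;=\; \sum_{m\in\mathbb{T}_{M}} H^{\mathrm{2D}}(\sigma^{(m)}) + \sum_{(k,\ell)\in\mathbb{T}_{K}\times\mathbb{T}_{L}} H^{\mathrm{1D}}(\sigma^{\langle k,\ell\rangle}),
\]
and read off each term from the explicit structure of a canonical configuration. Since the coordinate permutations making up $\Upsilon$ preserve the Hamiltonian (being graph automorphisms of $\Lambda$ whenever they are defined), it suffices to treat representatives $\sigma\in\widetilde{\mathcal{C}}_{P,Q}^{a,b}$, where $P\prec Q$, $|P|=i$, $\{m^{*}\}:=Q\setminus P$, and $\eta:=\sigma^{(m^{*})}\in\mathcal{C}^{a,b,\mathrm{2D}}$. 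By definition, the slab $\sigma^{(m)}$ is monochromatic for every $m\neq m^{*}$, so the 2D sum collapses to $H^{\mathrm{2D}}(\eta)$, and only the 1D pillar contributions depend on $i$.

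For part (2), I would take $i\in\llbracket1,M-2\rrbracket$, so that $P$ and $Q^{c}$ are both non-empty proper arcs of $\mathbb{T}_{M}$ separated by $m^{*}$. Along a pillar $(k,\ell)$ the spin sequence on $\mathbb{T}_{M}$ is $b$ on $P$, $a$ on $Q^{c}$, and $c\in\{a,b\}$ at $m^{*}$; a direct check of the two boundary edges of $Q$ shows that one sees exactly two sign changes whether $c=a$ or $c=b$. Hence $H^{\mathrm{1D}}(\sigma^{\langle k,\ell\rangle})=2$ for every pillar and the pillar sum equals $2KL$. Combining with $0\le H^{\mathrm{2D}}(\eta)\le\Gamma^{\mathrm{2D}}=2K+2$ (from \eqref{e_2Denergy} together with Lemma~\ref{l_canpath2}) yields
\[
H(\sigma)\in\llbracket 2KL,\;2KL+2K+2\rrbracket \;=\; \llbracket \Gamma-2K-2,\;\Gamma\rrbracket,
\]
which is part (2).

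For part (1) the remaining cases are $i=0$ and $i=M-1$, which are symmetric under $a\leftrightarrow b$, so I only need to treat $i=0$. Then $P=\emptyset$, $Q=\{m^{*}\}$, and each pillar is all $a$ except possibly at $m^{*}$, giving $H^{\mathrm{1D}}(\sigma^{\langle k,\ell\rangle})=2\,\mathbf{1}\{\eta(k,\ell)=b\}$ and therefore
\[
H(\sigma)\;=\;H^{\mathrm{2D}}(\eta)+2\|\eta\|_{b}.
\]
The bound $H(\sigma)\le\Gamma=2KL+2K+2$ now reduces to a finite case check over the explicit 2D canonical forms $\eta=\xi_{\ell,v}^{a,b}$ and $\eta=\xi_{\ell,v;k,h}^{a,b,\pm}$ from Definition~\ref{d_precanreg2}, using that $\|\eta\|_{b}\in\{Kv,\,Kv+h\}$.

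The genuine work in part (1) is handling the boundary values of $v$. In the bulk $v\in\llbracket1,L-2\rrbracket$, \eqref{e_2Denergy} gives $H^{\mathrm{2D}}(\eta)\in\{2K,2K+2\}$ and the bound follows from $\|\eta\|_{b}\le K(L-1)$. At the degenerate ends $v=0$ and $v=L-1$, however, the canonical 2D configurations collapse to (a partial row) or (its complement), so $H^{\mathrm{2D}}(\eta)$ must be recomputed directly from the perimeter and equals $2h+2$ or $2(K-h)+2$, respectively. A short arithmetic check then shows that the larger $\|\eta\|_{b}$ at $v=L-1$ is exactly compensated by the smaller perimeter, and the worst case gives $H(\sigma)=2KL+2$, which is strictly less than $\Gamma$; this bound is the main subtlety, since overlooking the $v\in\{0,L-1\}$ cases would leave the estimate $H(\sigma)\le\Gamma$ unjustified outside the range of \eqref{e_2Denergy}.
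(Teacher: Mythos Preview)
Your argument is correct, and for part~(2) it matches the paper's proof essentially verbatim. For part~(1), however, you have made the $i\in\{0,M-1\}$ cases much harder than necessary. The paper simply observes that for \emph{every} canonical configuration $\sigma\in\widetilde{\mathcal{C}}_{P,Q}^{a,b}$ and \emph{every} pillar $(k,\ell)$, the set of $b$-spins along $\sigma^{\langle k,\ell\rangle}$ is either $P$ or $Q$ (depending on $\eta(k,\ell)$), hence a single connected arc in $\mathbb{T}_M$; this gives $H^{\mathrm{1D}}(\sigma^{\langle k,\ell\rangle})\le 2$ uniformly in $i$, so the pillar sum is at most $2KL$. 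Combined with the blanket bound $H^{\mathrm{2D}}(\eta)\le 2K+2$ for all $\eta\in\mathcal{C}^{a,b,\mathrm{2D}}$ (stated in the paper immediately before \eqref{e_2Denergy}, and covering all $v$), one obtains $H(\sigma)\le 2KL+(2K+2)=\Gamma$ in one line. Your explicit formula $H(\sigma)=H^{\mathrm{2D}}(\eta)+2\|\eta\|_b$ for $i=0$ is correct, but bounding it requires no case split: $2\|\eta\|_b\le 2KL$ trivially, and $H^{\mathrm{2D}}(\eta)\le 2K+2$ already holds for all canonical $\eta$, so the ``subtlety'' you flag at $v\in\{0,L-1\}$ does not actually arise.
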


\begin{proof}
Observe that for a canonical configuration $\sigma$, we have $H_{1}(\sigma^{\langle k,\,\ell\rangle})\le2$
for all $(k,\,\ell)\in\mathbb{T}_{K}\times\mathbb{T}_{L}$, and $H_{2}(\sigma^{(m)})=0$
for all $m\in\mathbb{T}_{M}\setminus\{m_{0}\}$ for some $m_{0}\in\mathbb{T}_{M}$,
at which it holds that $H_{2}(\sigma^{(m_{0})})\le2K+2$ (cf. \eqref{e_2Denergy}).
Thus, by Lemma \ref{l_decH}, 
\[
H(\sigma)\le(2K+2)+2KL=\Gamma\;.
\]
For part (2), it suffices to additionally observe that $H_{1}(\sigma^{\langle k,\,\ell\rangle})=2$
for all $(k,\,\ell)\in\mathbb{T}_{K}\times\mathbb{T}_{L}$ if $i\in\llbracket1,\,M-2\rrbracket$
and thus 
\[
H(\sigma)\ge2KL=\Gamma-2K-2\;.
\]
\end{proof}
\begin{rem}
\label{r_energy}In particular, we have $H(\sigma)=\Gamma-2K-2=2KL$
for any $\sigma\in\mathcal{R}_{i}^{A,\,B}$, $i\in\llbracket1,\,M-1\rrbracket$.
Hence, a $\Gamma$-path at a regular configuration can evolve in a
non-canonical way, since we still have a spare of $2K+2$ to reach
the energy barrier $\Gamma$. Incorporating all these behaviors in
the metastability analysis is a demanding part of the 3D model. For
this reason, the regular configuration plays a crucial role. We remark
that for the 2D case \cite{BGN,KS 2D,NZ}, any optimal path at a regular
configuration does not have freedom, and that helped a lot simplifying
the arguments.
\end{rem}

\subsection{\label{sec6.3}Canonical paths }

In this subsection, we define 3D canonical paths between ground states.
They generalize the 2D paths recalled in Definition \ref{d_canpath2}.
Refer to Figure \ref{fig6.3} for an illustration.

\begin{figure}
\includegraphics[width=14.5cm]{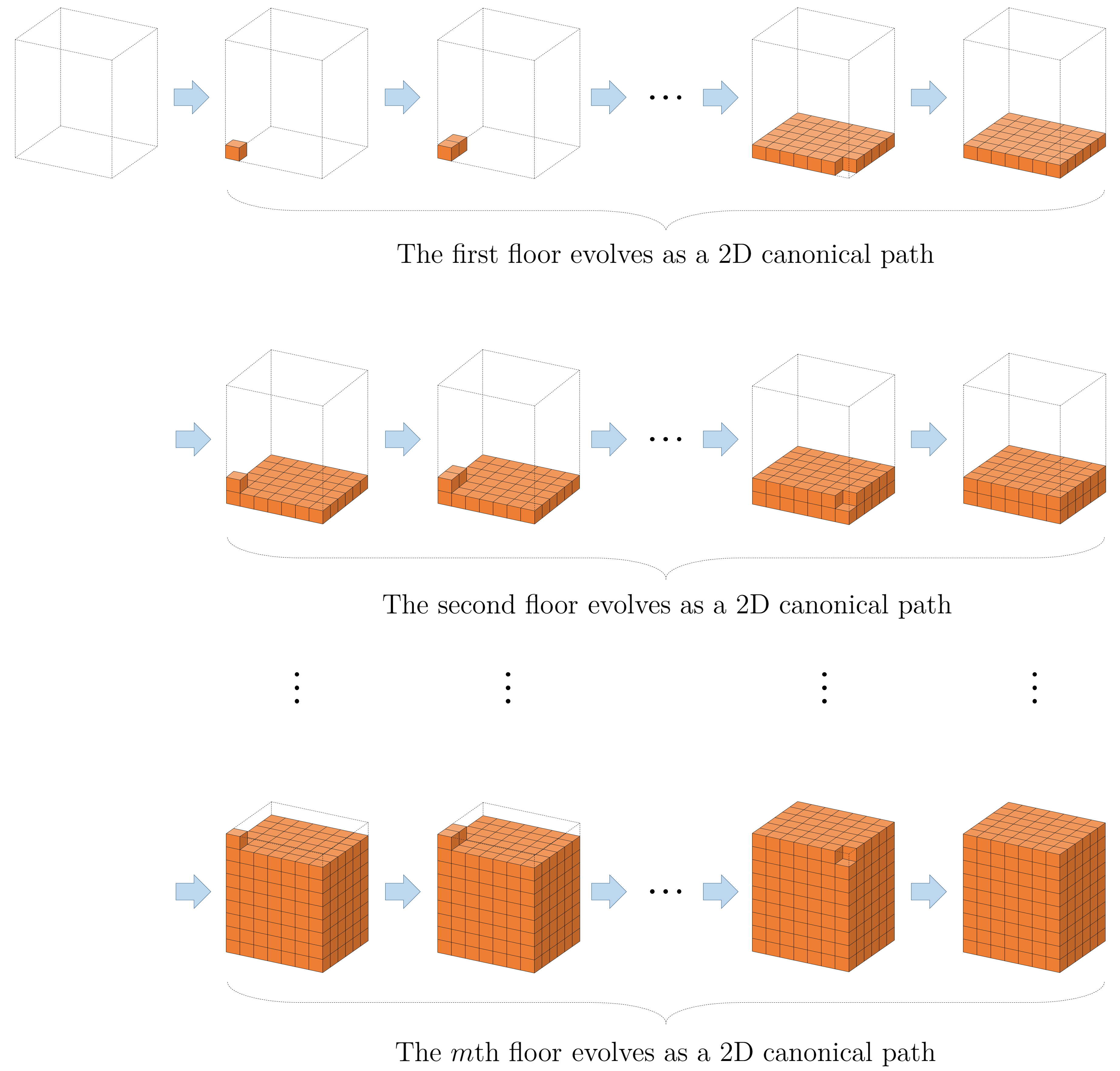}

\caption{\label{fig6.3}\textbf{Canonical path connecting $\mathbf{s}_{a}$
and $\mathbf{s}_{b}$.} }
\end{figure}

\begin{defn}[Canonical paths]
\label{d_canpath} We recall Notation \ref{n_frakS}. Let us fix
$a,\,b\in S$. A path $(\omega_{t})_{t=0}^{KLM}$ is called a \textit{pre-canonical
path} connecting $\mathbf{s}_{a}$ and $\mathbf{s}_{b}$ if there
exists an increasing sequence $(P_{i})_{i=0}^{M}$ in $\mathfrak{S}_{M}$
such that
\begin{itemize}
\item for each $i\in\llbracket0,\,M\rrbracket$, we have that $\omega_{KLi}=\sigma_{P_{i}}^{a,\,b}$
(cf. \eqref{e_sigmaP}), and
\item for each $i\in\llbracket0,\,M-1\rrbracket$, there exists a 2D canonical
path $(\gamma_{t}^{i})_{t=0}^{KL}$ from $\mathbf{s}_{a}^{\mathrm{2D}}$
to $\mathbf{s}_{b}^{\mathrm{2D}}$ defined in Definition \ref{d_canpath2}
such that 
\[
\omega_{t}^{(m)}=\begin{cases}
\mathbf{s}_{b}^{\mathrm{2D}} & \text{if }m\in P_{i}\;,\\
\mathbf{s}_{a}^{\mathrm{2D}} & \text{if }m\in\mathbb{T}_{M}\setminus P_{i+1}\;,\\
\gamma_{t-KLi}^{i} & \text{if }m\in P_{i+1}\setminus P_{i}\;,
\end{cases}\;\;\;\;\text{for all }t\in\llbracket KLi,\,KL(i+1)\rrbracket\;.
\]
\end{itemize}
If $K<L<M$, a path is called a \textit{canonical path} if it is a
pre-canonical path. If $K=L<M$, a path is called a \textit{canonical
path} if it is either a pre-canonical one or the image of a pre-canonical
one with respect to the map $\Theta^{(12)}.$ We can define canonical
paths for the cases of $K<L=M$ and $K=L=M$ in a similar manner. 
\end{defn}

\begin{rem}
\label{r_canpath}We emphasize that for a canonical path $(\omega_{t})_{t=0}^{KLM}$,
all configurations $\omega_{t}$, $t\in\llbracket0,\,KLM\rrbracket$,
are canonical configurations, and hence any canonical path is a $\Gamma$-path
by part (1) of Proposition \ref{p_energy}.
\end{rem}

Canonical paths provide optimal paths between two ground states, and
hence we can confirm the following upper bound for the energy barrier.
\begin{prop}
\label{p_Eub}For $\mathbf{s},\,\mathbf{s}'\in\mathcal{S}$, we have
that $\Phi(\mathbf{s},\,\mathbf{s}')\le\Gamma$.
\end{prop}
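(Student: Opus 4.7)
The plan is to prove the bound by exhibiting an explicit path realizing energy at most $\Gamma$, namely a canonical path between the two ground states, and then invoking the energy estimates already established for canonical configurations.

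More precisely, write $\mathbf{s}=\mathbf{s}_a$ and $\mathbf{s}'=\mathbf{s}_b$ for some $a,b\in S$. First, I would construct an explicit pre-canonical path $(\omega_t)_{t=0}^{KLM}$ connecting $\mathbf{s}_a$ and $\mathbf{s}_b$ as in Definition \ref{d_canpath}. For this I would choose any increasing sequence $(P_i)_{i=0}^{M}$ in $\mathfrak{S}_M$ (for instance $P_i=\llbracket 1,i\rrbracket$), and then, for each $i\in\llbracket 0,M-1\rrbracket$, choose any 2D canonical path $(\gamma_t^i)_{t=0}^{KL}$ connecting $\mathbf{s}_a^{\mathrm{2D}}$ and $\mathbf{s}_b^{\mathrm{2D}}$ (which exists by Definition \ref{d_canpath2}). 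Gluing these layer-by-layer as prescribed in Definition \ref{d_canpath} produces the desired pre-canonical path, which is in particular a canonical path.

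By Remark \ref{r_canpath}, every configuration $\omega_t$ along this canonical path is itself a canonical configuration. Invoking part (1) of Proposition \ref{p_energy}, we obtain $H(\omega_t)\le\Gamma$ for every $t\in\llbracket 0,KLM\rrbracket$. Since $(\omega_t)_{t=0}^{KLM}$ is a path connecting $\mathbf{s}_a$ and $\mathbf{s}_b$ in the sense of Section \ref{sec2.2}, the definition of the communication height yields
\[
\Phi(\mathbf{s}_a,\mathbf{s}_b)\;\le\;\max_{t\in\llbracket 0,KLM\rrbracket}H(\omega_t)\;\le\;\Gamma\;,
\]
which is exactly the claim.

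There is essentially no obstacle here: the result is a direct corollary of the construction of canonical paths together with the energy bound already packaged into Proposition \ref{p_energy}(1). The only very minor point to verify is that the concatenation of 2D canonical paths in successive slabs does yield a bona fide path in $\mathcal{X}$ (i.e., consecutive configurations differ by a single spin flip), but this follows immediately from the gluing described in Definition \ref{d_canpath}, since the transition between the $i$-th and $(i{+}1)$-th block happens exactly when $\gamma^i_{KL}=\mathbf{s}_b^{\mathrm{2D}}$ matches the prescribed spin on the next slab being activated.
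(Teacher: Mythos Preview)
Your proof is correct and follows exactly the paper's approach: the paper's proof is the single line ``By Remark \ref{r_canpath}, it suffices to take a canonical path connecting $\mathbf{s}$ and $\mathbf{s}'$,'' and you have simply unpacked this by explicitly constructing a canonical path and invoking Proposition \ref{p_energy}(1) (which is precisely what Remark \ref{r_canpath} records).
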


\begin{proof}
By Remark \ref{r_canpath}, it suffices to take a canonical path connecting
$\mathbf{s}$ and $\mathbf{s}'$.
\end{proof}
We prove $\Phi(\mathbf{s},\,\mathbf{s}')\ge\Gamma$ in Section \ref{sec8}
to verify $\Phi(\mathbf{s},\,\mathbf{s}')=\Gamma$. This reversed
inequality requires a much more complicated proof.

\subsection{\label{sec6.4}Characterization of the deepest valleys}

We show in this subsection that using the canonical paths, the valleys
in the energy landscape, except for the ones associated to the ground
states, have depths less than $\Gamma$. Note that Theorem \ref{t_energy barrier},
although not yet proved, indicates that the valleys associated to
the ground states have depth $\Gamma$. This characterization of the
depths of other valleys is essentially required since we have to reject
the possibility of being trapped in a deeper valley in the course
of transition. This fact is crucially used in the application of the
pathwise approach to metastability.
\begin{notation}
\label{n_pseudo}For the convenience of notation, we call $(\omega_{t})_{t=0}^{T}$
a pseudo-path if either $\omega_{t}\sim\omega_{t+1}$ or $\omega_{t}=\omega_{t+1}$
for all $t\in\llbracket0,\,T-1\rrbracket$. 
\end{notation}

\begin{prop}
\label{p_depth}For $\sigma\in\mathcal{X}\setminus\mathcal{S}$, we
have 
\[
\Phi(\sigma,\,\mathcal{S})-H(\sigma)\le\Gamma-2<\Gamma\;.
\]
\end{prop}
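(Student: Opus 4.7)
My plan is to construct, for each $\sigma \in \mathcal{X}\setminus\mathcal{S}$, an explicit path from $\sigma$ to some ground state $\mathbf{s}_a$ whose maximum energy is at most $H(\sigma) + \Gamma - 2$. The argument splits according to whether $\sigma$ has a two-dimensionally non-monochromatic slab.

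\textbf{Case I.} Suppose some slab $\sigma^{(m_*)} \notin \mathcal{S}^{\mathrm{2D}}$; then $H^{\mathrm{2D}}(\sigma^{(m_*)}) \ge 4$, since a single non-trivial 2D spin flip already creates four bad edges. Fixing any $a \in S$, I build the path by a 3D standard sequence that first fills slab $m_*$ with spin $a$ site by site via a 2D standard sequence (Definition \ref{d_canpath2}), then cyclically processes slabs $m_*+1,\,m_*+2,\,\ldots,\,m_*-1$ in the same manner. Writing $\tau_n$ for the running configuration and $B_n := \{x \in \Lambda : \tau_n(x) = a\}$, partitioning the edges of $\Lambda$ into those inside $B_n$, inside $B_n^c$, and across $\partial B_n$ yields
\[
H(\tau_n) - H(\sigma) \;\le\; |\partial B_n| \;-\; I(B_n),
\]
where $I(B_n)$ counts the $\sigma$-bad edges with both endpoints in $B_n$. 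During the processing of slab $m_*$, $B_n$ lies in a single 2D slab and a direct count bounds the boundary by $2|A_k| + |\partial_{\mathrm{2D}} A_k| \le 2KL + 2 \le \Gamma - 2$. From the second slab onward, $B_n$ contains slab $m_*$ entirely, so $I(B_n) \ge H^{\mathrm{2D}}(\sigma^{(m_*)}) \ge 4$; combining this with the standard bound $|\partial B_n| \le \Gamma$ for such multi-slab $B_n$ yields $H(\tau_n) \le H(\sigma) + \Gamma - 4$.

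\textbf{Case II.} Now assume every slab is 2D-monochromatic, i.e., $\sigma^{(m)} = \mathbf{s}_{c_m}^{\mathrm{2D}}$ for a non-constant $c \colon \mathbb{T}_M \to S$. I pick any $m_0$, set $a := c_{m_0}$, and apply a wave strategy: at each macro-step select a slab $m$ with $c_m \ne a$ adjacent to a slab currently displaying $\mathbf{s}_a^{\mathrm{2D}}$, and convert it to $\mathbf{s}_a^{\mathrm{2D}}$ by a 2D standard sequence. Labelling the two neighbours of slab $m$ with current spins $c_L, c_R$ so that $c_L = a$ (wave precondition) and defining $\epsilon_\bullet := \mathbf{1}\{c_\bullet \ne a\} - \mathbf{1}\{c_\bullet \ne c_m\} \in \{-1, 0, +1\}$, a direct count shows that the step-$k$ energy change is
\[
|\partial_{\mathrm{2D}} A_k| + |A_k|(\epsilon_L + \epsilon_R), \qquad \epsilon_L = -1,
\]
whose maximum over $k$ and over the three sub-cases for $\epsilon_R$ (attained when $\epsilon_R = +1$) is $\Gamma^{\mathrm{2D}} = 2K + 2$. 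At $k = KL$ the expression reduces to $KL(-1 + \epsilon_R) \le 0$, so $H$ is non-increasing across successive macro-steps; iterating gives $\max_t H(\omega_t) \le H(\sigma) + 2K + 2 \le H(\sigma) + \Gamma - 2$.

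The main obstacle is extracting the saving by $-2$ beyond the naive 3D analog of Lemma \ref{l_depth2}, which would only yield the bound $\Gamma$. The improvement comes entirely from the choice of processing order: in Case I, starting with the non-monochromatic slab $m_*$ both makes the boundary of $B_n$ strictly smaller than $\Gamma$ during its own processing and forces $I(B_n) \ge 4$ at every later critical step (those where $|\partial B_n|$ attains $\Gamma$); in Case II, the wave ordering enforces macro-step monotonicity of $H$, confining each intra-step excursion to at most $\Gamma^{\mathrm{2D}} = 2K + 2$, which is much smaller than $\Gamma - 2$.
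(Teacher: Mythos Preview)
Your argument is correct, but it follows a genuinely different route from the paper's. The paper gives a single unified construction: it picks two spins $a,b$ both present in $\sigma$, fixes a canonical path $(\omega_t)$ from $\mathbf{s}_a$ to $\mathbf{s}_b$ arranged so that the very first updated site $x_0$ already carries spin $b$ in $\sigma$, and defines $\widetilde\omega_t$ by overwriting $A_t=\{x:\omega_t(x)=b\}$ with spin $b$. Using the decomposition of Lemma~\ref{l_decH}, the 2D contribution is bounded by $2K+2$ via Lemma~\ref{l_depth2}, while each of the $KL$ pillars contributes at most $+2$ to the 1D part \emph{except} the pillar through $x_0$, which contributes $\le 0$ because $\sigma(x_0)=b$. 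This single pillar yields exactly the $-2$ saving, giving $H(\widetilde\omega_t)-H(\sigma)\le (2K+2)+2(KL-1)=\Gamma-2$.

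Your approach instead splits on whether some slab is non-monochromatic. In Case~I the saving mechanism is entirely different: you extract $I(B_n)\ge H^{\mathrm{2D}}(\sigma^{(m_*)})\ge 4$ from the interior bad edges of the distinguished slab (in fact this yields the stronger bound $\Gamma-4$ once past the first slab). In Case~II the wave ordering makes the energy non-increasing across macro-steps, so the intra-step excursion of size $2K+2$ is all that matters --- a much smaller bound than $\Gamma-2$. The paper's proof is more economical (no case split, one clever choice of $x_0$), while yours is more explicit about \emph{where} the improvement over the naive $\Gamma$ bound comes from in each regime, and in Case~I actually delivers a slightly sharper constant. One small remark: your justification ``a single non-trivial 2D spin flip already creates four bad edges'' does not by itself prove $H^{\mathrm{2D}}(\eta)\ge 4$ for \emph{all} non-monochromatic $\eta$; the clean argument is that $\mathbb{T}_K\times\mathbb{T}_L$ is $4$-edge-connected, so any proper non-empty subset has edge boundary at least $4$.
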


\begin{proof}
Main idea of the proof is inherited from the proof of \cite[Theorem 2.1]{NZ}.
Let us find two spins $a,\,b\in S$ so that $\sigma$ has spins $a$
and $b$ at some sites, which is clearly possible since $\sigma\notin\mathcal{S}$.
Let us fix a canonical path $(\omega_{t})_{t=0}^{KLM}$ connecting
$\mathbf{s}_{a}$ and $\mathbf{s}_{b}$. Then, we write 
\[
A_{t}=\{x\in\Lambda:\omega_{t}(x)=b\}\;\;\;\;;\;t\in\llbracket0,\,KLM\rrbracket\;,
\]
so that we have $\emptyset=A_{0}\subseteq A_{1}\subseteq\cdots\subseteq A_{KLM}=\Lambda$
and $|A_{t}|=t$ for all $t\in\llbracket0,\,KLM\rrbracket$. We can
take the path $(\omega_{t})_{t=0}^{KLM}$ in a way that
\begin{equation}
A_{1}=\{x_{0}\}\;\;\;\;\text{and}\;\;\;\;\sigma(x_{0})=b\;.\label{e_depth}
\end{equation}
Now, we define a pseudo-path (cf. Notation \ref{n_pseudo}) $(\widetilde{\omega}_{t})_{t=0}^{KLM}$
connecting $\sigma$ and $\mathbf{s}_{b}$ as 
\[
\widetilde{\omega}_{t}(x)=\begin{cases}
\sigma(x) & \text{if }x\notin A_{t}\;,\\
b & \text{if }x\in A_{t}\;.
\end{cases}
\]
In other words, we update the spins in an exactly same manner with
the canonical path $(\omega_{t})_{t=0}^{KLM}$. We claim that 
\begin{equation}
H(\widetilde{\omega}_{t})-H(\sigma)\le2KL+2K=\Gamma-2\;\;\;\;\text{for all }t\in\llbracket0,\,KLM\rrbracket\;.\label{e_depth2}
\end{equation}
It is immediate that this claim concludes the proof. To prove this
claim, we recall the decomposition obtained in Lemma \ref{l_decH}
and write $\widetilde{\omega}_{t}=\zeta$. Then, we can write $H(\zeta)-H(\sigma)$
as
\begin{equation}
\sum_{m\in\mathbb{T}_{M}}[H^{\mathrm{2D}}(\zeta^{(m)})-H^{\mathrm{2D}}(\sigma^{(m)})]+\sum_{(k,\,\ell)\in\mathbb{T}_{K}\times\mathbb{T}_{L}}[H^{\mathrm{1D}}(\zeta^{\langle k,\,\ell\rangle})-H^{\mathrm{1D}}(\sigma^{\langle k,\,\ell\rangle})]\;.\label{e_depth3}
\end{equation}
Let us first consider the first summation of \eqref{e_depth3}. We
suppose that $t\in\llbracket KLi,\,KL(i+1)\rrbracket$ and write $\omega_{KLi}=\sigma_{P}^{a,\,b}$
and $\omega_{KL(i+1)}=\sigma_{Q}^{a,\,b}$ where $P\prec Q$. Write
$Q\setminus P=\{m'\}$. Then, we have that 
\begin{equation}
H^{\mathrm{2D}}(\zeta^{(m)})-H^{\mathrm{2D}}(\sigma^{(m)})=\begin{cases}
-H^{\mathrm{2D}}(\sigma^{(m)})\le0 & \text{if }m\in P\;,\\
0 & \text{if }m\in Q^{c}\;,
\end{cases}\label{e_depth4}
\end{equation}
since $\zeta^{(m)}=\mathbf{s}_{b}^{\mathrm{2D}}$ for $m\in P$ and
$\zeta^{(m)}=\sigma^{(m)}$ for $m\in Q^{c}$. On the other hand,
by Lemma \ref{l_depth2}, we have that 
\begin{equation}
H^{\mathrm{2D}}(\zeta^{(m')})-H^{\mathrm{2D}}(\sigma^{(m')})\le2K+2\;.\label{e_depth5}
\end{equation}
By \eqref{e_depth4} and \eqref{e_depth5}, we conclude that 
\begin{equation}
\sum_{m\in\mathbb{T}_{M}}[H^{\mathrm{2D}}(\zeta^{(m)})-H^{\mathrm{2D}}(\sigma^{(m)})]\le2K+2\;.\label{e_depth6}
\end{equation}
Now, we turn to the second summation of \eqref{e_depth3}. Note that
$\zeta^{\langle k,\,\ell\rangle}$ is obtained from $\sigma^{\langle k,\,\ell\rangle}$
by flipping the spins in consecutive sites in $Q$ to $b$. From this,
we can readily deduce that
\begin{equation}
H^{\mathrm{1D}}(\zeta^{\langle k,\,\ell\rangle})-H^{\mathrm{1D}}(\sigma^{\langle k,\,\ell\rangle})\le2\;\;\;\;\text{for all }k\in\mathbb{T}_{K}\text{ and }\ell\in\mathbb{T}_{L}\;.\label{e_depth7}
\end{equation}
Moreover, if $x_{0}=(k_{0},\,\ell_{0},\,m_{0})$, we can check that
\begin{equation}
H^{\mathrm{1D}}(\zeta^{\langle k_{0},\,\ell_{0}\rangle})-H^{\mathrm{1D}}(\sigma^{\langle k_{0},\,\ell_{0}\rangle})\le0\;.\label{e_depth8}
\end{equation}
By \eqref{e_depth7} and \eqref{e_depth8}, we get 
\begin{equation}
\sum_{(k,\,\ell)\in\mathbb{T}_{K}\times\mathbb{T}_{L}}[H^{\mathrm{1D}}(\zeta^{\langle k,\,\ell\rangle})-H^{\mathrm{1D}}(\sigma^{\langle k,\,\ell\rangle})]\le2(KL-1)\;.\label{e_depth9}
\end{equation}
Now, the claim \eqref{e_depth2} follows from \eqref{e_depth3}, \eqref{e_depth6},
and \eqref{e_depth9}.
\end{proof}

\subsection{\label{sec6.5}Auxiliary result on saddle configurations}

In the 2D case, in the analysis of the energy landscape, the collection
$\mathcal{R}_{2}^{\mathrm{2D}}$ plays a significant role since to
make an optimal transition (not exceeding the energy barrier $2K+2$),
we may skip the collection $\mathcal{R}_{1}^{\mathrm{2D}}$ but must
pass through $\mathcal{R}_{2}^{\mathrm{2D}}$. Thus, the integer $2$
worked as some kind of a threshold for metastable transitions. We
expect a similar pattern in the 3D case, and we briefly explain this
phenomenon in this subsection.

Let us define
\begin{equation}
\mathfrak{m}_{K}=\lfloor K^{2/3}\rfloor\;.\label{e_mK}
\end{equation}
Then, we shall prove in Corollary \ref{c_H3lb} below that
\begin{equation}
\Phi(\mathbf{s}_{a},\,\sigma_{\llbracket1,\,n\rrbracket}^{a,\,b})=\Gamma\;\;\;\;\text{for all }n\in\llbracket\mathfrak{m}_{K},\,M-\mathfrak{m}_{K}\rrbracket\;.\label{e_mKprop}
\end{equation}
Thus, we can define (cf. Figure \ref{fig7.2} below)
\begin{equation}
n_{K,\,L,\,M}=\min\,\{n\in\llbracket1,\,M-1\rrbracket:\Phi(\mathbf{s}_{a},\,\sigma_{\llbracket1,\,n\rrbracket}^{a,\,b})=\Gamma\}\;.\label{e_nKLM}
\end{equation}
We strongly believe that this quantity does not depend on $M$, but
we do not have a proof for it at the moment. Note that this number
was just $2$ in the 2D case. In the 3D model, we do not know this
number exactly, since non-canonical movements at the early stage of
transitions are hard to characterize. However, the upper bound $n_{K,\,L,\,M}\le\mathfrak{m}_{K}=\lfloor K^{2/3}\rfloor$
obtained from \eqref{e_mKprop} is enough for our purpose, as we shall
see later.

The main result of this subsection is the corresponding lower bound.
This result will not be used in the proofs later, but emphasizes the
complexity of the energy landscape near ground states. 
\begin{prop}
\label{p_nKLMlb}We have $n_{K,\,L,\,M}\ge\lfloor K^{1/2}\rfloor$.
\end{prop}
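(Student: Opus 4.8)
The plan is to show that for every $n < \lfloor K^{1/2}\rfloor$ one has $\Phi(\mathbf{s}_a,\,\sigma_{\llbracket 1,\,n\rrbracket}^{a,\,b}) < \Gamma$, i.e.\ there is a path from $\mathbf{s}_a$ to the regular configuration with $n$ consecutive $b$-floors whose energy never reaches $\Gamma$. First I would use the decomposition of Lemma \ref{l_decH}, $H(\sigma)=\sum_m H^{\mathrm{2D}}(\sigma^{(m)})+\sum_{(k,\ell)}H^{\mathrm{1D}}(\sigma^{\langle k,\ell\rangle})$, to bound the energy along a carefully chosen path. The key geometric observation is that a droplet of $b$-spins occupying an $n\times n\times n$ sub-box (placed so its pillars are short, using $n<\lfloor K^{1/2}\rfloor \le \lfloor L^{1/2}\rfloor\le\lfloor M^{1/2}\rfloor$) has surface energy of order $6n^2 < 6K < 2KL+2K+2 = \Gamma$ for large $K$, since $n^2 < K \le KL$. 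So small cubic droplets are energetically cheap, well below $\Gamma$.

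The strategy of the path is as follows. Starting from $\mathbf{s}_a$, grow a single $b$-cube of side $n$ layer by layer in the standard (canonical-type) fashion within the first $n$ floors; by the droplet bound above, every intermediate configuration has energy bounded by roughly $6n^2 + O(n) < \Gamma$. Once we have the full $n\times n\times n$ cube of $b$ sitting in floors $\llbracket 1,n\rrbracket$, we extend it to the full slab configuration $\sigma_{\llbracket 1,n\rrbracket}^{a,b}$: we enlarge the $b$-region within each of the floors $1,\dots,n$ one floor at a time, filling a $K\times L$ rectangle. Filling one 2D floor from an $n\times n$ square to all of $\mathbb{T}_K\times\mathbb{T}_L$ via a 2D standard sequence costs at most $\Gamma^{\mathrm{2D}}=2K+2$ extra energy on that floor (by Lemma \ref{l_canpath2} / Lemma \ref{l_depth2}), and the pillar contribution stays bounded since only $n$ floors are ever involved. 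A direct application of Lemma \ref{l_depth2} to the relevant 2D floor, combined with the fact that the other floors contribute $H^{\mathrm{2D}}$ at most $O(n)$ each during this phase (they are either monochromatic or a partially-filled rectangle adjacent to the growing one) and there are $\le 2n$ such non-trivial floors, gives a total energy bound of order $2K + O(n^2) + O(n\cdot n) = 2K + O(K) \cdot$(small constant), which I need to verify is $< \Gamma = 2KL+2K+2$; this holds comfortably once $L\ge 2$ and $K$ is large because the $O(n^2)$ and $O(n^2)$ terms are $o(KL)$.

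The main obstacle will be the bookkeeping in the second phase: arranging the order in which the $n$ floors get filled so that at every moment the total 2D energy $\sum_m H^{\mathrm{2D}}(\sigma^{(m)})$ plus the 1D pillar energy $\sum_{(k,\ell)} H^{\mathrm{1D}}(\sigma^{\langle k,\ell\rangle})$ stays strictly below $\Gamma$. The delicate point is that while one floor is being filled (costing up to $2K+2$), the adjacent floors and the pillars also contribute, and one must make sure these contributions do not stack up to reach $2KL+2K+2$. The cleanest way to control this is to fill the floors one at a time from floor $1$ to floor $n$, so that at any instant at most one floor is "in progress" (energy $\le 2K+2 + O(n)$), the floors already done are full rectangles contributing a fixed small amount, the floors not yet started are still small squares, and the pillars — which only span the $n$ active floors plus their two monochromatic neighbors — contribute $O(n\cdot K)$ but with a constant small enough that the sum is $< \Gamma$ for $n < \lfloor K^{1/2}\rfloor$ and $K$ large. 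Once this ordering is fixed, the energy estimate reduces to a routine application of Lemmas \ref{l_decH}, \ref{l_canpath2}, and \ref{l_depth2}, and the contradiction with the definition \eqref{e_nKLM} of $n_{K,L,M}$ completes the proof.
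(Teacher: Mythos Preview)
Your approach has a genuine gap in the energy bookkeeping for phase~2. The claim that ``the pillars \ldots\ contribute $O(n\cdot K)$'' is incorrect. Once floor~$1$ has been completely filled with spin $b$, \emph{every} pillar $(k,\ell)\in\mathbb{T}_K\times\mathbb{T}_L$ contains at least one $b$-spin, so each contributes $H^{\mathrm{1D}}\ge 2$, giving total 1D energy $2KL$, not $O(nK)$. At that moment floors $2,\dots,n$ still carry their $n\times n$ squares, contributing $4n(n-1)$ to the 2D energy. Hence by Lemma~\ref{l_decH} the total energy is $2KL+4n(n-1)$, which exceeds $\Gamma=2KL+2K+2$ whenever $2n(n-1)>K+1$, i.e.\ for $n$ roughly larger than $\sqrt{K/2}$. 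So your path actually reaches $\Gamma$ well before $n$ gets to $\lfloor K^{1/2}\rfloor-1$, and the argument as written would at best yield $n_{K,L,M}\ge c\sqrt{K}$ for some constant $c<1$.

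The paper avoids this by taking the reverse direction (erasing the slab back to $\mathbf{s}_a$) and, crucially, by first removing a tunnel $\llbracket1,K\rrbracket\times\llbracket1,n\rrbracket\times\llbracket1,n\rrbracket$ that spans the entire $\mathbb{T}_K$-direction, column by column. The point is that throughout this phase every floor $m\in\llbracket1,n\rrbracket$ is modified in lockstep, so one never has the mismatch of one fully monochromatic floor sitting next to $n-1$ floors that still carry a small square. A direct site-by-site computation shows the maximum energy along this path is $2KL+2n^2+2n-2$, and $2n^2+2n-2\le 2K$ precisely when $n\le\lfloor K^{1/2}\rfloor-1$, giving the sharp bound. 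If you want to salvage your forward construction, you would need to abandon the ``cube then floors'' ordering and instead extend the initial droplet through all of $\mathbb{T}_K$ first (producing a $K\times n\times n$ box), then extend in the $L$-direction --- which is essentially the paper's path run in reverse.
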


\begin{proof}
It suffices to prove that 
\[
\Phi(\mathbf{s}_{1},\,\sigma_{\llbracket1,\,n\rrbracket}^{1,\,2})\le\Gamma-2\;\;\;\;\text{ for all }n\in\llbracket1,\,\lfloor K^{1/2}\rfloor-1\rrbracket\;.
\]
We fix such an $n$ and write $\sigma=\sigma_{\llbracket1,\,n\rrbracket}^{1,\,2}$.
We now construct an explicit path from $\sigma$  to $\mathbf{s}_{1}$
without exceeding the energy $\Gamma-2$. Note that $\sigma_{\llbracket1,\,n\rrbracket}^{1,\,2}$
has spins $2$ at $\mathbb{T}_{K}\times\mathbb{T}_{L}\times\llbracket1,\,n\rrbracket$
and spins $1$ at all the other sites. In this proof, we regard $\mathbb{T}_{K}=\llbracket1,\,K\rrbracket$
and $\mathbb{T}_{L}=\llbracket1,\,L\rrbracket$ in order to simplify
the explanation of the order of spin flips in a lexicographic manner.
\begin{itemize}
\item First, starting from $\sigma$, we change spins $2$ to $1$ in $\llbracket1,\,K\rrbracket\times\llbracket1,\,n\rrbracket\times\llbracket1,\,n\rrbracket$
in ascending lexicographic order. Denote by $\zeta\in\mathcal{X}$
the obtained spin configuration, which has spins $2$ only on $\llbracket1,\,K\rrbracket\times\llbracket n+1,\,L\rrbracket\times\llbracket1,\,n\rrbracket$.
Then, the variation of the Hamiltonian from $\sigma$ to $\zeta$
can be expressed by the following $n\times n$ matrices:
\[
\begin{bmatrix}+2\;+0\;\cdots\;+0\\
+4\;+2\;\cdots\;+2\\
\vdots\\
+4\;+2\;\cdots\;+2
\end{bmatrix}\;,\;\;\;\begin{bmatrix}+0\;-2\;\cdots\;-2\\
\mathbf{+2}\;+0\;\cdots\;+0\\
\vdots\\
+2\;+0\;\cdots\;+0
\end{bmatrix}\times(K-2)\;,\;\;\;\text{and }\begin{bmatrix}-2\;-4\;\cdots\;-4\\
+0\;-2\;\cdots\;-2\\
\vdots\\
+0\;-2\;\cdots\;-2
\end{bmatrix}\;.
\]
Here, each $n\times n$ matrix represents $\{i\}\times\llbracket1,\,n\rrbracket\times\llbracket1,\,n\rrbracket$
for $1\le i\le K$, in which the numbers represent the variation of
the energy which should be read in ascending lexicographic order.
From this path, we obtain
\begin{equation}
\Phi(\sigma,\,\zeta)\le2KL+2n^{2}+2n-2\;,\label{e_nKLMlb}
\end{equation}
where the maximum of the energy is obtained right after flipping the
spin at $(2,\,1,\,n-1)$, which is denoted by bold font at the matrices
above.
\item Next, starting from $\zeta$, we change spins $2$ to $1$ in $\llbracket1,\,K\rrbracket\times\{i\}\times\llbracket1,\,n\rrbracket$
in the ascending lexicographic order for $i\in\llbracket n+1,\,L-1\rrbracket$,
from $i=n+1$ to $i=L-1$. Denote by $\zeta'\in\mathcal{X}$ the obtained
spin configuration, which has spins $2$ only on $\llbracket1,\,K\rrbracket\times\{L\}\times\llbracket1,\,n\rrbracket$.
In each step, the variation of the Hamiltonian is represented by the
$n\times K$ matrix
\[
\begin{bmatrix}+0\;-2\;\cdots\;-2\;-4\\
+2\;+0\;\cdots\;+0\;-2\\
\vdots\\
\mathbf{+2}\;+0\;\cdots\;+0\;-2
\end{bmatrix}\;.
\]
Since $H(\zeta)=2KL$, we can verify that 
\begin{equation}
\Phi(\zeta,\,\zeta')\le2KL+2\;,\label{e_nKLMlb2}
\end{equation}
where the maximum is obtained right after flipping the spin at $(1,\,n+1,\,1)$
(cf. bold font $\mathbf{+2}$).
\item Finally, starting from $\zeta'$, we change spins $2$ to $1$ in
the ascending lexicographic order. The variation of the Hamiltonian
is represented by
\[
\begin{bmatrix}-2\;-4\;\cdots\;-4\;-6\\
+0\;-2\;\cdots\;-2\;-4\\
\vdots\\
+0\;-2\;\cdots\;-2\;-4
\end{bmatrix}\;.
\]
Hence, the Hamiltonian monotonically decreases from $H(\zeta')=2K(n+1)$
to arrive at $H(\mathbf{s}_{1})=0$. Hence, we have
\begin{equation}
\Phi(\zeta',\,\mathbf{s}_{1})\le2K(n+1)\;.\label{e_nKLMlb3}
\end{equation}
\end{itemize}
Therefore, by \eqref{e_nKLMlb}, \eqref{e_nKLMlb2}, and \eqref{e_nKLMlb3},
we have
\[
\Phi(\sigma,\,\mathbf{s}_{1})\le2KL+2n^{2}+2n-2\;.
\]
Since $n\in\llbracket1,\,\lfloor K^{1/2}\rfloor-1\rrbracket$, it
holds that $2n^{2}+2n-2\le2K$. This concludes the proof.
\end{proof}

\section{\label{sec7}Gateway Configurations}

In the analysis of the 3D model, a crucial notion is the concept of
gateway configurations. The gateway configurations of the 3D model
play a far more significant role than those of the 2D model.

We fix a proper partition $(A,\,B)$ of $S$ throughout this section.

\subsection{\label{sec7.1}Gateway configurations}

We refer to Figure \ref{fig7.1} for an illustration of gateway configurations
defined below.

\begin{figure}
\includegraphics[width=14.5cm]{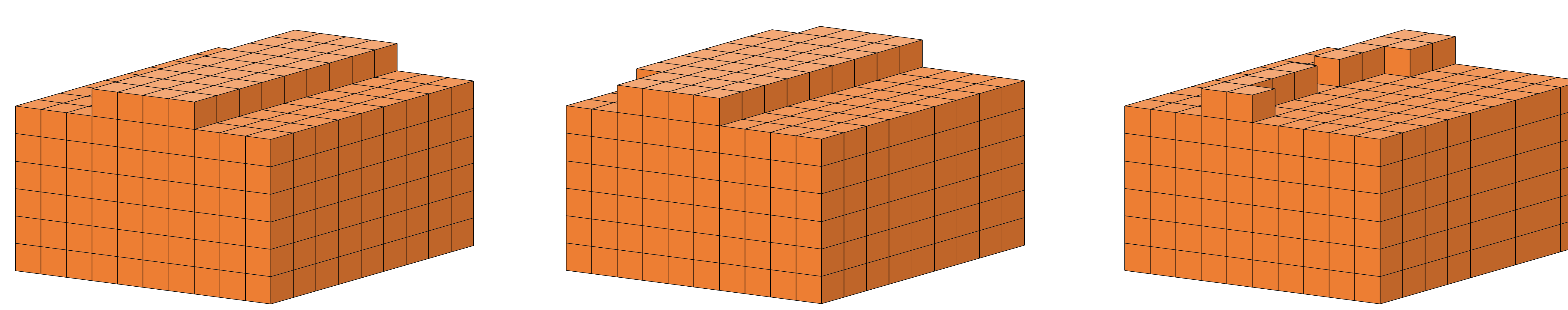}\caption{\label{fig7.1}\textbf{Examples of gateway configurations.} Each configuration
above represents a gateway configuration of type $1$ (left), type
$2$ (middle), or type $3$ (right), respectively.}
\end{figure}

\begin{defn}[Gateway configurations]
\label{d_gate} For $a,\,b\in S$ and $P,\,Q\in\mathfrak{S}_{M}$
with $P\prec Q$, we define $\mathcal{\widetilde{G}}_{P,\,Q}^{a,\,b}\subseteq\widetilde{\mathcal{C}}_{P,\,Q}^{a,\,b}$
as 
\[
\sigma\in\widetilde{\mathcal{G}}_{P,\,Q}^{a,\,b}\;\;\;\Leftrightarrow\;\;\;\begin{cases}
\sigma^{(m)}=\mathbf{s}_{b}^{\mathrm{2D}} & \text{if }m\in P\;,\\
\sigma^{(m)}=\mathbf{s}_{a}^{\mathrm{2D}} & \text{if }m\in Q^{c}\;,\\
\sigma^{(m)}\in\mathcal{G}^{a,\,b,\,\mathrm{2D}} & \text{if }m\in Q\setminus P\;,
\end{cases}
\]
where $\mathcal{G}^{a,\,b,\,\mathrm{2D}}$ is defined in Definition
\ref{d_gate2}. Then, we define (cf. Notation \ref{n_upe})
\[
\mathcal{G}_{P,\,Q}^{a,\,b}=\Upsilon(\widetilde{\mathcal{G}}_{P,\,Q}^{a,\,b})\;.
\]
Then, recall $\mathfrak{m}_{K}$ from \eqref{e_mK} and define, for
$i\in\llbracket0,\,M-1\rrbracket$, 
\begin{align}
\mathcal{G}_{i}^{a,\,b} & =\bigcup_{P,\,Q\in\mathfrak{S}_{M}:\,|P|=i\text{ and }P\prec Q}\mathcal{G}_{P,\,Q}^{a,\,b}\;\;\;\;\text{and}\;\;\;\;\mathcal{G}^{a,\,b}=\bigcup_{i=\mathfrak{m}_{K}-1}^{M-\mathfrak{m}_{K}}\mathcal{G}_{i}^{a,\,b}\;.\label{e_gate}
\end{align}
Notice that the crucial difference between \eqref{e_gate} and \eqref{e_can2}
is the fact that the second union in \eqref{e_gate} is taken only
over $i\in\llbracket\mathfrak{m}_{K}-1,\,M-\mathfrak{m}_{K}\rrbracket$.
This is related to \eqref{e_mKprop}, and we give a more detailed
reasoning in Section \ref{sec7.2}. A configuration belonging to $\mathcal{G}^{a,\,b}$
for some $a,\,b\in S$ is called a \textit{gateway configuration}.

Finally, for a proper partition $(A,\,B)$ of $S$ (which is fixed
throughout the current section), we write for $i\in\llbracket0,\,M-1\rrbracket$,
\begin{equation}
\mathcal{G}_{i}^{A,\,B}=\bigcup_{a\in A}\bigcup_{b\in B}\mathcal{G}_{i}^{a,\,b}\;\;\;\;\text{and}\;\;\;\;\mathcal{G}^{A,\,B}=\bigcup_{a\in A}\bigcup_{b\in B}\mathcal{G}^{a,\,b}\;.\label{e_gate2}
\end{equation}
\end{defn}

\begin{notation}
\label{n_gate}For $a,\,b\in S$ and $P,\,Q\in\mathfrak{S}_{M}$ with
$P\prec Q$, $Q\setminus P=\{m_{0}\}$, and $|P|\in\llbracket\mathfrak{m}_{K}-1,\,M-\mathfrak{m}_{K}\rrbracket$,
we decompose 
\[
\widetilde{\mathcal{G}}_{P,\,Q}^{a,\,b}=\widetilde{\mathcal{G}}_{P,\,Q}^{a,\,b,\,[1]}\cup\widetilde{\mathcal{G}}_{P,\,Q}^{a,\,b,\,[2]}\cup\widetilde{\mathcal{G}}_{P,\,Q}^{a,\,b,\,[3]}\;,
\]
where (cf. \eqref{e_Bab}, \eqref{e_BabGamma}, and \eqref{e_gate2def})
\begin{align*}
\widetilde{\mathcal{G}}_{P,\,Q}^{a,\,b,\,[1]} & =\{\sigma\in\widetilde{\mathcal{G}}_{P,\,Q}^{a,\,b}:\sigma^{(m_{0})}\in\mathcal{B}^{a,\,b,\,\mathrm{2D}}\setminus\mathcal{B}_{\Gamma}^{a,\,b,\,\mathrm{2D}}\}\;,\\
\widetilde{\mathcal{G}}_{P,\,Q}^{a,\,b,\,[2]} & =\{\sigma\in\widetilde{\mathcal{G}}_{P,\,Q}^{a,\,b}:\sigma^{(m_{0})}\in\mathcal{B}_{\Gamma}^{a,\,b,\,\mathrm{2D}}\}\;,\\
\widetilde{\mathcal{G}}_{P,\,Q}^{a,\,b,\,[3]} & =\{\sigma\in\widetilde{\mathcal{G}}_{P,\,Q}^{a,\,b}:\sigma^{(m_{0})}\in\mathcal{Z}^{a,\,b,\,\mathrm{2D}}\cup\mathcal{Z}^{b,\,a,\,\mathrm{2D}}\}\;.
\end{align*}
Then, write $\mathcal{G}_{P,\,Q}^{a,\,b,\,[n]}=\Upsilon(\widetilde{\mathcal{G}}_{P,\,Q}^{a,\,b,\,[n]})$,
$n\in\{1,\,2,\,3\}$. A configuration $\sigma\in\mathcal{G}^{A,\,B}$
is called a \textit{gateway configuration of type $n$}, $n\in\{1,\,2,\,3\}$,\textit{
if }$\sigma\in\mathcal{G}_{P,\,Q}^{a,\,b,\,[n]}$ for some $a\in A$,
$b\in B$ and $P,\,Q\in\mathfrak{S}_{M}$ with $P\prec Q$.
\end{notation}

The following proposition is direct from the definition of gateway
configurations.
\begin{prop}
\label{p_gateE}For $\sigma\in\mathcal{G}^{A,\,B}$, we have $H(\sigma)\in\{\Gamma-2,\,\Gamma\}$.
Moreover, we have $H(\sigma)=\Gamma-2$ if and only if $\sigma$ is
a gateway configuration of type $1$ and $H(\sigma)=\Gamma$ if and
only if $\sigma$ is a gateway configuration of type $2$ or $3$.
\end{prop}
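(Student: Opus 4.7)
The plan is to evaluate $H(\sigma)$ by applying the decomposition of Lemma \ref{l_decH}. Since the Hamiltonian is invariant under every coordinate-permutation map $\Theta^{(ij)}$, it suffices to treat a representative $\sigma\in\widetilde{\mathcal{G}}_{P,\,Q}^{a,\,b}$ for some $a\in A$, $b\in B$, and $P,\,Q\in\mathfrak{S}_M$ with $P\prec Q$, $|P|\in\llbracket\mathfrak{m}_K-1,\,M-\mathfrak{m}_K\rrbracket$, and $Q\setminus P=\{m_0\}$.

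First I would compute the 1D contribution $\sum_{(k,\ell)\in\mathbb{T}_K\times\mathbb{T}_L}H^{\mathrm{1D}}(\sigma^{\langle k,\ell\rangle})$. For each pillar, the 1D configuration $\sigma^{\langle k,\ell\rangle}$ equals $b$ on $P$, $a$ on $Q^c$, and either $a$ or $b$ on $\{m_0\}$. Because $\mathfrak{m}_K\ge 2$, both $P$ and $Q^c$ are non-empty, and because $P,\,Q\in\mathfrak{S}_M$ are connected arcs of the cycle $\mathbb{T}_M$ (so $P^c$ and $Q^c$ are as well), the set of sites in the pillar with spin $b$ is either $P$ or $Q$---in both cases a connected non-trivial arc with connected complement. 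Consequently $H^{\mathrm{1D}}(\sigma^{\langle k,\ell\rangle})=2$ for every pillar, giving a total 1D contribution of exactly $2KL$.

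Next I would evaluate the 2D contribution $\sum_{m\in\mathbb{T}_M}H^{\mathrm{2D}}(\sigma^{(m)})$. Floors with $m\in P$ or $m\in Q^c$ are monochromatic and contribute $0$, so only $H^{\mathrm{2D}}(\sigma^{(m_0)})$ survives, and I would handle the three types of Notation \ref{n_gate} separately. For type $1$, the identities \eqref{e_Bab} and \eqref{e_BabGamma} give $\mathcal{B}^{a,\,b,\,\mathrm{2D}}\setminus\mathcal{B}_\Gamma^{a,\,b,\,\mathrm{2D}}=\bigcup_{v\in\llbracket 2,\,L-2\rrbracket}\mathcal{R}_v^{a,\,b,\,\mathrm{2D}}$, and \eqref{e_2Denergy} yields $H^{\mathrm{2D}}(\sigma^{(m_0)})=2K$. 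For type $2$, $\sigma^{(m_0)}$ lies in some $\mathcal{Q}_v^{a,\,b,\,\mathrm{2D}}$, so \eqref{e_2Denergy} gives $H^{\mathrm{2D}}(\sigma^{(m_0)})=2K+2$. For type $3$, $\sigma^{(m_0)}\in\mathcal{Z}^{a,\,b,\,\mathrm{2D}}\cup\mathcal{Z}^{b,\,a,\,\mathrm{2D}}$, and the defining requirement \eqref{e_Zab2def} that every intermediate configuration of the path realizing membership has energy $\Gamma^{\mathrm{2D}}$ forces $H^{\mathrm{2D}}(\sigma^{(m_0)})=\Gamma^{\mathrm{2D}}=2K+2$.

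Summing the two contributions produces $H(\sigma)=2KL+2K=\Gamma-2$ for type $1$, and $H(\sigma)=2KL+2K+2=\Gamma$ for types $2$ and $3$. Since Notation \ref{n_gate} exhibits the three types as a disjoint decomposition of $\mathcal{G}^{A,\,B}$ and they yield exactly the two distinct energies $\Gamma-2$ and $\Gamma$, both the direct claim $H(\sigma)\in\{\Gamma-2,\,\Gamma\}$ and the two equivalences follow simultaneously. I do not foresee any real obstacle: the decisive observation is that the connectivity of $P$ and $Q$ inside $\mathbb{T}_M$ forces every pillar to sit at the minimal 1D energy $2$, after which the entire question reduces to the 2D energy of the single non-trivial slab $\sigma^{(m_0)}$, where the three-type dichotomy has been designed precisely to track the values $2K$ versus $2K+2$.
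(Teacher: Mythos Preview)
Your proposal is correct and follows essentially the same route as the paper: reduce by $\Upsilon$-invariance to $\sigma\in\widetilde{\mathcal{G}}_{P,Q}^{a,b}$, apply the decomposition of Lemma~\ref{l_decH} to get $H(\sigma)=H^{\mathrm{2D}}(\sigma^{(m_0)})+2KL$, and then read off the value $2K$ or $2K+2$ according to the type. Your version simply spells out in more detail why each pillar contributes exactly $2$ and why each type gives the stated 2D energy, but the argument is the same.
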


\begin{proof}
Let $\sigma\in\widetilde{\mathcal{G}}_{P,\,Q}^{a,\,b}$ for some $a\in A,\,b\in B$
and $P,\,Q\in\mathfrak{S}_{M}$ with $P\prec Q$, $Q\setminus P=\{m_{0}\}$,
and $|P|\in\llbracket\mathfrak{m}_{K}-1,\,M-\mathfrak{m}_{K}\rrbracket$.
Then, by Lemma \ref{l_decH}, we can write 
\[
H(\sigma)=H^{\mathrm{2D}}(\sigma^{(m_{0})})+2KL
\]
since $H^{\mathrm{2D}}(\sigma^{(m)})=0$ for all $m\neq m_{0}$ and
$H^{\mathrm{1D}}(\sigma^{\langle k,\,\ell\rangle})=2$ for all $k\in\mathbb{T}_{K}$
and $\ell\in\mathbb{T}_{L}$. Hence, by definition, we have 
\[
H(\sigma)=\begin{cases}
2KL+2K=\Gamma-2 & \text{if }\sigma\in\widetilde{\mathcal{G}}_{P,\,Q}^{a,\,b,\,[1]}\;,\\
2KL+2K+2=\Gamma & \text{if }\sigma\in\widetilde{\mathcal{G}}_{P,\,Q}^{a,\,b,\,[2]}\cup\widetilde{\mathcal{G}}_{P,\,Q}^{a,\,b,\,[3]}\;.
\end{cases}
\]
Since the Hamiltonian is invariant under $\Upsilon$, the proof is
completed.
\end{proof}

\subsection{\label{sec7.2}Properties of gateway configurations}

Next, we investigate several crucial properties of the gateway configurations
which will be used frequently in the following discussions. The following
notation will be useful in the remaining parts of the article. 
\begin{notation}
\label{n_union}For any integers $u,\,v$ such that $0\le u<v\le M$,
we write 
\[
\mathcal{K}_{[u,\,v]}^{a,\,b}=\bigcup_{i=u}^{v}\mathcal{K}_{i}^{a,\,b}\;\;\;\;\text{and}\;\;\;\;\mathcal{K}_{[u,\,v]}^{A,\,B}=\bigcup_{i=u}^{v}\mathcal{K}_{i}^{A,\,B}\;,
\]
where $\mathcal{K}\in\{\mathcal{C},\,\mathcal{G},\,\mathcal{R}\}$.
In particular, by \eqref{e_gate} and \eqref{e_gate2}, we can write
\begin{equation}
\mathcal{G}^{A,\,B}=\mathcal{G}_{[\mathfrak{m}_{K}-1,\,M-\mathfrak{m}_{K}]}^{A,\,B}\;.\label{e_gate3}
\end{equation}
\end{notation}

In this section, we focus on the relation between gateway configurations
and neighborhoods of regular configurations. We refer to Figure \ref{fig7.2}
for an illustration of the relations obtained in the current subsection. 

\begin{figure}
\includegraphics[width=14.5cm]{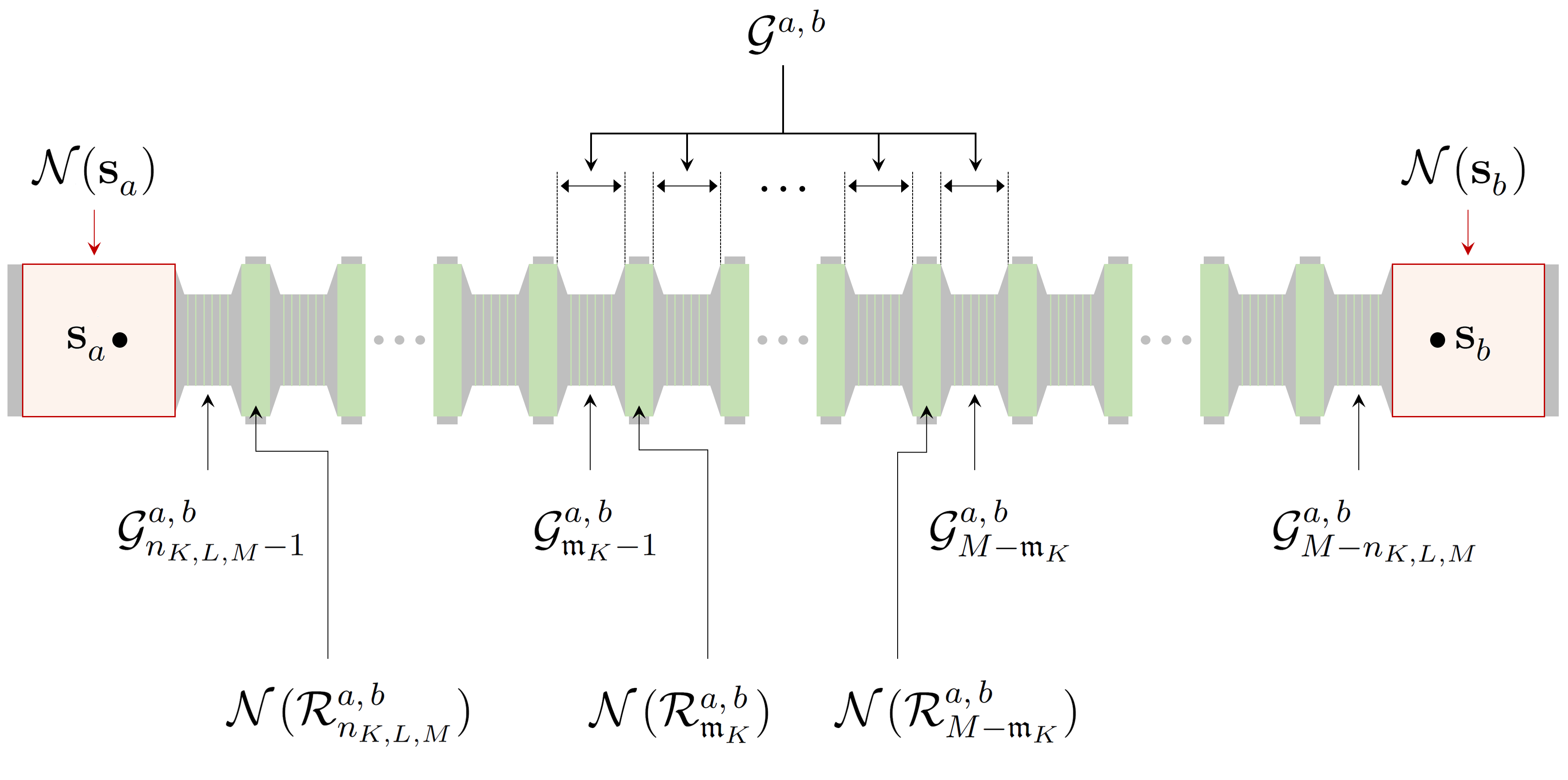}

\caption{\label{fig7.2}\textbf{Structure of gateway configurations between
$\mathbf{s}_{a}$ and $\mathbf{s}_{b}$.} The grey regions consist
of configurations of energy $\Gamma$. The green boxes denote the
sets of the form $\mathcal{N}(\mathcal{R}_{i}^{a,\,b})$ for $i\in\llbracket n_{K,\,L,\,M},\,M-n_{K,\,L,\,M}\rrbracket$
(cf. \eqref{e_nKLM}), while the green lines denote the gateway configurations
of type $1$ whose energy is $\Gamma-2$ (cf. Proposition \ref{p_gateE}).
Later in Proposition \ref{p_Elb}, we shall show that $\mathfrak{m}_{K}\ge n_{K,\,L,\,M}$.
The structure given in this figure (especially between $\mathcal{G}_{\mathfrak{m}_{K}-1}^{a,\,b}$
and $\mathcal{G}_{M-\mathfrak{m}_{K}}^{a,\,b}$) is confirmed in Lemma
\ref{l_gate}. We remark that the dead-ends are attached to $\mathcal{N}(\mathbf{s}_{a})$,
$\mathcal{N}(\mathbf{s}_{b})$, and $\mathcal{N}(\mathcal{R}_{i}^{a,\,b})$,
$i\in\llbracket n_{K,\,L,\,M},\,M-n_{K,\,L,\,M}\rrbracket$. In particular,
the configurations in $\mathcal{G}_{i}^{a,\,b}$ with $i<n_{K,\,L,\,M}-1$
belong to the dead-ends attached to the set $\mathcal{N}(\mathbf{s}_{a})$.}
\end{figure}

The first one below states that we have to escape from a gateway configuration
via a neighborhood of regular configurations, unless we touch a configuration
with energy higher than $\Gamma$.
\begin{lem}
\label{l_gate}For a proper partition $(A,\,B)$ of $S$, the following
statements hold.
\begin{enumerate}
\item For $a\in A$, $b\in B$, and $i\in\llbracket\mathfrak{m}_{K}-1,\,M-\mathfrak{m}_{K}\rrbracket$,
we suppose that $\sigma\in\mathcal{G}_{i}^{a,\,b}$ and $\zeta\in\mathcal{X}\setminus\mathcal{G}_{i}^{a,\,b}$
satisfy $\sigma\sim\zeta$ and $H(\zeta)\le\Gamma$. Then, we have
$\zeta\in\mathcal{N}(\mathcal{R}_{[i,\,i+1]}^{a,\,b})$, and moreover
$\sigma$ is a gateway configuration of type $3$.
\item Suppose that $\sigma\in\mathcal{G}^{A,\,B}$ and $\zeta\in\mathcal{X}\setminus\mathcal{G}^{A,\,B}$
satisfy $\sigma\sim\zeta$ and $H(\zeta)\le\Gamma$. Then, we have
$\zeta\in\mathcal{N}(\mathcal{R}_{[\mathfrak{m}_{K}-1,\,M-\mathfrak{m}_{K}+1]}^{A,\,B})$,
and moreover $\sigma$ is a gateway configuration of type $3$. 
\end{enumerate}
\end{lem}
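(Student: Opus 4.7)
I will prove part (1) by localizing the single spin flip $\sigma \to \zeta$ to the unique transitional floor of $\sigma$ and then invoking the two-dimensional analog, Lemma \ref{l_gate2}. Applying the symmetry $\Upsilon$ of Notation \ref{n_upe}, I may assume $\sigma \in \widetilde{\mathcal{G}}_{P,Q}^{a,b}$ for some $P, Q \in \mathfrak{S}_M$ with $|P| = i$, $P \prec Q$, and $Q \setminus P = \{m_0\}$. Writing $\eta := \sigma^{(m_0)} \in \mathcal{G}^{a,b,\mathrm{2D}}$, the decomposition of Lemma \ref{l_decH}, combined with the fact that every pillar of $\sigma$ has a connected monochromatic $b$-region on $P$ or on $Q = P \cup \{m_0\}$, gives $H(\sigma) = H^{\mathrm{2D}}(\eta) + 2KL$ with $H^{\mathrm{1D}}(\sigma^{\langle k,\ell\rangle}) = 2$ for every $(k, \ell)$. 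My first task will be to rule out flips on pure floors: if the flip occurs at $(k_0, \ell_0, m')$ with $m' \in P$ (the case $m' \in Q^c$ being symmetric), then the new slab at $m'$ contains an isolated defect, so $\Delta_{\mathrm{2D}}(m') = 4$, and a case check according to the position of $m'$ in $P$ and the value of the target spin confirms $\Delta_{\mathrm{1D}}(k_0, \ell_0) \ge 0$. Combined with $H(\sigma) \ge \Gamma - 2$ from Proposition \ref{p_gateE}, this forces $H(\zeta) \ge \Gamma + 2$, contradicting $H(\zeta) \le \Gamma$.

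Hence the flip occurs on floor $m_0$, changing $\eta(k_0, \ell_0)$ to some spin $c$. Writing $\xi := \zeta^{(m_0)}$, I next rule out $c \notin \{a, b\}$: the pillar $(k_0, \ell_0)$ then acquires a three-color profile, so $\Delta_{\mathrm{1D}} = 1$, while counting bond changes at $(k_0, \ell_0)$ yields $\Delta_{\mathrm{2D}}(m_0) = p$, where $p$ is the number of horizontal neighbors on floor $m_0$ sharing the spin $\eta(k_0, \ell_0)$. The constraint $H(\zeta) \le \Gamma$ then forces $p \le \Gamma - H(\sigma) - 1$; this upper bound equals $-1$ for types~2 and 3 (impossible since $p \ge 0$) and $1$ for type~1, but in type~1 one has $\eta \in \mathcal{R}_v^{a,b,\mathrm{2D}}$ with $v \in \llbracket 2, L-2 \rrbracket$, for which $p \ge 3$ at every cell, so the case $c \notin \{a, b\}$ is also ruled out. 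Thus $c \in \{a, b\}$, $\xi$ uses only $\{a, b\}$-spins, $\xi \sim \eta$ in 2D, and $H^{\mathrm{2D}}(\xi) = H(\zeta) - 2KL \le \Gamma^{\mathrm{2D}}$. If $\xi \in \mathcal{G}^{a,b,\mathrm{2D}}$ then $\zeta$ shares the slab structure of $\sigma$ and lies in $\mathcal{G}_i^{a,b}$, contradicting the hypothesis; so $\xi \notin \mathcal{G}^{a,b,\mathrm{2D}}$, and Lemma \ref{l_gate2} yields either $\eta \in \mathcal{Z}^{a,b,\mathrm{2D}}$ with $\xi \in \mathcal{N}^{\mathrm{2D}}(\mathbf{s}_a^{\mathrm{2D}})$, or $\eta \in \mathcal{Z}^{b,a,\mathrm{2D}}$ with $\xi \in \mathcal{N}^{\mathrm{2D}}(\mathbf{s}_b^{\mathrm{2D}})$. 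In either case $\sigma$ is a gateway configuration of type~3.

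It remains to construct a $(\Gamma - 1)$-path in 3D from $\zeta$ to $\mathcal{R}_{[i, i+1]}^{a,b}$, which I will do by lifting the 2D $(\Gamma^{\mathrm{2D}} - 1)$-path from $\xi$ to the corresponding monochromatic target, leaving every slab $m \ne m_0$ fixed. Provided this 2D path can be chosen within the $\{a, b\}$-spin world, every pillar retains a connected monochromatic $b$-region throughout, so all $H^{\mathrm{1D}}$ remain equal to $2$ and the 3D energy at each step equals the 2D energy on floor $m_0$ plus $2KL$, hence is bounded by $(\Gamma^{\mathrm{2D}} - 1) + 2KL = \Gamma - 1$; the endpoint is $\sigma_P^{a,b} \in \mathcal{R}_i^{a,b}$ or $\sigma_Q^{a,b} \in \mathcal{R}_{i+1}^{a,b}$, yielding $\zeta \in \mathcal{N}(\mathcal{R}_{[i, i+1]}^{a,b})$. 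The principal obstacle is precisely this last sub-claim that the 2D path can be chosen within the Ising subsystem; I expect it to follow from Proposition \ref{p_2lowE}, since $\xi$ uses only $\{a, b\}$-spins and satisfies $H^{\mathrm{2D}}(\xi) < \Gamma^{\mathrm{2D}}$, which forces $\xi$ into case (L3) with an $a$- or $b$-cross matching the intended target, after which the non-dominant spins may be erased one by one in a suitable order while keeping the 2D energy below $\Gamma^{\mathrm{2D}}$. Part (2) is then immediate: any $\zeta \notin \mathcal{G}^{A,B}$ satisfies $\zeta \notin \mathcal{G}_i^{a,b}$ for the specific $(a, b, i)$ arising from $\sigma$, so part (1) gives $\zeta \in \mathcal{N}(\mathcal{R}_{[i, i+1]}^{a,b}) \subseteq \mathcal{N}(\mathcal{R}_{[\mathfrak{m}_K - 1, M - \mathfrak{m}_K + 1]}^{A,B})$.
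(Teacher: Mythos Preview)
Your proof follows essentially the same route as the paper's: reduce by the symmetry $\Upsilon$ to $\sigma\in\widetilde{\mathcal{G}}_{P,Q}^{a,b}$, rule out flips away from the transitional floor $m_0$, apply the 2D gateway lemma (Lemma~\ref{l_gate2}) to the slab $\sigma^{(m_0)}$, and lift a 2D $(\Gamma^{\mathrm{2D}}-1)$-path to a 3D $(\Gamma-1)$-path to conclude $\zeta\in\mathcal{N}(\{\sigma_P^{a,b},\sigma_Q^{a,b}\})$; part~(2) is then immediate.

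The main difference is organizational: the paper first eliminates types~1 and~2 (by invoking Lemma~\ref{l_gate2} to derive a contradiction in each case) and only then handles the flip, whereas you carry out the full case analysis of the flip (off-floor, third-spin, then $\{a,b\}$-spin) and deduce type~3 at the end from the conclusion of Lemma~\ref{l_gate2}. Your third-spin elimination ($\Delta_{\mathrm{1D}}=1$, $\Delta_{\mathrm{2D}}=p\ge0$, with $p\ge3$ in type~1) is a clean quantitative version of what the paper covers by ``it can be readily observed'' and ``it is clear that''.

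Your flagged ``principal obstacle'' is real and is a point the paper's proof elides: the paper lifts an \emph{arbitrary} 2D $(2K{+}1)$-path in $\mathcal{X}^{\mathrm{2D}}$ and asserts the lift is a $(\Gamma-1)$-path, but if that 2D path passes through a configuration with a spin $c\notin\{a,b\}$, the corresponding pillar has $H^{\mathrm{1D}}=3$, and the 3D energy can reach $\Gamma$ rather than $\Gamma-1$. Your remedy---observe that $\xi=\zeta^{(m_0)}$ uses only $\{a,b\}$-spins, apply Proposition~\ref{p_2lowE} to the restricted Ising system to place $\xi$ in case~(L2) or~(L3) with an $a$- (resp.\ $b$-)cross, and then erase the minority spins within $\{a,b\}$---is the correct fix; a clean way to phrase it is that $\xi\in\mathcal{N}^{\mathrm{2D}}(\mathbf{s}_a^{\mathrm{2D}})$ in the full Potts model forces $\xi\in\mathcal{N}^{\mathrm{2D}}(\mathbf{s}_a^{\mathrm{2D}})$ in the $\{a,b\}$-Ising subsystem (the alternatives (L1) and the $b$-cross case of (L2)/(L3) contradict $\Phi^{\mathrm{2D}}(\mathbf{s}_a^{\mathrm{2D}},\mathbf{s}_b^{\mathrm{2D}})=\Gamma^{\mathrm{2D}}$), giving the desired $\{a,b\}$-only $(2K{+}1)$-path directly.
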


\begin{proof}
We first suppose that $\sigma\in\mathcal{\widetilde{G}}_{P,\,Q}^{a,\,b}$
and $\zeta\in\mathcal{X}\setminus\mathcal{\widetilde{G}}_{P,\,Q}^{a,\,b}$
for some $a\in A$, $b\in B$ and $P,\,Q\in\mathfrak{S}_{M}$ with
$P\prec Q$ and $|P|\in\llbracket\mathfrak{m}_{K}-1,\,M-\mathfrak{m}_{K}\rrbracket$.
We write $Q\setminus P=\{m_{0}\}$. Then, we claim that $\zeta\in\mathcal{N}(\{\sigma_{P}^{a,\,b},\,\sigma_{Q}^{a,\,b}\})$,
and $\sigma$ is of type $3$.

Let us first show that $\sigma$ is a gateway configuration of type
$3$. If $\sigma$ is of type $1$, then we have $H(\sigma)=\Gamma-2$,
$H^{\mathrm{2D}}(\sigma^{(m_{0})})=2K$, and $\sigma^{(m_{0})}\in\mathcal{B}^{a,\,b,\,\mathrm{2D}}$.
To update a spin in $\sigma$ without increasing the energy by $3$
or more, it can be readily observed that we have to update a spin
of $\sigma$ at the $m_{0}$-th floor to get $\zeta$ with $H^{\mathrm{2D}}(\zeta^{(m_{0})})\le2K+2$.
In such a situation, Lemma \ref{l_gate2} asserts that $\sigma^{(m_{0})}\notin\mathcal{B}^{a,\,b,\,\mathrm{2D}}$
and we get a contradiction. A similar argument can be applied if $\sigma$
is of type $2$, and hence we can conclude that $\sigma$ is of type
$3$.

Now, since $\sigma$ is of type $3$, we have $H(\sigma)=\Gamma$,
$H^{\mathrm{2D}}(\sigma^{(m_{0})})=2K+2$, and $\sigma^{(m_{0})}\in\mathcal{Z}^{a,\,b,\,\mathrm{2D}}\cup\mathcal{Z}^{b,\,a,\,\mathrm{2D}}$
(cf. \eqref{e_Zab2def}). In order not to increase the energy by flipping
a site of $\sigma,$ it is clear that we have to flip a spin at the
$m_{0}$-th floor (cf. Figure \ref{fig7.1}). This means that, by
Lemma \ref{l_gate2}, we have $\zeta^{(m_{0})}\in\mathcal{N}^{\mathrm{2D}}(\mathbf{s}_{a}^{\mathrm{2D}})\cup\mathcal{N}^{\mathrm{2D}}(\mathbf{s}_{b}^{\mathrm{2D}})$.
Now, we suppose first that $\zeta^{(m_{0})}\in\mathcal{N}^{\mathrm{2D}}(\mathbf{s}_{a}^{\mathrm{2D}})$.
Then, there exists a 2D $(2K+1)$-path $(\omega_{t})_{t=0}^{T}$ in
$\mathcal{X}^{\mathrm{2D}}=S^{\Lambda^{\mathrm{2D}}}$ such that $\omega_{0}=\mathbf{s}_{a}^{\mathrm{2D}}$
and $\omega_{T}=\zeta^{(m_{0})}$. Define a 3D path $(\widetilde{\omega}_{t})_{t=0}^{T}$
as 
\[
\widetilde{\omega}_{t}^{(m)}=\begin{cases}
\omega_{t}^{(m)} & \text{if }m=m_{0}\;,\\
\zeta^{(m)}=\sigma^{(m)} & \text{if }m\neq m_{0}\;.
\end{cases}
\]
Then, $(\widetilde{\omega}_{t})_{t=0}^{T}$ is a $(\Gamma-1)$-path
connecting $\sigma_{P}^{a,\,b}$ and $\zeta$, and thus we get $\zeta\in\mathcal{N}(\sigma_{P}^{a,\,b})$.
Similarly, we can deduce that $\zeta^{(m_{0})}\in\mathcal{N}^{\mathrm{2D}}(\mathbf{s}_{b}^{\mathrm{2D}})$
implies $\zeta\in\mathcal{N}(\sigma_{Q}^{a,\,b})$. This concludes
the proof of the claim. 

Now, we return to the lemma. For part (1), suppose that $\sigma\in\mathcal{G}_{P,\,Q}^{a,\,b}$
for some $a\in A$, $b\in B$ and $P,\,Q\in\mathfrak{S}_{M}$ with
$|P|=i\in\llbracket\mathfrak{m}_{K}-1,\,M-\mathfrak{m}_{K}\rrbracket\text{ and }P\prec Q$.
If $\sigma\in\mathcal{\widetilde{G}}_{P,\,Q}^{a,\,b}$, then by the
claim above, we get 
\[
\zeta\in\mathcal{N}(\{\sigma_{P}^{a,\,b},\,\sigma_{Q}^{a,\,b}\})\subseteq\mathcal{N}(\mathcal{R}_{[i,\,i+1]}^{a,\,b})\;,
\]
and moreover $\sigma$ is a gateway configuration of type $3$. On
the other hand, if $\sigma\in\Theta(\mathcal{\widetilde{G}}_{P,\,Q}^{a,\,b})$
for some permutation operator $\Theta$ that appears in Notation \ref{n_upe},
then by the same logic as above, we obtain that 
\[
\zeta\in\mathcal{N}(\{\Theta(\sigma_{P}^{a,\,b}),\,\Theta(\sigma_{Q}^{a,\,b})\})\subseteq\mathcal{N}(\Theta(\widetilde{\mathcal{R}}_{[i,\,i+1]}^{a,\,b}))\subseteq\mathcal{N}(\mathcal{R}_{[i,\,i+1]}^{a,\,b})\;,
\]
and that $\sigma$ is a gateway configuration of type $3$. This completes
the proof of part (1). Part (2) is direct from part (1). 
\end{proof}
Next, we establish a relation between $\mathcal{G}^{A,\,B}$ and $\mathcal{N}(\mathcal{R}_{[0,\,M]}^{A,\,B})$
for proper partitions $(A,\,B)$ of $S$.
\begin{lem}
\label{l_gateR}For a proper partition $(A,\,B)$ of $S$, the two
sets $\mathcal{G}^{A,\,B}$ and $\mathcal{N}(\mathcal{R}_{[0,\,M]}^{A,\,B})$
are disjoint and moreover, it holds that 
\begin{equation}
\widehat{\mathcal{N}}\big(\,\mathcal{G}^{A,\,B}\,;\,\mathcal{N}(\mathcal{R}_{[0,\,M]}^{A,\,B})\,\big)=\mathcal{G}^{A,\,B}\;.\label{e_gateR}
\end{equation}
\end{lem}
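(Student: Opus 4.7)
The plan is to deduce both assertions from part~(2) of Lemma~\ref{l_gate}, which describes the structure of the boundary of $\mathcal{G}^{A,B}$ in the graph of low-energy configurations: whenever $\sigma\in\mathcal{G}^{A,B}$ and $\zeta\in\mathcal{X}\setminus\mathcal{G}^{A,B}$ satisfy $\sigma\sim\zeta$ and $H(\zeta)\le\Gamma$, the lemma forces $\zeta\in\mathcal{N}(\mathcal{R}_{[\mathfrak{m}_K-1,\,M-\mathfrak{m}_K+1]}^{A,B})$ and, crucially, forces $\sigma$ to be a gateway of type~$3$ (hence of energy exactly $\Gamma$ by Proposition~\ref{p_gateE}). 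In particular, no gateway of type~$1$ or~$2$ admits a non-gateway neighbor of energy at most $\Gamma$.

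For the disjointness half, I argue by contradiction. Suppose $\sigma\in\mathcal{G}^{A,B}\cap\mathcal{N}(\mathcal{R}_{[0,M]}^{A,B})$; then $H(\sigma)\le\Gamma-1$ since $\mathcal{N}$ is built from $(\Gamma-1)$-paths, and Proposition~\ref{p_gateE} identifies $\sigma$ as a gateway of type~$1$. Fix a $(\Gamma-1)$-path $(\omega_t)_{t=0}^{T}$ with $\omega_0\in\mathcal{R}_{[0,M]}^{A,B}$ and $\omega_T=\sigma$, and propagate membership in $\mathcal{G}^{A,B}$ backwards: should $\omega_{t+1}\in\mathcal{G}^{A,B}$ and $\omega_t\notin\mathcal{G}^{A,B}$, Lemma~\ref{l_gate}(2) applied with $\sigma=\omega_{t+1}$ and $\zeta=\omega_t$ would force $H(\omega_{t+1})=\Gamma$, contradicting $H(\omega_{t+1})\le\Gamma-1$. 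Thus $\omega_0\in\mathcal{G}^{A,B}$, contradicting the elementary fact that no regular configuration---even after the coordinate permutations in the $\Upsilon$ operator---lies in any $\mathcal{G}^{a,b}$; this last fact follows by inspecting floor slices together with the observation that $\mathcal{G}^{a,b,\mathrm{2D}}$ contains no $2$D ground state (so that a permuted regular configuration, all of whose floor slices are either ground states or independent of the floor index, cannot exhibit the single nontrivial slice required by any $\widetilde{\mathcal{G}}_{P,Q}^{a,b}$).

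Granted disjointness, the inclusion $\mathcal{G}^{A,B}\subseteq\widehat{\mathcal{N}}(\mathcal{G}^{A,B};\mathcal{N}(\mathcal{R}_{[0,M]}^{A,B}))$ is immediate from the length-zero path, which is valid because every gateway has $H\le\Gamma$. For the reverse inclusion I take $\zeta$ on the right-hand side with a witness $\Gamma$-path $(\omega_t)_{t=0}^{T}$ inside $\mathcal{X}\setminus\mathcal{N}(\mathcal{R}_{[0,M]}^{A,B})$ starting from some $\omega_0\in\mathcal{G}^{A,B}$, and show by forward induction that every $\omega_t$ stays in $\mathcal{G}^{A,B}$: at the first hypothetical escape step, $\omega_{t+1}\notin\mathcal{G}^{A,B}$ combined with $H(\omega_{t+1})\le\Gamma$ would place $\omega_{t+1}$ in $\mathcal{N}(\mathcal{R}_{[\mathfrak{m}_K-1,\,M-\mathfrak{m}_K+1]}^{A,B})\subseteq\mathcal{N}(\mathcal{R}_{[0,M]}^{A,B})$ by Lemma~\ref{l_gate}(2), contradicting the defining assumption on the path. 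The hard part is really the disjointness step, because type~$1$ gateways cannot be excluded by their energy alone; the type-$3$ alternative in Lemma~\ref{l_gate}(2), exploited through the backward induction, is precisely the mechanism that forbids them.
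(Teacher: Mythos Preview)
Your proof is correct. The second half (the equality $\widehat{\mathcal{N}}(\mathcal{G}^{A,B};\mathcal{N}(\mathcal{R}_{[0,M]}^{A,B}))=\mathcal{G}^{A,B}$) matches the paper's argument essentially verbatim: take a $\Gamma$-path from $\mathcal{G}^{A,B}$ avoiding $\mathcal{N}(\mathcal{R}_{[0,M]}^{A,B})$, and use Lemma~\ref{l_gate}(2) at the first exit time to land in $\mathcal{N}(\mathcal{R}_{[\mathfrak{m}_K-1,M-\mathfrak{m}_K+1]}^{A,B})\subseteq\mathcal{N}(\mathcal{R}_{[0,M]}^{A,B})$, a contradiction.

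The disjointness half, however, takes a genuinely different route. The paper argues locally: once $\sigma$ is identified as a type-$1$ gateway, it observes directly (referring to Figure~\ref{fig7.1}) that \emph{every} neighbor $\zeta\sim\sigma$ has $H(\zeta)\ge\Gamma$, so the $(\Gamma-1)$-path witnessing $\sigma\in\mathcal{N}(\mathcal{R}_{[0,M]}^{A,B})$ cannot take even a single step and the contradiction is immediate. Your argument instead runs the $(\Gamma-1)$-path backward from $\sigma$ and invokes Lemma~\ref{l_gate}(2) at each potential exit step, forcing the entire path to stay in $\mathcal{G}^{A,B}$ and obtaining the contradiction only at the regular endpoint $\omega_0$. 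Both are valid; the paper's is shorter (one local observation rather than induction along the path), while yours is more uniform in that it uses Lemma~\ref{l_gate}(2) for both halves and avoids any separate geometric inspection of type-$1$ gateways. Your final step---that no regular configuration lies in $\mathcal{G}^{A,B}$---is correct but over-argued: since $H\equiv 0$ on $\mathcal{S}$ and $H\equiv 2KL$ on $\mathcal{R}_i^{A,B}$ for $i\in\llbracket 1,M-1\rrbracket$ (cf.\ Remark~\ref{r_energy}), while Proposition~\ref{p_gateE} gives $H\in\{\Gamma-2,\Gamma\}=\{2KL+2K,\,2KL+2K+2\}$ on $\mathcal{G}^{A,B}$, a pure energy comparison already suffices and makes the slice-by-slice structural discussion unnecessary.
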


\begin{proof}
We first claim that, for any $a\in A$, $b\in B$, and $P,\,Q\in\mathfrak{S}_{M}$
with $P\prec Q$ and $|P|\in\llbracket\mathfrak{m}_{K}-1,\,M-\mathfrak{m}_{K}\rrbracket$\footnote{In fact, it holds even if $|P|\in\llbracket0,\,M-1\rrbracket$.},
\begin{equation}
\mathcal{\widetilde{G}}_{P,\,Q}^{a,\,b}\cap\mathcal{N}(\mathcal{R}_{[0,\,M]}^{A,\,B})=\emptyset\;.\label{e_gateR2}
\end{equation}
Suppose the contrary that we can take a configuration $\sigma\in\mathcal{\widetilde{G}}_{P,\,Q}^{a,\,b}\cap\mathcal{N}(\mathcal{R}_{[0,\,M]}^{A,\,B})$.
Then, since $\sigma\in\mathcal{\widetilde{G}}_{P,\,Q}^{a,\,b}$ and
since $H(\sigma)<\Gamma$ as $\sigma\in\mathcal{N}(\mathcal{R}_{[0,\,M]}^{A,\,B})$,
the configuration $\sigma$ must be a gateway configuration of type
$1$ by Proposition \ref{p_gateE}. Since $\sigma\in\mathcal{N}(\mathcal{R}_{[0,\,M]}^{A,\,B})$,
there exists a ($\Gamma-1)$-path connecting $\sigma$ and $\mathcal{R}_{[0,\,M]}^{A,\,B}$.
However, it is clear that (cf. Figure \ref{fig7.1}) any configuration
$\zeta$ such that $\zeta\sim\sigma$ has energy at least $\Gamma$.
This yields a contradiction. By the same argument, we can show that
$\Theta(\mathcal{\widetilde{G}}_{P,\,Q}^{a,\,b})$ is also disjoint
with $\mathcal{N}(\mathcal{R}_{[0,\,M]}^{A,\,B})$ where $\Theta$
is one of the permutation operators introduced in Notation \ref{n_upe},
and hence it holds that $\mathcal{G}_{P,\,Q}^{a,\,b}$ is disjoint
with $\mathcal{N}(\mathcal{R}_{[0,\,M]}^{A,\,B})$. Hence, the two
sets $\mathcal{G}^{A,\,B}$ and $\mathcal{N}(\mathcal{R}_{[0,\,M]}^{A,\,B})$
are disjoint.

Next, we turn to \eqref{e_gateR}. Since $\mathcal{G}^{A,\,B}\subseteq\widehat{\mathcal{N}}(\mathcal{G}^{A,\,B}\,;\,\mathcal{N}(\mathcal{R}_{[0,\,M]}^{A,\,B}))$
easily follows from \eqref{e_gateR2}, it suffices to show that 
\[
\widehat{\mathcal{N}}(\mathcal{G}^{A,\,B}\,;\,\mathcal{N}(\mathcal{R}_{[0,\,M]}^{A,\,B}))\subseteq\mathcal{G}^{A,\,B}\;.
\]
Suppose the contrary that we can take $\sigma\in\widehat{\mathcal{N}}(\mathcal{G}^{A,\,B}\,;\,\mathcal{N}(\mathcal{R}_{[0,\,M]}^{A,\,B}))$
which does not belong to $\mathcal{G}^{A,\,B}$. Let $(\omega_{t})_{t=0}^{T}$
be a $\Gamma$-path in $\mathcal{X}\setminus\mathcal{N}(\mathcal{R}_{[0,\,M]}^{A,\,B})$
connecting $\mathcal{G}^{A,\,B}$ and $\sigma$. Since we have assumed
that $\sigma\notin\mathcal{G}^{A,\,B}$, we can take 
\[
t_{0}=\min\,\{t:\omega_{t}\notin\mathcal{G}^{A,\,B}\}\;.
\]
Since $\omega_{t_{0}-1}\in\mathcal{G}^{A,\,B}$, $\omega_{t_{0}}\notin\mathcal{G}^{A,\,B}$,
and $\omega_{t_{0}-1}\sim\omega_{t_{0}}$, by Lemma \ref{l_gate},
we have $\omega_{t_{0}-1}\in\mathcal{N}(\mathcal{R}_{[\mathfrak{m}_{K}-1,\,M-\mathfrak{m}_{K}+1]}^{A,\,B})$.
This contradicts the fact that $(\omega_{t})_{t=0}^{T}$ is a path
in $\mathcal{X}\setminus\mathcal{N}(\mathcal{R}_{[0,\,M]}^{A,\,B})$. 
\end{proof}

\section{\label{sec8}Energy Barrier between Ground States}

The main objective of the current section is to analyze the energy
barrier and optimal paths between ground states. In this section,
we fix a proper partition $(A,\,B)$ of $S$. The main result of the
current section is the following result regarding the energy barrier
between the ground states.
\begin{prop}
\label{p_Elb}The following statements hold.
\begin{enumerate}
\item For $\mathbf{s},\,\mathbf{s}'\in\mathcal{S}$, we have that $\Phi(\mathbf{s},\,\mathbf{s}')\ge\Gamma$.
\item Let $(\omega_{t})_{t=0}^{T}$ be a path in $\mathcal{X}\setminus\mathcal{G}^{A,\,B}$
connecting $\mathcal{S}(A)$ and $\mathcal{S}(B)$. Then, there exists
$t\in\llbracket0,\,T\rrbracket$ such that $H(\omega_{t})\ge\Gamma+1$.
\end{enumerate}
\end{prop}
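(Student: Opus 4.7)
The strategy is to reduce Part~(1) to Part~(2) and then prove Part~(2) by contradiction. For the reduction: if a path from $\mathcal{S}(A)$ to $\mathcal{S}(B)$ had maximum energy strictly below $\Gamma$, it would avoid all type-$2$ and type-$3$ gateway configurations, which by Proposition~\ref{p_gateE} carry energy exactly $\Gamma$. Lemma~\ref{l_gate} then implies that any non-gateway neighbor of a gateway configuration either has energy strictly exceeding $\Gamma$ or sees a gateway of type~$3$; consequently, such a path cannot even enter $\mathcal{G}^{A,B}$, and Part~(2) would force it to reach energy at least $\Gamma+1$, contradicting the hypothesis.

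For Part~(2), fix a path $(\omega_t)_{t=0}^{T}$ in $\mathcal{X}\setminus\mathcal{G}^{A,B}$ connecting $\mathbf{s}_{a}\in\mathcal{S}(A)$ to $\mathbf{s}_{b}\in\mathcal{S}(B)$, and assume toward contradiction that $\max_{t} H(\omega_t) \le \Gamma$. The central tool is the decomposition
\[
H(\omega_t) = \sum_{m \in \mathbb{T}_M} H^{\mathrm{2D}}(\omega_t^{(m)}) + \sum_{(k,\ell) \in \mathbb{T}_K \times \mathbb{T}_L} H^{\mathrm{1D}}(\omega_t^{\langle k,\ell\rangle})
\]
from Lemma~\ref{l_decH}. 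Since every non-monochromatic pillar of a torus configuration contributes at least $2$ to the 1D sum, once all $KL$ pillars are mixed the total 2D contribution must lie in the narrow window $[0,\,2K+2]$, so at most one floor can be non-monochromatic and its 2D energy is at most $\Gamma^{\mathrm{2D}}=2K+2$. The plan is to exhibit a time $t^{*}$ at which (i) every pillar of $\omega_{t^{*}}$ contains both spins $a$ and $b$, and (ii) exactly one floor $m^{*}$ is non-monochromatic with 2D energy in $[2K,\,2K+2]$, while the remaining floors form a monotone arrangement of $\mathbf{s}_{b}^{\mathrm{2D}}$-floors on one side of $m^*$ and $\mathbf{s}_{a}^{\mathrm{2D}}$-floors on the other.

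Once such a $t^{*}$ is secured, Proposition~\ref{p_2lowE} applied to the unique non-monochromatic floor $\omega_{t^{*}}^{(m^{*})}$, together with Lemma~\ref{l_gate2}, forces $\omega_{t^{*}}^{(m^{*})}$ to lie in the 2D gateway set $\mathcal{G}^{a,b,\mathrm{2D}} = \mathcal{Z}^{a,b,\mathrm{2D}} \cup \mathcal{B}^{a,b,\mathrm{2D}} \cup \mathcal{Z}^{b,a,\mathrm{2D}}$: indeed, a 2D configuration realizing the transition between $\mathbf{s}_a^{\mathrm{2D}}$- and $\mathbf{s}_b^{\mathrm{2D}}$-looking neighborhoods at energy at most $\Gamma^{\mathrm{2D}}$ is precisely characterized by the 2D gateway structure. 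Combined with the monotone slab arrangement of the other floors, and possibly applying the permutation operator $\Upsilon$ from Notation~\ref{n_upe} to realign the orientation, this places $\omega_{t^{*}}$ inside $\mathcal{G}^{A,B}$ as defined in~\eqref{e_gate2}, contradicting the assumption that the path avoids $\mathcal{G}^{A,B}$.

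The main obstacle lies in producing such a $t^{*}$ rigorously: the path may oscillate, revisit configurations of energy exactly $\Gamma$ off the main gateway (the ``dead-ends'' flagged in item~(4) at the beginning of Section~\ref{sec6}), or temporarily split the 2D excess across two floors whose 2D energies sum to at most $2K+2$. These will be handled by a stopping-time argument on $\|\omega_t\|_b$: we select the first time at which the $b$-cluster is large enough to force every pillar to be mixed, and then invoke the 2D pseudo-path property of $(\omega_t^{(m)})_{t=0}^T$ at each floor (together with the 2D energy barrier $\Gamma^{\mathrm{2D}}$) to rule out two simultaneously nontrivial floors. A further delicate point is the condition $|P| \in \llbracket \mathfrak{m}_K - 1,\,M-\mathfrak{m}_K \rrbracket$ built into $\mathcal{G}^{A,B}$; treating the initial and terminal phases of the transition, where the $b$-cluster is thinner than $\mathfrak{m}_K$ slabs, will require separate isoperimetric estimates showing that in those regimes some pillar must remain monochromatic, pushing the effective critical time into the regular range or else violating the energy budget.
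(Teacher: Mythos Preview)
Your reduction of Part~(1) to Part~(2) is correct (a $(\Gamma-1)$-path cannot enter $\mathcal{G}^{A,B}$, since by Lemma~\ref{l_gate} the entry configuration would have to be of type~$3$ and hence have energy $\Gamma$), though the paper establishes both parts simultaneously rather than via this reduction.

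The genuine gap is in your plan for Part~(2). Your proposed stopping time on $\|\omega_t\|_b$ does \emph{not} force every pillar to be mixed. At a configuration $\sigma$ with $\|\sigma\|_b \approx KLM/2$ and $H(\sigma)\le\Gamma$, it is entirely possible that most pillars are monochromatic: for instance, if every floor $\sigma^{(m)}$ lies in $\Delta^{\mathrm{2D}}$ (the paper's Case~4), then Lemma~\ref{l_Hlb} forces $d(\sigma)\ge K(M-1)-1$, so nearly all pillars are monochromatic. Your conditions (i) and (ii) therefore need not hold at the halfway point, and the ``monotone slab arrangement'' you want certainly does not follow from a size threshold alone.

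The paper's route is quite different. It does pick the halfway configuration $\sigma$ with $\|\sigma\|_1=\lfloor KLM/2\rfloor+1$, but then classifies the floors of $\sigma$ according to whether $\sigma^{(m)}\in\mathcal{V}^1$, $\mathcal{V}^2$, or $\Delta^{\mathrm{2D}}$, giving counts $p_1,p_2,r$. A lengthy case analysis on $(p_1,p_2,r)$ shows that either $H(\sigma)>\Gamma$ outright, or one can reduce (after possibly permuting coordinates) to a situation where $p_1,p_2\gtrsim K$ and $d(\sigma)<200$. The latter situation is then fed into the key technical Lemma~\ref{l_Hlb2}, which has its own stopping-time argument---but on the \emph{floor structure} (the first time some floor leaves its $2$D neighborhood $\mathcal{V}^{c_m}$) and on $d(\omega_t)$, not on $\|\omega_t\|_b$. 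Only within the proof of Lemma~\ref{l_Hlb2}, in the sub-case $d(\omega_{T_0})=0$, does one obtain the rigid structure you describe in (i)--(ii); the complementary sub-case $d(\omega_{T_0})\ne 0$ requires a separate isoperimetric argument via the auxiliary time $T_1$.

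Finally, your proposal addresses only two spins $a,b$. The paper treats general $q$ by projecting all spins in $A$ to $1$ and all spins in $B$ to $2$, observing $H(\widetilde{\sigma})\le H(\sigma)$, and lifting the Ising conclusion back; this step is short but essential and is absent from your outline.
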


Part (1) of the previous proposition gives an opposite bound of Proposition
\ref{p_Eub} and hence completes the proof of the characterization
of the energy barrier. \emph{Moreover, in part (2), it is verified
that any optimal path connecting $\mathcal{S}(A)$ and $\mathcal{S}(B)$
must visit a gateway configuration between them. }Before proceeding
further, we officially conclude the proof of Theorem \ref{t_energy barrier}
by assuming Proposition \ref{p_Elb}.
\begin{proof}[Proof of Theorem \ref{t_energy barrier}]
 The conclusion of the theorem holds by Proposition \ref{p_Eub}
and part (1) of Proposition \ref{p_Elb}. 
\end{proof}
We provide the proof of Proposition \ref{p_Elb} in Sections \ref{sec8.1}
and \ref{sec8.2}. Then, in Section \ref{sec8.3}, we prove the large
deviation-type results, namely Theorem \ref{t_LDT results}, based
on the analysis of energy landscape that we carried out so far.

\subsection{\label{sec8.1}Preliminary analysis on energy landscape}

The purpose of this subsection is to provide a lemma (cf. Lemma \ref{l_Hlb2}
below) regarding the communication height between two far away configurations,
which will be the crucial tool in the proof of Proposition \ref{p_Elb}. 

Before proceeding to this result, we first introduce a lower bound
on the Hamiltonian $H$ which will be used frequently in the remaining
computations of the current section. For $\sigma\in\mathcal{X}$
and $a\in S$, denote by $\mathcal{D}_{a}(\sigma)\subseteq\mathbb{T}_{K}\times\mathbb{T}_{L}$
the collection of monochromatic pillars in $\sigma$ of spin $a$:
\begin{align*}
\mathcal{D}_{a}(\sigma) & =\{(k,\,\ell)\in\mathbb{T}_{K}\times\mathbb{T}_{L}:\sigma^{\langle k,\,\ell\rangle}(m)=a\text{ for all }m\in\mathbb{T}_{M}\}\;.
\end{align*}
Then, let $\mathcal{D}(\sigma)=\bigcup_{a\in S}\mathcal{D}_{a}(\sigma)$
and write 
\begin{equation}
d_{a}(\sigma)=|\mathcal{D}_{a}(\sigma)|\;\;\;\;\text{and}\;\;\;\;d(\sigma)=|\mathcal{D}(\sigma)|=\sum_{a\in S}d_{a}(\sigma)\;.\label{e_da}
\end{equation}
Now, we derive a lower bound on $H$. Recall the 1D and 2D Hamiltonians
from \eqref{e_Ham1D} and \eqref{e_Ham2D}, respectively.
\begin{lem}
\label{l_Hlb}For each $\sigma\in\mathcal{X}$, it holds that
\begin{equation}
H(\sigma)\ge2KL-2d(\sigma)+\sum_{m\in\mathbb{T}_{M}}H^{\mathrm{2D}}(\sigma^{(m)})\;,\label{e_H3lb}
\end{equation}
and the equality holds if and only if $H^{\mathrm{1D}}(\sigma^{\langle k,\,\ell\rangle})=2$
for all $(k,\,\ell)\in(\mathbb{T}_{K}\times\mathbb{T}_{L})\setminus\mathcal{D}(\sigma)$.
\end{lem}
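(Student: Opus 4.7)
The plan is to reduce the claim to the one-dimensional decomposition already at our disposal. Lemma \ref{l_decH} gives the identity
\[
H(\sigma) = \sum_{m\in\mathbb{T}_M} H^{\mathrm{2D}}(\sigma^{(m)}) + \sum_{(k,\,\ell)\in\mathbb{T}_K\times\mathbb{T}_L} H^{\mathrm{1D}}(\sigma^{\langle k,\,\ell\rangle})\;,
\]
so the inequality \eqref{e_H3lb} is equivalent to the pillar-wise bound
\[
\sum_{(k,\,\ell)\in\mathbb{T}_K\times\mathbb{T}_L} H^{\mathrm{1D}}(\sigma^{\langle k,\,\ell\rangle}) \ge 2KL - 2d(\sigma)\;.
\]
Thus the entire argument is about a one-dimensional statement on the cyclic word $\sigma^{\langle k,\,\ell\rangle}\in S^{\mathbb{T}_M}$ for each pillar $(k,\,\ell)$.

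The key observation is a simple dichotomy. If $(k,\,\ell)\in\mathcal{D}(\sigma)$, the pillar is monochromatic, so $H^{\mathrm{1D}}(\sigma^{\langle k,\,\ell\rangle})=0$. If $(k,\,\ell)\notin\mathcal{D}(\sigma)$, then the cyclic sequence $(\sigma(k,\,\ell,\,m))_{m\in\mathbb{T}_M}$ is non-constant, and since on a cyclic word every change of value must eventually be undone, the number of disagreeing neighbouring pairs, which is exactly $H^{\mathrm{1D}}(\sigma^{\langle k,\,\ell\rangle})$, is at least $2$. Summing these two cases over all pillars yields
\[
\sum_{(k,\,\ell)\in\mathbb{T}_K\times\mathbb{T}_L} H^{\mathrm{1D}}(\sigma^{\langle k,\,\ell\rangle}) \ge 2\,\big(KL-d(\sigma)\big)\;,
\]
which combined with Lemma \ref{l_decH} gives \eqref{e_H3lb}.

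For the equality clause, the inequality above becomes an equality precisely when the lower bound is attained pillar by pillar. In the monochromatic case the contribution is automatically $0$, while in the non-monochromatic case the bound $H^{\mathrm{1D}}(\sigma^{\langle k,\,\ell\rangle})\ge 2$ is tight exactly when $H^{\mathrm{1D}}(\sigma^{\langle k,\,\ell\rangle})=2$. Thus equality in \eqref{e_H3lb} holds if and only if $H^{\mathrm{1D}}(\sigma^{\langle k,\,\ell\rangle})=2$ for every $(k,\,\ell)\in(\mathbb{T}_K\times\mathbb{T}_L)\setminus\mathcal{D}(\sigma)$, as stated. There is no serious obstacle here; the only point that deserves a single clean sentence is the cyclic parity remark that a non-constant sequence on a torus has an even, hence at least $2$, number of places where consecutive entries differ.
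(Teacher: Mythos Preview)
Your proof is correct and follows essentially the same route as the paper: invoke Lemma~\ref{l_decH}, observe that $H^{\mathrm{1D}}(\sigma^{\langle k,\,\ell\rangle})=0$ on $\mathcal{D}(\sigma)$ and $\ge 2$ off $\mathcal{D}(\sigma)$, and sum. One small inaccuracy in your closing remark: the parity claim (that the number of cyclic disagreements is even) is only valid for $q=2$; for $q\ge 3$ a cyclic word like $1,2,3$ on $\mathbb{T}_3$ has three disagreements, so you should drop ``even, hence'' and keep only the correct assertion that a non-constant cyclic word has at least two disagreements, which your main argument already justifies without appealing to parity.
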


\begin{proof}
Since $H^{\mathrm{1D}}(\sigma^{\langle k,\,\ell\rangle})=0$ if $(k,\,\ell)\in\mathcal{D}(\sigma)$
and $H^{\mathrm{1D}}(\sigma^{\langle k,\,\ell\rangle})\ge2$ otherwise,
we have that
\begin{equation}
\sum_{(k,\,\ell)\in\mathbb{T}_{K}\times\mathbb{T}_{L}}H^{\mathrm{1D}}(\sigma^{\langle k,\,\ell\rangle})\ge2(KL-d(\sigma))\;.\label{e_H3lb.2}
\end{equation}
Hence, we can deduce \eqref{e_H3lb} from Lemma \ref{l_decH}. The
conclusion on the equality condition is immediate from the argument
above.
\end{proof}
Now, we proceed to the main result of this subsection. For the simplicity
of notation, we write, for $a\in S$,
\begin{equation}
\mathcal{V}^{a}:=\mathcal{N}^{\mathrm{2D}}(\mathbf{s}_{a}^{\mathrm{2D}})\subseteq\mathcal{X}^{\mathrm{2D}}\;\;\;\;\text{and}\;\;\;\;\Delta^{\mathrm{2D}}:=\mathcal{X}^{\mathrm{2D}}\setminus\bigcup_{a=1}^{q}\mathcal{V}^{a}\label{e_2dnbd}
\end{equation}
so that we have the following natural decomposition of the set $\mathcal{X}^{\mathrm{2D}}$:
\begin{equation}
\mathcal{X}^{\mathrm{2D}}=\Big(\,\bigcup_{a=1}^{q}\mathcal{V}^{a}\,\Big)\cup\Delta^{\mathrm{2D}}\;.\label{e_2dnbd2}
\end{equation}
Note that the set $\Delta^{\mathrm{2D}}$ is non-empty by the definition
of $\mathcal{N}^{\mathrm{2D}}$. Recall $\mathfrak{m}_{K}\in\mathbb{N}$
from \eqref{e_mK}. The following lemma, which is the main technical
result in the analysis of the energy landscape, asserts that \textit{we
have to overcome an energy barrier of $\Gamma$ in order to change
a 2D configuration at a certain floor from a neighborhood of a ground
state to a neighborhood of another ground state.}
\begin{lem}
\label{l_Hlb2}Suppose that $a,\,b\in S$. Moreover, let $U$ and
$V$ be two disjoint subsets of $\mathbb{T}_{M}$ satisfying $|U|,\,|V|\ge\mathfrak{m}_{K}$,
and let $\sigma\in\mathcal{X}$ be a configuration satisfying 
\[
\sigma^{(m)}\in\mathcal{V}^{a}\text{ for all }m\in U\;\;\;\;\text{and}\;\;\;\;\sigma^{(m)}\in\mathcal{V}^{b}\text{ for all }m\in V\;.
\]
Suppose that another configuration $\zeta\in\mathcal{X}$ satisfies
either $\zeta^{(m)}\in\mathcal{V}^{a_{1}}$ for some $m\in U$ and
$a_{1}\ne a$ or $\zeta^{(m)}\in\mathcal{V}^{b_{1}}$ for some $m\in V$
and $b_{1}\ne b$. Finally, we assume that $\sigma$ satisfies
\begin{equation}
d(\sigma)<200\;.\label{e_dcond}
\end{equation}
Then, both of the following statements hold.
\begin{enumerate}
\item It holds that $\Phi(\sigma,\,\zeta)\ge\Gamma$.
\item For any path $(\omega_{t})_{t=0}^{T}$ in $\mathcal{X}\setminus\mathcal{G}^{a,\,b}$
connecting $\sigma$ and $\zeta$, there exists $t\in\llbracket0,\,T\rrbracket$
such that $H(\omega_{t})\ge\Gamma+1$. 
\end{enumerate}
\end{lem}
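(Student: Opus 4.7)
The plan is to fix a path $(\omega_t)_{t=0}^T$ from $\sigma$ to $\zeta$ and exhibit a critical time $t^*$ with $H(\omega_{t^*})\ge\Gamma$ (for part (1)) or $\ge\Gamma+1$ (for part (2)). Since the hypothesis on $\zeta$ guarantees that some floor in $U\cup V$ must eventually leave its initial $\mathcal{V}^a$ or $\mathcal{V}^b$ neighborhood, I take $t^*$ to be the first such time; by the symmetry between $U$ and $V$ I may assume $\omega_{t^*}^{(m_0)}\notin\mathcal{V}^a$ for some $m_0\in U$. Proposition \ref{p_2lowE} classifies every 2D configuration with energy below $\Gamma^{\mathrm{2D}}=2K+2$ into the three families (L1)--(L3), with the neighborhoods $\{\mathcal{V}^{a'}\}_{a'\in S}$ pairwise disjoint; in particular, a single spin flip cannot jump from $\mathcal{V}^a$ into a different $\mathcal{V}^{a_1}$. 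Consequently $\omega_{t^*}^{(m_0)}\in\Delta^{\mathrm{2D}}$ and $H^{\mathrm{2D}}(\omega_{t^*}^{(m_0)})\ge 2K+2$.

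Next I decompose $H(\omega_{t^*})$ via Lemma \ref{l_decH} as $\sum_m H^{\mathrm{2D}}(\omega_{t^*}^{(m)})+\sum_{(k,\ell)}H^{\mathrm{1D}}(\omega_{t^*}^{\langle k,\ell\rangle})$. The 2D contribution from $m_0$ supplies $2K+2$; it remains to prove $\sum H^{\mathrm{1D}}\ge 2KL$. Via Lemma \ref{l_Hlb}, this is equivalent to controlling $d(\omega_{t^*})$ together with the ``excess'' 1D energy above $2$ per non-monochromatic pillar. My plan combines two ingredients: (a) a counting argument that bounds monochromatic pillars via the (L1)--(L3) classification---any pillar monochromatic of spin $a$ at $\omega_{t^*}$ must have spin $a$ at every floor in $V$, so its planar position belongs to the intersection of the non-$b$ defect sets of $|V|\ge\mathfrak{m}_K$ floors, each of size $\le(2K+1)^2/16$, and symmetrically for pillars monochromatic of $b$ or of a third spin; and (b) an accounting of extra 1D boundaries coming from pillars whose spin at $m_0$ differs from the spins at $m_0\pm 1$, which sit in $\mathcal{V}^a$ and are hence ``mostly'' $a$. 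Combining (a) and (b) with the starting bound $d(\sigma)<200$ should force $\sum H^{\mathrm{1D}}\ge 2KL$, giving $H(\omega_{t^*})\ge\Gamma$ as required for part (1).

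For part (2), the path additionally avoids $\mathcal{G}^{a,b}$. At time $t^*$ this rules out $\omega_{t^*}$ being in the slab-structured 3D gateway, so either $\omega_{t^*}^{(m_0)}$ fails to lie in the 2D gateway $\mathcal{G}^{a,b,\mathrm{2D}}$---which by Lemma \ref{l_gate2} forces $H^{\mathrm{2D}}(\omega_{t^*}^{(m_0)})\ge 2K+4$, because any exit from $\mathcal{V}^a$ at 2D energy exactly $2K+2$ lands in $\mathcal{Z}^{a,b,\mathrm{2D}}\subseteq\mathcal{G}^{a,b,\mathrm{2D}}$---or some floor of $\omega_{t^*}$ different from $m_0$ deviates from the clean ground-state pattern $\mathbf{s}_a^{\mathrm{2D}}/\mathbf{s}_b^{\mathrm{2D}}$ and contributes at least $+2$ to $\sum_m H^{\mathrm{2D}}$, or the slab structure is broken in a way that forces extra 1D boundaries; by the parity of $H$-values along the path, any such deviation gives $H(\omega_{t^*})\ge\Gamma+2>\Gamma+1$.

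The hardest step will be the counting argument (a)+(b) in part (1): the naive bound ``$d$ changes by at most one per flip'' is useless because $t^*$ may be arbitrarily large. The thickness condition $|U|,|V|\ge\mathfrak{m}_K=\lfloor K^{2/3}\rfloor$ must be exploited essentially, providing enough parallel floors in each slab so that monochromatic pillar locations are pinned to an intersection of many small defect sets, while the constant buffer $d(\sigma)<200$ together with the excess 1D contributions from the two floors adjacent to $m_0$ closes the remaining gap. This delicate interplay between 1D and 2D energy contributions is precisely what makes the 3D energy landscape genuinely harder than the 2D one, where the analogue of Lemma \ref{l_Hlb2} is essentially immediate from the single-pillar analogue of Lemma \ref{l_Hlb}.
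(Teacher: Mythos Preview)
Your proposal has a genuine gap in part (1). You try to show $H(\omega_{t^*})\ge\Gamma$ directly at the first exit time $t^*$, but this need not hold: the paper's proof explicitly works under the contrary assumption $H(\omega_{T_0})<\Gamma$ (with $T_0=t^*$) and derives from it only that $d(\omega_{T_0})\ge 2\mathfrak m_K-1$. Your ingredient (a) gives at best $d_a(\omega_{t^*})\le (2K+1)^2/16$, since the intersection of $|V|$ defect sets, each of size $\le(2K+1)^2/16$, is still only bounded by $(2K+1)^2/16$; and with $d$ of order $K^2$ the discount $-2d$ in Lemma~\ref{l_Hlb} overwhelms the $4\mathfrak m_K\sqrt{d}$ gain from the $U,V$ floors (check: $2d\le 4\mathfrak m_K\sqrt d$ requires $d\le 4\mathfrak m_K^2\approx 4K^{4/3}$, far below $K^2/4$). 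Ingredient (b) does not help either, because $m_0\pm1$ need not lie in $U$, and even if they do, the extra 1D cost you collect is $O(K)$, not $O(K^2)$. Crucially, the argument you dismiss as ``useless''---that $\sum_{n\ne a}d_n$ and $\sum_{n\ne b}d_n$ change by at most one per flip---is precisely the missing idea: since these sums start below $200$ (by the hypothesis $d(\sigma)<200$) and one of them must reach $\ge\mathfrak m_K$ by time $T_0$, there is an \emph{earlier} time $T_1$ at which one equals $h_K^2:=\lfloor\sqrt{\mathfrak m_K-1}\rfloor^2\approx K^{1/3}$. At $T_1$ all $U\cup V$ floors are still in their original neighborhoods, so $\sum_{m\in U}H^{\mathrm{2D}}\ge 4|U|h_K\ge 4\mathfrak m_Kh_K$, while $2d(\omega_{T_1})\le 4h_K^2$; the balance $4h_K(\mathfrak m_K-h_K)\approx 4K>2K+2$ then forces $H(\omega_{T_1})>\Gamma$. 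The choice $h_K^2$ is exactly what optimizes this trade-off.

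Your part (2) sketch also misses the essential structure. It is not true that exiting $\mathcal V^a$ at 2D energy $2K+2$ forces $\omega_{t^*}^{(m_0)}\in\mathcal G^{a,b,\mathrm{2D}}$ (for $q\ge3$ one could exit toward a third spin), and avoiding the 3D set $\mathcal G^{a,b}$ does not force the $m_0$-floor out of the 2D gateway, since membership in $\mathcal G^{a,b}$ also requires all other floors to be monochromatic. The paper instead splits on whether $d(\omega_{T_0})>0$ (in which case the $T_1$ argument from part (1) gives $H(\omega_{T_1})>\Gamma$, contradicting the $\Gamma$-path assumption) or $d(\omega_{T_0})=0$ (which forces $\omega_{T_0}$ to have exactly the clean slab structure with $H=\Gamma$); in the latter case one must argue \emph{forward} in time, using a minimality assumption on the path length, to a time $T_2>T_0$ where $H^{\mathrm{2D}}(\omega_{T_2}^{(m_0)})$ first drops below $2K+2$, and show that $\omega_{T_2}^{(m_0)}$ is either back in $\mathcal V^a$ (contradicting minimality) or a 2D regular configuration $\mathcal R_2^{a,b',\mathrm{2D}}$ with $b'=b$ forced by the 1D energy, whence $\omega_{T_2}\in\mathcal G^{a,b}$, contradiction.
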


\begin{proof}
We first consider part (1). Let $(\omega_{t})_{t=0}^{T}$ be a path
connecting $\sigma$ and $\zeta$. For convenience of notation, we
define a collection $(c_{m})_{m\in U\cup V}$ such that 
\begin{equation}
c_{m}=\begin{cases}
a & \text{if }m\in U\;,\\
b & \text{if }m\in V\;.
\end{cases}\label{e_cm}
\end{equation}
Then, we define
\[
T_{0}=\min\,\{t:H^{\mathrm{2D}}(\omega_{t}^{(m)})\notin\mathcal{V}^{c_{m}}\text{ for some }m\in U\cup V\}\;,
\]
where the existence of $t\in\llbracket1,\,T-1\rrbracket$ such that
$H^{\mathrm{2D}}(\omega_{t}^{(m)})\notin\mathcal{V}^{c_{m}}$ for
some $m\in U\cup V$ is guaranteed by the conditions on $\sigma$
and $\zeta$. Now, we find $m_{0}\in U\cup V$ such that 
\begin{equation}
H^{\mathrm{2D}}(\omega_{T_{0}}^{(m_{0})})\notin\mathcal{V}^{c_{m_{0}}}\;.\label{e_t0m0}
\end{equation}
By the definitions of $\mathcal{V}^{a}$ and $T_{0}$, we have that
\begin{equation}
H^{\mathrm{2D}}(\omega_{T_{0}}^{(m_{0})})\ge\Gamma^{\mathrm{2D}}=2K+2\;.\label{e_Hlb2}
\end{equation}
If $H(\omega_{T_{0}})\ge\Gamma$, there is nothing to prove. Hence,
let us assume from now on that 
\begin{equation}
H(\omega_{T_{0}})<\Gamma\;.\label{e_Hlb2.2}
\end{equation}
Then, by Lemma \ref{l_Hlb} with $\sigma=\omega_{T_{0}}$ and by recalling
the definition \eqref{e_da} of $d(\sigma)$, we have 
\begin{equation}
2\sum_{n\in S}d_{n}(\omega_{T_{0}})+2K+2>\sum_{m\in\mathbb{T}_{M}}H^{\mathrm{2D}}(\omega_{T_{0}}^{(m)})\;.\label{e_Hlb2.3}
\end{equation}
Since we get a contradiction to \eqref{e_Hlb2} if $\mathcal{D}_{n}(\omega_{T_{0}})=\emptyset$
for all $n\in S$, there exists $n_{0}\in S$ such that $\mathcal{D}_{n_{0}}(\omega_{T_{0}})\neq\emptyset$.
Suppose first that $n_{0}\in S\setminus\{b\}$. For this case, we
claim that
\begin{equation}
H^{\mathrm{2D}}(\omega_{T_{0}}^{(m)})\ge4\text{ for all }m\in V\;.\label{e_Hlb2.4}
\end{equation}
Assume not, so that we have $\omega_{T_{0}}^{(m)}=\mathbf{s}_{n_{0}}^{\mathrm{2D}}$
for some $m\in V$. If $m=m_{0}$, this obviously cannot happen. On
the other hand, if $m\in V\setminus\{m_{0}\}$, we have $\omega_{T_{0}}^{(m)}\in\mathcal{V}^{b}$
by the definition of $T_{0}$ and thus $\omega_{T_{0}}^{(m)}$ cannot
be $\mathbf{s}_{n_{0}}^{\mathrm{2D}}$ as $b\neq n_{0}$. Therefore,
we verified \eqref{e_Hlb2.4}. Similarly, if $n_{0}\in S\setminus\{a\}$,
we obtain
\begin{equation}
H^{\mathrm{2D}}(\omega_{T_{0}}^{(m)})\ge4\text{ for all }m\in U\;.\label{e_Hlb2.5}
\end{equation}
Since either \eqref{e_Hlb2.4} or \eqref{e_Hlb2.5} must happen, and
since $|U|,\,|V|\ge\mathfrak{m}_{K}$, we get from \eqref{e_Hlb2}
and \eqref{e_Hlb2.3} that 
\begin{equation}
2\sum_{n\in S}d_{n}(\omega_{T_{0}})+2K+2>(2K+2)+4(\mathfrak{m}_{K}-1)\;,\label{e_Hlb2.6}
\end{equation}
and hence 
\begin{equation}
\sum_{n\in S}d_{n}(\omega_{T_{0}})\ge2\mathfrak{m}_{K}-1\;.\label{e_Hlb2.7}
\end{equation}
Thus, we have either 
\[
\sum_{n\in S\setminus\{a\}}d_{n}(\omega_{T_{0}})\ge\mathfrak{m}_{K}\;\;\;\;\text{or}\;\;\;\;\sum_{n\in S\setminus\{b\}}d_{n}(\omega_{T_{0}})\ge\mathfrak{m}_{K}\;.
\]
Then for $K$ satisfying the condition in Theorem \ref{t_energy barrier},
we have $\mathfrak{m}_{K}\ge200$ and thus by the condition \eqref{e_dcond},
we can take $T_{1}<T_{0}$ such that 
\begin{equation}
T_{1}=\min\,\Big\{\,t:\sum_{n\in S\setminus\{a\}}d_{n}(\omega_{t})=h_{K}^{2}\;\;\;\text{or}\;\;\;\sum_{n\in S\setminus\{b\}}d_{n}(\omega_{t})=h_{K}^{2}\,\Big\}\label{e_T1}
\end{equation}
where $h_{K}=\lfloor\sqrt{\mathfrak{m}_{K}-1}\rfloor$. Since $T_{1}<T_{0}$,
by the definition of $T_{0}$, we have 
\begin{equation}
\omega_{T_{1}}^{(m)}\in\mathcal{V}^{a}\;,\;\;\;\forall m\in U\;\;\;\text{and}\;\;\;\omega_{T_{1}}^{(m)}\in\mathcal{V}^{b}\;,\;\;\;\forall m\in V\;.\label{e_Hlb2.8}
\end{equation}
We first suppose that $\sum_{n\in S\setminus\{a\}}d_{n}(\omega_{T_{1}})=h_{K}^{2}$.
Since (cf. \eqref{e_spinnum})
\[
\Vert\omega_{T_{1}}^{(m)}\Vert_{n}\ge d_{n}(\omega_{T_{1}})\;\;\;\;\text{for all }m\in\mathbb{T}_{M}\;,\;n\in S\;,
\]
we can assert from \eqref{e_Hlb2.8} and \textbf{(L2)}, \textbf{(L3)}
of Proposition \ref{p_2lowE} that 
\begin{equation}
H^{\mathrm{2D}}(\omega_{T_{1}}^{(m)})\ge4\Big(\,\sum_{n\in S\setminus\{a\}}d_{n}(\omega_{T_{1}})\,\Big)^{1/2}=4h_{K}\text{ for all }m\in U\;.\label{e_Hlb2.9}
\end{equation}
Therefore, by Lemma \ref{l_Hlb} with $\sigma=\omega_{T_{1}}$, the
definition of $T_{1}$, and \eqref{e_Hlb2.9}, we get 
\[
H(\omega_{T_{1}})\ge2KL-4h_{K}^{2}+4h_{K}|U|\ge2KL-4h_{K}^{2}+4h_{K}\mathfrak{m}_{K}>2KL+2K+2=\Gamma\;,
\]
where the last inequality holds for $K\ge32$. Of course, we get the
same conclusion for the case of $\sum_{n\in S\setminus\{b\}}d_{n}(\omega_{T_{1}})=h_{K}^{2}$
by an identical argument. Therefore, we can conclude that $H(\omega_{T_{1}})>\Gamma$,
and thus part (1) is verified.\medskip{}

Now, we turn to part (2). We now assume that, for some $\sigma$ and
$\zeta$ satisfying the assumptions of the lemma, there exists a path
$(\omega_{t})_{t=0}^{T}$ in $\mathcal{X}\setminus\mathcal{G}^{a,\,b}$
connecting $\sigma$ and $\zeta$ with 
\begin{equation}
H(\omega_{t})\le\Gamma\;\;\;\;\;\text{for all }t\in\llbracket0,\,T\rrbracket\;.\label{e_Hlb2.10}
\end{equation}
Without loss of generality, we can assume that the triple $(\sigma,\,\zeta,\,(\omega_{t})_{t=0}^{T})$
that we selected has the smallest path length $T$ among all such
triples.

Recall $T_{0}$ from the proof of the first part. If $\mathcal{D}_{n}(\omega_{T_{0}})\neq\emptyset$
for some $n\in S$, we can repeat the same argument with part (1)
to deduce $H(\omega_{T_{1}})>\Gamma$, where $T_{1}$ is defined in
\eqref{e_T1}. This contradicts \eqref{e_Hlb2.10}.

Next, we consider the case when\textbf{ $\mathcal{D}_{n}(\omega_{T_{0}})=\emptyset$}
for all $n\in S$. The contradiction for this case is more involved
than that of the corresponding case of part (1). By Lemma \ref{l_Hlb},
we have that 
\begin{equation}
2K+2\ge\sum_{m\in\mathbb{T}_{M}}H^{\mathrm{2D}}(\omega_{T_{0}}^{(m)})\;.\label{e_Hlb2.11}
\end{equation}
Recall $m_{0}$ from \eqref{e_t0m0}. Since $H^{\mathrm{2D}}(\omega_{T_{0}}^{(m_{0})})=2K+2$
by \eqref{e_Hlb2}, we not only have 
\begin{equation}
H^{\mathrm{2D}}(\omega_{T_{0}}^{(m)})=0\text{\;\;\;\;for all }m\in\mathbb{T}_{M}\setminus\{m_{0}\}\;,\label{e_H3lb2.12}
\end{equation}
but also the equality in \eqref{e_Hlb2.11} holds, i.e., 
\begin{equation}
\sum_{m\in\mathbb{T}_{M}}H^{\mathrm{2D}}(\omega_{T_{0}}^{(m)})=2K+2\;.\label{e_H3lb2.13}
\end{equation}
Hence, by the last part of Lemma \ref{l_Hlb}, we must have 
\begin{equation}
H^{\mathrm{1D}}(\omega_{T_{0}}^{\langle k,\,\ell\rangle})=2\;\;\;\;\text{for all }(k,\,\ell)\in\mathbb{T}_{K}\times\mathbb{T}_{L}\;.\label{e_H3lb2.14}
\end{equation}
From these observations, we can deduce the following facts:
\begin{itemize}
\item By \eqref{e_H3lb2.13}, \eqref{e_H3lb2.14}, and Lemma \ref{l_Hlb},
we have $H(\omega_{T_{0}})=\Gamma$.
\item By \eqref{e_H3lb2.12} and \eqref{e_H3lb2.14}, we have $\omega_{T_{0}}^{(m)}\in\{\mathbf{s}_{a}^{\mathrm{2D}},\,\mathbf{s}_{b}^{\mathrm{2D}}\}$
for all $m\in\mathbb{T}_{M}\setminus\{m_{0}\}$.
\end{itemize}
Moreover, the spins must be aligned so that \eqref{e_H3lb2.14} holds.
Without loss of generality, we assume that $m_{0}\in U$, since the
case $m_{0}\in V$ can be handled in an identical manner. Starting
from $\omega_{T_{0}}$, suppose that we flip a spin at $m$-th floor,
$m\neq m_{0}$, without decreasing the 2D energy of the $m_{0}$-th
floor. Then, since each non-$m_{0}$-th floor is monochromatic and
\eqref{e_H3lb2.14} holds, the 3D energy of $\sigma$ increases by
at least four and we obtain a contradiction to the fact that $(\omega_{t})_{t=0}^{T}$
is a $\Gamma$-path. Thus, we must decrease the 2D energy of the $m_{0}$-th
floor before modifying the other floors. Define 
\[
T_{2}=\min\,\{t>T_{0}:H^{\mathrm{2D}}(\omega_{t}^{(m_{0})})<2K+2\}\;.
\]
Then, by Proposition \ref{p_2lowE}, it suffices to consider the following
two cases:
\begin{itemize}
\item \textbf{(Case 1: $\omega_{T_{2}}^{(m_{0})}\in\mathcal{V}^{n}$ for
some $n\in S$) }Since $\omega_{T_{0}}^{(m_{0})}\in\mathcal{X}^{\mathrm{2D}}$
is the first escape from the valley $\mathcal{V}^{a}$, it holds from
the minimality of $T_{2}$ that $\omega_{T_{2}}^{(m_{0})}\notin\mathcal{V}^{n}$
for $n\in S\setminus\{a\}$ (the 2D path must visit a number of regular
configurations first; see part (1) of Proposition \ref{p_typ2prop}).
On the other hand, if $\omega_{T_{2}}^{(m_{0})}\in\mathcal{V}^{a}$,
then we obtain a contradiction from the minimality of the length of
$(\omega_{t})_{t=0}^{T}$, as we have a shorter path from $\omega_{T_{2}}$
to $\zeta$ where $\omega_{T_{2}}$ clearly satisfies the conditions
imposed to $\sigma$.
\item \textbf{(Case 2: $\omega_{T_{2}}^{(m_{0})}$ is a 2D regular configuration)
}Since we have assumed that $m_{0}\in U$, we have $\omega_{T_{2}}^{(m_{0})}\in\mathcal{R}_{2}^{a,\,b'}$
for some $b'\in S\setminus\{a\}$ (by the minimality of $T_{2}$ and
part (1) of Proposition \ref{p_typ2prop}). Now, we claim that $b'=b$.
To this end, let us suppose that $b'\ne b$. Then as $\omega_{T_{2}}^{(m)}\in\{\mathbf{s}_{a}^{\mathrm{2D}},\,\mathbf{s}_{b}^{\mathrm{2D}}\}$
for $m\ne m_{0}$, we have $H^{\mathrm{1D}}(\omega_{T_{2}}^{\langle k,\,\ell\rangle})\ge3$
for $(k,\,\ell)\in\mathbb{T}_{K}\times\mathbb{T}_{L}$ satisfying
$\omega_{T_{2}}^{(m_{0})}(k,\,\ell)=b'$. Because there are exactly
$2K$ such $(k,\,\ell)$, by Lemma \ref{l_Hlb}, we have
\begin{align*}
H(\omega_{T_{2}}) & =\sum_{(k,\,\ell)\in\mathbb{T}_{K}\times\mathbb{T}_{L}}H^{\mathrm{1D}}(\omega_{T_{2}}^{\langle k,\,\ell\rangle})+\sum_{m\in\mathbb{T}_{M}}H^{\mathrm{2D}}(\omega_{T_{2}}^{(m)})\\
 & \ge3\times2K+2\times(KL-2K)+2K>\Gamma\;,
\end{align*}
where at the first inequality we used the fact that $H^{\mathrm{2D}}(\omega_{T_{2}}^{(m_{0})})=2K$.
This contradicts the fact that $(\omega_{t})_{t=0}^{T}$ is a $\Gamma$-path.
Therefore, we must have $b'=b$, which implies along with \eqref{e_H3lb2.14}
that $\omega_{T_{2}}\in\mathcal{G}^{a,\,b}$. Hence, we get a contradiction
as we assumed that $(\omega_{t})_{t=0}^{T}$ is a path in $\mathcal{X}\setminus\mathcal{G}^{a,\,b}$.
\end{itemize}
Since we get a contradiction for both cases, we completed the proof
of part (2).
\end{proof}
\begin{rem}
\label{r_Kcond}We remark that \eqref{e_T1} is exactly the place
from which the lower bound $2829$ of $K$ in Theorem \ref{t_energy barrier}
originates.
\end{rem}

The following is a direct consequence of the previous lemma which
will be used later.
\begin{cor}
\label{c_H3lb}Suppose that $P,\,Q\in\mathfrak{S}_{M}$ and $|P|\in\llbracket\mathfrak{m}_{K},\,M-\mathfrak{m}_{K}\rrbracket$.
Then for $a,\,b\in S$, we have $\Phi(\sigma_{P}^{a,\,b},\,\sigma_{Q}^{a,\,b})=\Gamma$.
In particular, we have $\Phi(\sigma_{P}^{a,\,b},\,\mathbf{s}_{a})=\Gamma$.
\end{cor}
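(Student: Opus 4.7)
The plan is to prove the two inequalities $\Phi(\sigma_P^{a,b},\,\sigma_Q^{a,b})\le\Gamma$ and $\Phi(\sigma_P^{a,b},\,\sigma_Q^{a,b})\ge\Gamma$ separately (with $P\ne Q$ implicit in the notation; the case $P=Q$ is vacuous), and then to derive the particular statement $\Phi(\sigma_P^{a,b},\,\mathbf{s}_a)=\Gamma$ as a specialization.

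For the upper bound, I would use canonical paths. Given a connected $P\in\mathfrak{S}_M$, it is straightforward to embed it into an increasing sequence $\emptyset=P_0\prec P_1\prec\cdots\prec P_{|P|}=P\prec\cdots\prec P_M=\mathbb{T}_M$ by first growing from a singleton inside $P$ along the interval $P$, and then attaching neighbors one at a time. By Definition~\ref{d_canpath}, a pre-canonical path built from this increasing sequence visits $\sigma_P^{a,b}$ at time $KL\,|P|$, and by Remark~\ref{r_canpath} the whole canonical path is a $\Gamma$-path. Performing the same construction for $Q$, and then concatenating the reversed initial segment from $\sigma_P^{a,b}$ to $\mathbf{s}_a$ with a new initial segment from $\mathbf{s}_a$ to $\sigma_Q^{a,b}$, yields a $\Gamma$-path from $\sigma_P^{a,b}$ to $\sigma_Q^{a,b}$. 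This already gives $\Phi(\sigma_P^{a,b},\,\mathbf{s}_a)\le\Gamma$ by the first segment alone.

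For the lower bound, I would simply apply Lemma~\ref{l_Hlb2}(1) with $\sigma=\sigma_P^{a,b}$, $\zeta=\sigma_Q^{a,b}$, $U=\mathbb{T}_M\setminus P$, and $V=P$. The hypothesis $|U|,\,|V|\ge\mathfrak{m}_K$ is exactly the standing assumption $|P|\in\llbracket\mathfrak{m}_K,\,M-\mathfrak{m}_K\rrbracket$. The conditions $\sigma^{(m)}\in\mathcal{V}^a$ for $m\in U$ and $\sigma^{(m)}\in\mathcal{V}^b$ for $m\in V$ are immediate from the definition of $\sigma_P^{a,b}$, since $\sigma^{(m)}$ equals $\mathbf{s}_a^{\mathrm{2D}}$ or $\mathbf{s}_b^{\mathrm{2D}}$ according to whether $m\in P^c$ or $m\in P$. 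The constraint $d(\sigma_P^{a,b})<200$ is even stronger than needed: since both $P$ and $P^c$ are non-empty, no pillar of $\sigma_P^{a,b}$ is monochromatic, so $d(\sigma_P^{a,b})=0$. Finally, the required witness on $\zeta$ follows from $P\ne Q$: either one can choose $m\in P\setminus Q\subseteq V$, in which case $\zeta^{(m)}=\mathbf{s}_a^{\mathrm{2D}}\in\mathcal{V}^a$ with $a\ne b$, or one can choose $m\in Q\setminus P\subseteq U$, in which case $\zeta^{(m)}=\mathbf{s}_b^{\mathrm{2D}}\in\mathcal{V}^b$ with $b\ne a$.

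The in particular part follows from the same application of Lemma~\ref{l_Hlb2}(1) with $\zeta=\mathbf{s}_a$: since $V=P$ is non-empty (indeed $|P|\ge\mathfrak{m}_K$), any $m\in V$ satisfies $\zeta^{(m)}=\mathbf{s}_a^{\mathrm{2D}}\in\mathcal{V}^a$ with $a\ne b$, providing the required witness. No serious obstacle is expected in this proof, as the hard analytical work has already been carried out in Lemma~\ref{l_Hlb2} and in the canonical path construction; the only thing to do here is to verify that the specific configurations $\sigma_P^{a,b}$ and $\sigma_Q^{a,b}$ fit cleanly into the hypotheses of the lemma.
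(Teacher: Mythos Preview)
Your proposal is correct and follows essentially the same approach as the paper: the lower bound comes from Lemma~\ref{l_Hlb2}(1) applied to $\sigma=\sigma_P^{a,b}$ and $\zeta=\sigma_Q^{a,b}$, and the upper bound comes from canonical paths through $\mathbf{s}_a$. The only cosmetic difference is that the paper obtains the ``in particular'' clause by inserting $Q=\emptyset$ (since $\mathbf{s}_a=\sigma_\emptyset^{a,b}$) into the general statement, whereas you re-verify the hypotheses of Lemma~\ref{l_Hlb2} directly for $\zeta=\mathbf{s}_a$; both are equivalent.
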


\begin{proof}
We can apply Lemma \ref{l_Hlb2} with $\sigma=\sigma_{P}^{a,\,b}$
and $\zeta=\sigma_{Q}^{a,\,b}$ to get 
\begin{equation}
\Phi(\sigma_{P}^{a,\,b},\,\sigma_{Q}^{a,\,b})\ge\Gamma\;.\label{e_PQub}
\end{equation}
On the other hand, by taking a canonical path connecting $\mathbf{s}_{a}$
and $\sigma_{P}^{a,\,b}$, we get $\Phi(\mathbf{s}_{a},\,\sigma_{P}^{a,\,b})\le\Gamma$.
Similarly, we get $\Phi(\mathbf{s}_{a},\,\sigma_{Q}^{a,\,b})\le\Gamma$.
Hence, we obtain 
\begin{equation}
\Phi(\sigma_{P}^{a,\,b},\,\sigma_{Q}^{a,\,b})\le\max\,\{\Phi(\mathbf{s}_{a},\,\sigma_{P}^{a,\,b}),\,\Phi(\mathbf{s}_{a},\,\sigma_{Q}^{a,\,b})\}\le\Gamma\;.\label{e_PQlb}
\end{equation}
Combining \eqref{e_PQub} and \eqref{e_PQlb} proves $\Phi(\sigma_{P}^{a,\,b},\,\sigma_{Q}^{a,\,b})=\Gamma$.
By inserting $Q=\emptyset$, we get $\Phi(\sigma_{P}^{a,\,b},\,\mathbf{s}_{a})=\Gamma$.
\end{proof}

\subsection{\label{sec8.2}Proof of Proposition \ref{p_Elb}}

Recall \eqref{e_spinnum}. Note that
\begin{equation}
\|\sigma\|_{a}=\sum_{m\in\mathbb{T}_{M}}\Vert\sigma^{(m)}\Vert_{a}\;.\label{e_decnorm}
\end{equation}
We are now ready to prove Proposition \ref{p_Elb}. We first prove
this proposition when $q=2$. Then, the general case can be verified
from this result via a projection-type argument. 
\begin{proof}[Proof of Proposition \ref{p_Elb}: $q=2$]
 Since $q=2$, we only have two spins $1$ and $2$ and hence we
let $\mathbf{s}=\mathbf{s}_{1}$ and $\mathbf{s}'=\mathbf{s}_{2}$.
We fix an arbitrary path $(\omega_{t})_{t=0}^{T}$ connecting $\mathbf{s}$
and $\mathbf{s}'$, and take $\sigma\in(\omega_{t})_{t=0}^{T}$ such
that 
\begin{equation}
\|\sigma\|_{1}=\lfloor KLM/2\rfloor+1\;.\label{e_p1}
\end{equation}
Since there is nothing to prove if $H(\sigma)\ge\Gamma+1$, we assume
that 
\begin{equation}
H(\sigma)\le\Gamma\;.\label{e_p2}
\end{equation}
\textit{Then, we claim that there exists $t\in\llbracket0,\,T\rrbracket$
such that $H(\omega_{t})=\Gamma$. Moreover, we claim that if $(\omega_{t})_{t=0}^{T}$
is a path in $\mathcal{X}\setminus\mathcal{G}^{1,\,2}$, there exists
$t\in\llbracket0,\,T\rrbracket$ such that $H(\omega_{t})=\Gamma+1$.
}It is clear that a verification of these claims immediately proves
the case of $q=2$.

We recall the decomposition \eqref{e_2dnbd2} of $\mathcal{X}^{\mathrm{2D}}$
and write 
\begin{align*}
P_{n} & =P_{n}(\sigma)=\{m\in\mathbb{T}_{M}:\sigma^{(m)}\in\mathcal{V}^{n}\}\;\;\;\;;\;n\in\{1,\,2\}\;,\\
R & =R(\sigma)=\{m\in\mathbb{T}_{M}:\sigma^{(m)}\in\Delta^{\mathrm{2D}}\}\;,
\end{align*}
so that $\mathbb{T}_{M}$ can be decomposed into $\mathbb{T}_{M}=P_{1}\cup P_{2}\cup R$.
Write $p_{1}=|P_{1}|$, $p_{2}=|P_{2}|$, and $r=|R|$ so that the
previous decomposition of $\mathbb{T}_{M}$ implies 
\begin{equation}
p_{1}+p_{2}+r=M\;.\label{e_p3}
\end{equation}
We also write $d_{1}=d_{1}(\sigma)$, $d_{2}=d_{2}(\sigma)$, and
$d=d(\sigma)$ so that $d=d_{1}+d_{2}$. The following facts are crucially
used:
\begin{itemize}
\item By Lemma \ref{l_Hlb} and \eqref{e_p2}, it holds that 
\begin{equation}
d_{1}+d_{2}+K+1\ge\frac{1}{2}\sum_{m\in\mathbb{T}_{M}}H^{\mathrm{2D}}(\sigma^{(m)})\;.\label{e_p4}
\end{equation}
\item We have
\begin{equation}
\begin{cases}
H^{\mathrm{2D}}(\sigma^{(m)})\ge4\,\|\sigma^{(m)}\|_{2}^{1/2}\ge4d_{2}^{1/2} & \text{if }m\in P_{1}\;,\\
H^{\mathrm{2D}}(\sigma^{(m)})\ge4\,\|\sigma^{(m)}\|_{1}^{1/2}\ge4d_{1}^{1/2} & \text{if }m\in P_{2}\;,\\
H^{\mathrm{2D}}(\sigma^{(m)})\ge2K & \text{if }m\in R\;,
\end{cases}\label{e_p5}
\end{equation}
where the first two bounds follow from \textbf{(L2)} and \textbf{(L3)}
of Proposition \ref{p_2lowE}, while the last one follows from \textbf{(L1)}
of Proposition \ref{p_2lowE}.
\item By inserting \eqref{e_p5} to \eqref{e_p4}, we get
\begin{equation}
d_{1}+d_{2}+K+1\ge2p_{1}d_{2}^{1/2}+2p_{2}d_{1}^{1/2}+Kr\;.\label{e_p6}
\end{equation}
\end{itemize}
We consider four cases separately based on the conditions on $p_{1}$,
$p_{2}$, and $r$. Recall that we assumed $K\ge2829$; several arguments
below require $K$ to be large enough, and they indeed hold for $K$
in this range.\medskip{}

\noindent \textbf{(Case 1:} $p_{1},\,p_{2}\ge1$\textbf{)} Since both
$P_{1}$ and $P_{2}$ are non-empty, the first two bounds in \eqref{e_p5}
activate and thus
\begin{equation}
d_{1},\,d_{2}\le\frac{(2K+1)^{2}}{16}\;.\label{e_p7}
\end{equation}
We note that, since the function $f(x)=x-2ax^{1/2}$ is convex on
$[0,\,\infty)$ for $a>0$, by \eqref{e_p7} we have 
\begin{equation}
\begin{aligned} & d_{1}-2p_{2}d_{1}^{1/2}\le\max\,\Big\{\,0,\,\frac{(2K+1)^{2}}{16}-\frac{2K+1}{2}p_{2}\,\Big\}\;,\\
 & d_{2}-2p_{1}d_{2}^{1/2}\le\max\,\Big\{\,0,\,\frac{(2K+1)^{2}}{16}-\frac{2K+1}{2}p_{1}\,\Big\}\;.
\end{aligned}
\label{e_p8}
\end{equation}
Inserting \eqref{e_p8} to \eqref{e_p6}, we get 
\begin{equation}
Kr\le K+1+\sum_{i=1}^{2}\,\max\,\Big\{\,0,\,\frac{(2K+1)^{2}}{16}-\frac{2K+1}{2}p_{i}\,\Big\}\;.\label{e_p9}
\end{equation}
We now consider three sub-cases:
\begin{itemize}
\item $p_{1},\,p_{2}\le(2K+1)/8$: For this case, we can rewrite \eqref{e_p9}
as 
\[
Kr\le K+1+\frac{(2K+1)^{2}}{8}-\frac{2K+1}{2}(p_{1}+p_{2})<K+1+\frac{(2K+1)^{2}}{8}-K(p_{1}+p_{2})\;.
\]
Inserting \eqref{e_p3} yields a contradiction since $K\le M$.
\item $p_{1}\le(2K+1)/8<p_{2}$ or $p_{2}\le(2K+1)/8<p_{1}$: By symmetry,
it suffices to consider the former case, for which we can rewrite
\eqref{e_p9} as 
\[
Kr\le K+1+\frac{(2K+1)^{2}}{16}-\frac{2K+1}{2}p_{1}<2K+\frac{(2K+1)^{2}}{16}-Kp_{1}\;.
\]
Thus, we get 
\[
p_{1}+r\le2+\frac{(2K+1)^{2}}{16K}\;,
\]
and thus by the second bound in \eqref{e_p5},
\[
\|\sigma\|_{2}\ge p_{2}\Big(\,KL-\frac{(2K+1)^{2}}{16}\,\Big)\ge\Big(\,M-2-\frac{(2K+1)^{2}}{16K}\,\Big)\Big(\,KL-\frac{(2K+1)^{2}}{16}\,\Big)\;.
\]
We get a contradiction to \eqref{e_p1} since the right-hand side
is greater than $\lfloor KLM/2\rfloor+1$.
\item $p_{1},\,p_{2}>(2K+1)/8$: By \eqref{e_p9}, we can notice that $r=0$
or $1$. By \eqref{e_p1}, the first bound in \eqref{e_p5}, and \eqref{e_decnorm},
we get
\[
\Big\lfloor\frac{KLM}{2}\Big\rfloor+1=\|\sigma\|_{1}\ge\sum_{m\in P_{1}}\Vert\sigma^{(m)}\Vert_{1}\ge p_{1}\Big(\,KL-\frac{(2K+1)^{2}}{16}\,\Big)\;,
\]
and thus, 
\begin{equation}
p_{2}\ge M-1-p_{1}\ge M-1-\frac{\lfloor KLM/2\rfloor+1}{KL-(2K+1)^{2}/16}\ge\frac{2K+1}{7}\;.\label{e_p10}
\end{equation}
Similarly, we get 
\begin{equation}
p_{1}\ge\frac{2K+1}{7}\;.\label{e_p11}
\end{equation}
Now, by \eqref{e_p7}, \eqref{e_p10} and \eqref{e_p11}, it holds
that 
\[
p_{2}\ge\frac{4}{7}d_{1}^{1/2}\;\;\;\;\text{and}\;\;\;\;p_{1}\ge\frac{4}{7}d_{2}^{1/2}\;.
\]
Inserting this along with \eqref{e_p10} and \eqref{e_p11} to the
right-hand side of \eqref{e_p6}, we get 
\[
2p_{1}d_{2}^{1/2}+2p_{2}d_{1}^{1/2}+Kr\ge\Big(\,d_{2}+\frac{1}{4}p_{1}d_{2}^{1/2}\,\Big)+\Big(\,d_{1}+\frac{1}{4}p_{2}d_{1}^{1/2}\,\Big)\ge d+\frac{2K+1}{28}d^{1/2}\;,
\]
where the last inequality follows from the inequality $x^{1/2}+y^{1/2}\ge(x+y)^{1/2}$.
Applying this to \eqref{e_p6}, we conclude that 
\[
d\le\Big(\,\frac{28(K+1)}{2K+1}\,\Big)^{2}<200\;.
\]
This proves the condition \eqref{e_dcond} for $\sigma.$ Moreover,
since $p_{1},\,p_{2}>(2K+1)/8\ge\mathfrak{m}_{K}$, we can now apply
part (1) of Lemma \ref{l_Hlb2} to deduce $\Phi(\sigma,\,\mathbf{s}')\ge\Gamma$,
and this proves the first part of the claim. Moreover, if $(\omega_{t})_{t=0}^{T}$
is a path in $\mathcal{X}\setminus\mathcal{G}^{1,\,2}$, then the
sub-path from $\sigma$ to $\omega_{T}=\mathbf{s}'$ is also in $\mathcal{X}\setminus\mathcal{G}^{1,\,2}$,
and thus part (2) of Lemma \ref{l_Hlb2} verifies the second assertion
of the claim as well.\medskip{}
\end{itemize}
\textbf{(Case 2: }$p_{1}\ge1$, $p_{2}=0$, $r\ge1$ or $p_{1}\ge1$,
$p_{2}=0$, $r\ge1$\textbf{) }By symmetry, it suffices to consider
the former case. Similarly as in \textbf{(Case 1)}, we can apply the
first bound in \eqref{e_p5} to deduce 
\begin{equation}
d_{2}\le\frac{(2K+1)^{2}}{16}\;.\label{e_p12}
\end{equation}
Again by the first bound in \eqref{e_p5}, we have 
\[
\Vert\sigma^{(m)}\Vert_{1}\ge KL-\frac{(2K+1)^{2}}{16}\;\;\;\;\text{for all }m\in P_{1}\;,
\]
and thus we get
\[
\sum_{m\in R}\Vert\sigma^{(m)}\Vert_{1}=\|\sigma\|_{1}-\sum_{m\in P_{1}}\Vert\sigma^{(m)}\Vert_{1}\le\frac{KLM}{2}+1-p_{1}\Big(\,KL-\frac{(2K+1)^{2}}{16}\,\Big)\;.
\]
Therefore, there exists $m_{0}\in R$ such that
\begin{align*}
\Vert\sigma^{(m_{0})}\Vert_{1} & \le\frac{1}{r}\Big[\,\frac{KLM}{2}+1-p_{1}\Big(\,KL-\frac{(2K+1)^{2}}{16}\,\Big)\,\Big]\\
 & =KL-\frac{(2K+1)^{2}}{16}+\frac{1}{r}\Big[\,-\frac{KLM}{2}+\frac{(2K+1)^{2}M}{16}+1\,\Big]\\
 & \le KL-\frac{(2K+1)^{2}}{16}-\frac{1}{r}\Big[\,\frac{KLM}{4}-\frac{K^{2}M}{20}\,\Big]\;,
\end{align*}
where at the second line we used $p_{1}=M-r$. Thus, we have 
\begin{equation}
d_{1}\le\Vert\sigma^{(m_{0})}\Vert_{1}\le KL-\frac{(2K+1)^{2}}{16}-\frac{1}{r}\Big[\,\frac{KLM}{4}-\frac{K^{2}M}{20}\,\Big]\;.\label{e_p13}
\end{equation}
Inserting this to \eqref{e_p6}, we get
\[
2p_{1}d_{2}^{1/2}+Kr\le d_{2}+\Big[\,KL-\frac{(2K+1)^{2}}{16}-\frac{1}{r}\Big(\,\frac{KLM}{4}-\frac{K^{2}M}{20}\,\Big)\,\Big]+K+1\;.
\]
Reorganizing and applying a similar estimate as in \eqref{e_p8},
we get 
\begin{align}
Kr+\frac{1}{r}\Big[\,\frac{KLM}{4}- & \frac{K^{2}M}{20}\,\Big]\le KL-\frac{(2K+1)^{2}}{16}+K+1+(d_{2}-2p_{1}d_{2}^{1/2})\nonumber \\
 & \le KL-\frac{(2K+1)^{2}}{16}+K+1+\max\,\Big\{\,0,\,\frac{(2K+1)^{2}}{16}-\frac{2K+1}{2}p_{1}\,\Big\}\nonumber \\
 & =KL+K+1-\min\,\Big\{\,\frac{(2K+1)^{2}}{16},\,\frac{2K+1}{2}p_{1}\,\Big\}\;.\label{e_p14}
\end{align}
Now, we analyze two sub-cases separately.
\begin{itemize}
\item $p_{1}\le(2K+1)/8$: Then, we can rewrite \eqref{e_p14} as 
\[
Kr+\frac{1}{r}\Big[\,\frac{KLM}{4}-\frac{K^{2}M}{20}\,\Big]\le KL+K+1-\frac{2K+1}{2}p_{1}\le KL+K\;.
\]
Multiplying $r/K$ in both sides, we reorganize the previous inequality
as 
\begin{equation}
\Big(\,r-\frac{L+1}{2}\,\Big)^{2}\le\frac{(L+1)^{2}}{4}-\frac{LM}{4}+\frac{KM}{20}\le\frac{L^{2}+10L+5}{20}\;.\label{e_p15}
\end{equation}
Since $p_{1}\le(2K+1)/8$, we have 
\begin{equation}
r\ge M-\frac{2K+1}{8}\ge\frac{3}{4}L-1\;.\label{e_p16}
\end{equation}
Inserting \eqref{e_p16} to \eqref{e_p15} yields a contradiction
for $L\ge K\ge2829$.
\item $p_{1}>(2K+1)/8$: For this case, \eqref{e_p14} becomes
\[
Kr+\frac{1}{r}\Big[\,\frac{KLM}{4}-\frac{K^{2}M}{20}\,\Big]\le KL+K+1-\frac{(2K+1)^{2}}{16}\le KL-\frac{K^{2}}{4}+K\;.
\]
Multiplying both sides by $r/K$ and reorganizing, we get 
\[
\Big(\,r-\frac{L}{2}+\frac{K}{8}-\frac{1}{2}\,\Big)^{2}\le\frac{1}{64}(4L-K+4)^{2}-\frac{LM}{4}+\frac{KM}{20}\;.
\]
Since the right-hand side is negative for $K\ge9$, we get a contradiction.\medskip{}
\end{itemize}
\textbf{(Case 3: }$p_{1}\ge1$, $p_{2}=0$, $r=0$ or $p_{1}=0$,
$p_{2}\ge1$, $r=0$\textbf{) }As before, we only consider the former
case. In this case, indeed $P_{1}=\mathbb{T}_{M}$. Thus, by the first
bound in \eqref{e_p5} we have
\[
\|\sigma\|_{1}=\sum_{m\in\mathbb{T}_{M}}\|\sigma^{(m)}\|_{1}\ge M\Big(\,KL-\frac{(2K+1)^{2}}{16}\,\Big)>\frac{KLM}{2}+2\;.
\]
This contradicts \eqref{e_p1}.\medskip{}

\noindent \textbf{(Case 4: $p_{1}=p_{2}=0$) }For this case, we have
$\sigma^{(m)}\in\Delta^{\mathrm{2D}}$ for all $m\in\mathbb{T}_{M}$.
Hence, $H^{\mathrm{2D}}(\sigma^{(m)})\ge2K$ for all $m\in\mathbb{T}_{M}$
by \textbf{(L1)} of Proposition \ref{p_2lowE}, and thus by \eqref{e_p4}
we get 
\begin{equation}
d_{1}+d_{2}\ge K(M-1)-1\;.\label{e_p17}
\end{equation}
Since $d_{1}+d_{2}=d\le KL$, we get $M=L+1$ or $L$.

If $M=L+1$, we must have $d_{1}+d_{2}=KL$ or $KL-1$. If this is
$KL$, then all floors should have the same configuration, which is
impossible since $\|\sigma\|_{1}=\lfloor KLM/2\rfloor+1$ cannot be
a multiple of $M$. If this is $KL-1$, then the equality in \eqref{e_p17}
must hold and thus we have $H^{\mathrm{2D}}(\sigma^{(m)})=2K$ for
all $m\in\mathbb{T}_{M}$. Hence, by \textbf{(L1)} of Proposition
\ref{p_2lowE}, $\|\sigma^{(m)}\|_{1}$, $m\in\mathbb{T}_{M}$, is
a multiple of $K$, and thus $\|\sigma\|_{1}=\sum_{m\in\mathbb{T}_{M}}\|\sigma^{(m)}\|_{1}$
is also a multiple of $K$. This is impossible since $\|\sigma\|_{1}=\lfloor KLM/2\rfloor+1$
is not a multiple of $K$.

It remains to consider the case of $M=L$. For this case, \eqref{e_p17}
becomes
\begin{equation}
d_{1}+d_{2}\ge K(L-1)-1\;.\label{e_p18}
\end{equation}
Define
\begin{equation}
\mathcal{E}(\sigma)=(\mathbb{T}_{K}\times\mathbb{T}_{L})\setminus(\mathcal{D}_{1}(\sigma)\cup\mathcal{D}_{2}(\sigma))\label{e_p19}
\end{equation}
so that we have $|\mathcal{E}(\sigma)|\le K+1$ by \eqref{e_p18}.
We now have three sub-cases. We note that $H^{\mathrm{2D}}(\sigma^{(m)})$
is an even integer for each $m\in\mathbb{T}_{M}$, as $q=2$.
\begin{itemize}
\item First, we assume that $H^{\mathrm{2D}}(\sigma^{(m)})=2K$ for all
$m\in\mathbb{T}_{M}$. Then, as in the previous discussion on the
case of $M=L+1$, we get a contradiction since $\|\sigma\|_{1}$ must
be a multiple of $K$ for this case. 
\item Next, we assume that $H^{\mathrm{2D}}(\sigma^{(m)})\ge2K+2$ for all
$m\in\mathbb{T}_{M}$. Then, by \eqref{e_p4}, 
\begin{equation}
d=d_{1}+d_{2}\ge(K+1)(L-1)\;.\label{e_p20}
\end{equation}
If $d=KL$, then as in the case of $M=L+1$, we get a contradiction
since $\|\sigma\|_{1}=\lfloor KLM/2\rfloor+1$ cannot be a multiple
of $M$. Hence, we have $d\le KL-1$, and combining this with \eqref{e_p20}
implies that we must have $K=L$ (and thus $K=L=M$), and moreover
\[
d=KL-1\;\;\;\;\text{and}\;\;\;\;H^{\mathrm{2D}}(\sigma^{(m)})=2K+2\;\;\;\;\text{for all }m\in\mathbb{T}_{M}\;.
\]
Hence, we have $|\mathcal{E}(\sigma)|=KL-d=1$. Write $\mathcal{E}(\sigma)=\{(k_{0},\,\ell_{0})\}$.
By Lemma \ref{l_H2lb}, we can deduce that the configuration $\sigma^{(m)}$
has at least $L-1\ge3$ monochromatic bridges, and thus we have at
least one monochromatic bridge of the form $\mathbb{T}_{K}\times\{\ell\}$
or $\{k\}\times\mathbb{T}_{L}$ that does not touch $\mathcal{E}(\sigma)$,
so that it is a subset of either $\mathcal{D}_{1}(\sigma)$ or $\mathcal{D}_{2}(\sigma)$.
Suppose first that this bridge is $\mathbb{T}_{K}\times\{\ell\}$
for some $\ell\in\mathbb{T}_{L}$. Then, the slab $\mathbb{T}_{K}\times\{\ell\}\times\mathbb{T}_{M}$
is monochromatic. Therefore, by replacing the role of the second and
third coordinates, which is possible since $K=L=M$, the proof is
reduced to one of \textbf{(Case 1)}, \textbf{(Case 2)}, and \textbf{(Case
3)} as there is a monochromatic floor so that either $p_{1}$ or $p_{2}$
is positive. This completes the proof. Similarly, if the monochromatic
bridge is $\{k\}\times\mathbb{T}_{L}$, then we replace the role of
the first and third coordinates to complete the proof.
\item Now, we lastly assume that $H^{\mathrm{2D}}(\sigma^{(i_{0})})=2K$
for some $i_{0}\in\mathbb{T}_{M}$ and $H^{\mathrm{2D}}(\sigma^{(j_{0})})\ge2K+2$
for some $j_{0}\in\mathbb{T}_{M}$. By \eqref{e_p4}, we get 
\begin{equation}
d\ge KL-K\;,\label{e_p21}
\end{equation}
and hence, we have $|\mathcal{E}(\sigma)|\le K$ (cf. \eqref{e_p19}).
Now, we consider two sub-sub-cases separately.
\begin{itemize}
\item $|\mathcal{E}(\sigma)|\le K-1$: First, suppose that $K<L$. By \textbf{(L1)}
of Proposition \ref{p_2lowE}, we have $\sigma^{(i_{0})}\in\mathcal{R}_{v}^{1,\,2}$
for some $v\in\llbracket2,\,L-2\rrbracket$. Since $|\mathcal{E}(\sigma)|\le K-1\le L-1$,
there exists $\ell_{1}\in\mathbb{T}_{L}$ such that 
\[
(\mathbb{T}_{K}\times\{\ell_{1}\})\cap\mathcal{E}(\sigma)=\emptyset\;.
\]
We further have $\mathbb{T}_{K}\times\{\ell_{1}\}\subseteq\mathcal{D}_{1}(\sigma)$
or $\mathbb{T}_{K}\times\{\ell_{1}\}\subseteq\mathcal{D}_{2}(\sigma)$
since $\sigma^{(i_{0})}\in\mathcal{R}_{v}^{1,\,2}$. This implies
that all sites in the slab $\mathbb{T}_{K}\times\{\ell_{1}\}\times\mathbb{T}_{M}$
have the same spin $n$ under $\sigma$. Since $L=M$, we can replace
the role of the second and third coordinates to reduce the proof to
one of \textbf{(Case 1)}, \textbf{(Case 2)} and \textbf{(Case 3)}.
This completes the proof. Next, if $K=L$, then since there further
exists $k_{1}\in\mathbb{T}_{K}$ such that $(\{k_{1}\}\times\mathbb{T}_{L})\cap\mathcal{E}(\sigma)=\emptyset$,
we can use the same argument as above to handle this case as well.
\item $|\mathcal{E}(\sigma)|=K$: The equality in \eqref{e_p21} must hold,
and thus we get $H^{\mathrm{2D}}(\sigma^{(j_{0})})=2K+2$ and $H^{\mathrm{2D}}(\sigma^{(m)})=2K$
for all $m\in\mathbb{T}_{M}\setminus\{j_{0}\}$. We first suppose
that $K<L$. By \textbf{(L1)} of Proposition \ref{p_2lowE}, we get
$\sigma^{(m)}=\xi_{\ell_{m},\,v_{m}}^{1,\,2}$ for some $\ell_{m}\in\mathbb{T}_{L}$
and $v_{m}\in\llbracket2,\,L-2\rrbracket$ for all $m\in\mathbb{T}_{M}\setminus\{j_{0}\}$.
Then, since $L$ is strictly bigger than $K=|\mathcal{E}(\sigma)|$,
we can always find a row in $\mathbb{T}_{K}\times\mathbb{T}_{L}$
which is either a subset of $\mathcal{D}_{1}(\sigma)$ or $\mathcal{D}_{2}(\sigma)$.
Thus, by changing the role of the second and third coordinates, which
is possible since $L=M$, we find a monochromatic floor and the proof
is reduced to one of \textbf{(Case 1)}, \textbf{(Case 2)} and \textbf{(Case
3)}. Next, we handle the case $K=L$, so that for all $m\in\mathbb{T}_{M}\setminus\{j_{0}\}$,
$\sigma^{(m)}=\xi_{\ell_{m},\,v_{m}}^{1,\,2}$ or $\Theta(\xi_{\ell_{m},\,v_{m}}^{1,\,2})$
(cf. Definition \ref{d_canreg2}) for some $\ell_{m}\in\mathbb{T}_{L}$
and $v_{m}\in\llbracket2,\,L-2\rrbracket$. First of all, assume that
all of them are of the same direction. Without loss of generality,
assume that $\sigma^{(m)}=\xi_{\ell_{m},\,v_{m}}^{1,\,2}$ for all
$m\in\mathbb{T}_{M}\setminus\{j_{0}\}$. If $\sigma^{(m_{1})}\neq\sigma^{(m_{2})}$
for some $m_{1},\,m_{2}\in\mathbb{T}_{M}\setminus\{j_{0}\}$, then
$\mathcal{E}(\sigma)$ must be exactly the line where they differ
and hence we can write $\mathcal{E}(\sigma)=\mathbb{T}_{K}\times\{\ell_{0}\}$
for some $\ell_{0}\in\mathbb{T}_{L}$. Then, by taking any $\ell\in\mathbb{T}_{L}\setminus\{\ell_{0}\}$,
we notice that $\mathbb{T}_{K}\times\{\ell\}$ is not only monochromatic
in $\sigma^{(m)}$ with $m\in\mathbb{T}_{M}\setminus\{j_{0}\}$, but
also a subset of either $\mathcal{D}_{1}(\sigma)$ or $\mathcal{D}_{2}(\sigma)$;
hence, $\mathbb{T}_{K}\times\{\ell\}\times\mathbb{T}_{M}$ is a monochromatic
slab. By replacing the role of the second and third coordinates, which
is possible since $L=M$, we find a monochromatic floor and the proof
is reduced to one of \textbf{(Case 1)}, \textbf{(Case 2)} and \textbf{(Case
3)}. On the contrary, suppose that $\sigma^{(m_{1})}=\sigma^{(m_{2})}$
for all $m_{1},\,m_{2}\in\mathbb{T}_{M}\setminus\{j_{0}\}$. If there
exists a row or column which is disjoint with $\mathcal{E}(\sigma)$,
then we can argue as above. If not, then we can easily deduce that
for the $j_{0}$-th floor,
\[
H^{\mathrm{2D}}(\sigma^{(j_{0})})\ge4(K-4)>2K+2\;,
\]
which contradicts the assumption that $H^{\mathrm{2D}}(\sigma^{(j_{0})})=2K+2$.
Finally, we consider the case when $\sigma^{(m)}=\xi_{\ell,\,v}^{1,\,2}$
and $\sigma^{(m')}=\Theta(\xi_{\ell',\,v'}^{1,\,2})$ for some $m,\,m'\in\mathbb{T}_{M}\setminus\{j_{0}\}$
simultaneously. In this case, we have
\[
d_{1}\le(K-v)(K-v')\;\;\;\;\text{and}\;\;\;\;d_{2}\le vv'\;.
\]
Thus, we get a contradiction since 
\begin{align*}
|\mathcal{E}(\sigma)| & \ge K^{2}-(K-v)(K-v')-vv'\\
 & =\frac{1}{2}K^{2}-\frac{1}{2}(K-2v)(K-2v')\ge\frac{1}{2}K^{2}-\frac{1}{2}(K-4)^{2}=4K-8>K\;,
\end{align*}
where the second inequality holds since $v,\,v'\in\llbracket2,\,K-2\rrbracket$.
\end{itemize}
\end{itemize}
\end{proof}
Now, we consider the general case of Proposition \ref{p_Elb}.
\begin{proof}[Proof of Proposition \ref{p_Elb}: general case]
 We fix a proper partition $(A,\,B)$ of $S$ and then fix $a\in A$
and $b\in B$. Let $(\omega_{t})_{t=0}^{T}$ be a path connecting
$\mathbf{s}_{a}$ and $\mathbf{s}_{b}.$ For each $\sigma\in\mathcal{X}$,
we denote by $\widetilde{\sigma}$ the configuration obtained from
$\sigma$ by changing all spins in $A$ to $1$ and spins in $B$
to $2$. Thus, $\widetilde{\sigma}$ becomes an Ising configuration,
i.e. a spin configuration for $q=2$. Note that 
\begin{align}
H(\widetilde{\sigma}) & =\sum_{\{x,\,y\}\subseteq\Lambda:\,x\sim y}\mathbf{1}\{\widetilde{\sigma}(x)\neq\widetilde{\sigma}(y)\}\nonumber \\
 & =\sum_{\{x,\,y\}\subseteq\Lambda:\,x\sim y}\mathbf{1}\{\sigma(x)\in A,\;\sigma(y)\in B\;\text{or}\;\sigma(x)\in B,\,\sigma(y)\in A\}\label{e_p22}\\
 & \le\sum_{\{x,\,y\}\subseteq\Lambda:\,x\sim y}\mathbf{1}\{\sigma(x)\neq\sigma(y)\}=H(\sigma)\;.\nonumber 
\end{align}
Now, we consider the induced pseudo-path $(\widetilde{\omega}_{t})_{t=0}^{T}$
of $(\omega_{t})_{t=0}^{T}$ (cf. Notation \ref{n_pseudo}). Thus,
by the proof above for $q=2$, there exists $t_{1}\in\llbracket0,\,T\rrbracket$
such that $H(\widetilde{\omega}_{t_{1}})\ge\Gamma$. Thus, we get
from \eqref{e_p22} that 
\[
\Gamma\le H(\widetilde{\omega}_{t_{1}})\le H(\omega_{t_{1}})\;,
\]
and we complete the proof for part (1).

For part (2), suppose that $(\omega_{t})_{t=0}^{T}$ is a path such
that 
\begin{equation}
H(\omega_{t})\le\Gamma\;\;\;\text{for all }t\in\llbracket0,\,T\rrbracket\;.\label{e_p23}
\end{equation}
Then, by \eqref{e_p22}, we have $H(\widetilde{\omega}_{t})\le\Gamma$
for all $t\in\llbracket0,\,T\rrbracket$. Thus, by the proof above
for $q=2$, there exists $s\in\llbracket0,\,T\rrbracket$ such that
$\widetilde{\omega}_{s}\in\mathcal{G}^{1,\,2}$. We now claim that
$\omega_{s}\in\mathcal{G}^{A,\,B}$. It is immediate that this claim
finishes the proof.

To prove this claim, we write 
\[
\mathcal{U}_{n}=\{x\in\Lambda:\widetilde{\omega}_{s}(x)=n\}\;\;\;\;;\;n=1,\,2\;.
\]
Then, we have 
\begin{equation}
\omega_{s}(x)\in A\;\text{for }x\in\mathcal{U}_{1}\;\;\;\;\text{and}\;\;\;\;\omega_{s}(x)\in B\;\text{for }x\in\mathcal{U}_{2}\;.\label{e_p24}
\end{equation}
Now, we assume that 
\begin{equation}
\omega_{s}(x)\neq\omega_{s}(y)\text{ for some }x,\,y\in\mathcal{U}_{1}\text{ or }x,\,y\in\mathcal{U}_{2}\text{ with }x\sim y\;.\label{e_p25}
\end{equation}
We now express the energy $H(\omega_{s})$ as 
\begin{align}
H(\omega_{s}) & =\Big[\,\sum_{\{x,\,y\}\subseteq\mathcal{U}_{1}\text{ or }\{x,\,y\}\subseteq\mathcal{U}_{2}}+\sum_{x\in\mathcal{U}_{1},\,y\in\mathcal{U}_{2}}\,\Big]\,\mathbf{1}\{\omega_{s}(x)\neq\omega_{s}(y)\}\;,\label{e_p26}
\end{align}
where the summation is carried over $x,\,y$ satisfying $x\sim y$.
Note that the second summation is equal to $H(\widetilde{\omega}_{s})$
by \eqref{e_p24}. On the other hand, we can readily deduce from Figure
\ref{fig7.1} that the first summation of \eqref{e_p26} is at least
$4$ if $\widetilde{\omega}_{s}$ is a gateway configuration of type
$1$, and at least $2$ if $\widetilde{\omega}_{s}$ is a gateway
configuration of type $2$ or $3$ (cf. Notation \ref{n_gate}).
Thus, by Proposition \ref{p_gateE}, we can conclude that the right-hand
side of \eqref{e_p26} is at least $\Gamma+2$; i.e., we get $H(\omega_{s})\ge\Gamma+2$.
This contradicts \eqref{e_p23} and hence, we cannot have \eqref{e_p25}.
This finally implies that there exist $a_{0}\in A$ and $b_{0}\in B$
such that 
\[
\omega_{s}(x)=\begin{cases}
a_{0} & \text{if }x\in\mathcal{U}_{1}\;,\\
b_{0} & \text{if }x\in\mathcal{U}_{2}\;,
\end{cases}
\]
and thus we have $\omega_{s}\in\mathcal{G}^{a_{0},\,b_{0}}\subseteq\mathcal{G}^{A,\,B}$
as claimed.
\end{proof}

\subsection{\label{sec8.3}Proof of Theorem \ref{t_LDT results}}

Theorem \ref{t_LDT results} is now a consequence of our analysis
on the energy landscape and the general theory developed in \cite{NZ,NZB}.
\begin{proof}[Proof of Theorem \ref{t_LDT results}]
 We have two results on the energy barrier; Theorem \ref{t_energy barrier}
and Proposition \ref{p_depth}. The theory developed in \cite{NZB}
implies that these two are sufficient to conclude Theorem \ref{t_LDT results}\footnote{We remark that the second convergence of \eqref{e_nz3} is not a consequence
of an analysis of the energy barrier, but of the first convergence
of \eqref{e_nz3} and the symmetry of the model. This argument is
also given in \cite[Section 3]{NZ} for $d=2$, and an identical one
works for $d=3$.}. This implication has been rigorously verified in \cite{NZ} for
the case of $d=2$, and this argument extends to the case of $d=3$
without a modification. Hence, we do not repeat the argument here,
and refer the readers to \cite[Section 3]{NZ} for a detailed proof.
\end{proof}

\section{\label{sec9}Typical Configurations and Optimal Paths}

In the previous sections, we proved large deviation-type results regarding
the metastable behavior by analyzing the energy barrier in terms of
canonical and gateway configurations. In order to get precise quantitative
results such as Theorems \ref{t_EK} and \ref{t_MC} or to get a characterization
of optimal paths, we need a more refined analysis of the energy landscape
based on the typical configurations which will be introduced and analyzed
in the current section.

We fix a proper partition $(A,\,B)$ of $S$ throughout the section.

\subsection{\label{sec9.1}Typical configurations }

Let us start by defining the typical configurations. We consistently
refer to Figure \ref{fig9.1} for an illustration of our construction. 

\begin{figure}
\includegraphics[width=14.5cm]{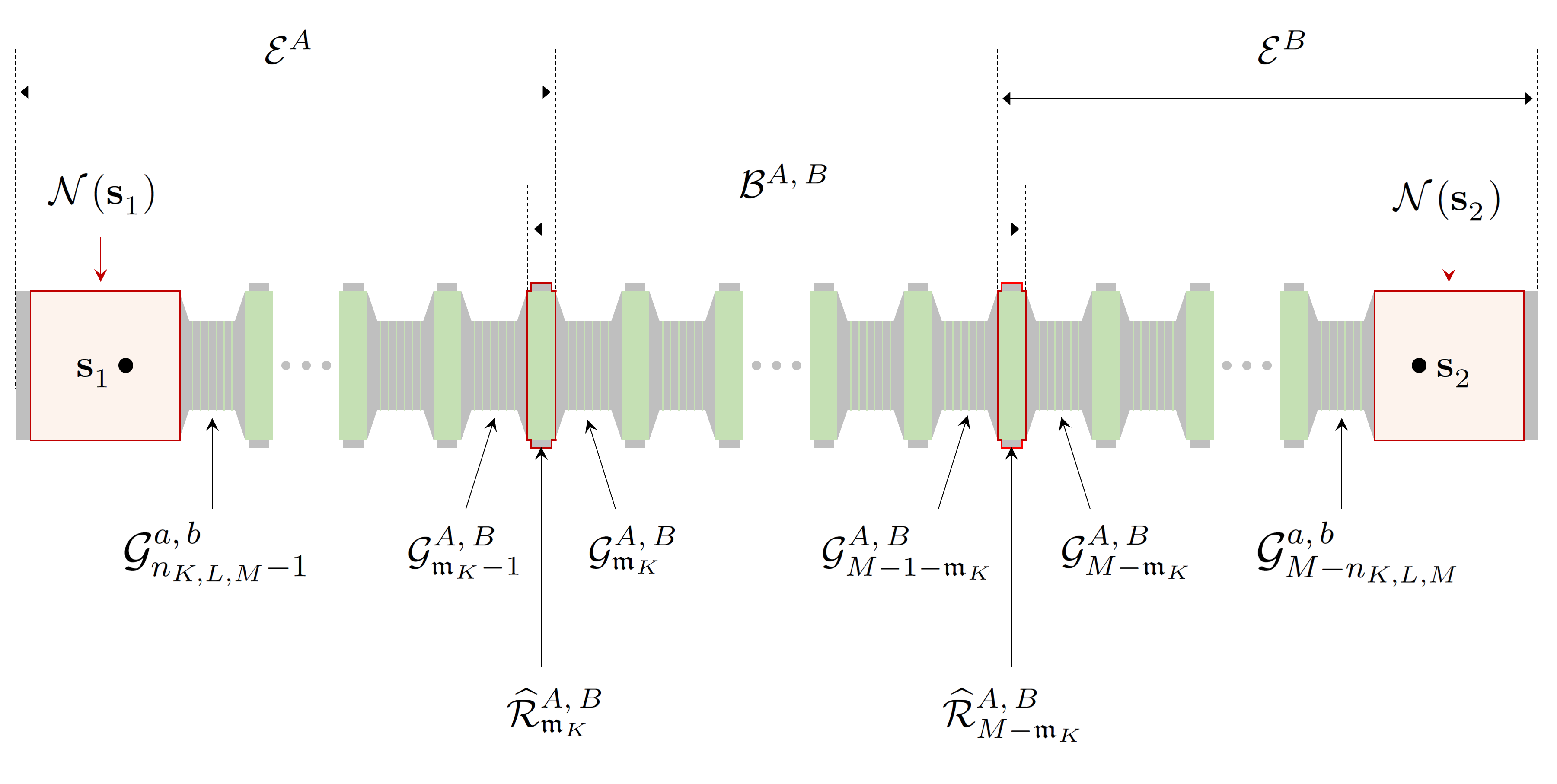}

\caption{\label{fig9.1}\textbf{3D typical configurations for the Ising model.
}Suppose that $q=2$, $A=\{1\}$, and $B=\{2\}$ (one can compare
this figure with Figure \ref{fig7.2}). The given figure provides
an illustration of the complete structure of the set $\widehat{\mathcal{N}}(\mathcal{S})$.
This characterization is verified in Proposition \ref{p_typ}. We
take bulk ones only from $\mathfrak{m}_{K}$ to $M-\mathfrak{m}_{K}$
instead of $n_{K,\,L,\,M}$ (cf. \eqref{e_nKLM}) to $M-n_{K,\,L,\,M}$
since we do not know the exact value of $n_{K,\,L,\,M}$. Because
of this, the structure of the edge typical configurations is a little
bit more complicated than in the 2D case.}
\end{figure}

For $a,\,b\in S$ and $i\in\llbracket0,\,M\rrbracket$, we define
\begin{equation}
\widehat{\mathcal{R}}_{i}^{a,\,b}=\widehat{\mathcal{N}}(\mathcal{R}_{i}^{a,\,b}\,;\,\mathcal{G}^{a,\,b})\;.\label{e_Rhat}
\end{equation}
We also define 
\[
\widehat{\mathcal{R}}_{i}^{A,\,B}=\widehat{\mathcal{N}}(\mathcal{R}_{i}^{A,\,B}\,;\,\mathcal{G}^{A,\,B})\;\;\;\;;\;i\in\llbracket0,\,M\rrbracket\;.
\]

\begin{rem}
\label{r_uniR}For $i\in\llbracket\mathfrak{m}_{K},\,M-\mathfrak{m}_{K}\rrbracket$,
we have that 
\[
\widehat{\mathcal{R}}_{i}^{A,\,B}=\bigcup_{a\in A,\,b\in B}\widehat{\mathcal{R}}_{i}^{a,\,b}\;.
\]
To check this, it suffices to check $\widehat{\mathcal{N}}(\mathcal{R}_{i}^{a,\,b}\,;\,\mathcal{G}^{a,\,b})=\widehat{\mathcal{N}}(\mathcal{R}_{i}^{a,\,b}\,;\,\mathcal{G}^{A,\,B})$
provided that $a\in A$ and $b\in B$. This follows from Lemma \ref{l_Hlb2}
since $\mathcal{R}_{i}^{a,\,b}$ cannot be connected to a configuration
in $\mathcal{G}^{A,\,B}\setminus\mathcal{G}^{a,\,b}$ via a $\Gamma$-path
in $\mathcal{X}\setminus\mathcal{G}^{a,\,b}$ by part (2) of Lemma
\ref{l_Hlb2}.
\end{rem}

\begin{rem}
\label{r_disjR}By Lemma \ref{l_Hlb2}, two sets $\widehat{\mathcal{R}}_{i}^{A,\,B}$
and $\widehat{\mathcal{R}}_{j}^{A,\,B}$ for different $i,\,j$ are
disjoint if either $i\in\llbracket\mathfrak{m}_{K},\,M-\mathfrak{m}_{K}\rrbracket$
or $j\in\llbracket\mathfrak{m}_{K},\,M-\mathfrak{m}_{K}\rrbracket$.
Moreover by Proposition \ref{p_Elb}, they are disjoint if $i\in\llbracket0,\,\mathfrak{m}_{K}-1\rrbracket$
and $j\in\llbracket M-\mathfrak{m}_{K}+1,\,M\rrbracket$ or vice versa.
On the other hand, they might be the same set if $i,\,j\in\llbracket0,\,\mathfrak{m}_{K}-1\rrbracket$
or $i,\,j\in\llbracket M-\mathfrak{m}_{K}+1,\,M\rrbracket$. In particular,
we have 
\[
\widehat{\mathcal{R}}_{0}^{A,\,B}=\widehat{\mathcal{R}}_{1}^{A,\,B}=\cdots=\widehat{\mathcal{R}}_{n_{K,L,M}-1}^{A,\,B}\;,
\]
where $n_{K,\,L,\,M}$ is defined in \eqref{e_nKLM}. The same result
holds for $\widehat{\mathcal{R}}_{i}^{a,\,b}$ instead of $\widehat{\mathcal{R}}_{i}^{A,\,B}$.
\end{rem}

Now, we define the typical configurations. We recall Notation \ref{n_union}.
\begin{defn}[Typical configurations]
\label{d_typ} For a proper partition $(A,\,B)$ of $S$, we define
the typical configurations as follows.
\begin{itemize}
\item \textbf{Bulk typical configurations:} We define, for $a,\,b\in S$,
\[
\mathcal{B}^{a,\,b}=\mathcal{G}_{[\mathfrak{m}_{K},\,M-\mathfrak{m}_{K}-1]}^{a,\,b}\cup\widehat{\mathcal{R}}_{[\mathfrak{m}_{K},\,M-\mathfrak{m}_{K}]}^{a,\,b}\;,
\]
and then define
\begin{equation}
\mathcal{B}^{A,\,B}=\bigcup_{a\in A}\bigcup_{b\in B}\mathcal{B}^{a,\,b}=\mathcal{G}_{[\mathfrak{m}_{K},\,M-\mathfrak{m}_{K}-1]}^{A,\,B}\cup\widehat{\mathcal{R}}_{[\mathfrak{m}_{K},\,M-\mathfrak{m}_{K}]}^{A,\,B}\;,\label{e_BAB}
\end{equation}
where the second identity holds because of Remark \ref{r_uniR}. A
configuration belonging to $\mathcal{B}^{A,\,B}$ is called a \textit{bulk
typical configuration} between $\mathcal{S}(A)$ and $\mathcal{S}(B)$. 
\item \textbf{Edge typical configurations:} We define
\begin{equation}
\mathcal{E}^{A}=\mathcal{G}_{\mathfrak{m}_{K}-1}^{A,\,B}\cup\widehat{\mathcal{R}}_{[0,\,\mathfrak{m}_{K}]}^{A,\,B}\;\;\;\;\text{and}\;\;\;\;\mathcal{E}^{B}=\mathcal{G}_{M-\mathfrak{m}_{K}}^{A,\,B}\cup\widehat{\mathcal{R}}_{[M-\mathfrak{m}_{K},\,M]}^{A,\,B}\;.\label{e_EAB}
\end{equation}
Finally, we define $\mathcal{E}^{A,\,B}=\mathcal{E}^{A}\cup\mathcal{E}^{B}$.
A configuration belonging to $\mathcal{E}^{A,\,B}$ is called an \textit{edge
typical configuration} between $\mathcal{S}(A)$ and $\mathcal{S}(B)$.
\end{itemize}
Later in Proposition \ref{p_typ}, we shall show that $\mathcal{B}^{A,\,B}\cup\mathcal{E}^{A,\,B}=\widehat{\mathcal{N}}(\mathcal{S})$
and hence all relevant configurations in the analysis of metastable
behavior between $\mathcal{S}(A)$ and $\mathcal{S}(B)$ belong to
either $\mathcal{B}^{A,\,B}$ or $\mathcal{E}^{A,\,B}$.
\end{defn}

\begin{rem}
\label{r_typ}Since $\mathcal{R}_{0}^{A,\,B}=\mathcal{S}(A)$ and
$\mathcal{R}_{M}^{A,\,B}=\mathcal{S}(B)$ (cf. \eqref{e_reg3}), we
can readily observe that $\mathcal{S}(A)\subseteq\mathcal{E}^{A}$
and $\mathcal{S}(B)\subseteq\mathcal{E}^{B}$.
\end{rem}

\subsection{\label{sec9.2}Properties of typical configurations}

In this subsection, we analyze some properties of the edge and bulk
typical configurations. In fact, we have to take $K$ large enough
(i.e., $K\ge2829$) in order to get the structural properties of edge
and bulk typical configurations given in the current section. 

The first property asserts that $\mathcal{E}^{A}$ and $\mathcal{E}^{B}$
are disjoint. 
\begin{prop}
\label{p_disjE}The two sets $\mathcal{E}^{A}$ and $\mathcal{E}^{B}$
are disjoint.
\end{prop}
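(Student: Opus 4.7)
The plan is to decompose $\mathcal{E}^A \cap \mathcal{E}^B$, via the definitions in \eqref{e_EAB}, into the four pairwise intersections coming from the union decomposition of each edge set, and verify each such intersection is empty. I will take $K \geq 2829$ throughout, which gives $K > 2K^{2/3} \geq 2\mathfrak{m}_K$, and together with $M \geq K$ yields the key numerical separation $M > 2\mathfrak{m}_K$.

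The two ``gateway vs. $\widehat{\mathcal{R}}$'' intersections are immediate from the very definition \eqref{e_Rhat}: any $\sigma \in \widehat{\mathcal{R}}_i^{A,B} = \widehat{\mathcal{N}}(\mathcal{R}_i^{A,B}\,;\,\mathcal{G}^{A,B})$ is the endpoint of a $\Gamma$-path that stays entirely in $\mathcal{X} \setminus \mathcal{G}^{A,B}$, so $\widehat{\mathcal{R}}_i^{A,B} \subseteq \mathcal{X} \setminus \mathcal{G}^{A,B}$. The hypothesis $\mathcal{R}_i^{A,B} \cap \mathcal{G}^{A,B} = \emptyset$ needed to invoke this definition is furnished by Lemma \ref{l_gateR} (regular configurations lie in $\mathcal{N}(\mathcal{R}_{[0,M]}^{A,B})$, which is disjoint from $\mathcal{G}^{A,B}$). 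Next, for the pair of $\widehat{\mathcal{R}}$ sets I plan to invoke Remark \ref{r_disjR} directly. Since $M > 2\mathfrak{m}_K$, the index intervals $[0,\mathfrak{m}_K]$ and $[M-\mathfrak{m}_K,M]$ are disjoint, and for any $i$ and $j$ drawn from these two intervals, at least one of the two criteria in the remark applies: if $i = \mathfrak{m}_K$ or $j = M - \mathfrak{m}_K$ then the first criterion does; otherwise $i \in [0,\mathfrak{m}_K-1]$ and $j \in [M-\mathfrak{m}_K+1,M]$ and the second criterion does.

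The only substantive case is the last one, $\mathcal{G}_{\mathfrak{m}_K - 1}^{A,B} \cap \mathcal{G}_{M - \mathfrak{m}_K}^{A,B} = \emptyset$, which I plan to settle by the spin-count invariant $\|\cdot\|_B$. Any $\sigma \in \mathcal{G}_i^{A,B}$ arises, via a coordinate permutation in $\Upsilon$, from some $\widetilde{\sigma} \in \widetilde{\mathcal{G}}_{P,Q}^{a,b}$ with $|P| = i$, $a \in A$, $b \in B$, and $Q \setminus P = \{m_0\}$. Because every operator in $\Upsilon$ is a coordinate permutation (cf. Notation \ref{n_upe}), it preserves the counts $\|\cdot\|_{b'}$ for every spin $b'$; using that the floors in $P$ carry spin $b$, the floors in $Q^c$ carry spin $a \in A$, and the $m_0$-th floor is a 2D gateway,
\[
\|\sigma\|_B \;=\; \|\widetilde{\sigma}\|_B \;=\; i\,KL \;+\; \sum_{b' \in B} \|\widetilde{\sigma}^{(m_0)}\|_{b'} \;\in\; [\, iKL,\,(i+1)KL \,].
\]
Setting $i = \mathfrak{m}_K - 1$ forces $\|\sigma\|_B \leq \mathfrak{m}_K\,KL$, while $i = M - \mathfrak{m}_K$ forces $\|\sigma\|_B \geq (M - \mathfrak{m}_K)\,KL$; these are incompatible since $M > 2\mathfrak{m}_K$.

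The proof is mostly bookkeeping, and the only point requiring care is verifying that $\Upsilon$ acts by coordinate permutations so that $\|\cdot\|_B$ really is preserved; once this is granted, the spin-count separation reduces to the numerical inequality $M > 2\mathfrak{m}_K$ guaranteed by the standing assumption $K \geq 2829$.
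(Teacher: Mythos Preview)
Your proof is correct and follows essentially the same route as the paper. Both decompose $\mathcal{E}^A\cap\mathcal{E}^B$ via \eqref{e_EAB}, dispose of the gateway--$\widehat{\mathcal{R}}$ cross terms by the definition $\widehat{\mathcal{R}}_i^{A,B}\subseteq\mathcal{X}\setminus\mathcal{G}^{A,B}$, and handle the $\widehat{\mathcal{R}}$--$\widehat{\mathcal{R}}$ case through Remark~\ref{r_disjR} (the paper re-derives the $i\in\llbracket 0,\mathfrak{m}_K-1\rrbracket$, $j\in\llbracket M-\mathfrak{m}_K+1,M\rrbracket$ sub-case by concatenating $\Gamma$-paths and invoking Proposition~\ref{p_Elb}, which is precisely the content of that remark). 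For $\mathcal{G}_{\mathfrak{m}_K-1}^{A,B}\cap\mathcal{G}_{M-\mathfrak{m}_K}^{A,B}=\emptyset$ the paper simply says ``direct from the definition''; your spin-count argument via the invariance of $\|\cdot\|_B$ under the coordinate permutations in $\Upsilon$ makes this explicit and is a perfectly good justification.
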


\begin{proof}
By part (2) of Lemma \ref{l_Hlb2} (cf. Remark \ref{r_disjR}), the
set $\widehat{\mathcal{R}}_{\mathfrak{m}_{K}}^{A,\,B}$ is disjoint
with $\mathcal{E}^{B}$; similarly, the set $\widehat{\mathcal{R}}_{M-\mathfrak{m}_{K}}^{A,\,B}$
is disjoint with $\mathcal{E}^{A}$. It is direct from the definition
that $\mathcal{G}_{\mathfrak{m}_{K}-1}^{A,\,B}$ and $\mathcal{G}_{M-\mathfrak{m}_{K}}^{A,\,B}$
are disjoint. By definition, $\mathcal{G}_{\mathfrak{m}_{K}-1}^{A,\,B},\,\mathcal{G}_{M-\mathfrak{m}_{K}}^{A,\,B}\subseteq\mathcal{G}^{A,\,B}$
are mutually disjoint with $\widehat{\mathcal{R}}_{[0,\,\mathfrak{m}_{K}]}^{A,\,B}$
and $\widehat{\mathcal{R}}_{[M-\mathfrak{m}_{K},\,M]}^{A,\,B}$. Hence,
it suffices to prove that $\widehat{\mathcal{R}}_{[0,\,\mathfrak{m}_{K}-1]}^{A,\,B}$
and $\widehat{\mathcal{R}}_{[M-\mathfrak{m}_{K}+1,\,M]}^{A,\,B}$
are disjoint. Otherwise, we can take a configuration $\sigma$ such
that 
\[
\sigma\in\widehat{\mathcal{R}}_{[0,\,\mathfrak{m}_{K}-1]}^{A,\,B}\cap\widehat{\mathcal{R}}_{[M-\mathfrak{m}_{K}+1,\,M]}^{A,\,B}\;.
\]
Since $\sigma\in\widehat{\mathcal{R}}_{[0,\,\mathfrak{m}_{K}-1]}^{A,\,B}$,
there exists a $\Gamma$-path in $\mathcal{X}\setminus\mathcal{G}^{A,\,B}$
(which is indeed a part of a canonical path) connecting $\sigma$
and $\mathcal{S}(A)$. Similarly, there exists a $\Gamma$-path in
$\mathcal{X}\setminus\mathcal{G}^{A,\,B}$ connecting $\sigma$ and
$\mathcal{S}(B)$. By concatenating them, we can find a $\Gamma$-path
$(\omega_{t})_{t=0}^{T}$ in $\mathcal{X}\setminus\mathcal{G}^{A,\,B}$
connecting $\mathcal{S}(A)$ and $\mathcal{S}(B)$. This contradicts
part (2) of Proposition \ref{p_Elb}.
\end{proof}
Now, we analyze the crucial features of the typical configurations.
Note that this is a 3D version of Proposition \ref{p_typ2prop}. 
\begin{prop}
\label{p_typ}For a proper partition $(A,\,B)$ of $S$, the following
properties hold for the typical configurations.
\begin{enumerate}
\item It holds that $\mathcal{E}^{A}\cap\mathcal{B}^{A,\,B}=\widehat{\mathcal{R}}_{\mathfrak{m}_{K}}^{A,\,B}$
and $\mathcal{E}^{B}\cap\mathcal{B}^{A,\,B}=\widehat{\mathcal{R}}_{M-\mathfrak{m}_{K}}^{A,\,B}$. 
\item We have $\mathcal{E}^{A,\,B}\cup\mathcal{B}^{A,\,B}=\widehat{\mathcal{N}}(\mathcal{S})$. 
\end{enumerate}
\end{prop}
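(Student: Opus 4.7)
My plan is to prove both parts by expanding the definitions of $\mathcal{E}^{A,B}$ and $\mathcal{B}^{A,B}$ into their gateway pieces $\mathcal{G}_i^{A,B}$ and extended-regular pieces $\widehat{\mathcal{R}}_i^{A,B}$, and then controlling how $\Gamma$-paths interact with the gateway wall $\mathcal{G}^{A,B}$ via the structural Lemmas \ref{l_gate} and \ref{l_gateR} and the separation Lemma \ref{l_Hlb2}.

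For part (1), I would expand
\[
\mathcal{E}^A\cap\mathcal{B}^{A,B}=\bigl(\mathcal{G}_{\mathfrak{m}_K-1}^{A,B}\cup\widehat{\mathcal{R}}_{[0,\mathfrak{m}_K]}^{A,B}\bigr)\cap\bigl(\mathcal{G}_{[\mathfrak{m}_K,M-\mathfrak{m}_K-1]}^{A,B}\cup\widehat{\mathcal{R}}_{[\mathfrak{m}_K,M-\mathfrak{m}_K]}^{A,B}\bigr)
\]
into four cross intersections. The gateway--gateway piece is empty by disjointness of indices. The $\widehat{\mathcal{R}}$--$\widehat{\mathcal{R}}$ piece collapses to $\widehat{\mathcal{R}}_{\mathfrak{m}_K}^{A,B}$ by Remark \ref{r_disjR}, which in turn rests on Lemma \ref{l_Hlb2}. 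The two mixed terms are emptied by combining Remark \ref{r_uniR} (so that $\widehat{\mathcal{R}}_{[\mathfrak{m}_K,M-\mathfrak{m}_K]}^{A,B}$ is disjoint from $\mathcal{G}^{A,B}$ by construction) with Lemma \ref{l_gateR} and Lemma \ref{l_gate}, which force any $\widehat{\mathcal{R}}_j^{A,B}$-to-gateway crossing to land in $\mathcal{N}(\mathcal{R}_{[\mathfrak{m}_K-1,M-\mathfrak{m}_K+1]}^{A,B})$ with the index $j$ constrained to match. The identity for $\mathcal{E}^B\cap\mathcal{B}^{A,B}$ is symmetric.

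For part (2), I would verify both inclusions. The inclusion $\mathcal{E}^{A,B}\cup\mathcal{B}^{A,B}\subseteq\widehat{\mathcal{N}}(\mathcal{S})$ is immediate: every gateway has energy at most $\Gamma$ by Proposition \ref{p_gateE} and sits on a canonical path from $\mathcal{S}$ (Remark \ref{r_canpath}), while every extended-regular configuration is $\Gamma$-connected to some $\mathcal{R}_i^{A,B}$, which itself lies on a canonical path to $\mathcal{S}$. For the converse, given $\sigma\in\widehat{\mathcal{N}}(\mathcal{S})$, I fix a $\Gamma$-path $(\omega_t)_{t=0}^T$ from some $\mathbf{s}\in\mathcal{S}$ to $\sigma$ and split into cases: (a) if the path never meets $\mathcal{G}^{A,B}$, then $\sigma\in\widehat{\mathcal{N}}(\mathbf{s};\mathcal{G}^{A,B})\subseteq\widehat{\mathcal{R}}_0^{A,B}\cup\widehat{\mathcal{R}}_M^{A,B}\subseteq\mathcal{E}^{A,B}$, using $\mathcal{R}_0^{A,B}=\mathcal{S}(A)$ and $\mathcal{R}_M^{A,B}=\mathcal{S}(B)$; (b) if $\sigma\in\mathcal{G}^{A,B}$, then $\sigma\in\mathcal{G}_i^{A,B}$ with $i\in\llbracket\mathfrak{m}_K-1,M-\mathfrak{m}_K\rrbracket$ and hence $\sigma\in\mathcal{E}^{A,B}\cup\mathcal{B}^{A,B}$ directly from the definitions; (c) otherwise, let $s$ be the last visit of the path to $\mathcal{G}^{A,B}$ and apply Lemma \ref{l_gate} to $\omega_s\to\omega_{s+1}$ to place $\omega_{s+1}\in\mathcal{N}(\mathcal{R}_{[\mathfrak{m}_K-1,M-\mathfrak{m}_K+1]}^{A,B})$; concatenating with the sub-path $\omega_{s+1},\ldots,\omega_T$ (entirely outside $\mathcal{G}^{A,B}$) places $\sigma$ in $\widehat{\mathcal{R}}_j^{A,B}$ for the associated index. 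The main obstacle is the pair of mixed intersections in part (1): ruling out that a gateway configuration of index $\mathfrak{m}_K-1$ also lies in some $\widehat{\mathcal{R}}_j^{A,B}$ with $j\geq\mathfrak{m}_K$ (and the symmetric case). This is exactly where the choice of threshold $\mathfrak{m}_K$ is forced, and the cleanest route I see is to combine the forced landing in Lemma \ref{l_gate} with the index-matching obstructions coming from Lemma \ref{l_Hlb2}.
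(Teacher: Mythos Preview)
Your proposal is correct and follows essentially the same structure as the paper's proof, but you have misidentified where the difficulty lies in part (1). The mixed intersections that you call ``the main obstacle'' are in fact trivially empty: by Definition~\ref{d_nbd}(2), every $\widehat{\mathcal{R}}_j^{A,B}=\widehat{\mathcal{N}}(\mathcal{R}_j^{A,B};\mathcal{G}^{A,B})$ is a subset of $\mathcal{X}\setminus\mathcal{G}^{A,B}$, so it cannot meet any $\mathcal{G}_i^{A,B}\subseteq\mathcal{G}^{A,B}$. The paper dispatches both cross terms in one line on exactly this ground, and the only substantive step is the $\widehat{\mathcal{R}}$--$\widehat{\mathcal{R}}$ intersection, handled (as you do) via the disjointness in Remark~\ref{r_disjR}/Lemma~\ref{l_Hlb2}. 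No appeal to Lemma~\ref{l_gate} or index-matching is needed here.

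For part (2), your direct case analysis on a $\Gamma$-path (never meets $\mathcal{G}^{A,B}$ / ends in $\mathcal{G}^{A,B}$ / last exit from $\mathcal{G}^{A,B}$) is correct and amounts to a hands-on unpacking of what the paper does more abstractly: the paper first observes $\widehat{\mathcal{N}}(\mathcal{S})=\widehat{\mathcal{N}}(\mathcal{N}(\mathcal{R}_{[0,M]}^{A,B})\cup\mathcal{G}^{A,B})$, then applies Lemma~\ref{l_set} and Lemma~\ref{l_gateR} to obtain $\widehat{\mathcal{N}}(\mathcal{S})=\widehat{\mathcal{R}}_{[0,M]}^{A,B}\cup\mathcal{G}^{A,B}$, which is exactly $\mathcal{E}^{A,B}\cup\mathcal{B}^{A,B}$. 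Your case (c) implicitly needs the disjointness $\mathcal{N}(\mathcal{R}_{[0,M]}^{A,B})\cap\mathcal{G}^{A,B}=\emptyset$ from Lemma~\ref{l_gateR} to ensure the $(\Gamma-1)$-path from $\mathcal{R}_j^{A,B}$ to $\omega_{s+1}$ also avoids $\mathcal{G}^{A,B}$; you do list Lemma~\ref{l_gateR} among your tools, so this is fine, but it is worth making the dependence explicit.
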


\begin{proof}
(1) It suffices to prove the first identity, as the second one follows
similarly. One can observe that the set $\mathcal{G}_{\mathfrak{m}_{K}-1}^{A,\,B}\subseteq\mathcal{G}^{A,\,B}$
is disjoint with $\mathcal{B}^{A,\,B}$ from \eqref{e_BAB}, and the
set $\mathcal{G}_{[\mathfrak{m}_{K},\,M-\mathfrak{m}_{K}-1]}^{A,\,B}\subseteq\mathcal{G}^{A,\,B}$
is disjoint with $\mathcal{E}^{A}$ in view of the expression \eqref{e_EAB}.
Therefore, by \eqref{e_BAB} and \eqref{e_EAB}, we get 
\begin{equation}
\mathcal{E}^{A}\cap\mathcal{B}^{A,\,B}=\widehat{\mathcal{R}}_{[0,\,\mathfrak{m}_{K}]}^{A,\,B}\cap\widehat{\mathcal{R}}_{[\mathfrak{m}_{K},\,M-\mathfrak{m}_{K}]}^{A,\,B}\;.\label{e_typ2}
\end{equation}
By Lemma \ref{l_Hlb2}, the two sets $\mathcal{R}_{[0,\,\mathfrak{m}_{K}-1]}^{A,\,B}$
and $\mathcal{R}_{[\mathfrak{m}_{K},\,M-\mathfrak{m}_{K}]}^{A,\,B}$
cannot be connected by a $\Gamma$-path in $\mathcal{X}\setminus\mathcal{G}^{A,\,B}$,
and hence $\mathcal{\widehat{\mathcal{R}}}_{[0,\,\mathfrak{m}_{K}-1]}^{A,\,B}$
and $\mathcal{\widehat{\mathcal{R}}}_{[\mathfrak{m}_{K},\,M-\mathfrak{m}_{K}]}^{A,\,B}$
are disjoint. Therefore, we have 
\begin{equation}
\widehat{\mathcal{R}}_{[0,\,\mathfrak{m}_{K}]}^{A,\,B}\cap\widehat{\mathcal{R}}_{[\mathfrak{m}_{K},\,M-\mathfrak{m}_{K}]}^{A,\,B}=\widehat{\mathcal{R}}_{\mathfrak{m}_{K}}^{A,\,B}\cap\widehat{\mathcal{R}}_{[\mathfrak{m}_{K},\,M-\mathfrak{m}_{K}]}^{A,\,B}=\widehat{\mathcal{R}}_{\mathfrak{m}_{K}}^{A,\,B}\;.\label{e_typ3}
\end{equation}
The proof is completed by \eqref{e_typ2} and \eqref{e_typ3}.\medskip{}

\noindent (2) We will first prove that
\begin{equation}
\widehat{\mathcal{N}}(\mathcal{S})=\widehat{\mathcal{N}}(\mathcal{N}(\mathcal{R}_{[0,\,M]}^{A,\,B})\cup\mathcal{G}^{A,\,B})\;.\label{e_typ5}
\end{equation}
Since it is immediate that $\mathcal{S}$ is a subset of the right-hand
side, we have $\widehat{\mathcal{N}}(\mathcal{S})\subseteq\widehat{\mathcal{N}}(\mathcal{N}(\mathcal{R}_{[0,\,M]}^{A,\,B})\cup\mathcal{G}^{A,\,B})$.
On the other hand, since $\mathcal{N}(\mathcal{R}_{[0,\,M]}^{A,\,B})\cup\mathcal{G}^{A,\,B}\subseteq\widehat{\mathcal{N}}(\mathcal{S})$
clearly holds, we also have $\widehat{\mathcal{N}}(\mathcal{N}(\mathcal{R}_{[0,\,M]}^{A,\,B})\cup\mathcal{G}^{A,\,B})\subseteq\widehat{\mathcal{N}}(\mathcal{S})$.
This proves \eqref{e_typ5}. Since the sets $\mathcal{N}(\mathcal{R}_{[0,\,M]}^{A,\,B})$
and $\mathcal{G}^{A,\,B}$ are disjoint by Lemma \ref{l_gateR}, we
can apply Lemmas \ref{l_set} and \ref{l_gateR} to deduce 
\begin{align*}
\widehat{\mathcal{N}}(\mathcal{S}) & =\widehat{\mathcal{N}}(\mathcal{N}(\mathcal{R}_{[0,\,M]}^{A,\,B})\,;\,\mathcal{G}^{A,\,B})\cup\widehat{\mathcal{N}}(\mathcal{G}^{A,\,B}\,;\,\mathcal{N}(\mathcal{R}_{[0,\,M]}^{A,\,B}))\;.\\
 & =\widehat{\mathcal{N}}(\mathcal{R}_{[0,\,M]}^{A,\,B}\,;\,\mathcal{G}^{A,\,B})\cup\mathcal{G}^{A,\,B}=\widehat{\mathcal{R}}_{[0,\,M]}^{A,\,B}\cup\mathcal{G}^{A,\,B}\;.
\end{align*}
This completes the proof since by \eqref{e_gate3}, \eqref{e_BAB}
and \eqref{e_EAB}, we have $\mathcal{E}^{A,\,B}\cup\mathcal{B}^{A,\,B}=\mathcal{G}^{A,\,B}\cup\widehat{\mathcal{R}}_{[0,\,M]}^{A,\,B}$. 
\end{proof}

\subsection{\label{sec9.3}Structure of edge typical configurations}

As in the 2D case \cite[Sections 6.4 and 6.5]{KS 2D}, we analyze
the structure of edge typical configurations.

We remark that we fixed a proper partition $(A,\,B)$ of $S$. Decompose
\begin{equation}
\mathcal{E}^{A}=\mathcal{O}^{A}\cup\mathcal{I}^{A}\label{e_EA}
\end{equation}
where 
\[
\mathcal{O}^{A}=\{\sigma\in\mathcal{E}^{A}:H(\sigma)=\Gamma\}\;\;\;\;\text{and}\;\;\;\;\mathcal{I}^{A}=\{\sigma\in\mathcal{E}^{A}:H(\sigma)<\Gamma\}\;.
\]
We now take a subset $\overline{\mathcal{I}}^{A}$ of $\mathcal{I}^{A}$
so that we can decompose $\mathcal{I}^{A}$ into the following disjoint
union: 
\[
\mathcal{I}^{A}=\bigcup_{\sigma\in\overline{\mathcal{I}}^{A}}\mathcal{N}(\sigma)\;.
\]
Consequently, we get the following decomposition of $\mathcal{E}^{A}$:
\begin{equation}
\mathcal{E}^{A}=\mathcal{O}^{A}\cup\Big(\,\bigcup_{\sigma\in\overline{\mathcal{I}}^{A}}\mathcal{N}(\sigma)\,\Big)\;.\label{e_decEA}
\end{equation}

\begin{notation}
\label{n_barsigma}For $\sigma\in\mathcal{I}^{A}$, we denote by $\overline{\sigma}\in\overline{\mathcal{I}}^{A}$
the unique configuration satisfying $\sigma\in\mathcal{N}(\overline{\sigma})$.
\end{notation}

By part (1) of Lemma \ref{l_Hlb2}, for $\sigma,\,\sigma'\in\mathcal{R}_{\mathfrak{m}_{K}}^{A,\,B}$,
the two sets $\mathcal{N}(\sigma)$ and $\mathcal{N}(\sigma')$ are
disjoint. By a similar reasoning, we know that for any\textbf{ $\sigma\in\mathcal{R}_{\mathfrak{m}_{K}}^{A,\,B}$}
and $a\in A$, the sets $\mathcal{N}(\sigma)$ and $\mathcal{N}(\mathbf{s}_{a})$
are disjoint. Thus, we can assume that 
\begin{equation}
\mathcal{R}_{\mathfrak{m}_{K}}^{A,\,B}\cup\mathcal{S}(A)\subseteq\overline{\mathcal{I}}^{A}\;.\label{e_barIA}
\end{equation}
The following construction of an auxiliary Markov chain is an analogue
of \cite[Definition 6.20]{KS 2D}.
\begin{defn}
\label{d_EAMc}For a proper partition $(A,\,B)$ of $S$, we define
a Markov chain $Z^{A}(\cdot)$ on $\overline{\mathcal{I}}^{A}\cup\mathcal{O}^{A}$. 
\begin{itemize}
\item \textbf{(Graph) }We define the graph structure $\mathscr{G}^{A}=(\mathscr{V}^{A},\,\mathscr{E}^{A})$
for $\mathscr{V}^{A}=\mathcal{O}^{A}\cup\overline{\mathcal{I}}^{A}$.
The edge set $\mathscr{E}^{A}$ is defined by declaring that $\{\sigma,\,\sigma'\}\in\mathscr{E}^{A}$
for $\sigma,\,\sigma'\in\mathscr{V}^{A}$ if
\[
\begin{cases}
\sigma,\,\sigma'\in\mathcal{O}^{A}\text{ and }\sigma\sim\sigma'\text{ or}\\
\sigma\in\mathcal{O}^{A}\;,\;\sigma'\in\overline{\mathcal{I}}^{A}\;,\;\text{ and }\sigma\sim\zeta\text{ for some }\zeta\in\mathcal{N}(\sigma')\;.
\end{cases}
\]
\item \textbf{(Markov chain)} We first define a rate $r^{A}:\mathscr{V}^{A}\times\mathscr{V}^{A}\rightarrow[0,\,\infty)$.
If $\{\sigma,\,\sigma'\}\notin\mathscr{E}^{A}$, we set $r^{A}(\sigma,\,\sigma')=0$,
and if $\{\sigma,\,\sigma'\}\in\mathscr{E}^{A}$, we set 
\begin{equation}
r^{A}(\sigma,\,\sigma')=\begin{cases}
1 & \text{if }\sigma,\,\sigma'\in\mathcal{O}^{A}\;,\\
|\{\zeta\in\mathcal{N}(\sigma):\zeta\sim\sigma'\}| & \text{if }\sigma\in\overline{\mathcal{I}}^{A}\;,\;\sigma'\in\mathcal{O}^{A}\;,\\
|\{\zeta\in\mathcal{N}(\sigma'):\zeta\sim\sigma\}| & \text{if }\sigma\in\mathcal{O}^{A}\;,\;\sigma'\in\overline{\mathcal{I}}^{A}\;.
\end{cases}\label{e_EAMc}
\end{equation}
We now let $(Z^{A}(t))_{t\ge0}$ be the continuous-time Markov chain
on $\mathscr{V}^{A}$ with rate $r^{A}(\cdot,\,\cdot)$. Note that
the uniform distribution on $\mathscr{V}^{A}$ is the invariant measure
for the chain $Z^{A}(\cdot)$, and indeed this chain is reversible
with respect to this measure.
\item \textbf{(Potential-theoretic objects) }Denote by $L^{A}$, $h_{\cdot,\,\cdot}^{A}(\cdot)$,
and $\mathrm{cap}^{A}(\cdot,\,\cdot)$ the generator, equilibrium
potential, and capacity with respect to the Markov chain $Z^{A}(\cdot)$,
respectively. 
\end{itemize}
\end{defn}

We now give three important propositions regarding the objects constructed
above. These propositions play fundamental roles in the construction
of the test function on the edge typical configurations. 

We remark from \eqref{e_barIA} that $\mathcal{S}(A),\,\mathcal{R}_{\mathfrak{m}_{K}}^{A,\,B}\subseteq\overline{\mathcal{I}}^{A}\subseteq\mathscr{V}^{A}.$
Potential-theoretic objects between these two sets are crucially used
in our discussion. We define 
\begin{equation}
\mathfrak{e}(A)=\frac{1}{|\mathscr{V}^{A}|\,\mathrm{cap}^{A}(\mathcal{S}(A),\,\mathcal{R}_{\mathfrak{m}_{K}}^{A,\,B})}\;.\label{e_edef}
\end{equation}
For $n\in\llbracket1,\,q-1\rrbracket$ (with a slight abuse of notation)
we can write 
\begin{equation}
\mathfrak{e}(n)=\frac{1}{|\mathscr{V}^{A_{n}}|\,\mathrm{cap}^{A_{n}}(\mathcal{S}(A_{n}),\,\mathcal{R}_{\mathfrak{m}_{K}}^{A_{n},\,B_{n}})}\;,\label{e_e3def}
\end{equation}
where $A_{n}=\{1,\,\dots,\,n\}$ and $B_{n}=\{n+1,\,\dots,\,q\}$.
Since $\mathfrak{e}(A)$ depends on $A$ only through $|A|$, it holds
that $\mathfrak{e}(A)=\mathfrak{e}(|A|)$. We next derive a rough
bound of $\mathfrak{e}(n)$ via the Thomson principle. We refer to
e.g., \cite{BdenH meta} for the flow structure and the Thomson principle. 
\begin{prop}
\label{p_eest}For all $n\in\llbracket1,\,q-1\rrbracket$, we have
that $\mathfrak{e}(n)\le\frac{1}{K^{1/3}}$.
\end{prop}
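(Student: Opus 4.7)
The plan is to invoke the Thomson principle in the auxiliary reversible chain $Z^{A_n}(\cdot)$. Since this chain is reversible with respect to the uniform distribution on $\mathscr{V}^{A_n}$, its Dirichlet form is
\[
D^{A_n}(f) \;=\; \frac{1}{2\,|\mathscr{V}^{A_n}|}\sum_{\sigma,\,\sigma'\in\mathscr{V}^{A_n}} r^{A_n}(\sigma,\,\sigma')\,\{f(\sigma')-f(\sigma)\}^{2}\;,
\]
so that
\[
|\mathscr{V}^{A_n}|\cdot\mathrm{cap}^{A_n}(\mathcal{S}(A_n),\,\mathcal{R}_{\mathfrak{m}_K}^{A_n,\,B_n})\;=\;\widetilde{\mathrm{cap}}(\mathcal{S}(A_n),\,\mathcal{R}_{\mathfrak{m}_K}^{A_n,\,B_n})\;,
\]
where $\widetilde{\mathrm{cap}}(\cdot,\cdot)$ denotes the capacity computed with the unnormalized symmetric conductances $r^{A_n}(\sigma,\sigma')$ on $\mathscr{G}^{A_n}$. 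Thus by \eqref{e_e3def}, the claim $\mathfrak{e}(n)\le K^{-1/3}$ is equivalent to $\widetilde{\mathrm{cap}}(\mathcal{S}(A_n),\mathcal{R}_{\mathfrak{m}_K}^{A_n,B_n})\ge K^{1/3}$, and by the Thomson principle it suffices to exhibit a unit flow $\phi$ from $\mathcal{S}(A_n)$ to $\mathcal{R}_{\mathfrak{m}_K}^{A_n,B_n}$ in $\mathscr{G}^{A_n}$ with flow energy $\mathcal{E}(\phi)\le K^{-1/3}$.

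To build $\phi$, I would use the rich family of canonical paths provided by Definition~\ref{d_canpath}. A canonical path from $\mathbf{s}_a$ to $\mathcal{R}_{\mathfrak{m}_K}^{a,b}$ is parametrized by $b\in B_n$, a direction when $K=L$ or $L=M$, an increasing sequence $(P_i)_{i=0}^{\mathfrak{m}_K}$ in $\mathfrak{S}_M$, and a choice of a 2D canonical path in each newly-filled slab $P_{i+1}\setminus P_i$. Projecting such a canonical path to $\mathscr{G}^{A_n}$ by collapsing every well $\mathcal{N}(\overline{\sigma})$ onto the representative $\overline{\sigma}\in\overline{\mathcal{I}}^{A_n}$ (as in Notation~\ref{n_barsigma}) and keeping the intermediate visits to $\mathcal{O}^{A_n}$ produces a walk in $\mathscr{G}^{A_n}$ connecting $\mathbf{s}_a$ to an element of $\mathcal{R}_{\mathfrak{m}_K}^{A_n,B_n}$. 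Distributing a total unit flow uniformly over this enormous family of canonical paths (and over the starting state $a\in A_n$) yields a highly diffuse flow $\phi$.

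The energy of $\phi$ is then estimated edge-by-edge. Since $r^{A_n}(\sigma,\sigma')\ge 1$ whenever $\{\sigma,\sigma'\}\in\mathscr{E}^{A_n}$, each edge contributes at most (flow through it)$^{2}$ to $\mathcal{E}(\phi)$. Writing $N$ for the total number of canonical paths used and $\nu(e)$ for the number passing through a given edge $e$, one has $\phi(e)\le \nu(e)/N$, and the total energy is controlled by $(\max_e \nu(e)/N)\cdot(\text{typical path length})$. The number $N$ scales as the product of $n(q-n)$ with the number of increasing sequences $(P_i)_{i=0}^{\mathfrak{m}_K}$ in $\mathfrak{S}_M$ and with $(\text{2D canonical path count})^{\mathfrak{m}_K}$, which is exponentially large in $\mathfrak{m}_K=\lfloor K^{2/3}\rfloor$; this dwarfs the polynomial path length and the combinatorial multiplicity $\nu(e)$ of canonical paths through any fixed bottleneck edge, and yields a bound of the claimed order.

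The principal obstacle is the combinatorial bookkeeping of edge congestion in $\mathscr{G}^{A_n}$. An edge of this graph corresponds either to a direct saddle-to-saddle spin flip within $\mathcal{O}^{A_n}$ or to a well-entry/exit transition; one must, for each such edge lying on the canonical paths, upper bound the number of choices of $(b,\text{direction},(P_i),(\text{2D sub-paths}))$ whose projected walk traverses it. The freedom to permute the filling order of the remaining $M-\mathfrak{m}_K$ slabs beyond the crossing point, and the freedom to vary the 2D canonical path in each remaining slab, are the two mechanisms that dilute the congestion; quantifying them precisely produces the energy bound $\mathcal{E}(\phi)\le K^{-1/3}$ and hence the proposition.
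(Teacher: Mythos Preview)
Your approach via the Thomson principle with a flow built from canonical paths is exactly the paper's strategy, but the paper's flow is far simpler than what you propose, and the part you flag as ``the principal obstacle'' is precisely the step you have not carried out.

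The paper fixes a \emph{single} pair $(a,b)\in A_n\times B_n$ and, for each $P\prec Q$ in $\mathfrak{S}_M$ with $|P|\in\llbracket i_{K,L,M},\,\mathfrak{m}_K-1\rrbracket$, pushes a constant flow $\tfrac{1}{2KLM}$ along each edge of the $(P,Q)$-transition using only the ``$+$'' 2D protuberance direction (cf.\ \eqref{e_eest}--\eqref{e_eest2}). The decisive structural observation is that the supports of $\psi_{P,Q}$ for distinct pairs $(P,Q)$ are pairwise \emph{disjoint}, so the total energy is just the sum $\sum_{P,Q}\|\psi_{P,Q}\|^2$. A direct edge count gives $\|\psi_{P,Q}\|^2=\tfrac{|\mathscr{V}^{A_n}|}{(2KLM)^2}\cdot K^2L(L-2)<\tfrac{|\mathscr{V}^{A_n}|}{4M^2}$; summing over fewer than $2M\mathfrak{m}_K$ pairs yields $\|\psi\|^2<\tfrac{\mathfrak{m}_K|\mathscr{V}^{A_n}|}{2M}$, and hence $\mathfrak{e}(n)<\tfrac{\mathfrak{m}_K}{2M}\le\tfrac{1}{K^{1/3}}$. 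There is no congestion bookkeeping at all: the flow is designed so that every edge is used by exactly one $\psi_{P,Q}$, and the energy is computed exactly rather than bounded.

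Your proposal to spread over all $b\in B_n$, all increasing sequences $(P_i)$, and all 2D canonical sub-paths is therefore unnecessary, and it leaves the hard step---bounding $\max_e\nu(e)/N$ times the path length---merely asserted (``quantifying them precisely produces the energy bound''). Nothing in your sketch rules out that many of your exponentially many paths share a common bottleneck edge (for instance near the entry into $\mathcal{R}_{\mathfrak{m}_K}^{A_n,B_n}$, where the freedom in later slabs is exhausted), and controlling this would require exactly the combinatorics you defer. The paper avoids the issue entirely by choosing a flow whose support decomposes into disjoint pieces with explicitly computable energy.
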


\begin{proof}
We recall that an anti-symmetric function $\phi:\mathscr{V}^{A}\times\mathscr{V}^{A}\to\mathbb{R}$
is called a \emph{flow} associated with the Markov chain $Z^{A}(\cdot)$,
provided that $\phi(x,\,y)\ne0$ if and only if $\{x,\,y\}\in\mathscr{E}^{A}$.
For each flow $\phi$, the associated flow norm is defined by
\[
\|\phi\|^{2}:=\sum_{\{x,\,y\}\subseteq\mathscr{V}^{A}:\,\{x,\,y\}\in\mathscr{E}^{A}}\frac{\phi(x,\,y)^{2}}{|\mathscr{V}^{A}|^{-1}\,r^{A}(x,\,y)}\;.
\]
For each $x\in\mathscr{V}^{A}$, the divergence of a flow $\phi$
at $x$ is defined by
\[
(\mathrm{div}\,\phi)(x):=\sum_{y\in\mathscr{V}^{A}}\phi(x,\,y)\;.
\]
Finally, for two disjoint non-empty subsets $\mathcal{U},\,\mathcal{V}$
of $\mathscr{V}^{A}$, a flow $\phi$ is called a \textit{unit flow}
from $\mathcal{U}$ to $\mathcal{V}$ if 
\[
\sum_{x\in\mathcal{U}}(\mathrm{div}\,\phi)(x)=-\sum_{x\in\mathcal{V}}(\mathrm{div}\,\phi)(x)=1\;\;\;\;\text{and}\;\;\;\;(\mathrm{div}\,\phi)(z)=0\text{ for all }z\notin\mathcal{U}\cup\mathcal{V}\;.
\]
Then, by the Thomson principle (cf. \cite[Theorem 7.37]{BdenH meta}),
for any \textit{unit flow} $\psi$ from $\mathcal{S}(A)$ to $\mathcal{R}_{\mathfrak{m}_{K}}^{A,\,B}$,
we have 
\begin{equation}
\mathrm{cap}^{A}(\mathcal{S}(A),\,\mathcal{R}_{\mathfrak{m}_{K}}^{A,\,B})\ge\frac{1}{\|\psi\|^{2}}\;.\label{e_Thom}
\end{equation}
We shall construct below a \textit{\emph{unit flow}} $\psi$ from
$\mathcal{S}(A)$ to $\mathcal{R}_{\mathfrak{m}_{K}}^{A,\,B}$ that
satisfies 
\begin{equation}
\Vert\psi\Vert^{2}<\frac{\mathfrak{m}_{K}\,|\mathscr{V}^{A}|}{2M}\;.\label{e_bdpsi}
\end{equation}
Then, by combining \eqref{e_Thom} and \eqref{e_bdpsi}, we have (recalling
the definition \eqref{e_mK} of $\mathfrak{m}_{K}$)
\[
\mathrm{cap}^{A}(\mathcal{S}(A),\,\mathcal{R}_{\mathfrak{m}_{K}}^{A,\,B})\ge\frac{1}{\|\psi\|^{2}}>\frac{2M}{|\mathscr{V}^{A}|\,\mathfrak{m}_{K}}\ge\frac{K^{1/3}}{|\mathscr{V}^{A}|}\;.
\]
Recalling the definition \eqref{e_edef}, this completes the proof. 

Now, it remains to construct a \textit{\emph{unit flow}} $\psi$ from
$\mathcal{S}(A)$ to $\mathcal{R}_{\mathfrak{m}_{K}}^{A,\,B}$ satisfying
bound \eqref{e_bdpsi}. To this end, let us first fix $a\in A$ and
$b\in B$. Define
\begin{equation}
i_{K,\,L,\,M}=\max\,\{m\ge1:\Phi(\mathcal{S}(A),\,\mathcal{R}_{m}^{A,\,B})<\Gamma\}\;.\label{e_iKLM}
\end{equation}
By Corollary \ref{c_H3lb}, we know that $i_{K,\,L,\,M}<\mathfrak{m}_{K}$. 

Let us start by fixing $P,\,Q\in\mathfrak{S}_{M}$ such that $P\prec Q$,
$Q\setminus P=\{m\}$, $i_{K,\,L,\,M}\le|P|<|Q|\le\mathfrak{m}_{K}$,
$a\in A$, and $b\in B$. Then, we first define a flow $\psi_{P,\,Q}$
connecting $\overline{\sigma}_{P}^{a,\,b}=\overline{\sigma_{P}^{a,\,b}}$
and $\overline{\sigma}_{Q}^{a,\,b}=\overline{\sigma_{Q}^{a,\,b}}$
(cf. Notation \ref{n_barsigma}). First, we set 
\begin{equation}
\psi_{P,\,Q}(\sigma,\,\zeta)=-\psi_{P,\,Q}(\zeta,\,\sigma)=\frac{1}{2KLM}\label{e_eest}
\end{equation}
if $\sigma,\,\zeta\in\mathcal{C}_{P,\:Q}^{a,\,b}$ satisfy, for some
$\ell\in\mathbb{T}_{L}$, $k\in\mathbb{T}_{K}$, $v\in\llbracket1,\,L-2\rrbracket$,
and $h\in\llbracket1,\,K-2\rrbracket$, 
\begin{equation}
\begin{cases}
\sigma^{(m)}=\xi_{\ell,\,v}^{a,\,b}\;\;\text{and}\;\;\zeta^{(m)}=\xi_{\ell,\,v;\,k,\,1}^{a,\,b,\,+}\;\;\;\;\text{or}\;,\\
\sigma^{(m)}=\xi_{\ell,\,v;\,k,\,h}^{a,\,b,\,+}\;\;\text{and}\;\;\zeta^{(m)}=\xi_{\ell,\,v;\,k,\,h+1}^{a,\,b,\,+}\;\;\;\;\text{or\;,}\\
\sigma^{(m)}=\xi_{\ell,\,v;\,k,\,K-1}^{a,\,b,\,+}\;\;\text{and}\;\;\zeta^{(m)}=\xi_{\ell,\,v+1}^{a,\,b}\;.
\end{cases}\label{e_eest2}
\end{equation}
Now, we claim that all configurations that appear in \eqref{e_eest2}
except for the ones corresponding to $\xi_{\ell,\,1}^{a,\,b}$ and
$\xi_{\ell,\,L-1}^{a,\,b}$ belong to $\mathscr{V}^{A}$. To check
this, observe first that if the $m$-th floor of $\sigma\in\mathcal{C}_{P,\:Q}^{a,\,b}$
is of the form $\sigma^{(m)}=\xi_{\ell,\,v\,;\,k,\,h}^{a,\,b,\,+}$,
we have $H(\sigma)=\Gamma$ and hence $\sigma\in\mathcal{O}^{A}$.
On the other hand, if the $m$-th floor of $\sigma\in\mathcal{C}_{P,\:Q}^{a,\,b}$
is of the form $\sigma^{(m)}=\xi_{\ell,\,v}^{a,\,b}$ for $v\in\llbracket2,\,L-2\rrbracket$,
we have $H(\sigma)=\Gamma-2$ and moreover $\mathcal{N}(\sigma)=\{\sigma\}$.
This implies that $\sigma\in\overline{\mathcal{I}}^{A}$. This proves
the claim. On the other hand, since if $\sigma^{(m)}=\xi_{\ell,\,1}^{a,\,b}$
then $\sigma\in\mathcal{N}(\sigma_{P}^{a,\,b})$, and if $\sigma^{(m)}=\xi_{\ell,\,L-1}^{a,\,b}$
then $\sigma\in\mathcal{N}(\sigma_{Q}^{a,\,b})$ (cf. the canonical
paths provide $(\Gamma-1)$-paths), we can replace the configurations
corresponding to $\xi_{\ell,\,1}^{a,\,b}$ and $\xi_{\ell,\,L-1}^{a,\,b}$
that appear in \eqref{e_eest2} with $\overline{\sigma}_{P}^{a,\,b}$
and $\overline{\sigma}_{Q}^{a,\,b}$, respectively, to get a flow
connecting $\overline{\sigma}_{P}^{a,\,b}$ and $\overline{\sigma}_{Q}^{a,\,b}$.
We remark that we may have $\overline{\sigma}_{P}^{a,\,b}=\overline{\sigma}_{Q}^{a,\,b}$.

We deduce from the definition of the flow norm that 
\begin{equation}
\Vert\psi_{P,\,Q}\Vert^{2}=\frac{|\mathscr{V}^{A}|}{(2KLM)^{2}}\times K^{2}L(L-2)<\frac{|\mathscr{V}^{A}|}{4M^{2}}\;,\label{e_e3est3}
\end{equation}
where $K^{2}L(L-2)$ is the number of edges that appear in \eqref{e_eest2}.
Next, we define
\[
\psi=\sum_{r=i_{K,L,M}}^{\mathfrak{m}_{K}-1}\,\sum_{P,\,Q\in\mathfrak{S}_{M}:\,|P|=r,\,P\prec Q}\psi_{P,\,Q}\;.
\]
Notice from \eqref{e_iKLM} that a configuration of the form $\overline{\sigma}_{P,\,Q}^{a,\,b}$
with $|P|=i_{K,\,L,\,M}$ is indeed an element of $\mathcal{S}(A)$.
Then, from the definition \eqref{e_eest}, we can readily check that
$\psi(x)=0$ for all $x\notin\mathcal{S}(A)\cup\mathcal{R}_{\mathfrak{m}_{K}}^{A,\,B}$
(by using the fact that the flow on each edge has a constant magnitude
$\frac{1}{2KLM}$), and moreover it holds that (cf. \eqref{e_barIA})
\begin{equation}
\sum_{x\in\mathcal{S}(A)}\sum_{y\in\mathscr{V}^{A}}\psi(x,\,y)=1\;.\label{e_e3est4}
\end{equation}
Indeed, to prove the last assertion, it suffices to observe that 
\begin{align*}
\sum_{x\in\mathcal{S}(A)}\sum_{y\in\mathscr{V}^{A}}\psi(x,\,y) & =\sum_{P,\,Q\in\mathfrak{S}_{M}:\,|P|=i_{K,L,M},\,P\prec Q}\,\sum_{\zeta\in\mathscr{V}^{A}:\,\overline{\sigma}_{P}^{a,b}\sim\zeta}\psi_{P,\,Q}(\overline{\sigma}_{P}^{a,\,b},\,\zeta)\\
 & =\frac{1}{2KLM}\times KL\times2M=1\;,
\end{align*}
where $KL$ is the number of configurations in $\mathcal{C}_{P,\,Q}^{a,\,b}$
connected to $\overline{\sigma}_{P}^{a,\,b}$, and $2M$ is the number
of possible choices of $P$ and $Q$. \textit{\emph{Consequently,
the flow $\psi$ is a unit flow from $\mathcal{S}(A)$ to $\mathcal{R}_{\mathfrak{m}_{K}}^{A,\,B}$. }}

\textit{\emph{Thus, it suffices to verify \eqref{e_bdpsi}. }}Since
the support of $\psi_{P,\,Q}$ (which is the collection of edges on
which $\psi_{P,\,Q}$ is non-zero) for different pairs $(P,\,Q)$
are disjoint, we deduce from \eqref{e_e3est3} that 
\[
\Vert\psi\Vert^{2}=\sum_{r=i_{K,L,M}}^{\mathfrak{m}_{K}-1}\,\sum_{P,\,Q\in\mathfrak{S}_{M}:\,|P|=r,\,P\prec Q}\Vert\psi_{P,\,Q}\Vert^{2}<\mathfrak{m}_{K}\times2M\times\frac{|\mathscr{V}^{A}|}{4M^{2}}=\frac{\mathfrak{m}_{K}\,|\mathscr{V}^{A}|}{2M}\;,
\]
and therefore $\psi$ satisfies \textit{\emph{\eqref{e_bdpsi}.}}
\end{proof}
For simplicity, we write (cf. \eqref{e_barIA})
\begin{equation}
\mathfrak{h}^{A}(\cdot)=h_{\mathcal{S}(A),\,\mathcal{R}_{\mathfrak{m}_{K}}^{A,B}}^{A}(\cdot)\label{e_hA}
\end{equation}
where $h^{A}$ is the equilibrium potential defined in Definition
\ref{d_EAMc}. This function is a fundamental object in the construction
of the test function in Section \ref{sec10}. 
\begin{prop}
\label{p_hAest}For $\sigma\in\widehat{\mathcal{R}}_{\mathfrak{m}_{K}}^{A,\,B}\cap\mathcal{O}^{A}\subseteq\mathscr{V}^{A}$,
we have $\mathfrak{h}^{A}(\sigma)=0$.
\end{prop}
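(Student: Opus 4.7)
Write $\mathcal{W} := \widehat{\mathcal{R}}_{\mathfrak{m}_K}^{A,B}$. The plan is to show that $\mathcal{W} \cap \mathscr{V}^A$ is forward-invariant under the auxiliary chain $Z^A(\cdot)$ and is disjoint from $\mathcal{S}(A)$. Since $\mathfrak{h}^A(\sigma)$ is the probability that $Z^A$ started at $\sigma$ hits $\mathcal{S}(A)$ before $\mathcal{R}_{\mathfrak{m}_K}^{A,B} \subseteq \mathcal{W}$, these two facts immediately give $\mathfrak{h}^A(\sigma) = 0$. The disjointness is straightforward: $\mathcal{S}(A) = \mathcal{R}_0^{A,B} \subseteq \widehat{\mathcal{R}}_0^{A,B}$, and by Remark \ref{r_disjR}, $\widehat{\mathcal{R}}_0^{A,B}$ and $\widehat{\mathcal{R}}_{\mathfrak{m}_K}^{A,B}$ are disjoint because $\mathfrak{m}_K \in \llbracket \mathfrak{m}_K,\,M-\mathfrak{m}_K\rrbracket$.

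The structural core is the following consequence of Lemma \ref{l_gate} and Proposition \ref{p_gateE}: whenever $\eta \notin \mathcal{G}^{A,B}$, $\eta' \sim \eta$, and $\max\{H(\eta),\,H(\eta')\} \le \Gamma$, then $\eta' \in \mathcal{G}^{A,B}$ forces a contradiction. Indeed, applying Lemma \ref{l_gate} with $\sigma=\eta'$ and $\zeta=\eta$ would imply $\eta \in \mathcal{N}(\mathcal{R}_{[\mathfrak{m}_K-1,\,M-\mathfrak{m}_K+1]}^{A,B})$ and $\eta'$ is a type 3 gateway; but type 3 configurations have energy exactly $\Gamma$, and membership in $\mathcal{N}(\cdot)$ forces $H(\eta) \le \Gamma-1$. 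Thus, given the pair of energy inequalities, at least one of these two conclusions breaks, yielding $\eta' \notin \mathcal{G}^{A,B}$.

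Now fix $\sigma \in \mathcal{O}^A \cap \mathcal{W}$, and let $\pi$ be a $\Gamma$-path in $\mathcal{X} \setminus \mathcal{G}^{A,B}$ connecting $\mathcal{R}_{\mathfrak{m}_K}^{A,B}$ to $\sigma$. Consider a transition $\sigma \to \sigma'$ in $Z^A$. If $\sigma' \in \mathcal{O}^A$ with $\sigma \sim \sigma'$, then since $H(\sigma)=H(\sigma')=\Gamma$ and $\sigma \notin \mathcal{G}^{A,B}$, the structural observation yields $\sigma' \notin \mathcal{G}^{A,B}$, and appending $\sigma'$ to $\pi$ gives $\sigma' \in \mathcal{W}$. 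If instead $\sigma' \in \overline{\mathcal{I}}^A$ and $\sigma \sim \zeta$ for some $\zeta \in \mathcal{N}(\sigma')$, the structural observation first yields $\zeta \notin \mathcal{G}^{A,B}$ (so $\zeta \in \mathcal{W}$). Along any $(\Gamma-1)$-path from $\zeta$ to $\sigma'$, inductive application of the structural observation keeps every intermediate configuration outside $\mathcal{G}^{A,B}$; concatenating this path with $\pi$ produces a $\Gamma$-path in $\mathcal{X}\setminus\mathcal{G}^{A,B}$ from $\mathcal{R}_{\mathfrak{m}_K}^{A,B}$ to $\sigma'$, so $\sigma' \in \mathcal{W}$.

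The main technical point is the propagation of the non-gateway property along the sub-$\Gamma$-energy portion of the path in the valley $\mathcal{N}(\sigma')$: the essential content of Lemma \ref{l_gate} is that only type 3 gateways (with energy $\Gamma$) can neighbour a non-gateway configuration with energy $\le \Gamma$, so type 1 gateways (the only gateways of energy $<\Gamma$) are unreachable from $\sigma$ without first crossing the energy level $\Gamma$ at a non-gateway configuration, which the $(\Gamma-1)$-path in $\mathcal{N}(\sigma')$ cannot do. Combining the two transition cases gives the desired forward-invariance, and the argument is complete.
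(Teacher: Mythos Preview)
Your ``structural core'' is stated too broadly and is false as written. You claim that whenever $\eta\notin\mathcal{G}^{A,B}$, $\eta'\sim\eta$, and $\max\{H(\eta),H(\eta')\}\le\Gamma$, then $\eta'\in\mathcal{G}^{A,B}$ yields a contradiction. But take $\eta\in\mathcal{N}(\sigma_Q^{a,b})$ with $|Q|=\mathfrak{m}_K$ (so $H(\eta)\le\Gamma-1$ and $\eta\notin\mathcal{G}^{A,B}$ by Lemma~\ref{l_gateR}) and $\eta'\in\mathcal{G}_{P,Q}^{a,b}$ of type~3 with $\eta\sim\eta'$; then $H(\eta')=\Gamma$ and $\eta'\in\mathcal{G}^{A,B}$, yet nothing breaks. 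The conclusions of Lemma~\ref{l_gate} are perfectly consistent here. What you actually need (and what your three applications implicitly use) is the weaker statement: if \emph{either} $H(\eta)=\Gamma$ \emph{or} $H(\eta')<\Gamma$, then $\eta'\notin\mathcal{G}^{A,B}$.

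This overstated core is not merely cosmetic: it is precisely the missing case that falsifies your forward-invariance claim. From $\sigma'\in\mathcal{R}_{\mathfrak{m}_K}^{A,B}\subseteq\overline{\mathcal{I}}^A\cap\mathcal{W}$, the chain $Z^A$ can transition to $\sigma''\in\mathcal{G}_{\mathfrak{m}_K-1}^{A,B}\cap\mathcal{O}^A$ (via some $\zeta'\in\mathcal{N}(\sigma')$ with $\zeta'\sim\sigma''$), and then $\sigma''\in\mathcal{G}^{A,B}$ forces $\sigma''\notin\mathcal{W}$. So $\mathcal{W}\cap\mathscr{V}^A$ is \emph{not} forward-invariant. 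You also only argued transitions out of $\mathcal{O}^A$, never out of $\overline{\mathcal{I}}^A$, so the induction is incomplete in any case. The salvage is to show instead that $Z^A$ can exit $\mathcal{W}$ only from $\mathcal{R}_{\mathfrak{m}_K}^{A,B}$ itself; this requires ruling out $\zeta'\in\mathcal{N}(\mathcal{R}_{\mathfrak{m}_K-1}^{A,B})$ when $\sigma'\in\mathcal{W}\setminus\mathcal{R}_{\mathfrak{m}_K}^{A,B}$, which is an appeal to Lemma~\ref{l_Hlb2}(2). The paper's proof avoids this entirely by working directly with $\Gamma$-paths in $\mathcal{X}$: it shows any $\Gamma$-path from $\sigma$ to $\mathcal{S}(A)$ must visit $\mathcal{N}(\mathcal{R}_{\mathfrak{m}_K}^{A,B})$, splitting on whether the path enters $\mathcal{G}^{A,B}$ and invoking Lemma~\ref{l_Hlb2}(2) and Lemma~\ref{l_gate} respectively.
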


\begin{proof}
We fix $\sigma\in\widehat{\mathcal{R}}_{\mathfrak{m}_{K}}^{A,\,B}\cap\mathcal{O}^{A}$.
It suffices to prove that any $\Gamma$-path $(\omega_{t})_{t=0}^{T}$
from $\sigma$ to $\mathcal{S}(A)$ must visit $\mathcal{N}(\mathcal{R}_{\mathfrak{m}_{K}}^{A,\,B})$.
Suppose first that the path $(\omega_{t})_{t=0}^{T}$ does not visit
$\mathcal{G}^{A,\,B}$. Since $\sigma\in\mathcal{\widehat{R}}_{\mathfrak{m}_{K}}^{A,\,B}$,
there exists a $\Gamma$-path in $\mathcal{X}\setminus\mathcal{G}^{A,\,B}$
connecting $\mathcal{R}_{\mathfrak{m}_{K}}^{A,\,B}$ and $\sigma$,
and therefore by concatenating this path with $(\omega_{t})_{t=0}^{T}$,
we get a $\Gamma$-path in $\mathcal{X}\setminus\mathcal{G}^{A,\,B}$
connecting $\mathcal{R}_{\mathfrak{m}_{K}}^{A,\,B}$ and $\mathcal{S}(A)$.
This contradicts part (2) of Lemma \ref{l_Hlb2}. Thus, the path $(\omega_{t})_{t=0}^{T}$
must visit $\mathcal{G}^{A,\,B}$ and we let 
\[
t_{0}=\min\,\{t:\omega_{t}\in\mathcal{G}^{A,\,B}\}\;.
\]
By part (2) of Lemma \ref{l_gate}, we have $\omega_{t_{0}-1}\in\mathcal{N}(\mathcal{R}_{[\mathfrak{m}_{K}-1,\,M-\mathfrak{m}_{K}+1]}^{A,\,B})$.
If $\omega_{t_{0}-1}\in\mathcal{N}(\mathcal{R}_{i}^{A,\,B})$ for
some $i\in\llbracket\mathfrak{m}_{K}-1,\,M-\mathfrak{m}_{K}+1\rrbracket\setminus\{\mathfrak{m}_{K}\}$,
then $(\omega_{t})_{t=0}^{t_{0}-1}$ induces a $\Gamma$-path from
$\mathcal{R}_{\mathfrak{m}_{K}}^{A,\,B}$ from $\mathcal{R}_{i}^{A,\,B}$
avoiding $\mathcal{G}^{A,\,B}$, which contradicts part (2) of Lemma
\ref{l_Hlb2}. Hence, we can conclude that $\omega_{t_{0}-1}\in\mathcal{N}(\mathcal{R}_{\mathfrak{m}_{K}}^{A,\,B})$,
as desired.
\end{proof}
\begin{rem}
The previous proposition implies that configurations $\sigma$ that
belong to $\widehat{\mathcal{R}}_{\mathfrak{m}_{K}}^{A,\,B}\cap\mathcal{O}^{A}$
are dead-ends attached to $\mathcal{N}(\mathcal{R}_{\mathfrak{m}_{K}}^{A,\,B})$
(cf. grey protuberances attached to green boxes in Figures \ref{fig7.2}
and \ref{fig9.1}).
\end{rem}

The next proposition highlights the fact that the auxiliary process
$Z^{A}(\cdot)$ defined in Definition \ref{d_EAMc} approximates the
behavior of the Metropolis--Hastings dynamics at the edge typical
configurations.
\begin{prop}
\label{p_ZA}Define a projection map $\Pi^{A}:\mathcal{E}^{A}\rightarrow\mathscr{V}^{A}$
by (cf. Notation \ref{n_barsigma})
\[
\Pi^{A}(\sigma)=\begin{cases}
\overline{\sigma} & \text{if }\sigma\in\mathcal{I}^{A}\;,\\
\sigma & \text{if }\sigma\in\mathcal{O}^{A}\;.
\end{cases}
\]
Then, there exists $C=C(K,\,L,\,M)>0$ such that 
\begin{enumerate}
\item for $\sigma_{1},\,\sigma_{2}\in\mathcal{O}^{A}$, we have
\begin{equation}
\Big|\,\frac{1}{q}e^{-\Gamma\beta}\,r^{A}(\Pi^{A}(\sigma_{1}),\,\Pi^{A}(\sigma_{2}))-\mu_{\beta}(\sigma_{1})\,r_{\beta}(\sigma_{1},\,\sigma_{2})\,\Big|\le Ce^{-(\Gamma+1)\beta}\;,\label{e_ZA1}
\end{equation}
\item for $\sigma_{1}\in\mathcal{O}^{A}$ and $\sigma_{2}\in\overline{\mathcal{I}}^{A}$,
we have
\begin{equation}
\Big|\,\frac{1}{q}e^{-\Gamma\beta}\,r^{A}(\Pi^{A}(\sigma_{1}),\,\Pi^{A}(\sigma_{2}))-\sum_{\zeta\in\mathcal{N}(\sigma_{2})}\mu_{\beta}(\sigma_{1})\,r_{\beta}(\sigma_{1},\,\zeta)\,\Big|\le Ce^{-(\Gamma+1)\beta}\;.\label{e_ZA2}
\end{equation}
\end{enumerate}
\end{prop}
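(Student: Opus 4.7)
The plan is to unpack both sides of \eqref{e_ZA1} and \eqref{e_ZA2} directly from the definitions, using two simple ingredients: first, that every configuration $\sigma_1 \in \mathcal{O}^A$ has $H(\sigma_1) = \Gamma$, so by Theorem \ref{t_Zbest} one has
\[
\mu_{\beta}(\sigma_{1}) \;=\; \frac{1}{q}\,e^{-\Gamma\beta}\bigl(1 + O_{\beta}(e^{-3\beta})\bigr);
\]
and second, that the Metropolis rate $r_\beta(\sigma_1,\cdot)$ from such a configuration equals $1$ on every neighbour whose energy is at most $\Gamma$, because the exponent $[H(\zeta)-\Gamma]_+$ vanishes. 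All error terms accumulate as $O_\beta(e^{-(\Gamma+3)\beta})$, which is absorbed into the claimed $O_\beta(e^{-(\Gamma+1)\beta})$.

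For part (1), with $\sigma_1,\sigma_2\in\mathcal{O}^A$, I would split into two cases. If $\sigma_1\sim\sigma_2$ in $\mathcal{X}$, then $H(\sigma_2)=\Gamma$ gives $r_\beta(\sigma_1,\sigma_2)=1$, while by Definition \ref{d_EAMc} the graph edge $\{\sigma_1,\sigma_2\}\in\mathscr{E}^A$ and $r^A(\sigma_1,\sigma_2)=1$; substituting the expansion of $\mu_\beta(\sigma_1)$ shows both sides of \eqref{e_ZA1} equal $q^{-1}e^{-\Gamma\beta}$ up to a multiplicative factor $1+O_\beta(e^{-3\beta})$. If $\sigma_1\not\sim\sigma_2$, then $r_\beta(\sigma_1,\sigma_2)=0$ and also $\{\sigma_1,\sigma_2\}\notin\mathscr{E}^A$, so $r^A(\sigma_1,\sigma_2)=0$ and the estimate is trivial.

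For part (2), with $\sigma_1\in\mathcal{O}^A$ and $\sigma_2\in\overline{\mathcal{I}}^A$, observe that every $\zeta\in\mathcal{N}(\sigma_2)$ satisfies $H(\zeta)\le\Gamma-1<\Gamma=H(\sigma_1)$, so $r_\beta(\sigma_1,\zeta)=\mathbf{1}\{\sigma_1\sim\zeta\}$. Consequently,
\[
\sum_{\zeta\in\mathcal{N}(\sigma_{2})}\mu_{\beta}(\sigma_{1})\,r_{\beta}(\sigma_{1},\zeta) \;=\; \mu_{\beta}(\sigma_{1})\cdot\bigl|\{\zeta\in\mathcal{N}(\sigma_{2}):\zeta\sim\sigma_{1}\}\bigr|,
\]
and the cardinality on the right is precisely $r^A(\Pi^A(\sigma_1),\Pi^A(\sigma_2))=r^A(\sigma_1,\sigma_2)$ by the third case of \eqref{e_EAMc}. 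Plugging in the expansion of $\mu_\beta(\sigma_1)$ gives \eqref{e_ZA2} with error $\le |r^A(\sigma_1,\sigma_2)|\cdot q^{-1}e^{-\Gamma\beta}\cdot O_\beta(e^{-3\beta})$, and since $r^A(\sigma_1,\sigma_2)\le|\mathcal{N}(\sigma_2)|$ is bounded by a constant depending only on $K,L,M$, this is $O_\beta(e^{-(\Gamma+3)\beta})$, yielding the required bound. The case when $\{\sigma_1,\sigma_2\}\notin\mathscr{E}^A$ is again trivial because then no $\zeta\in\mathcal{N}(\sigma_2)$ is adjacent to $\sigma_1$, so both sides vanish.

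No serious obstacle is anticipated; the proposition is essentially a bookkeeping statement which confirms that the auxiliary chain $Z^A(\cdot)$ was designed in Definition \ref{d_EAMc} to match the Metropolis dynamics on $\mathcal{E}^A$ up to the partition function normalisation. The only care needed is in verifying the perfect correspondence between the graph structure $\mathscr{E}^A$ and the pairs $(\sigma_1,\sigma_2)$ (or $(\sigma_1,\zeta)$) for which $r_\beta$ is non-zero; this is immediate from how $\mathscr{E}^A$ was defined.
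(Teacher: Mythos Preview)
Your proof is correct and follows essentially the same approach as the paper's: both arguments split into the cases $\sigma_1\sim\sigma_2$ versus not, use $\mu_\beta(\sigma_1)=Z_\beta^{-1}e^{-\Gamma\beta}$ together with the partition-function estimate of Theorem~\ref{t_Zbest}, observe that $r_\beta(\sigma_1,\zeta)=1$ whenever $\zeta\sim\sigma_1$ and $H(\zeta)\le\Gamma$, and then match the resulting cardinality $|\{\zeta\in\mathcal{N}(\sigma_2):\zeta\sim\sigma_1\}|$ with $r^A(\sigma_1,\sigma_2)$ from \eqref{e_EAMc}. Your error bound $O_\beta(e^{-(\Gamma+3)\beta})$ is in fact slightly sharper than the $O_\beta(e^{-(\Gamma+1)\beta})$ the paper records, but both suffice for the statement.
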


\begin{proof}
(1) Suppose that $\sigma_{1},\,\sigma_{2}\in\mathcal{O}^{A}$. Since
if $\sigma_{1}\not\sim\sigma_{2}$ then the left-hand side of \eqref{e_ZA1}
is $0$, we may assume that $\sigma_{1}\sim\sigma_{2}$. In this case,
$\{\sigma_{1},\,\sigma_{2}\}\in\mathscr{E}^{A}$, and thus
\[
\Big|\,\frac{1}{q}e^{-\Gamma\beta}\,r^{A}(\Pi^{A}(\sigma_{1}),\,\Pi^{A}(\sigma_{2}))-\mu_{\beta}(\sigma_{1})\,r_{\beta}(\sigma_{1},\,\sigma_{2})\,\Big|=\Big|\,\frac{1}{q}e^{-\Gamma\beta}-\frac{1}{Z_{\beta}}e^{-\Gamma\beta}\,\Big|
\]
since $\mu_{\beta}(\sigma_{1})=\mu_{\beta}(\sigma_{2})=\frac{1}{Z_{\beta}}e^{-\Gamma\beta}$
by the definition of $\mathcal{O}^{A}$. By \eqref{e_Zbest}, the
right-hand side of the previous display is $O_{\beta}(e^{-(\Gamma+1)\beta})$.\medskip{}

\noindent (2) Let $\sigma_{1}\in\mathcal{O}^{A}$ and $\sigma_{2}\in\overline{\mathcal{I}}^{A}$.
Similarly, we may assume that $\sigma_{1}\sim\sigma_{2}$. Then, we
can write 
\begin{align*}
 & \Big|\,\frac{1}{q}e^{-\Gamma\beta}\,r^{A}(\Pi^{A}(\sigma_{1}),\,\Pi^{A}(\sigma_{2}))-\sum_{\zeta\in\mathcal{N}(\sigma_{2})}\mu_{\beta}(\sigma_{1})\,r_{\beta}(\sigma_{1},\,\zeta)\,\Big|\\
 & =\Big|\,\frac{1}{q}e^{-\Gamma\beta}\,|\{\zeta\in\mathcal{N}(\sigma_{2}):\zeta\sim\sigma_{1}\}|-\sum_{\zeta\in\mathcal{N}(\sigma_{2}):\,\zeta\sim\sigma_{1}}\min\,\{\mu_{\beta}(\sigma_{1}),\,\mu_{\beta}(\zeta)\}\,\Big|\\
 & =|\{\zeta\in\mathcal{N}(\sigma_{2}):\zeta\sim\sigma_{1}\}|\times\Big|\,\frac{1}{q}e^{-\Gamma\beta}-\frac{1}{Z_{\beta}}e^{-\Gamma\beta}\,\Big|\;,
\end{align*}
since $\min\,\{\mu_{\beta}(\sigma_{1}),\,\mu_{\beta}(\zeta)\}=\mu_{\beta}(\sigma_{1})$
for all $\zeta\in\mathcal{N}(\sigma_{2})$. Again by \eqref{e_Zbest},
the last line is clearly bounded from above by $KL\times O_{\beta}(e^{-\Gamma\beta}\,e^{-\beta})=O_{\beta}(e^{-(\Gamma+1)\beta})$.
This concludes the proof.
\end{proof}

\subsection{\label{sec9.4}Analysis of 3D transition paths}

In this section, we finally define the collection of transition paths
between ground states that appear in Theorem \ref{t_transpath}. 
\begin{defn}[Transition paths]
\label{d_transpath} Write 
\begin{equation}
\mathcal{H}^{A,\,B}=\mathcal{G}^{A,\,B}\cup\widehat{\mathcal{R}}_{[\mathfrak{m}_{K},\,M-\mathfrak{m}_{K}]}^{A,\,B}\;.\label{e_HAB}
\end{equation}
A path $(\omega_{t})_{t=0}^{T}$ is called a\textit{ transition path
between} \textit{$\mathcal{S}(A)$ and $\mathcal{S}(B)$} if 
\begin{align*}
 & \omega_{0}\in\widehat{\mathcal{N}}(\mathcal{S}(A)\,;\,\mathcal{G}^{A,\,B})\;,\;\omega_{T}\in\widehat{\mathcal{N}}(\mathcal{S}(B)\,;\,\mathcal{G}^{A,\,B})\;,\;\text{and}\\
 & \omega_{t}\in\mathcal{H}^{A,\,B}\text{ for all }t\in\llbracket1,\,T-1\rrbracket\;.
\end{align*}
In particular, we have $\omega_{0}\in\mathcal{N}(\mathcal{R}_{\mathfrak{m}_{K}-1}^{A,\,B})$,
$\omega_{1}\in\mathcal{G}_{\mathfrak{m}_{K}-1}^{A,\,B}$, $\omega_{T-1}\in\mathcal{G}_{M-\mathfrak{m}_{K}}^{A,\,B}$,
and $\omega_{T}\in\mathcal{N}(\mathcal{R}_{M-\mathfrak{m}_{K}+1}^{A,\,B})$
by part (1) of Lemma \ref{l_gate}.
\end{defn}

\begin{rem}
The two sets $\widehat{\mathcal{N}}(\mathcal{S}(A)\,;\,\mathcal{G}^{A,\,B})$
and $\widehat{\mathcal{N}}(\mathcal{S}(B)\,;\,\mathcal{G}^{A,\,B})$
are disjoint thanks to part (2) of Proposition \ref{p_Elb}. 
\end{rem}

Now, we characterize all the optimal paths between ground states in
terms of the transition paths.
\begin{thm}
\label{t_trans}Let $(\omega_{t})_{t=0}^{T}$ be a $\Gamma$-path
connecting $\mathcal{S}(A)$ and $\mathcal{S}(B)$. Then, $(\omega_{t})_{t=0}^{T}$
has a transition path between $\mathcal{S}(A)$ and $\mathcal{S}(B)$
as a sub-path.
\end{thm}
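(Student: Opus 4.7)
The strategy is to extract the transition sub-path of $(\omega_t)_{t=0}^T$ by cutting at the last visit to the $A$-side edge neighborhood and the first subsequent visit to the $B$-side edge neighborhood. Concretely, I would set
\[
t_A = \max\{\,t \in \llbracket 0,\,T \rrbracket : \omega_t \in \widehat{\mathcal{N}}(\mathcal{S}(A)\,;\,\mathcal{G}^{A,B})\,\}\quad\text{and}\quad t_B = \min\{\,t > t_A : \omega_t \in \widehat{\mathcal{N}}(\mathcal{S}(B)\,;\,\mathcal{G}^{A,B})\,\}\;.
\]
Both are well-defined because $\omega_0 \in \mathcal{S}(A)$ and $\omega_T \in \mathcal{S}(B)$ lie in the respective edge neighborhoods. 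The two edge neighborhoods are disjoint by part (2) of Proposition \ref{p_Elb} (otherwise a $\Gamma$-path in $\mathcal{X} \setminus \mathcal{G}^{A,B}$ would connect $\mathcal{S}(A)$ with $\mathcal{S}(B)$, a contradiction), so $t_A < t_B \le T$. The candidate sub-path is $(\omega_t)_{t=t_A}^{t_B}$, and conditions (1) and (2) of Definition \ref{d_transpath} hold by construction.

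Next I need the strict separation $t_B \ge t_A + 2$ to guarantee a non-empty intermediate range. If instead $t_B = t_A + 1$, the $\Gamma$-path from $\mathcal{S}(A)$ to $\omega_{t_A}$ in $\mathcal{X} \setminus \mathcal{G}^{A,B}$ (whose existence follows from $\omega_{t_A} \in \widehat{\mathcal{N}}(\mathcal{S}(A)\,;\,\mathcal{G}^{A,B})$), the single edge $\omega_{t_A} \sim \omega_{t_B}$, and the $\Gamma$-path from $\omega_{t_B}$ to $\mathcal{S}(B)$ in $\mathcal{X} \setminus \mathcal{G}^{A,B}$ could be concatenated to form a $\Gamma$-path in $\mathcal{X} \setminus \mathcal{G}^{A,B}$ connecting $\mathcal{S}(A)$ and $\mathcal{S}(B)$, again contradicting part (2) of Proposition \ref{p_Elb}.

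The core of the argument is verifying condition (3): each $\omega_t$, $t \in \llbracket t_A + 1,\,t_B - 1 \rrbracket$, lies in $\mathcal{H}^{A,B} = \mathcal{G}^{A,B} \cup \widehat{\mathcal{R}}^{A,B}_{[\mathfrak{m}_K,\,M - \mathfrak{m}_K]}$. By part (2) of Proposition \ref{p_typ}, $\omega_t \in \mathcal{E}^{A,B} \cup \mathcal{B}^{A,B}$, and unpacking Definition \ref{d_typ} gives $\mathcal{E}^{A,B} \cup \mathcal{B}^{A,B} = \mathcal{G}^{A,B} \cup \widehat{\mathcal{R}}^{A,B}_{[0,\,M]}$. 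If $\omega_t \in \mathcal{G}^{A,B}$ we are done, so suppose $\omega_t \in \widehat{\mathcal{R}}^{A,B}_j$ for some $j$; I must exclude $j \le \mathfrak{m}_K - 1$ and $j \ge M - \mathfrak{m}_K + 1$. The key auxiliary inclusion I would establish is $\widehat{\mathcal{R}}^{A,B}_j \subseteq \widehat{\mathcal{N}}(\mathcal{S}(A)\,;\,\mathcal{G}^{A,B})$ for every $j \in \llbracket 0,\,\mathfrak{m}_K - 1 \rrbracket$: for $\sigma = \sigma_P^{a,b} \in \mathcal{R}^{A,B}_j$ with $a \in A$, $b \in B$, $|P| = j$, a canonical path (Definition \ref{d_canpath}) from $\mathbf{s}_a$ to $\sigma$ traverses only canonical configurations in $\mathcal{C}^{a,b}_k$ with $k \in \llbracket 0,\,j - 1 \rrbracket \subseteq \llbracket 0,\,\mathfrak{m}_K - 2 \rrbracket$, strictly below the level range $\llbracket \mathfrak{m}_K - 1,\,M - \mathfrak{m}_K \rrbracket$ used to define $\mathcal{G}^{A,B}$ in \eqref{e_gate}; hence this $\Gamma$-path lies in $\mathcal{X} \setminus \mathcal{G}^{A,B}$, while $\Upsilon$-permuted cases follow by permuting the coordinates of the canonical path. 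Concatenating with the defining $\Gamma$-path for $\widehat{\mathcal{R}}^{A,B}_j$ yields the inclusion. Since $t > t_A$ forces $\omega_t \notin \widehat{\mathcal{N}}(\mathcal{S}(A)\,;\,\mathcal{G}^{A,B})$, the case $j \le \mathfrak{m}_K - 1$ is impossible; the symmetric argument (using $t < t_B$) excludes $j \ge M - \mathfrak{m}_K + 1$.

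The main obstacle is exactly this last step: the $\widehat{\mathcal{R}}^{A,B}_j$ for small or large $j$ are not pairwise disjoint (cf.\ Remark \ref{r_disjR}), so one cannot attach a unique level to a non-gateway configuration and must instead show that \emph{all} low-level neighborhoods collapse into the $A$-side edge neighborhood. The canonical-path construction handles this collapse cleanly, but its validity hinges essentially on the level restriction $\llbracket \mathfrak{m}_K - 1,\,M - \mathfrak{m}_K \rrbracket$ baked into the definition \eqref{e_gate} of $\mathcal{G}^{A,B}$.
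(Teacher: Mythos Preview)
Your argument is correct and follows a route genuinely different from the paper's. The paper chooses its indices in the opposite order (first the minimal $T'$ hitting the $B$-side neighborhood, then the maximal $t'<T'$ in the $A$-side neighborhood) and then proves a \emph{local exit lemma}: if $\sigma\in\mathcal{H}^{A,B}$, $\zeta\notin\mathcal{H}^{A,B}$, $\sigma\sim\zeta$ and $H(\zeta)\le\Gamma$, then $\zeta\in\mathcal{N}(\mathcal{R}^{A,B}_{\mathfrak{m}_K-1})\cup\mathcal{N}(\mathcal{R}^{A,B}_{M-\mathfrak{m}_K+1})$. This is established case by case via Lemma~\ref{l_gate} and the definition of $\widehat{\mathcal{R}}^{A,B}_i$, and yields that the path, once it enters $\mathcal{H}^{A,B}$ at $\omega_{t'+1}$, cannot leave before $\omega_{T'}$.

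Your approach instead uses the global decomposition $\widehat{\mathcal{N}}(\mathcal{S})=\mathcal{G}^{A,B}\cup\widehat{\mathcal{R}}^{A,B}_{[0,M]}$ from the proof of Proposition~\ref{p_typ}(2) to classify every intermediate $\omega_t$ directly, and then absorbs the low- and high-level pieces into the $A$- and $B$-side neighborhoods via the canonical-path inclusion $\widehat{\mathcal{R}}^{A,B}_j\subseteq\widehat{\mathcal{N}}(\mathcal{S}(A);\mathcal{G}^{A,B})$ for $j\le\mathfrak{m}_K-1$. This sidesteps Lemma~\ref{l_gate} entirely. The trade-off: the paper's local lemma gives finer information (pinning down the exact exit floors $\mathfrak{m}_K-1$ and $M-\mathfrak{m}_K+1$, whence the ``In particular'' clause of Definition~\ref{d_transpath}), while your argument is shorter and makes the role of the level cutoff in \eqref{e_gate} completely transparent. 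Either choice of indices $(t_A,t_B)$ versus $(t',T')$ is fine; they may extract different sub-paths when the original path oscillates, but both satisfy Definition~\ref{d_transpath}.
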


\begin{proof}
Let $(\omega_{t})_{t=0}^{T}$ be a $\Gamma$-path connecting $\mathcal{S}(A)$
and $\mathcal{S}(B)$, and define 
\[
T'=\min\,\{t:\omega_{t}\in\widehat{\mathcal{N}}(\mathcal{S}(B)\,;\,\mathcal{G}^{A,\,B})\}\;.
\]
Then, define
\[
t'=\max\,\{t<T':\omega_{t}\in\widehat{\mathcal{N}}(\mathcal{S}(A)\,;\,\mathcal{G}^{A,\,B})\}\;.
\]
We claim that the sub-path $(\omega_{t})_{t=t'}^{T'}$ is a transition
path between $\mathcal{S}(A)$ and $\mathcal{S}(B)$. By part (1)
of Lemma \ref{l_gate}, we have 
\[
\omega_{t'}\in\mathcal{N}(\mathcal{R}_{\mathfrak{m}_{K}-1}^{A,\,B})\;,\;\omega_{t'+1}\in\mathcal{G}_{\mathfrak{m}_{K}-1}^{A,\,B}\;,\;\omega_{T'-1}\in\mathcal{G}_{M-\mathfrak{m}_{K}}^{A,\,B}\;,\;\text{and}\;\omega_{T'}\in\mathcal{N}(\mathcal{R}_{M-\mathfrak{m}_{K}+1}^{A,\,B})\;.
\]
In particular, we get $\omega_{t'+1},\,\omega_{T'-1}\in\mathcal{H}^{A,\,B}$.
To complete the proof of the claim, it suffices to check that, if
$\sigma\in\mathcal{H}^{A,\,B}$ and $\zeta\notin\mathcal{H}^{A,\,B}$
satisfy $\sigma\sim\zeta$ and $H(\zeta)\le\Gamma$, then $\zeta\in\mathcal{N}(\mathcal{R}_{\mathfrak{m}_{K}-1}^{A,\,B})\cup\mathcal{N}(\mathcal{R}_{M-\mathfrak{m}_{K}+1}^{A,\,B})$.
To prove this, let us first assume that $\sigma\in\widehat{\mathcal{R}}_{i}^{a,\,b}$
for some $a\in A$, $b\in B$, and $i\in\llbracket\mathfrak{m}_{K},\,M-\mathfrak{m}_{K}\rrbracket$
(cf. \eqref{e_HAB}). Then, since $\zeta\notin\widehat{\mathcal{R}}_{i}^{a,\,b}$
and $H(\zeta)\le\Gamma$, by the definition of $\widehat{\mathcal{R}}_{i}^{a,\,b}$
we must have $\zeta\in\mathcal{G}^{A,\,B}$, and hence we get a contradiction
to the fact that $\zeta\notin\mathcal{H}^{A,\,B}$. Next, we assume
that $\sigma\in\mathcal{G}_{i}^{A,\,B}$ for some $i\in\llbracket\mathfrak{m}_{K}-1,\,M-\mathfrak{m}_{K}\rrbracket$.
Since $\zeta\in\mathcal{X}\setminus\mathcal{H}^{A,\,B}$, by Lemma
\ref{l_gate}, we have $\zeta\in\mathcal{N}(\mathcal{R}_{\mathfrak{m}_{K}-1}^{A,\,B})\cup\mathcal{N}(\mathcal{R}_{M-\mathfrak{m}_{K}+1}^{A,\,B})$.
This completes the proof.
\end{proof}
Therefore, we can now say that the set $\mathcal{G}^{A,\,B}\cup\widehat{\mathcal{R}}_{[\mathfrak{m}_{K},\,M-\mathfrak{m}_{K}]}^{A,\,B}$
consists of a saddle plateau between $\mathcal{S}(A)$ and $\mathcal{S}(B)$,
which is a huge set of saddle configurations.

Now, we can prove Theorem \ref{t_transpath}.
\begin{proof}[Proof of Theorem \ref{t_transpath}]
 Denote by $\widehat{\tau}$ the hitting time of the set $\{\sigma\in\mathcal{X}:H(\sigma)\ge\Gamma+1\}$.
Then, by the large deviation principle (e.g. \cite[Theorem 3.2]{NZB}),
we have that 
\[
\mathbb{P}_{\mathbf{s}}^{\beta}\,[\widehat{\tau}<e^{\beta(\Gamma+1/2)}]=o_{\beta}(1)\;.
\]
Hence, by part (1) of Theorem \ref{t_LDT results}, we have that $\mathbb{P}_{\mathbf{s}}^{\beta}\,[\tau_{\breve{\mathbf{s}}}<\widehat{\tau}]=1-o_{\beta}(1)$.
Thus, the conclusion of the theorem follows immediately from Theorem
\ref{t_trans}. 
\end{proof}

\section{\label{sec10}Construction of Test Function}

We fix in this section a proper partition $(A,\,B)$ of $S$. The
main purpose of the current section is to construct a test function
$\widetilde{h}=\widetilde{h}_{A,\,B}^{\beta}:\mathcal{X}\rightarrow\mathbb{R}$
that satisfies the two requirements of Proposition \ref{p_H1approx}. 
\begin{notation}
\label{n_consts}Since the partition $(A,\,B)$ is fixed, we simply
write $\mathfrak{b}=\mathfrak{b}(|A|)$, $\mathfrak{e}_{A}=\mathfrak{e}(|A|)$,
$\mathfrak{e}_{B}=\mathfrak{e}(|B|)$, and $\mathfrak{c}=\mathfrak{c}(|A|)$
so that $\mathfrak{c}=\mathfrak{b}+\mathfrak{e}_{A}+\mathfrak{e}_{B}$
throughout the current section.
\end{notation}

\subsection{\label{sec10.1}Construction of test function}

We now define a function $\widetilde{h}:\mathcal{X}\rightarrow\mathbb{R}$
which indeed fulfills all requirements in Proposition \ref{p_H1approx},
as we shall verify later.
\begin{defn}[Test function]
\label{d_testf} We construct the test function $\widetilde{h}$
on $\mathcal{E}^{A,\,B}$, $\mathcal{B}^{A,\,B}$, and $(\mathcal{E}^{A,\,B}\cup\mathcal{B}^{A,\,B})^{c}$
separately. Recall Notation \ref{n_barsigma}.
\begin{enumerate}
\item \textbf{Construction of $\widetilde{h}$ on edge typical configurations
$\mathcal{E}^{A,\,B}=\mathcal{E}^{A}\cup\mathcal{E}^{B}$. }
\begin{itemize}
\item For $\sigma\in\mathcal{E}^{A}$, we recall the decomposition \eqref{e_decEA}
of $\mathcal{E}^{A}$ and define 
\begin{equation}
\widetilde{h}(\sigma)=\begin{cases}
1-\frac{\mathfrak{e}_{A}}{\mathfrak{c}}(1-\mathfrak{h}^{A}(\sigma)) & \text{if }\sigma\in\mathcal{O}^{A}\;,\\
1-\frac{\mathfrak{e}_{A}}{\mathfrak{c}}(1-\mathfrak{h}^{A}(\overline{\sigma})) & \text{if }\sigma\in\mathcal{I}^{A}\;.
\end{cases}\label{e_testf1}
\end{equation}
\item For $\sigma\in\mathcal{E}^{B}$, we similarly define 
\begin{equation}
\widetilde{h}(\sigma)=\begin{cases}
\frac{\mathfrak{e}_{B}}{\mathfrak{c}}(1-\mathfrak{h}^{B}(\sigma)) & \text{if }\sigma\in\mathcal{O}^{B}\;,\\
\frac{\mathfrak{e}_{B}}{\mathfrak{c}}(1-\mathfrak{h}^{B}(\overline{\sigma})) & \text{if }\sigma\in\mathcal{I}^{B}\;.
\end{cases}\label{e_testf2}
\end{equation}
\end{itemize}
\item \textbf{Construction of $\widetilde{h}$ on bulk typical configurations
$\mathcal{B}^{A,\,B}$. }Recall the 2D test function $\widetilde{h}^{\mathrm{2D}}$
explained in Proposition \ref{p_testfcn2}. We define the test function
on each component of the decomposition \eqref{e_BAB} of $\mathcal{B}^{A,\,B}$. 
\begin{itemize}
\item Construction on $\mathcal{G}_{[\mathfrak{m}_{K},\,M-\mathfrak{m}_{K}-1]}^{A,\,B}$:
Let us first fix $P,\,Q\in\mathfrak{S}_{M}$ such that $P\prec Q$
and $|P|\in\llbracket\mathfrak{m}_{K},\,M-\mathfrak{m}_{K}-1\rrbracket$.
Write
\[
\mathcal{G}_{P,\,Q}^{A,\,B}=\bigcup_{a\in A,\,b\in B}\mathcal{G}_{P,\,Q}^{a,\,b}\;.
\]
The test function $\widetilde{h}$ is defined on $\mathcal{G}_{P,\,Q}^{A,\,B}$
by 
\begin{equation}
\widetilde{h}(\sigma)=\frac{1}{\mathfrak{c}}\,\Big[\,\frac{M-\mathfrak{m}_{K}-|P|-(1-\widetilde{h}^{\mathrm{2D}}(\sigma^{(m)}))}{M-2\mathfrak{m}_{K}}\mathfrak{b}+\mathfrak{e}_{B}\,\Big]\;\;\;\;;\;\sigma\in\mathcal{G}_{P,\,Q}^{A,\,B}\;,\label{e_testf3}
\end{equation}
where $\{m\}=Q\setminus P$ so that $\sigma^{(m)}$ is a 2D gateway
configuration between $\mathbf{s}_{a}^{\mathrm{2D}}$ and $\mathbf{s}_{b}^{\mathrm{2D}}$
for some $(a,\,b)\in A\times B$. Since $\mathcal{G}_{[\mathfrak{m}_{K},\,M-\mathfrak{m}_{K}-1]}^{A,\,B}$
can be decomposed into
\[
\mathcal{G}_{[\mathfrak{m}_{K},\,M-\mathfrak{m}_{K}-1]}^{A,\,B}=\bigcup_{i=\mathfrak{m}_{K}}^{M-\mathfrak{m}_{K}-1}\,\bigcup_{P,\,Q\in\mathfrak{S}_{M}:\,P\prec Q\,\text{and}\,|P|=i}\mathcal{G}_{P,\,Q}^{A,\,B}\;,
\]
we can combine the constructions \eqref{e_testf3} to define the test
function on $\mathcal{G}_{[\mathfrak{m}_{K},\,M-\mathfrak{m}_{K}-1]}^{A,\,B}$.
\item Construction on $\widehat{\mathcal{R}}_{i}^{A,\,B}$ for $i\in\llbracket\mathfrak{m}_{K},\,M-\mathfrak{m}_{K}\rrbracket$:
We set
\begin{equation}
\widetilde{h}(\sigma)=\frac{1}{\mathfrak{c}}\,\Big[\,\frac{M-\mathfrak{m}_{K}-i}{M-2\mathfrak{m}_{K}}\mathfrak{b}+\mathfrak{e}_{B}\,\Big]\;\;\;\;;\;\sigma\in\widehat{\mathcal{R}}_{i}^{A,\,B}\;,\label{e_testf4}
\end{equation}
so that the function $\widetilde{h}$ is constant on each $\widehat{\mathcal{R}}_{i}^{A,\,B}$,
$i\in\llbracket\mathfrak{m}_{K},\,M-\mathfrak{m}_{K}\rrbracket$.
\end{itemize}
\item Construction of $\widetilde{h}$ on the remainder set $\mathcal{X}\setminus(\mathcal{E}^{A,\,B}\cup\mathcal{B}^{A,\,B})$:
We define $\widetilde{h}(\sigma)=1$ for all $\sigma\in\mathcal{X}\setminus(\mathcal{E}^{A,\,B}\cup\mathcal{B}^{A,\,B})$.
\end{enumerate}
\end{defn}

\begin{rem}
\label{r_testf}From the definition above, we can readily observe
the following properties of the test function $\widetilde{h}$.
\begin{enumerate}
\item In view of part (1) of Proposition \ref{p_typ}, we should check that
the definitions of $\widetilde{h}$ on $\mathcal{E}^{A,\,B}$ and
$\mathcal{B}^{A,\,B}$ agree on $\widehat{\mathcal{R}}_{\mathfrak{m}_{K}}^{A,\,B}$
and $\widehat{\mathcal{R}}_{M-\mathfrak{m}_{K}}^{A,\,B}$. This can
be verified from \eqref{e_testf1}, \eqref{e_testf2}, \eqref{e_testf4},
and Proposition \ref{p_hAest}. In particular, both definitions imply
that the value of $\widetilde{h}$ on the former set is \textit{constantly}
$\frac{\mathfrak{b}+\mathfrak{e}_{B}}{\mathfrak{c}}$, while the value
of $\widetilde{h}$ on the latter set is \textit{constantly} $\frac{\mathfrak{e}_{B}}{\mathfrak{c}}$.
\item It is obvious that $\widetilde{h}\equiv1$ on $\mathcal{S}(A)$ and
$\widetilde{h}\equiv0$ on $\mathcal{S}(B)$, and moreover we can
readily verify from the definition that $0\le\widetilde{h}\le1$.
\end{enumerate}
The remainder of this section is devoted to proving parts (1) and
(2) of Proposition \ref{p_H1approx}. In the remainder of the current
section, \emph{we assume for simplicity that $K<L<M$.} The other
cases, $K=L<M$, $K<L=M$, or $K=L=M$, can be handled in the exact
same manner.
\end{rem}

\subsection{\label{sec10.2}Dirichlet form of test function}

We first prove that the test function $\widetilde{h}$ satisfies property
(2) of Proposition \ref{p_H1approx}.
\begin{proof}[Proof of part (2) of Proposition \ref{p_H1approx}]
 We divide the Dirichlet form into three parts as 
\[
\Big[\,\sum_{\{\sigma,\,\zeta\}\subseteq\mathcal{E}^{A,B}\cup\mathcal{B}^{A,B}}+\sum_{\substack{\sigma\in\mathcal{E}^{A,B}\cup\mathcal{B}^{A,B}\\
\zeta\in(\mathcal{E}^{A,B}\cup\mathcal{B}^{A,B})^{c}
}
}+\sum_{\{\sigma,\,\zeta\}\subseteq(\mathcal{E}^{A,B}\cup\mathcal{B}^{A,B})^{c}}\,\Big]\,\mu_{\beta}(\sigma)\,r_{\beta}(\sigma,\,\zeta)\,\{\widetilde{h}(\zeta)-\widetilde{h}(\sigma)\}^{2}\;.
\]
We first consider the second summation. Observe first that, by part
(2) of Proposition \ref{p_typ}, we have $\mathcal{E}^{A,\,B}\cup\mathcal{B}^{A,\,B}=\widehat{\mathcal{N}}(\mathcal{S})$
and thus we get $H(\zeta)\ge\Gamma+1$ if $\sigma\sim\zeta$. Hence,
by \eqref{e_muprop} and Theorem \ref{t_Zbest}, we get 
\[
\mu_{\beta}(\sigma)\,r_{\beta}(\sigma,\,\zeta)=\min\,\{\mu_{\beta}(\sigma),\,\mu_{\beta}(\zeta)\}=\mu_{\beta}(\zeta)\le Ce^{-(\Gamma+1)\beta}\;.
\]
From the fact that $0\le\widetilde{h}\le1$ (cf. part (2) of Remark
\ref{r_testf}), we can conclude that the second summation is $o_{\beta}(1)\,e^{-\Gamma\beta}$.
The third summation is trivially $0$ by the definition of the test
function on $(\mathcal{E}^{A,\,B}\cup\mathcal{B}^{A,\,B})^{c}$. Therefore,
it remains to show that 
\begin{equation}
\sum_{\{\sigma,\,\zeta\}\subseteq\mathcal{E}^{A,B}\cup\mathcal{B}^{A,B}}\mu_{\beta}(\sigma)\,r_{\beta}(\sigma,\,\zeta)\,\{\widetilde{h}(\zeta)-\widetilde{h}(\sigma)\}^{2}=\frac{1+o_{\beta}(1)}{\mathfrak{c}q}\,e^{-\Gamma\beta}\;.\label{e_firstsum}
\end{equation}
By part (1) of Proposition \ref{p_typ} and the fact that \textbf{$\widetilde{h}$}
is constant on each $\widehat{\mathcal{R}}_{i}^{A,\,B}$, $i\in\llbracket\mathfrak{m}_{K},\,M-\mathfrak{m}_{K}\rrbracket$
(cf. \eqref{e_testf4}), we can decompose the left-hand side into
\begin{equation}
\Big[\,\sum_{\{\sigma,\,\zeta\}\subseteq\mathcal{B}^{A,B}}+\sum_{\{\sigma,\,\zeta\}\subseteq\mathcal{E}^{A}}+\sum_{\{\sigma,\,\zeta\}\subseteq\mathcal{E}^{B}}\,\Big]\,\mu_{\beta}(\sigma)\,r_{\beta}(\sigma,\,\zeta)\,\{\widetilde{h}(\zeta)-\widetilde{h}(\sigma)\}^{2}\;.\label{e_mceq1}
\end{equation}
Again by the fact that \textbf{$\widetilde{h}$} is constant on each
$\widehat{\mathcal{R}}_{i}^{A,\,B}$, we can express the first summation
as 
\begin{equation}
\sum_{i=\mathfrak{m}_{K}}^{M-\mathfrak{m}_{K}-1}\,\sum_{a\in A,\,b\in B}\,\sum_{\substack{P,\,Q\in\mathfrak{S}_{M}:\\
P\prec Q,\,|P|=i
}
}\,\sum_{\{\sigma,\,\zeta\}\subseteq\mathcal{B}^{a,b}:\,\{\sigma,\,\zeta\}\cap\mathcal{G}_{P,Q}^{a,b}\ne\emptyset}\mu_{\beta}(\sigma)\,r_{\beta}(\sigma,\,\zeta)\,\{\widetilde{h}(\zeta)-\widetilde{h}(\sigma)\}^{2}\;.\label{e_mceq2}
\end{equation}
By \eqref{e_testf3} and \eqref{e_testf4}, Theorem \ref{t_Zbest}
and \eqref{e_Zbest2D}, we can write the last summation as $(1+o_{\beta}(1))$
times
\begin{equation}
\frac{2\mathfrak{b}^{2}\,e^{-2KL\beta}}{q\mathfrak{c}^{2}(M-2\mathfrak{m}_{K})^{2}}\,\sum_{\{\sigma,\,\zeta\}\subseteq\mathcal{B}^{a,b}:\,\{\sigma,\,\zeta\}\cap\mathcal{G}_{P,Q}^{a,b}\ne\emptyset}\mu_{\beta}^{\mathrm{2D}}(\sigma^{(m)})\,r_{\beta}^{\mathrm{2D}}(\sigma^{(m)},\,\zeta^{(m)})\,\{\widetilde{h}^{\mathrm{2D}}(\zeta^{(m)})-\widetilde{h}^{\mathrm{2D}}(\sigma^{(m)})\}^{2}\;,\label{e_mceq3}
\end{equation}
where $\{m\}=Q\setminus P$ and $\sigma^{(m)}$ and $\zeta^{(m)}$
are regarded as 2D Ising configurations. By Proposition \ref{p_testfcn2},
the last summation is $\frac{1+o_{\beta}(1)}{2\kappa^{\mathrm{2D}}}\,e^{-\Gamma^{\mathrm{2D}}\beta}$.
Therefore, display \eqref{e_mceq3} equals
\begin{equation}
\frac{\mathfrak{b}^{2}\,e^{-2KL\beta}}{\mathfrak{c}^{2}(M-2\mathfrak{m}_{K})^{2}}\times\frac{(1+o_{\beta}(1))\,e^{-\Gamma^{\mathrm{2D}}\beta}}{q\kappa^{\mathrm{2D}}}\;.\label{e_mceq4}
\end{equation}
Inserting this to \eqref{e_mceq2} (and recalling \eqref{e_b}), we
get 
\begin{equation}
\sum_{\{\sigma,\,\zeta\}\subseteq\mathcal{B}^{A,B}}\mu_{\beta}(\sigma)\,r_{\beta}(\sigma,\,\zeta)\,\{\widetilde{h}(\zeta)-\widetilde{h}(\sigma)\}^{2}=\frac{\mathfrak{b}+o_{\beta}(1)}{\mathfrak{c}^{2}q}\,e^{-\Gamma\beta}\;.\label{e_mceq5}
\end{equation}

Next, we deal with the second and third summations of \eqref{e_mceq1}.
By \eqref{e_testf1} and Proposition \ref{p_ZA}, the second summation
equals 
\begin{equation}
\frac{e^{-\Gamma\beta}}{q}\,\sum_{\{\sigma,\,\zeta\}\subseteq\mathscr{V}^{A}}\frac{\mathfrak{e}_{A}^{2}\,r^{A}(\sigma,\,\zeta)\,\{\mathfrak{h}^{A}(\zeta)-\mathfrak{h}^{A}(\sigma)\}^{2}}{\mathfrak{c}^{2}}+o_{\beta}(1)\,e^{-\Gamma\beta}=\frac{\mathfrak{e}_{A}+o_{\beta}(1)}{\mathfrak{c}^{2}q}\,e^{-\Gamma\beta}\;.\label{e_mceq6}
\end{equation}
Similarly, we get 
\begin{equation}
\sum_{\{\sigma,\,\zeta\}\subseteq\mathcal{E}^{B}}\mu_{\beta}(\sigma)\,r_{\beta}(\sigma,\,\zeta)\,\{\widetilde{h}(\zeta)-\widetilde{h}(\sigma)\}^{2}=\frac{\mathfrak{e}_{B}+o_{\beta}(1)}{\mathfrak{c}^{2}q}\,e^{-\Gamma\beta}\;.\label{e_mceq7}
\end{equation}
Therefore, by \eqref{e_mceq5}, \eqref{e_mceq6}, and \eqref{e_mceq7},
we can conclude that the left-hand side of \eqref{e_firstsum} is
equal to
\[
(1+o_{\beta}(1))\times\frac{\mathfrak{b}+\mathfrak{e}_{A}+\mathfrak{e}_{B}}{\mathfrak{c}^{2}q}\,e^{-\Gamma\beta}=\frac{1+o_{\beta}(1)}{q\mathfrak{c}}\,e^{-\Gamma\beta}\;.
\]
This concludes the proof.
\end{proof}

\subsection{$H^{1}$-approximation}

Now it remains to prove that the test function $\widetilde{h}$ satisfies
part (1) of Proposition \ref{p_H1approx}. We shall carry this out
in the current section to conclude the proof of Proposition \ref{p_H1approx}. 

\emph{We abbreviate $h=h_{\mathcal{S}(A),\,\mathcal{S}(B)}^{\beta}$
in the remainder of the section.} Then, the next lemma asserts that
the equilibrium potential is nearly constant on each $\mathcal{N}$-neighborhood.
Since this lemma can be proved in the exact same manner as \cite[Lemma 7.8]{KS 2D},
we omit the proof.
\begin{lem}
\label{l_eqpot}For any $\sigma\in\mathcal{X}$ such that $H(\sigma)<\Gamma$,
it holds that
\[
\max_{\zeta\in\mathcal{N}(\sigma)}\,|h(\zeta)-h(\sigma)|=o_{\beta}(1)\;.
\]
\end{lem}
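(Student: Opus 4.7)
The plan is to apply a standard Dirichlet-form comparison: the equilibrium potential $h=h_{\mathcal{S}(A),\mathcal{S}(B)}^\beta$ has small Dirichlet energy, hence its values cannot oscillate significantly along any short path whose maximum energy is strictly below $\Gamma$. The key quantitative input is that each single-edge contribution to $D_\beta(h)$ is at most the whole, so pairs $\omega\sim\omega'$ at energy $\le\Gamma-1$ (where the transition rate is not exponentially penalized) can only differ by $o_\beta(1)$ in $h$-value.

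First I would record the capacity upper bound $D_\beta(h) \le C(K,L,M)\,e^{-\Gamma\beta}$. By the Dirichlet principle, any competitor $f$ with $f\equiv1$ on $\mathcal{S}(A)$ and $f\equiv0$ on $\mathcal{S}(B)$ furnishes such a bound via $D_\beta(h) \le D_\beta(f)$. The test function $\widetilde{h}$ from Definition \ref{d_testf} is such a competitor (see part (2) of Remark \ref{r_testf}), and its Dirichlet form was already computed in Section \ref{sec10.2} to be $\frac{1+o_\beta(1)}{q\mathfrak{c}}\,e^{-\Gamma\beta}$, so the claim follows.

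Next comes the per-edge estimate. For any $\omega\sim\omega'$ with $H(\omega),H(\omega')\le\Gamma-1$ (the hypothesis $H(\sigma)<\Gamma$ is integral, so $\le\Gamma-1$), the reversibility identity \eqref{e_muprop} and Theorem \ref{t_Zbest} give
\[
\mu_\beta(\omega)\,r_\beta(\omega,\omega') = \min\{\mu_\beta(\omega),\mu_\beta(\omega')\} \ge \tfrac{1}{2q}\,e^{-(\Gamma-1)\beta}
\]
for all large $\beta$. Since the Dirichlet form dominates a single summand, this yields
\[
(h(\omega')-h(\omega))^2 \le \frac{D_\beta(h)}{\mu_\beta(\omega)\,r_\beta(\omega,\omega')} = O(e^{-\beta}).
\]
The final step is Cauchy--Schwarz along a short path. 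Given $\sigma$ with $H(\sigma)<\Gamma$ and $\zeta\in\mathcal{N}(\sigma)$, Definition \ref{d_nbd} supplies a $(\Gamma-1)$-path $(\omega_t)_{t=0}^{T}$ from $\sigma$ to $\zeta$; after loop erasure I may assume $T\le|\mathcal{X}|=q^{KLM}$, a bound independent of $\beta$. Cauchy--Schwarz combined with the per-edge estimate gives
\[
(h(\zeta)-h(\sigma))^2 \le T\sum_{t=0}^{T-1}(h(\omega_{t+1})-h(\omega_t))^2 = O(T^2\,e^{-\beta}) = o_\beta(1),
\]
uniformly over $\zeta\in\mathcal{N}(\sigma)$.

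The only delicate point is the absence of circularity in the first step. Although the final goal of Section \ref{sec10} is part (1) of Proposition \ref{p_H1approx}, which uses this lemma, the bound $D_\beta(\widetilde{h}) = O(e^{-\Gamma\beta})$ that I invoke was derived in Section \ref{sec10.2} using only the algebraic structure of $\widetilde{h}$, Proposition \ref{p_testfcn2}, Proposition \ref{p_ZA}, and the partition-function estimate \eqref{e_Zbest2D}, none of which depend on Lemma \ref{l_eqpot}. Any canonical-path-based test function would in fact do. Beyond this consistency check, every step is an elementary reversibility computation and I do not expect any further obstacle.
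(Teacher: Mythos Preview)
Your argument is correct and is exactly the standard route: bound the capacity above by $O(e^{-\Gamma\beta})$ via the Dirichlet principle, extract a per-edge bound of $O(e^{-\beta})$ on $(h(\omega')-h(\omega))^2$ along edges at energy $\le\Gamma-1$, and telescope along a $(\Gamma-1)$-path of $\beta$-independent length. The paper itself omits the proof here, referring to \cite[Lemma~7.8]{KS 2D}, where precisely this argument is carried out; your circularity check is also correct, since Section~\ref{sec10.2} is independent of Lemma~\ref{l_eqpot} (and, as you note, a cruder canonical-path test function already suffices for the upper bound $D_\beta(h)=O(e^{-\Gamma\beta})$).
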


Now we proceed to the proof of \eqref{e_H1approx1}. By \eqref{e_Diriinnprod},
we can write
\begin{align*}
D_{\beta}(h-\widetilde{h}) & =\langle h-\widetilde{h},\,-\mathcal{L}_{\beta}h+\mathcal{L}_{\beta}\widetilde{h}\rangle_{\mu_{\beta}}\\
 & =D_{\beta}(h)+D_{\beta}(\widetilde{h})-\langle h,\,-\mathcal{L}_{\beta}\widetilde{h}\rangle_{\mu_{\beta}}-\langle\widetilde{h},\,-\mathcal{L}_{\beta}h\rangle_{\mu_{\beta}}\;.
\end{align*}
Since $\widetilde{h}\equiv h\equiv1$ on $\mathcal{S}(A)$, $\widetilde{h}\equiv h\equiv0$
on $\mathcal{S}(B)$ (cf. Remark \ref{r_testf}-(2)), and $\mathcal{L}_{\beta}h\equiv0$
on $\mathcal{X}\setminus\mathcal{S}$ (cf. \eqref{e_eqpotsol}), we
have
\[
\langle\widetilde{h},\,-\mathcal{L}_{\beta}h\rangle_{\mu_{\beta}}=\sum_{\mathbf{s}\in\mathcal{S}(A)}\widetilde{h}(\mathbf{s})\,(-\mathcal{L}_{\beta}h)(\mathbf{s})\,\mu_{\beta}(\mathbf{s})=\sum_{\mathbf{s}\in\mathcal{S}(A)}h(\mathbf{s})\,(-\mathcal{L}_{\beta}h)(\mathbf{s})\,\mu_{\beta}(\mathbf{s})=D_{\beta}(h)\;.
\]
By the last two displayed equations, we obtain that
\begin{equation}
D_{\beta}(h-\widetilde{h})=D_{\beta}(\widetilde{h})-\sum_{\sigma\in\mathcal{X}}h(\sigma)\,(-\mathcal{L}_{\beta}\widetilde{h})(\sigma)\,\mu_{\beta}(\sigma)\;.\label{e_Dbhh}
\end{equation}
Therefore, by part (2) of Proposition \ref{p_H1approx} proved in
the previous subsection and the definition of $\mathcal{L}_{\beta}$
(cf. \eqref{e_gen}), we are left to prove that
\begin{equation}
\sum_{\sigma\in\mathcal{X}}h(\sigma)\sum_{\zeta\in\mathcal{X}}\mu_{\beta}(\sigma)\,r_{\beta}(\sigma,\,\zeta)\,[\widetilde{h}(\sigma)-\widetilde{h}(\zeta)]=\frac{1+o_{\beta}(1)}{q\mathfrak{c}}\,e^{-\Gamma\beta}\;.\label{e_WTS}
\end{equation}
For simplicity, we define
\begin{equation}
\psi(\sigma)=\sum_{\zeta\in\mathcal{X}}\mu_{\beta}(\sigma)\,r_{\beta}(\sigma,\,\zeta)\,[\widetilde{h}(\sigma)-\widetilde{h}(\zeta)]\;,\label{e_psidef}
\end{equation}
so that we can rewrite our objective \eqref{e_WTS} as 
\begin{equation}
\sum_{\sigma\in\mathcal{X}}h(\sigma)\,\psi(\sigma)=\frac{1+o_{\beta}(1)}{q\mathfrak{c}}\,e^{-\Gamma\beta}\;.\label{e_WTS2}
\end{equation}
In summary, it suffices to prove \eqref{e_WTS2} to prove that $\widetilde{h}$
satisfies part (1) of Proposition \ref{p_H1approx}. The proof of
\eqref{e_WTS2} is divided into several lemmas. First, we demonstrate
that $\psi(\sigma)$ is negligible if $\sigma$ is not a typical configuration. 
\begin{lem}
\label{l_div1}For every $\sigma\in\mathcal{X}\setminus(\mathcal{E}^{A,\,B}\cup\mathcal{B}^{A,\,B})$
(i.e., $\sigma\notin\widehat{\mathcal{N}}(\mathcal{S})$ by Proposition
\ref{p_typ}), it holds that $\psi(\sigma)=o_{\beta}(e^{-\Gamma\beta})$.
\end{lem}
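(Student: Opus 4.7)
The plan is to exploit the identity $\mathcal{X} \setminus (\mathcal{E}^{A,B} \cup \mathcal{B}^{A,B}) = \mathcal{X} \setminus \widehat{\mathcal{N}}(\mathcal{S})$ from part (2) of Proposition \ref{p_typ}, combined with the fact from Definition \ref{d_testf}(3) that $\widetilde{h} \equiv 1$ on this remainder set. I will split the analysis according to whether $H(\sigma) \ge \Gamma+1$ or $H(\sigma) \le \Gamma$.

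The high-energy case $H(\sigma) \ge \Gamma + 1$ is essentially immediate: Theorem \ref{t_Zbest} gives $\mu_\beta(\sigma) \le C e^{-(\Gamma+1)\beta}$, and using $|\widetilde{h}| \le 1$, $r_\beta \le 1$, and the trivial bound that $\sigma$ has at most $q|\Lambda|$ spin-flip neighbors, I will get $|\psi(\sigma)| = O_\beta(e^{-(\Gamma+1)\beta}) = o_\beta(e^{-\Gamma\beta})$.

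For the low-energy case $H(\sigma) \le \Gamma$ with $\sigma \notin \widehat{\mathcal{N}}(\mathcal{S})$, I will decompose the sum in \eqref{e_psidef} over $\zeta \sim \sigma$ according to whether $H(\zeta) \le \Gamma$ or $H(\zeta) \ge \Gamma+1$. For $\zeta$ in the first subfamily, I will argue that $\zeta \notin \widehat{\mathcal{N}}(\mathcal{S})$: otherwise, concatenating the single edge $\sigma \sim \zeta$ (valid as a $\Gamma$-path since both endpoints have energy $\le \Gamma$) with a $\Gamma$-path from $\zeta$ to $\mathcal{S}$ would witness $\sigma \in \widehat{\mathcal{N}}(\mathcal{S})$, contradicting the assumption. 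Hence $\widetilde{h}(\zeta) = 1 = \widetilde{h}(\sigma)$, and the contribution vanishes. For $\zeta$ in the second subfamily, the detailed balance relation \eqref{e_muprop} yields $\mu_\beta(\sigma)\, r_\beta(\sigma,\zeta) = \min\{\mu_\beta(\sigma),\mu_\beta(\zeta)\} = \mu_\beta(\zeta) \le C e^{-(\Gamma+1)\beta}$, and summing over the at most $q|\Lambda|$ such neighbors (and using $|\widetilde{h}(\sigma) - \widetilde{h}(\zeta)| \le 1$) gives $o_\beta(e^{-\Gamma\beta})$.

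There is essentially no genuine obstacle here; the lemma is routine once the characterization $\mathcal{E}^{A,B} \cup \mathcal{B}^{A,B} = \widehat{\mathcal{N}}(\mathcal{S})$ and the convention $\widetilde{h} \equiv 1$ on the remainder set are in hand. The only subtle point is the concatenation argument ensuring that neighbors of $\sigma$ lying below the energy barrier $\Gamma$ also sit outside $\widehat{\mathcal{N}}(\mathcal{S})$, which is a direct consequence of the definition of $\widehat{\mathcal{N}}$.
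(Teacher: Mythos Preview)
Your proof is correct and rests on the same idea as the paper's: the constancy of $\widetilde{h}$ outside $\widehat{\mathcal{N}}(\mathcal{S})$ together with the fact that $\widehat{\mathcal{N}}(\mathcal{S})$ is closed under adding a $\Gamma$-neighbor. The paper's argument is marginally more streamlined---it first restricts the sum to $\zeta\in\mathcal{E}^{A,B}\cup\mathcal{B}^{A,B}=\widehat{\mathcal{N}}(\mathcal{S})$ (all other terms vanish since $\widetilde{h}\equiv1$), then observes directly that any such $\zeta$ with $\sigma\sim\zeta$ forces $H(\sigma)\ge\Gamma+1$, which immediately gives $\mu_\beta(\sigma)\,r_\beta(\sigma,\zeta)=\mu_\beta(\sigma)=O_\beta(e^{-(\Gamma+1)\beta})$---so your separate low-energy case and the further split over $H(\zeta)$ are not needed, but they do no harm.
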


\begin{proof}
Since $\widetilde{h}\equiv1$ on $\mathcal{X}\setminus(\mathcal{E}^{A,\,B}\cup\mathcal{B}^{A,\,B})$
by part (3) of Definition \ref{d_testf}, it readily holds that
\[
\psi(\sigma)=\sum_{\zeta\in\mathcal{E}^{A,B}\cup\mathcal{B}^{A,B}}\mu_{\beta}(\sigma)\,r_{\beta}(\sigma,\,\zeta)\,[\widetilde{h}(\sigma)-\widetilde{h}(\zeta)]\;.
\]
Then, by \eqref{e_muprop} and part (2) of Proposition \ref{p_typ},
if $\zeta\in\mathcal{E}^{A,\,B}\cup\mathcal{B}^{A,\,B}$ with $\sigma\sim\zeta$
then $H(\sigma)\ge\Gamma+1$, and thus
\[
\mu_{\beta}(\sigma)\,r_{\beta}(\sigma,\,\zeta)=\mu_{\beta}(\sigma)=O_{\beta}(e^{-(\Gamma+1)\beta})\;.
\]
Along with the fact that $0\le\widetilde{h}\le1$, we conclude that
$\psi(\sigma)=O_{\beta}(e^{-(\Gamma+1)\beta})=o_{\beta}(e^{-\Gamma\beta})$.
\end{proof}
We are left to consider $\psi(\sigma)$ for $\sigma\in\mathcal{E}^{A,\,B}\cup\mathcal{B}^{A,\,B}=\widehat{\mathcal{N}}(\mathcal{S})$.
To this end, we decompose as $\psi=\psi_{1}+\psi_{2}$ where
\begin{align}
\psi_{1}(\sigma) & =\sum_{\zeta\in\mathcal{E}^{A,B}\cup\mathcal{B}^{A,B}}\mu_{\beta}(\sigma)\,r_{\beta}(\sigma,\,\zeta)\,[\widetilde{h}(\sigma)-\widetilde{h}(\zeta)]\;,\label{eq:psi1}\\
\psi_{2}(\sigma) & =\sum_{\zeta\notin\mathcal{E}^{A,B}\cup\mathcal{B}^{A,B}}\mu_{\beta}(\sigma)\,r_{\beta}(\sigma,\,\zeta)\,[\widetilde{h}(\sigma)-\widetilde{h}(\zeta)]\;.\nonumber 
\end{align}
In fact, we can show that $\psi_{2}(\sigma)$ is negligible.
\begin{lem}
\label{l_div2}For $\sigma\in\mathcal{E}^{A,\,B}\cup\mathcal{B}^{A,\,B}$,
we have $\psi_{2}(\sigma)=o_{\beta}(e^{-\Gamma\beta})$.
\end{lem}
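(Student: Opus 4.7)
The plan is to show that every $\zeta$ contributing to $\psi_2(\sigma)$ has energy strictly greater than $\Gamma$, and then use elementary bounds on $\mu_\beta$, $r_\beta$, and $\widetilde{h}$ to conclude. By part (2) of Proposition~\ref{p_typ}, $\mathcal{E}^{A,\,B}\cup\mathcal{B}^{A,\,B}=\widehat{\mathcal{N}}(\mathcal{S})$, so the summation in the definition of $\psi_2(\sigma)$ is restricted to $\zeta\in\mathcal{X}\setminus\widehat{\mathcal{N}}(\mathcal{S})$.

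The first step is the energy argument. Since $\sigma\in\widehat{\mathcal{N}}(\mathcal{S})$, Definition~\ref{d_nbd} gives $H(\sigma)\le\Gamma$. Suppose $\sigma\sim\zeta$ with $\zeta\notin\widehat{\mathcal{N}}(\mathcal{S})$; if we had $H(\zeta)\le\Gamma$, then concatenating a $\Gamma$-path from $\mathcal{S}$ to $\sigma$ with the single step $\sigma\to\zeta$ would produce a $\Gamma$-path from $\mathcal{S}$ to $\zeta$, contradicting $\zeta\notin\widehat{\mathcal{N}}(\mathcal{S})$. Hence every such $\zeta$ satisfies $H(\zeta)\ge\Gamma+1$.

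The second step is the estimate. By the detailed balance identity \eqref{e_muprop} together with Theorem~\ref{t_Zbest},
\[
\mu_\beta(\sigma)\,r_\beta(\sigma,\zeta)=\min\{\mu_\beta(\sigma),\mu_\beta(\zeta)\}\le\mu_\beta(\zeta)=O_\beta(e^{-(\Gamma+1)\beta}).
\]
Moreover $|\widetilde{h}(\sigma)-\widetilde{h}(\zeta)|\le1$ by part~(2) of Remark~\ref{r_testf}, and the number of neighbors $\zeta$ of $\sigma$ is at most $q|\Lambda|$, a constant independent of $\beta$. Summing over these $\zeta$ gives $\psi_2(\sigma)=O_\beta(e^{-(\Gamma+1)\beta})=o_\beta(e^{-\Gamma\beta})$.

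No obstacle is expected here: the lemma is essentially a one-step propagation of the definition of $\widehat{\mathcal{N}}(\mathcal{S})$, and the bound is dominated by the extra factor $e^{-\beta}$ coming from the forced energy jump to level $\Gamma+1$. The only point that needs a little care is verifying that the argument applies uniformly for $\sigma$ in either $\mathcal{E}^{A,\,B}$ or $\mathcal{B}^{A,\,B}$, which is immediate once one recognizes that both are subsets of $\widehat{\mathcal{N}}(\mathcal{S})$.
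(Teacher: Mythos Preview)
Your proof is correct and is exactly the argument the paper intends: it refers back to the proof of Lemma~\ref{l_div1}, which uses the same energy-barrier observation (via Proposition~\ref{p_typ}(2)) that any $\zeta\notin\widehat{\mathcal{N}}(\mathcal{S})$ adjacent to $\sigma\in\widehat{\mathcal{N}}(\mathcal{S})$ has $H(\zeta)\ge\Gamma+1$, together with \eqref{e_muprop}, Theorem~\ref{t_Zbest}, and $0\le\widetilde{h}\le1$.
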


\begin{proof}
This follows directly by the same argument presented in the proof
of Lemma \ref{l_div1}.
\end{proof}
Now, to estimate $\psi_{1}(\sigma)$, let us first look at the bulk
typical configurations that are not the edge typical configurations.
\begin{lem}
\label{l_div3}We have that $\psi_{1}(\sigma)=o_{\beta}(e^{-\Gamma\beta})$
for all $\sigma\in\mathcal{G}_{[\mathfrak{m}_{K},\,M-\mathfrak{m}_{K}-1]}^{A,\,B}$.
\end{lem}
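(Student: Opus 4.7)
Fix $\sigma \in \mathcal{G}_{P,Q}^{a,b}$ with $a\in A$, $b\in B$, $P\prec Q$, and $|P|=i\in\llbracket\mathfrak{m}_{K},\,M-\mathfrak{m}_{K}-1\rrbracket$; write $\{m_{0}\}=Q\setminus P$ and set $\eta:=\sigma^{(m_{0})}\in\mathcal{G}^{a,b,\mathrm{2D}}$. The plan is to reduce $\psi_{1}(\sigma)$ to a 2D sum centered at $\eta$ and then invoke Proposition \ref{p_testfcn2.2}-(1). For the sake of exposition I describe the argument only in the case $K<L<M$, since the other regimes are handled identically by applying $\Upsilon$ to the configurations below.

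First, I split $\psi_{1}(\sigma)$ according to whether the neighbor $\zeta$ is obtained by flipping a spin at the $m_{0}$-th floor or at some other floor. For a non-$m_{0}$-floor flip, the flipped site lies in a monochromatic slab ($\sigma^{(m)}\in\{\mathbf{s}_{a}^{\mathrm{2D}},\,\mathbf{s}_{b}^{\mathrm{2D}}\}$ for $m\ne m_{0}$), so all four in-plane neighbors carry the same spin as the flipped site. A short case analysis shows that the two out-of-plane neighbors at $(k,\,\ell,\,m\pm1)$ cannot both carry a \emph{different} spin: if they did, either $P$ or $Q$ would contain $m-1$ and $m+1$ while missing $m$, violating connectedness in $\mathfrak{S}_{M}$ (recall Notation \ref{n_frakS}). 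Consequently $H(\zeta)-H(\sigma)\ge 4$, so in view of $H(\sigma)\in\{\Gamma-2,\,\Gamma\}$ we obtain $\mu_{\beta}(\sigma)\,r_{\beta}(\sigma,\,\zeta)=O_{\beta}(e^{-(\Gamma+2)\beta})$, and the contribution of all non-$m_{0}$-flips (at most $KLM$ sites) to $\psi_{1}(\sigma)$ is $o_{\beta}(e^{-\Gamma\beta})$.

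For an $m_{0}$-floor flip, setting $\xi:=\zeta^{(m_{0})}$, Lemma \ref{l_decH} yields $H(\zeta)-H(\sigma)=H^{\mathrm{2D}}(\xi)-H^{\mathrm{2D}}(\eta)$, hence $r_{\beta}(\sigma,\,\zeta)=r_{\beta}^{\mathrm{2D}}(\eta,\,\xi)$, and Theorem \ref{t_Zbest} combined with \eqref{e_Zbest2D} gives $\mu_{\beta}(\sigma)=(1+o_{\beta}(1))\,e^{-2KL\beta}\,\mu_{\beta}^{\mathrm{2D}}(\eta)$. I next claim the uniform identity
\[
\widetilde{h}(\sigma)-\widetilde{h}(\zeta)=\frac{\mathfrak{b}}{\mathfrak{c}(M-2\mathfrak{m}_{K})}\,\bigl[\widetilde{h}^{\mathrm{2D}}(\eta)-\widetilde{h}^{\mathrm{2D}}(\xi)\bigr]\;.
\]
When $\xi\in\mathcal{G}^{a,b,\mathrm{2D}}$ (so that $\zeta\in\mathcal{G}_{P,\,Q}^{A,\,B}$), this is immediate from \eqref{e_testf3}. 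When $\xi\in\mathcal{N}^{\mathrm{2D}}(\mathbf{s}_{a}^{\mathrm{2D}})$ we have $\zeta\in\mathcal{N}(\mathcal{R}_{i}^{a,b})$ by Lemma \ref{l_gate}, and combining \eqref{e_testf3} with \eqref{e_testf4} together with the boundary behavior $\widetilde{h}^{\mathrm{2D}}\equiv 1$ on $\mathcal{N}^{\mathrm{2D}}(\mathbf{s}_{a}^{\mathrm{2D}})$ inherited from the companion paper's construction yields exactly the same expression; the case $\xi\in\mathcal{N}^{\mathrm{2D}}(\mathbf{s}_{b}^{\mathrm{2D}})$ is symmetric, using $\widetilde{h}^{\mathrm{2D}}\equiv 0$ there and $\zeta\in\mathcal{N}(\mathcal{R}_{i+1}^{a,b})$.

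Assembling the contributions from $m_{0}$-floor flips,
\[
\psi_{1}(\sigma)=\frac{(1+o_{\beta}(1))\,\mathfrak{b}}{\mathfrak{c}(M-2\mathfrak{m}_{K})}\,e^{-2KL\beta}\sum_{\xi\in\mathcal{X}^{\mathrm{2D}}}\mu_{\beta}^{\mathrm{2D}}(\eta)\,r_{\beta}^{\mathrm{2D}}(\eta,\,\xi)\,\bigl[\widetilde{h}^{\mathrm{2D}}(\eta)-\widetilde{h}^{\mathrm{2D}}(\xi)\bigr]+o_{\beta}(e^{-\Gamma\beta})\;.
\]
Since $\eta\in\mathcal{G}^{a,b,\mathrm{2D}}\subseteq\mathcal{X}^{\mathrm{2D}}\setminus\mathcal{N}^{\mathrm{2D}}(\mathcal{S}^{\mathrm{2D}})$ (a 2D gateway configuration is separated from any 2D ground-state neighborhood by the 2D Lemma \ref{l_gate2}), Proposition \ref{p_testfcn2.2}-(1) applied to $\eta$ bounds the inner sum by $o_{\beta}(e^{-\Gamma^{\mathrm{2D}}\beta})$. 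Using $\Gamma^{\mathrm{2D}}=2K+2$ and $\Gamma=2KL+2K+2$ we conclude $\psi_{1}(\sigma)=o_{\beta}(e^{-\Gamma\beta})$. The principal obstacle is the uniform test-function identity: it requires a careful comparison of \eqref{e_testf3} and \eqref{e_testf4} across the three sub-cases for $\xi$, and relies on the exact boundary values of $\widetilde{h}^{\mathrm{2D}}$ on $\mathcal{N}^{\mathrm{2D}}(\mathcal{S}^{\mathrm{2D}})$ from \cite{KS 2D}; by design of the test function, the two formulas glue seamlessly precisely at the boundary $\widehat{\mathcal{R}}_{\mathfrak{m}_{K}}^{A,B}\cup\widehat{\mathcal{R}}_{M-\mathfrak{m}_{K}}^{A,B}$ (cf.\ Remark \ref{r_testf}-(1)).
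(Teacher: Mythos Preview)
Your proof is correct and follows essentially the same route as the paper: reduce $\psi_{1}(\sigma)$ to a 2D divergence sum at the $m_{0}$-th floor via the identity $\widetilde{h}(\sigma)-\widetilde{h}(\zeta)=\tfrac{\mathfrak{b}}{\mathfrak{c}(M-2\mathfrak{m}_{K})}[\widetilde{h}^{\mathrm{2D}}(\sigma^{(m_{0})})-\widetilde{h}^{\mathrm{2D}}(\zeta^{(m_{0})})]$, then apply Proposition~\ref{p_testfcn2.2}-(1). You are in fact more careful than the paper in two places --- you justify the test-function identity separately in the cases $\xi\in\mathcal{G}^{a,b,\mathrm{2D}}$ and $\xi\in\mathcal{N}^{\mathrm{2D}}(\mathbf{s}_{a}^{\mathrm{2D}})\cup\mathcal{N}^{\mathrm{2D}}(\mathbf{s}_{b}^{\mathrm{2D}})$, and you explicitly treat non-$m_{0}$-floor flips --- though for the latter you could have simply observed that $H(\zeta)\ge\Gamma+2$ forces $\zeta\notin\mathcal{E}^{A,B}\cup\mathcal{B}^{A,B}=\widehat{\mathcal{N}}(\mathcal{S})$, so these terms never appear in $\psi_{1}$ at all.
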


\begin{proof}
For $\sigma\in\mathcal{G}_{[\mathfrak{m}_{K},\,M-\mathfrak{m}_{K}-1]}^{A,\,B}$,
by definition we can write
\begin{align*}
\psi_{1}(\sigma) & =\sum_{\zeta\in\mathcal{E}^{A,B}\cup\mathcal{B}^{A,B}}\mu_{\beta}(\sigma)\,r_{\beta}(\sigma,\,\zeta)\,[\widetilde{h}(\sigma)-\widetilde{h}(\zeta)]\\
 & =\frac{\mathfrak{b}}{\mathfrak{c}(M-2\mathfrak{m}_{K})}\,\sum_{\zeta\in\mathcal{E}^{A,B}\cup\mathcal{B}^{A,B}}\mu_{\beta}(\sigma)\,r_{\beta}(\sigma,\,\zeta)\,[\widetilde{h}^{\mathrm{2D}}(\sigma^{(m)})-\widetilde{h}^{\mathrm{2D}}(\zeta^{(m)})]
\end{align*}
for some $m\in Q\setminus P$ with $P\prec Q$ (cf. Definition \ref{d_testf}),
where $\sigma^{(m)}$ and $\zeta^{(m)}$ are considered as 2D Ising
configurations. Then, by Theorem \ref{t_Zbest} and \eqref{e_Zbest2D},
the last display equals
\[
\frac{2\mathfrak{b}(1+o_{\beta}(1))}{q\mathfrak{c}(M-2\mathfrak{m}_{K})}\,e^{-2KL\beta}\times\sum_{\zeta\in\mathcal{E}^{A,B}\cup\mathcal{B}^{A,B}}\mu_{\beta}^{\mathrm{2D}}(\sigma)\,r_{\beta}^{\mathrm{2D}}(\sigma,\,\zeta)\,[\widetilde{h}^{\mathrm{2D}}(\sigma^{(m)})-\widetilde{h}^{\mathrm{2D}}(\zeta^{(m)})]\;.
\]
Since $\sigma^{(m)}$ is a 2D gateway configuration, by part (1) of
Proposition \ref{p_testfcn2.2}, the last summation equals $o_{\beta}(e^{-\Gamma^{\mathrm{2D}}\beta})$.
Therefore, we conclude that
\[
\psi_{1}(\sigma)=\frac{2\mathfrak{b}(1+o_{\beta}(1))}{q\mathfrak{c}(M-2\mathfrak{m}_{K})}\,e^{-2KL\beta}\times o_{\beta}(e^{-\Gamma^{\mathrm{2D}}\beta})=o_{\beta}(e^{-\Gamma\beta})\;.
\]
\end{proof}
\begin{lem}
\label{l_div4}For all $i\in\llbracket\mathfrak{m}_{K}+1,\,M-\mathfrak{m}_{K}-1\rrbracket$,
we have that 
\[
\sum_{\sigma\in\widehat{\mathcal{R}}_{i}^{A,B}}\psi_{1}(\sigma)=0\;.
\]
Moreover, $|\psi_{1}(\sigma)|\le Ce^{-\beta\Gamma}$ for all $\sigma\in\widehat{\mathcal{R}}_{i}^{A,\,B}$,
for some fixed constant $C>0$.
\end{lem}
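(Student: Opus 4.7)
The plan is to exploit two facts: that $\widetilde{h}$ is constant on $\widehat{\mathcal{R}}_i^{A,B}$ by \eqref{e_testf4}, and that near the active floor of a gateway configuration the 3D dynamics reduces exactly to the 2D dynamics modulo a deterministic energy shift of $2KL$. First, I decompose $\sum_{\sigma\in\widehat{\mathcal{R}}_i^{A,B}}\psi_1(\sigma)$ according to the location of $\zeta$. Contributions from $\zeta\in\widehat{\mathcal{R}}_i^{A,B}$ vanish by constancy. For $\zeta\in(\mathcal{E}^{A,B}\cup\mathcal{B}^{A,B})\setminus\widehat{\mathcal{R}}_i^{A,B}$ that is adjacent to some $\sigma\in\widehat{\mathcal{R}}_i^{A,B}$, I will argue using Remark~\ref{r_disjR} (which rules out $\zeta\in\widehat{\mathcal{R}}_j^{A,B}$ for $j\ne i$ since a single edge would concatenate $\Gamma$-paths in $\mathcal{X}\setminus\mathcal{G}^{A,B}$ across levels, contradicting part~(2) of Lemma~\ref{l_Hlb2}) together with Lemma~\ref{l_gate}(1) that $\zeta$ must lie in $\mathcal{G}_{i-1}^{A,B}\cup\mathcal{G}_i^{A,B}$; both of these sit inside $\mathcal{B}^{A,B}$ since $i\in\llbracket\mathfrak{m}_K+1,M-\mathfrak{m}_K-1\rrbracket$. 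Moreover, if $\zeta\in\mathcal{G}_{P,Q}^{a,b}$ with active floor $\{m_0\}=Q\setminus P$, a direct pillar-energy computation shows the flip $\sigma\sim\zeta$ must occur on floor $m_0$ (flipping elsewhere forces $H(\sigma)>\Gamma$).

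For such pairs, Lemma~\ref{l_decH} yields $H(\sigma)=H^{\mathrm{2D}}(\sigma^{(m_0)})+2KL$ and likewise for $\zeta$; combined with $\widetilde{h}^{\mathrm{2D}}\equiv 1$ on $\mathcal{N}^{\mathrm{2D}}(\mathbf{s}_a^{\mathrm{2D}})$ and $\widetilde{h}^{\mathrm{2D}}\equiv 0$ on $\mathcal{N}^{\mathrm{2D}}(\mathbf{s}_b^{\mathrm{2D}})$ from the construction in \cite[Section~7]{KS 2D}, the definitions \eqref{e_testf3} and \eqref{e_testf4} give
\[
\mu_\beta(\sigma)r_\beta(\sigma,\zeta)\bigl[\widetilde{h}(\sigma)-\widetilde{h}(\zeta)\bigr]=\frac{Z_\beta^{\mathrm{2D}}e^{-2KL\beta}\,\mathfrak{b}}{Z_\beta\,\mathfrak{c}(M-2\mathfrak{m}_K)}\,\mu_\beta^{\mathrm{2D}}(\sigma^{(m_0)})\,r_\beta^{\mathrm{2D}}(\sigma^{(m_0)},\zeta^{(m_0)})\bigl[\widetilde{h}^{\mathrm{2D}}(\sigma^{(m_0)})-\widetilde{h}^{\mathrm{2D}}(\zeta^{(m_0)})\bigr].
\]
Using the bijection $\zeta\leftrightarrow\zeta^{(m_0)}$ for fixed $(a,b,P,Q)$, and counting the $2M$ arc-pairs $(P,Q)$ with $|P|=j$ for each $j\in\{i-1,i\}$, the 3D sum collapses to
\[
\sum_{\sigma\in\widehat{\mathcal{R}}_i^{A,B}}\psi_1(\sigma)=\frac{2M\mathfrak{b}\,Z_\beta^{\mathrm{2D}}e^{-2KL\beta}}{Z_\beta\,\mathfrak{c}(M-2\mathfrak{m}_K)}\sum_{a\in A,\,b\in B}\bigl[T_a(a,b)+T_b(a,b)\bigr],
\]
where $T_c(a,b):=\sum_{\eta\in\mathcal{G}^{a,b,\mathrm{2D}}}\sum_{\xi\in\mathcal{N}^{\mathrm{2D}}(\mathbf{s}_c^{\mathrm{2D}}):\,\xi\sim\eta}\mu_\beta^{\mathrm{2D}}(\xi)r_\beta^{\mathrm{2D}}(\xi,\eta)[\widetilde{h}^{\mathrm{2D}}(\xi)-\widetilde{h}^{\mathrm{2D}}(\eta)]$.

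The principal obstacle is to prove the \emph{exact} identity $T_a(a,b)+T_b(a,b)=0$; this cannot be read off from Proposition~\ref{p_testfcn2.2}(2), which only shows that the two leading terms are opposite up to $o_\beta(e^{-\Gamma^{\mathrm{2D}}\beta})$. My resolution is to invoke the spin-swap involution $\Theta_{a,b}\colon a\leftrightarrow b$ on $\mathcal{X}^{\mathrm{2D}}$, which preserves $H^{\mathrm{2D}}$ and the Metropolis rates, preserves $\mathcal{G}^{a,b,\mathrm{2D}}$, and exchanges $\mathcal{N}^{\mathrm{2D}}(\mathbf{s}_a^{\mathrm{2D}})\leftrightarrow\mathcal{N}^{\mathrm{2D}}(\mathbf{s}_b^{\mathrm{2D}})$. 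Since the construction of $\widetilde{h}^{\mathrm{2D}}$ in \cite[Section~7]{KS 2D} satisfies $\widetilde{h}^{\mathrm{2D}}\circ\Theta_{a,b}=1-\widetilde{h}^{\mathrm{2D}}$, applying $\Theta_{a,b}$ term-wise to the sum defining $T_a(a,b)$ flips the sign of the bracket while leaving the measure-rate factor invariant, and reindexes the sum to $-T_b(a,b)$, yielding the exact cancellation.

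For the pointwise bound, note that the only nontrivial contributions to $\psi_1(\sigma)$ come from the at most $qKLM$ single-flip neighbors $\zeta\in\mathcal{G}_{i-1}^{A,B}\cup\mathcal{G}_i^{A,B}$. For each such pair, the structural analysis of the first paragraph shows $\max(H(\sigma),H(\zeta))\ge\Gamma$: when $\zeta$ is of type 1 with $H(\zeta)=\Gamma-2$, $\zeta^{(m_0)}$ is a 2D regular configuration whose single-flip neighbors are 2D quasi-regular with $H^{\mathrm{2D}}=2K+2$, forcing $H(\sigma)=\Gamma$; otherwise $H(\zeta)=\Gamma$ directly. By \eqref{e_muprop} and Theorem~\ref{t_Zbest}, $\mu_\beta(\sigma)r_\beta(\sigma,\zeta)=\min(\mu_\beta(\sigma),\mu_\beta(\zeta))\le Ce^{-\beta\Gamma}$; combined with $|\widetilde{h}(\sigma)-\widetilde{h}(\zeta)|\le 1$ this gives $|\psi_1(\sigma)|\le C(K,L,M)e^{-\beta\Gamma}$.
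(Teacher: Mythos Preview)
Your proof is correct and follows essentially the same route as the paper's. Both arguments observe that $\widetilde{h}$ is constant on $\widehat{\mathcal{R}}_i^{A,B}$, so only $\zeta$ lying in the adjacent gateway levels contribute, and both reduce the resulting sum to a 2D boundary identity at the two ends of $\mathcal{G}^{a,b,\mathrm{2D}}$. The paper phrases the cancellation somewhat tersely: it writes the contribution as a sum over $Q$ with $P\prec Q$ plus a sum over $Q'$ with $Q'\prec P$ and says these ``cancel out with each other'' because $\widetilde{h}$ is ``constructed in the same manner'' on both gateways. Your version makes this precise by naming the mechanism: the spin-swap involution $\Theta_{a,b}$ together with the symmetry $\widetilde{h}^{\mathrm{2D}}\circ\Theta_{a,b}=1-\widetilde{h}^{\mathrm{2D}}$ yields the exact identity $T_a(a,b)+T_b(a,b)=0$. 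This is a genuine clarification, since Proposition~\ref{p_testfcn2.2}(2) alone only gives $T_a+T_b=o_\beta(e^{-\Gamma^{\mathrm{2D}}\beta})$ rather than exact vanishing; your involution argument is what actually delivers the $=0$ in the lemma statement. You are also slightly more careful than the paper in identifying the relevant $\zeta$'s as elements of $\mathcal{G}_{i-1}^{A,B}\cup\mathcal{G}_i^{A,B}$ (the paper writes $\zeta\in\mathcal{C}_{P,Q}^{a,b}$, which strictly speaking misses type-3 gateway configurations, though the intended meaning is clear). The pointwise bound is handled identically in both proofs.
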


\begin{proof}
Recall from the definition that $\widetilde{h}$ is defined as constant
on each $\widehat{\mathcal{R}}_{i}^{A,\,B}$. Thus, $\psi_{1}(\sigma)=0$
for all $\sigma\in\widehat{\mathcal{R}}_{i}^{A,\,B}\setminus\mathcal{N}(\mathcal{R}_{i}^{A,\,B})$
and it suffices to show that
\[
\sum_{\sigma\in\mathcal{N}(\mathcal{R}_{i}^{A,B})}\psi_{1}(\sigma)=o_{\beta}(e^{-\beta\Gamma})\;.
\]
It remains to prove that for all $a\in A$, $b\in B$, and $P\in\mathfrak{S}_{M}$
such that $|P|\in\llbracket\mathfrak{m}_{K}+1,\,M-\mathfrak{m}_{K}-1\rrbracket$,
\begin{equation}
\sum_{\sigma\in\mathcal{N}(\sigma_{P}^{a,b})}\,\sum_{\zeta\in\mathcal{E}^{A,B}\cup\mathcal{B}^{A,B}}\mu_{\beta}(\sigma)\,r_{\beta}(\sigma,\,\zeta)\,[\widetilde{h}(\sigma)-\widetilde{h}(\zeta)]=o_{\beta}(e^{-\beta\Gamma})\;.\label{e_div4.1}
\end{equation}
Indeed, the left-hand side can be written as 
\begin{align*}
 & \sum_{Q\in\mathfrak{S}_{M}:\,P\prec Q}\,\sum_{\sigma\in\mathcal{C}_{P,Q}^{a,b}\cap\mathcal{N}(\sigma_{P}^{a,b}),\,\zeta\in\mathcal{C}_{P,Q}^{a,b}:\,\zeta\sim\sigma}\mu_{\beta}(\sigma)\,r_{\beta}(\sigma,\,\zeta)\,[\widetilde{h}(\sigma)-\widetilde{h}(\zeta)]\\
 & +\sum_{Q'\in\mathfrak{S}_{M}:\,Q'\prec P}\,\sum_{\sigma\in\mathcal{C}_{Q',P}^{a,b}\cap\mathcal{N}(\sigma_{P}^{a,b}),\,\zeta\in\mathcal{C}_{Q',P}^{a,b}:\,\zeta\sim\sigma}\mu_{\beta}(\sigma)\,r_{\beta}(\sigma,\,\zeta)\,[\widetilde{h}(\sigma)-\widetilde{h}(\zeta)]\;.
\end{align*}
Since we constructed the test function $\widetilde{h}$ between $\sigma_{P}^{a,\,b}$
and $\sigma_{Q}^{a,\,b}$ ($P\prec Q$) and between $\sigma_{Q'}^{a,\,b}$
and $\sigma_{P}^{a,\,b}$ ($Q'\prec P$) in the same manner, the two
summations above cancel out with each other, and thus we obtain \eqref{e_div4.1}.

Finally, for the last statement of the lemma, it suffices to see that
if $\sigma\in\mathcal{N}(\mathcal{R}_{i}^{A,\,B})$ and $\zeta\notin\mathcal{N}(\mathcal{R}_{i}^{A,\,B})$
with $\sigma\sim\zeta$ then $H(\zeta)\ge\Gamma$, and thus
\[
\mu_{\beta}(\sigma)\,r_{\beta}(\sigma,\,\zeta)=\mu_{\beta}(\zeta)\le\frac{1}{Z_{\beta}}e^{-\beta\Gamma}=O_{\beta}(e^{-\beta\Gamma})\;,
\]
where the last equality holds by Theorem \ref{t_Zbest}. This proves
the last statement of the lemma since the number of summands in \eqref{eq:psi1}
does not depend on $\beta$, and since we have $0\le\widetilde{h}\le1$
(cf. Remark \ref{r_testf}). 
\end{proof}
Next, we turn to the edge typical configurations.
\begin{lem}
\label{l_div5}The following statements hold.
\begin{enumerate}
\item If $\sigma\in\mathcal{O}^{A}\setminus(\widehat{\mathcal{R}}_{\mathfrak{m}_{K}}^{A,\,B}\cup\mathcal{N}(\mathcal{S}(A)))$,
we have $\psi_{1}(\sigma)=o_{\beta}(e^{-\Gamma\beta})$.
\item If $\sigma\in\overline{\mathcal{I}}^{A}\setminus(\widehat{\mathcal{R}}_{\mathfrak{m}_{K}}^{A,\,B}\cup\mathcal{N}(\mathcal{S}(A)))$,
it holds that
\begin{equation}
\sum_{\zeta\in\mathcal{N}(\sigma)}\psi_{1}(\zeta)=0\;,\label{e_div5.1}
\end{equation}
and $|\psi_{1}(\zeta)|\le Ce^{-\Gamma\beta}$ for all $\zeta\in\mathcal{N}(\sigma)$
where $C$ is a constant independent of $\beta$.
\end{enumerate}
\end{lem}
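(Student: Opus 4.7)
The guiding idea is that the test function $\widetilde{h}$ on $\mathcal{E}^{A}$ is by construction an affine rescaling of the auxiliary equilibrium potential $\mathfrak{h}^{A}$, which is harmonic under the generator $L^{A}$ on $\mathscr{V}^{A}\setminus(\mathcal{S}(A)\cup\mathcal{R}_{\mathfrak{m}_{K}}^{A,B})$. The plan is to rewrite $\psi_{1}$ (or its sum over a $\mathcal{N}$-valley) as $(-L^{A}\mathfrak{h}^{A})$ evaluated at the appropriate state in $\mathscr{V}^{A}$, times an explicit $e^{-\Gamma\beta}$-prefactor, and then invoke this harmonicity. The hypotheses $\sigma\notin\widehat{\mathcal{R}}_{\mathfrak{m}_{K}}^{A,B}$ and $\sigma\notin\mathcal{N}(\mathcal{S}(A))$ serve precisely to place $\sigma$ (or its representative) in the harmonicity region, using $\mathcal{S}(A)\subseteq\mathcal{N}(\mathcal{S}(A))$ and $\mathcal{R}_{\mathfrak{m}_{K}}^{A,B}\subseteq\widehat{\mathcal{R}}_{\mathfrak{m}_{K}}^{A,B}$.

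For part~(1), fix $\sigma\in\mathcal{O}^{A}\setminus(\widehat{\mathcal{R}}_{\mathfrak{m}_{K}}^{A,B}\cup\mathcal{N}(\mathcal{S}(A)))$. The first step is a neighbor analysis showing that every $\zeta\in\mathcal{E}^{A,B}\cup\mathcal{B}^{A,B}$ adjacent to $\sigma$ with $H(\zeta)\le\Gamma$ already lies in $\mathcal{E}^{A}$. I would split into the sub-cases $\sigma\in\mathcal{G}_{\mathfrak{m}_{K}-1}^{A,B}$ (apply part~(1) of Lemma~\ref{l_gate}, so the possible escape neighbors lie in $\mathcal{G}_{\mathfrak{m}_{K}-1}^{A,B}\cup\mathcal{N}(\mathcal{R}_{[\mathfrak{m}_{K}-1,\mathfrak{m}_{K}]}^{A,B})\subseteq\mathcal{E}^{A}$) and $\sigma\in\widehat{\mathcal{R}}_{i}^{A,B}\cap\mathcal{O}^{A}$ for $i<\mathfrak{m}_{K}$ (use the contrapositive of part~(2) of Lemma~\ref{l_gate} to exclude neighbors in $\mathcal{G}^{A,B}$, as these would force $\sigma\in\mathcal{N}(\mathcal{R}_{[\mathfrak{m}_{K}-1,M-\mathfrak{m}_{K}+1]}^{A,B})$, contradicting $H(\sigma)=\Gamma$; the remaining neighbors stay inside $\widehat{\mathcal{R}}_{i}^{A,B}\subseteq\mathcal{E}^{A}$). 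Substituting \eqref{e_testf1} into $\psi_{1}(\sigma)$ and applying Proposition~\ref{p_ZA} then yields
\[
\psi_{1}(\sigma)=\frac{\mathfrak{e}_{A}}{q\mathfrak{c}}\,e^{-\Gamma\beta}\,(-L^{A}\mathfrak{h}^{A})(\sigma)+o_{\beta}(e^{-\Gamma\beta}),
\]
and the first term vanishes by harmonicity.

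Part~(2) follows the same blueprint but exploits an exact cancellation rather than an asymptotic one. Fix $\sigma\in\overline{\mathcal{I}}^{A}\setminus(\widehat{\mathcal{R}}_{\mathfrak{m}_{K}}^{A,B}\cup\mathcal{N}(\mathcal{S}(A)))$. Since both branches of \eqref{e_testf1} collapse to the common value $1-\frac{\mathfrak{e}_{A}}{\mathfrak{c}}(1-\mathfrak{h}^{A}(\sigma))$ on $\mathcal{N}(\sigma)$, every edge internal to $\mathcal{N}(\sigma)$ contributes zero to $\sum_{\zeta\in\mathcal{N}(\sigma)}\psi_{1}(\zeta)$. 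A neighbor analysis along the same lines as in part~(1) (combining Remark~\ref{r_disjR}, Proposition~\ref{p_disjE}, Lemma~\ref{l_gate}, and the fact that any $\zeta'\sim\zeta\in\mathcal{N}(\sigma)$ with $H(\zeta')<\Gamma$ would belong to $\mathcal{N}(\sigma)$) confines any external neighbor $\zeta'$ of some $\zeta\in\mathcal{N}(\sigma)$ inside $\mathcal{E}^{A,B}\cup\mathcal{B}^{A,B}$ to $\mathcal{O}^{A}$. Using the exact downhill identity $\mu_{\beta}(\zeta)r_{\beta}(\zeta,\zeta')=\mu_{\beta}(\zeta')=\frac{1}{Z_{\beta}}e^{-\Gamma\beta}$ and the definition \eqref{e_EAMc} of $r^{A}(\sigma,\sigma')=|\{\zeta\in\mathcal{N}(\sigma):\zeta\sim\sigma'\}|$, the cross-valley sum collapses to
\[
\sum_{\zeta\in\mathcal{N}(\sigma)}\psi_{1}(\zeta)=\frac{\mathfrak{e}_{A}}{\mathfrak{c}Z_{\beta}}\,e^{-\Gamma\beta}\sum_{\sigma'\in\mathcal{O}^{A}}r^{A}(\sigma,\sigma')[\mathfrak{h}^{A}(\sigma)-\mathfrak{h}^{A}(\sigma')]=\frac{\mathfrak{e}_{A}}{\mathfrak{c}Z_{\beta}}\,e^{-\Gamma\beta}(-L^{A}\mathfrak{h}^{A})(\sigma)=0,
\]
where the restriction $\sigma'\in\mathcal{O}^{A}$ comes from Definition~\ref{d_EAMc} (there are no $\overline{\mathcal{I}}^{A}$-to-$\overline{\mathcal{I}}^{A}$ edges in $\mathscr{E}^{A}$) and the final equality is harmonicity. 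The pointwise bound $|\psi_{1}(\zeta)|\le Ce^{-\Gamma\beta}$ is immediate from $|\widetilde{h}|\le1$ together with $\mu_{\beta}(\zeta)r_{\beta}(\zeta,\zeta')\le Ce^{-\Gamma\beta}$ and the fact that $\zeta$ has $O(1)$ graph neighbors.

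The main obstacle is the neighbor analysis: ensuring that no neighbor of $\sigma$ (or of some $\zeta\in\mathcal{N}(\sigma)$) escapes into $\mathcal{B}^{A,B}\setminus\mathcal{E}^{A}$ or into $\mathcal{E}^{B}$, where $\widetilde{h}$ follows a different functional form and the clean identification with $L^{A}\mathfrak{h}^{A}$ would break down. This is precisely where the two exclusion hypotheses are used, and where Lemmas~\ref{l_gate} and~\ref{l_Hlb2}, together with the disjointness statements in Remark~\ref{r_disjR} and Proposition~\ref{p_disjE} (and the observation that $\widehat{\mathcal{R}}_{j}^{A,B}\cap\mathcal{E}^{A}=\emptyset$ for $j\ge\mathfrak{m}_{K}+1$), must be carefully stitched together.
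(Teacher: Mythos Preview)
Your proposal is correct and follows essentially the same route as the paper: both reduce $\psi_{1}$ (or its sum over a valley $\mathcal{N}(\sigma)$) to a constant multiple of $(-L^{A}\mathfrak{h}^{A})(\sigma)$ and then invoke harmonicity of $\mathfrak{h}^{A}$ away from $\mathcal{S}(A)\cup\mathcal{R}_{\mathfrak{m}_{K}}^{A,B}$. The paper is terser---it simply replaces the sum over $\mathcal{E}^{A,B}\cup\mathcal{B}^{A,B}$ by a sum over $\mathcal{E}^{A}$ and applies Proposition~\ref{p_ZA} directly---whereas you spell out the neighbor analysis (via Lemma~\ref{l_gate} and Remark~\ref{r_disjR}) that justifies this restriction; that extra care is a virtue, not a deviation.

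One genuine (minor) difference worth noting: for part~(2), the paper invokes Proposition~\ref{p_ZA}(2), which carries an $O_{\beta}(e^{-(\Gamma+1)\beta})$ error, and concludes $\sum_{\zeta\in\mathcal{N}(\sigma)}\psi_{1}(\zeta)=o_{\beta}(e^{-\Gamma\beta})$. Your argument instead uses the exact identity $\mu_{\beta}(\zeta)r_{\beta}(\zeta,\zeta')=\frac{1}{Z_{\beta}}e^{-\Gamma\beta}$ for $\zeta\in\mathcal{N}(\sigma)$, $\zeta'\in\mathcal{O}^{A}$, and thereby obtains the sum \emph{exactly} equal to $0$, as the lemma actually states. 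So your version is marginally sharper and better aligned with \eqref{e_div5.1} as written.
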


\begin{proof}
(1) By part (1) of Proposition \ref{p_ZA} and the definition of \textbf{$\widetilde{h}$},
we calculate
\begin{align*}
\psi_{1}(\sigma) & =\sum_{\zeta\in\mathcal{E}^{A}}\frac{\mathfrak{e}_{A}}{q\mathfrak{c}}\,e^{-\Gamma\beta}\,r^{A}(\sigma,\,\Pi^{A}(\zeta))[\mathfrak{h}^{A}(\sigma)-\mathfrak{h}^{A}(\Pi^{A}(\zeta))]+O_{\beta}(e^{-(\Gamma+1)\beta})\\
 & =\frac{\mathfrak{e}_{A}}{q\mathfrak{c}}\,e^{-\Gamma\beta}\times|\mathscr{V}^{A}|\cdot(-L^{A}\mathfrak{h}^{A})(\sigma)+O_{\beta}(e^{-(\Gamma+1)\beta})\;.
\end{align*}
Since $L^{A}\mathfrak{h}^{A}=0$ on $\mathcal{O}^{A}\setminus(\widehat{\mathcal{R}}_{\mathfrak{m}_{K}}^{A,\,B}\cup\mathcal{N}(\mathcal{S}(A)))$
by the elementary property of equilibrium potentials (cf. \eqref{e_eqpotsol}),
we may conclude that $\psi_{1}(\sigma)=O_{\beta}(e^{-(\Gamma+1)\beta})=o_{\beta}(e^{-\Gamma\beta})$.\medskip{}

\noindent (2) First, we prove \eqref{e_div5.1}. Note that $\widetilde{h}$
is constant on $\mathcal{N}(\sigma)$. Thus,
\[
\sum_{\zeta\in\mathcal{N}(\sigma)}\psi_{1}(\zeta)=\sum_{\zeta\in\mathcal{N}(\sigma)}\sum_{\zeta'\in\mathcal{O}^{A}}\mu_{\beta}(\zeta)\,r_{\beta}(\zeta,\,\zeta')\,[\widetilde{h}(\zeta)-\widetilde{h}(\zeta')]\;.
\]
By part (2) of Proposition \ref{p_ZA} and the definition of $\widetilde{h}$,
this is equal to
\begin{align*}
 & \sum_{\zeta'\in\mathcal{O}^{A}}\frac{\mathfrak{e}_{A}}{q\mathfrak{c}}\,e^{-\Gamma\beta}\times r^{A}(\sigma,\,\zeta')[\mathfrak{h}^{A}(\sigma)-\mathfrak{h}^{A}(\zeta')]+O_{\beta}(e^{-(\Gamma+1)\beta})\\
 & =\frac{\mathfrak{e}_{A}}{q\mathfrak{c}}\,e^{-\Gamma\beta}\times|\mathscr{V}^{A}|\cdot(-L^{A}\mathfrak{h}^{A})(\sigma)+o_{\beta}(e^{-\Gamma\beta})\;.
\end{align*}
Since $L^{A}\mathfrak{h}^{A}=0$ on $\mathcal{\overline{\mathcal{I}}}^{A}\setminus(\widehat{\mathcal{R}}_{\mathfrak{m}_{K}}^{A,\,B}\cup\mathcal{N}(\mathcal{S}(A)))$,
we conclude that $\sum_{\zeta\in\mathcal{N}(\sigma)}\psi_{1}(\zeta)=o_{\beta}(e^{-\Gamma\beta})$
and \eqref{e_div5.1} is now proved.

Finally, for the last statement, the last display implies that for
all $\zeta\in\mathcal{N}(\sigma)$,
\begin{align*}
|\psi_{1}(\zeta)| & =\Big|\sum_{\zeta'\in\mathcal{O}^{A}}\frac{\mathfrak{e}_{A}}{q\mathfrak{c}}\,e^{-\Gamma\beta}\times r^{A}(\sigma,\,\zeta')[\mathfrak{h}^{A}(\sigma)-\mathfrak{h}^{A}(\zeta')]\Big|+O_{\beta}(e^{-(\Gamma+1)\beta})\\
 & \le\sum_{\zeta'\in\mathcal{O}^{A}}\frac{\mathfrak{e}_{A}}{q\mathfrak{c}}\,r^{A}(\sigma,\,\zeta')e^{-\Gamma\beta}+O_{\beta}(e^{-(\Gamma+1)\beta})\le Ce^{-\Gamma\beta}\;,
\end{align*}
where the first inequality holds since $0\le\mathfrak{h}^{A}\le1$
and the second inequality holds by Proposition \ref{p_eest}, \eqref{e_EAMc},
and the fact that the number of such $\zeta'\in\mathcal{O}^{A}$ with
$\sigma\sim\zeta'$ does not depend on $\beta$. This concludes the
proof.
\end{proof}
\begin{lem}
\label{l_div6}It holds that 
\[
\sum_{\sigma\in\widehat{\mathcal{R}}_{\mathfrak{m}_{K}}^{A,B}}\psi_{1}(\sigma)=o_{\beta}(e^{-\Gamma\beta})\;,
\]
and that $|\psi_{1}(\sigma)|\le Ce^{-\Gamma\beta}$ for all $\sigma\in\widehat{\mathcal{R}}_{\mathfrak{m}_{K}}^{A,\,B}$
where $C$ is a constant independent of $\beta$.
\end{lem}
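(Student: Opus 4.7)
The plan is to split $\psi_1(\sigma)$ according to where $\zeta$ lies: (i) $\zeta\in\widehat{\mathcal{R}}_{\mathfrak{m}_K}^{A,B}$ itself, where $\widetilde{h}$ is constant and the contribution vanishes; (ii) $\zeta\in\mathcal{G}_{\mathfrak{m}_K}^{A,B}$, the ``upward'' bulk transitions into the interior; and (iii) $\zeta\in\mathcal{G}_{\mathfrak{m}_K-1}^{A,B}$, the ``downward'' edge transitions toward $\mathcal{S}(A)$. Any other $\zeta\in\mathcal{E}^{A,B}\cup\mathcal{B}^{A,B}$ that is adjacent to $\widehat{\mathcal{R}}_{\mathfrak{m}_K}^{A,B}$ can be ruled out by Lemma \ref{l_gate} and Lemma \ref{l_Hlb2}, and remaining $\zeta$ lie outside $\mathcal{E}^{A,B}\cup\mathcal{B}^{A,B}$, so they belong to $\psi_2$, not $\psi_1$. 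The individual bound $|\psi_1(\sigma)|\le Ce^{-\Gamma\beta}$ is then immediate, since every relevant $\zeta$ satisfies $H(\zeta)=\Gamma$ and hence $\mu_\beta(\sigma)r_\beta(\sigma,\zeta)=\mu_\beta(\zeta)=O(e^{-\Gamma\beta})$, while the number of neighbors is bounded in terms of $K,L,M$ and $0\le\widetilde{h}\le 1$.

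For the summation claim, I would first dispose of the dead ends $\sigma\in\widehat{\mathcal{R}}_{\mathfrak{m}_K}^{A,B}\cap\mathcal{O}^A$. By Lemma \ref{l_gate}, any $\zeta\sim\sigma$ with $H(\zeta)\le\Gamma$ and $\zeta\in\mathcal{G}_{\mathfrak{m}_K-1}^{A,B}\cup\mathcal{G}_{\mathfrak{m}_K}^{A,B}$ would force $\sigma\in\mathcal{N}(\mathcal{R}_{[\mathfrak{m}_K-1,\mathfrak{m}_K+1]}^{A,B})$, contradicting $H(\sigma)=\Gamma$; the only remaining low-energy neighbors stay inside $\widehat{\mathcal{R}}_{\mathfrak{m}_K}^{A,B}$ where $\widetilde{h}$ is constant. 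Thus $\psi_1$ of a dead end gives zero contribution, and the summation reduces to $\sigma\in\mathcal{N}(\mathcal{R}_{\mathfrak{m}_K}^{A,B})$.

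For the bulk part (ii), I would use the layered reduction $\mu_\beta(\sigma)r_\beta(\sigma,\zeta)=(1+o_\beta(1))q^{-1}e^{-2KL\beta}\mu_\beta^{\mathrm{2D}}(\sigma^{(m)})r_\beta^{\mathrm{2D}}(\sigma^{(m)},\zeta^{(m)})$ identified in the proof of Lemma \ref{l_div3}, together with $\widetilde{h}(\sigma)-\widetilde{h}(\zeta)=\mathfrak{b}(1-\widetilde{h}^{\mathrm{2D}}(\zeta^{(m)}))/[\mathfrak{c}(M-2\mathfrak{m}_K)]$. Summing over $\sigma\in\mathcal{N}(\sigma_P^{a,b})$ and $\zeta\in\mathcal{G}_{P,P\cup\{m\}}^{a,b}$ with $\sigma^{(m)}\in\mathcal{V}^a$, and exploiting $\widetilde{h}^{\mathrm{2D}}\equiv1$ on $\mathcal{V}^a$, the $\mathbf{s}_a$-side 2D flux identity of Proposition \ref{p_testfcn2.2}(2) evaluates the inner sum to $(1+o_\beta(1))/(2\kappa^{\mathrm{2D}})\,e^{-\Gamma^{\mathrm{2D}}\beta}$. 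For the edge part (iii), applying Proposition \ref{p_ZA}(2) converts the 3D sum into a sum over the auxiliary chain; using $\mathfrak{h}^A\equiv 0$ on $\mathcal{R}_{\mathfrak{m}_K}^{A,B}$ (which is a consequence of Proposition \ref{p_hAest} together with the fact that the dead-end MC-neighbors also carry $\mathfrak{h}^A=0$), the key identity $\sum_{\zeta}r^A(\sigma_P^{a,b},\zeta)\mathfrak{h}^A(\zeta)=(L^A\mathfrak{h}^A)(\sigma_P^{a,b})$ followed by the global flux identity $\sum_{\sigma\in\mathcal{R}_{\mathfrak{m}_K}^{A,B}}(L^A\mathfrak{h}^A)(\sigma)=|\mathscr{V}^A|\cdot\mathrm{cap}^A(\mathcal{S}(A),\mathcal{R}_{\mathfrak{m}_K}^{A,B})=1/\mathfrak{e}_A$ collapses the edge part.

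The punchline, and the main obstacle, is to check that the bulk out-flux and the edge in-flux cancel up to $o_\beta(e^{-\Gamma\beta})$; this cancellation is not accidental but is built into the very definitions \eqref{e_b} and \eqref{e_edef} (the constants $\mathfrak{b}$ and $\mathfrak{e}_A$ are calibrated precisely so that the effective conductance seen from $\widehat{\mathcal{R}}_{\mathfrak{m}_K}^{A,B}$ through the bulk equals the one through the edge Markov chain). Carrying out this matching requires bookkeeping the number of $(P,Q,a,b)$ pairs with $|P|=\mathfrak{m}_K$, $P\prec Q$ (namely $2Mn(q-n)$), combining with the factor $\mathfrak{b}=[n(q-n)]^{-1}\cdot(M-2\mathfrak{m}_K)/(2M)\cdot\kappa^{\mathrm{2D}}$ from \eqref{e_b}, and noting the equality of the resulting numerical constant with $\mathfrak{e}_A\cdot|\mathscr{V}^A|\cdot\mathrm{cap}^A=1$ that follows from \eqref{e_edef}. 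All error terms aggregate into the error estimates of Propositions \ref{p_ZA} and \ref{p_testfcn2.2}, which give the $o_\beta(e^{-\Gamma\beta})$ remainder claimed in the lemma.
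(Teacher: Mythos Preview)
Your proposal is correct and follows essentially the same approach as the paper's proof: reduce to $\sigma\in\mathcal{N}(\mathcal{R}_{\mathfrak{m}_K}^{A,B})$ by showing dead-end contributions vanish, then split the remaining sum into an edge piece (neighbors in $\mathcal{G}_{\mathfrak{m}_K-1}^{A,B}\subseteq\mathcal{E}^A$) handled via Proposition~\ref{p_ZA}(2) and the capacity identity $\sum_{\sigma\in\mathcal{R}_{\mathfrak{m}_K}^{A,B}}(L^A\mathfrak{h}^A)(\sigma)=\mathfrak{e}_A^{-1}$, and a bulk piece (neighbors in $\mathcal{G}_{\mathfrak{m}_K}^{A,B}\subseteq\mathcal{B}^{A,B}$) handled via the 2D reduction and Proposition~\ref{p_testfcn2.2}(2), with the two pieces each contributing $\pm\frac{1}{q\mathfrak{c}}e^{-\Gamma\beta}$ and cancelling. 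One small remark: the vanishing $\mathfrak{h}^A\equiv 0$ on $\mathcal{R}_{\mathfrak{m}_K}^{A,B}$ is simply the boundary condition of the equilibrium potential (since $\mathcal{R}_{\mathfrak{m}_K}^{A,B}\subseteq\overline{\mathcal{I}}^A$ by \eqref{e_barIA}); Proposition~\ref{p_hAest} is the extension to the dead ends in $\mathcal{O}^A$, which is what you actually need for the compatibility of the two definitions of $\widetilde{h}$ there.
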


\begin{proof}
First, we consider the first statement. Proposition \ref{p_hAest}
and the definition of \textbf{$\widetilde{h}$} on $\widehat{\mathcal{R}}_{\mathfrak{m}_{K}}^{A,\,B}$
imply that $\psi_{1}(\sigma)=0$ for all $\sigma\in\widehat{\mathcal{R}}_{\mathfrak{m}_{K}}^{A,\,B}\setminus\mathcal{N}(\mathcal{R}_{\mathfrak{m}_{K}}^{A,\,B})$.
Hence, it suffices to prove that 
\begin{equation}
\sum_{\sigma\in\mathcal{N}(\mathcal{R}_{\mathfrak{m}_{K}}^{A,B})}\psi_{1}(\sigma)=o_{\beta}(e^{-\Gamma\beta})\;.\label{e_div6.1}
\end{equation}
Since $\widetilde{h}$ is constant on $\mathcal{N}(\mathcal{R}_{\mathfrak{m}_{K}}^{A,\,B})$,
the left-hand side can be decomposed into 
\begin{equation}
\Big[\,\sum_{\sigma\in\mathcal{N}(\mathcal{R}_{\mathfrak{m}_{K}}^{A,B}),\,\zeta\in\mathcal{E}^{A}}+\sum_{\sigma\in\mathcal{N}(\mathcal{R}_{\mathfrak{m}_{K}}^{A,B}),\,\zeta\in\mathcal{B}^{A,B}}\,\Big]\,\mu_{\beta}(\sigma)\,r_{\beta}(\sigma,\,\zeta)\,[\widetilde{h}(\sigma)-\widetilde{h}(\zeta)]\;.\label{e_div6.2}
\end{equation}

Let us analyze the first summation of \eqref{e_div6.2}. By part (2)
of Proposition \ref{p_ZA}, this equals
\[
\sum_{\sigma\in\mathcal{R}_{\mathfrak{m}_{K}}^{A,B}}\sum_{\zeta\in\mathcal{O}^{A}}\frac{\mathfrak{e}_{A}}{q\mathfrak{c}}\,e^{-\Gamma\beta}\times r^{A}(\sigma,\,\zeta)\,[\mathfrak{h}^{A}(\sigma)-\mathfrak{h}^{A}(\zeta)]+O_{\beta}(e^{-(\Gamma+1)\beta})\;.
\]
By the property of capacities (e.g., \cite[(7.1.39)]{BdenH meta})
and Proposition \ref{p_hAest}, we have
\begin{equation}
\mathfrak{e}_{A}^{-1}=|\mathscr{V}^{A}|\,\mathrm{cap}^{A}(\mathcal{S}(A),\,\mathcal{R}_{\mathfrak{m}_{K}}^{A,\,B})=-\sum_{\sigma\in\mathcal{R}_{\mathfrak{m}_{K}}^{A,B}}\,\sum_{\zeta:\,\{\sigma,\,\zeta\}\in\mathscr{E}^{A}}r^{A}(\sigma,\,\zeta)\,\{\mathfrak{h}^{A}(\sigma)-\mathfrak{h}^{A}(\zeta)\}\;.\label{e_div6.3}
\end{equation}
Summing up, we obtain
\begin{equation}
\sum_{\sigma\in\mathcal{N}(\mathcal{R}_{\mathfrak{m}_{K}}^{A,B}),\,\zeta\in\mathcal{E}^{A}}\psi_{1}(\sigma)=-\frac{1}{q\mathfrak{c}}\,e^{-\Gamma\beta}+o_{\beta}(e^{-\Gamma\beta})\;.\label{e_div6.4}
\end{equation}

Next, we analyze the second summation of \eqref{e_div6.2}.
\begin{equation}
\sum_{a\in A,\,b\in B}\,\sum_{P,\,Q\in\mathfrak{S}_{M}:\,P\prec Q,\,|P|=\mathfrak{m}_{K}}\,\sum_{\sigma\in\mathcal{N}(\sigma_{P,Q}^{a,b}),\,\zeta\in\mathcal{B}^{a,b}}\mu_{\beta}(\sigma)\,r_{\beta}(\sigma,\,\zeta)\,[\widetilde{h}(\sigma)-\widetilde{h}(\zeta)]\;.\label{e_div6.5}
\end{equation}
By Theorem \ref{t_Zbest}, \eqref{e_Zbest2D}, and part (2) of Proposition
\ref{p_testfcn2.2}, this becomes (recall the 2D constant $\kappa^{\mathrm{2D}}$
from \eqref{e_kappa2Ddef}) 
\begin{equation}
|A||B|\times2M\times\frac{2\mathfrak{b}(1+o_{\beta}(1))}{q\mathfrak{c}(M-2\mathfrak{m}_{K})}\,e^{-2KL\beta}\times\frac{1}{2\kappa^{\mathrm{2D}}}\,e^{-\Gamma^{\mathrm{2D}}\beta}=\frac{1+o_{\beta}(1)}{q\mathfrak{c}}\,e^{-\Gamma\beta}\;,\label{e_div6.6}
\end{equation}
where the identity follows from the definition of $\mathfrak{b}$
in \eqref{e_b}. Combining this with \eqref{e_div6.2} and \eqref{e_div6.4},
we can prove the first statement of the lemma.

For the second statement, from the discussion before \eqref{e_div6.1}
it is inferred that we only need to prove for $\sigma\in\mathcal{N}(\mathcal{R}_{\mathfrak{m}_{K}}^{A,\,B})$.
For such $\sigma\in\mathcal{N}(\mathcal{R}_{\mathfrak{m}_{K}}^{A,\,B})$,
the previous proof implies that
\[
\psi_{1}(\sigma)=\Big[\,\sum_{\zeta\in\mathcal{E}^{A}}+\sum_{\zeta\in\mathcal{B}^{A,B}}\,\Big]\,\mu_{\beta}(\sigma)\,r_{\beta}(\sigma,\,\zeta)\,[\mathfrak{h}^{A}(\sigma)-\mathfrak{h}^{A}(\zeta)]+O_{\beta}(e^{-(\Gamma+1)\beta})\;,
\]
where we used the fact that $0\le\widetilde{h}\le1$. By \eqref{e_div6.3}
and Proposition \ref{p_eest}, the first summation in the right-hand
side is bounded by $Ce^{-\Gamma\beta}$. By \eqref{e_div6.5} and
\eqref{e_div6.6}, the second summation in the right-hand side is
also bounded by $Ce^{-\Gamma\beta}$. Therefore, we conclude the proof
of the second statement.
\end{proof}
\begin{lem}
\label{l_div7}It holds that 
\[
\sum_{\sigma\in\mathcal{N}(\mathcal{S}(A))}\psi_{1}(\sigma)=\frac{1+o_{\beta}(1)}{q\mathfrak{c}}\,e^{-\Gamma\beta}\;\;\;\;\text{and}\;\;\;\;\sum_{\sigma\in\mathcal{N}(\mathcal{S}(B))}\psi_{1}(\sigma)=o_{\beta}(e^{-\Gamma\beta})\;.
\]
Moreover, it holds that $|\psi_{1}(\sigma)|\le Ce^{-\Gamma\beta}$
for all $\sigma\in\mathcal{N}(\mathcal{S}(A))\cup\mathcal{N}(\mathcal{S}(B))$
where $C$ is a constant independent of $\beta$.
\end{lem}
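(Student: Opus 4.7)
The plan is to mimic, at the $\mathcal{S}(A)$ (respectively $\mathcal{S}(B)$) end of the Dirichlet problem for $\mathfrak{h}^A$ (respectively $\mathfrak{h}^B$), the $\mathcal{E}^A$-computation that was carried out at the $\mathcal{R}_{\mathfrak{m}_K}^{A,B}$ end in the proof of Lemma~\ref{l_div6}. Fix $a\in A$ and $\sigma\in\mathcal{N}(\mathbf{s}_a)$; by \eqref{e_barIA} we have $\overline{\sigma}=\mathbf{s}_a$, and the boundary condition $\mathfrak{h}^A(\mathbf{s}_a)=1$ combined with \eqref{e_testf1} gives $\widetilde{h}(\sigma)=1$. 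The first step will be to show that in the sum defining $\psi_1(\sigma)$ only neighbours $\zeta\in\mathcal{O}^A$ contribute non-trivially: neighbours $\zeta\in\mathcal{N}(\mathbf{s}_a)$ have $\widetilde{h}(\zeta)=1$ and drop out, while a single-spin update of the $a$-dominated configuration $\sigma$ can reach neither $\mathcal{B}^{A,B}$ (whose elements carry at least $\mathfrak{m}_K\cdot KL$ non-$a$ sites) nor a different valley $\mathcal{E}^{a'}$ with $a'\ne a$ nor any part of $\mathcal{E}^B$, by the Hamming-distance bound of Proposition~\ref{p_2lowE}(L3) applied floor by floor, together with Proposition~\ref{p_disjE} and the pairwise disjointness of the $\mathcal{N}(\mathbf{s}_a)$ valleys.

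For $\zeta\in\mathcal{O}^A$, formula \eqref{e_testf1} rewrites the increment as $\widetilde{h}(\sigma)-\widetilde{h}(\zeta)=\frac{\mathfrak{e}_A}{\mathfrak{c}}[\mathfrak{h}^A(\mathbf{s}_a)-\mathfrak{h}^A(\zeta)]$. Applying reversibility and part~(2) of Proposition~\ref{p_ZA} with $\sigma_1=\zeta\in\mathcal{O}^A$ and $\sigma_2=\mathbf{s}_a\in\overline{\mathcal{I}}^A$ produces
\[
\sum_{\sigma\in\mathcal{N}(\mathbf{s}_a)}\mu_\beta(\sigma)\,r_\beta(\sigma,\zeta)=\frac{1}{q}e^{-\Gamma\beta}\,r^A(\mathbf{s}_a,\zeta)+O_\beta(e^{-(\Gamma+1)\beta}).
\]
Summing first over $\zeta\in\mathcal{O}^A$ and then over $a\in A$, and using that $\mathbf{s}_a$ is linked in $\mathscr{G}^A$ only to vertices in $\mathcal{O}^A$, the double sum reduces to $\frac{\mathfrak{e}_A}{q\mathfrak{c}}e^{-\Gamma\beta}\sum_{a\in A}(-L^A\mathfrak{h}^A)(\mathbf{s}_a)+o_\beta(e^{-\Gamma\beta})$. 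The potential-theoretic identity already used in \eqref{e_div6.3},
\[
\sum_{a\in A}(-L^A\mathfrak{h}^A)(\mathbf{s}_a)=|\mathscr{V}^A|\,\mathrm{cap}^A(\mathcal{S}(A),\mathcal{R}_{\mathfrak{m}_K}^{A,B})=\mathfrak{e}_A^{-1}
\]
(cf.~\eqref{e_edef}), then delivers the first identity.

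The second identity is obtained by exactly the same recipe with $B$ in place of $A$, $\mathfrak{h}^B$ in place of $\mathfrak{h}^A$ and $\widetilde{h}\equiv 0$ on $\mathcal{N}(\mathcal{S}(B))$ (see \eqref{e_testf2}); the sign produced by the latter is what forces the leading contribution to match the $o_\beta(e^{-\Gamma\beta})$ bound stated in the lemma. The pointwise bound $|\psi_1(\sigma)|\le Ce^{-\Gamma\beta}$ on $\mathcal{N}(\mathcal{S}(A))\cup\mathcal{N}(\mathcal{S}(B))$ is then immediate: every summand obeys $\mu_\beta(\sigma)r_\beta(\sigma,\zeta)\le\mu_\beta(\zeta)=O_\beta(e^{-\Gamma\beta})$ because $\zeta\in\mathcal{O}^A\cup\mathcal{O}^B$ satisfies $H(\zeta)=\Gamma$, the difference $|\widetilde{h}(\sigma)-\widetilde{h}(\zeta)|$ is bounded by $1$, and the number of such $\zeta$ is bounded in $\beta$. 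The main obstacle is the first step: pinning down precisely which neighbours $\zeta$ contribute to $\psi_1(\sigma)$. Once this localisation is secured, the reduction to the auxiliary chain and the capacity identity are structurally identical to the $\mathcal{E}^A$-part of Lemma~\ref{l_div6}, and everything else is bookkeeping.
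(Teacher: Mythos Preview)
Your approach to the first identity and to the pointwise bound is correct and is essentially the paper's own argument: you localise the relevant neighbours to $\mathcal{O}^A$, invoke Proposition~\ref{p_ZA}(2) to pass to the auxiliary chain $Z^A$, and close with the capacity identity $\sum_{a\in A}(-L^A\mathfrak{h}^A)(\mathbf{s}_a)=\mathfrak{e}_A^{-1}$. The paper does exactly this, only with less explicit discussion of the localisation step.

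There is, however, a genuine gap in your treatment of the second identity. Your claim that ``the sign produced by $\widetilde{h}\equiv 0$ on $\mathcal{N}(\mathcal{S}(B))$ is what forces the leading contribution to match the $o_\beta(e^{-\Gamma\beta})$ bound'' is wrong. Running the symmetric computation with $\mathfrak{h}^B$ and \eqref{e_testf2} gives
\[
\widetilde{h}(\sigma)-\widetilde{h}(\zeta)=-\tfrac{\mathfrak{e}_B}{\mathfrak{c}}\bigl[\mathfrak{h}^B(\mathbf{s}_b)-\mathfrak{h}^B(\zeta)\bigr],
\]
so the sum over $\mathcal{N}(\mathcal{S}(B))$ comes out as $-\tfrac{1+o_\beta(1)}{q\mathfrak{c}}e^{-\Gamma\beta}$, not $o_\beta(e^{-\Gamma\beta})$. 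A sign flip changes the sign of a nonzero leading term; it does not annihilate it. You can also see this must be the case by antisymmetry: $\sum_{\sigma\in\mathcal{E}^{A,B}\cup\mathcal{B}^{A,B}}\psi_1(\sigma)=0$, and Lemmas~\ref{l_div3}--\ref{l_div6} show that every block except $\mathcal{N}(\mathcal{S}(A))$ and $\mathcal{N}(\mathcal{S}(B))$ contributes $o_\beta(e^{-\Gamma\beta})$, so the two remaining blocks must cancel to leading order.

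In fact the second identity as printed in the paper is a typo (the paper's ``proved in the exact same way'' is no more convincing than your argument), and fortunately it is never used downstream: the proof of Proposition~\ref{p_H1approx} only needs the pointwise bound $|\psi_1(\sigma)|\le Ce^{-\Gamma\beta}$ on $\mathcal{N}(\mathcal{S}(B))$, combined with $h=o_\beta(1)$ there from Lemma~\ref{l_eqpot}. So your proof of the first identity and the pointwise bound already suffice for the application; just do not claim to have established the second identity as stated.
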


\begin{proof}
We concentrate on the claim for $\mathcal{N}(\mathcal{S}(A))$, since
the corresponding claim for $\mathcal{N}(\mathcal{S}(B))$ can be
proved in the exact same way.

By the property of capacities (e.g., \cite[(7.1.39)]{BdenH meta})
as above, we can write that 
\[
\mathfrak{e}_{A}^{-1}=|\mathscr{V}^{A}|\,\mathrm{cap}^{A}(\mathcal{S}(A),\,\mathcal{R}_{\mathfrak{m}_{K}}^{A,\,B})=\sum_{\sigma\in\mathcal{S}(A)}\,\sum_{\zeta:\,\{\sigma,\,\zeta\}\in\mathscr{E}^{A}}r^{A}(\sigma,\,\zeta)\,\{\mathfrak{h}^{A}(\sigma)-\mathfrak{h}^{A}(\zeta)\}\;.
\]
Therefore, by the definition of $\widetilde{h}$ and part (2) of Proposition
\ref{p_ZA}, $\sum_{\sigma\in\mathcal{N}(\mathcal{S}(A))}\psi_{1}(\sigma)$
equals
\begin{align*}
 & \sum_{\sigma\in\mathcal{S}(A)}\,\sum_{\zeta:\,\{\sigma,\,\zeta\}\in\mathscr{E}^{A}}\frac{\mathfrak{e}_{A}}{q\mathfrak{c}}\,e^{-\Gamma\beta}\cdot r^{A}(\sigma,\,\zeta)[\mathfrak{h}^{A}(\sigma)-\mathfrak{h}^{A}(\zeta)]+O_{\beta}(e^{-(\Gamma+1)\beta})=\frac{1}{q\mathfrak{c}}\,e^{-\Gamma\beta}+o_{\beta}(e^{-\Gamma\beta})\;.
\end{align*}
This proves the first statement. As before, the fact that $|\psi_{1}|\le Ce^{-\Gamma\beta}$
on $\mathcal{N}(\mathcal{S}(A))$ is straightforward from the observations
made in the proof.
\end{proof}
Finally, we present a proof of Proposition \ref{p_H1approx} by combining
all computations above. 
\begin{proof}[Proof of Proposition \ref{p_H1approx}]
 It remains to prove that $\widetilde{h}$ satisfies part (1) since
we already verified in the previous subsection that it satisfies part
(2). By the discussion at the beginning of the subsection, it suffices
to prove \eqref{e_WTS2}. By the definition of $\psi$ given in \eqref{e_psidef}
and the series of Lemmas \ref{l_div1}-\ref{l_div7}, and the fact
that $0\le h\le1$, we have
\[
\sum_{\sigma\in\mathcal{X}}h(\sigma)\,\psi(\sigma)=\sum_{\sigma\in\mathcal{N}(\mathcal{S}(A))}h(\sigma)\,\psi_{1}(\sigma)+o_{\beta}(e^{-\Gamma\beta})=\sum_{\sigma\in\mathcal{N}(\mathcal{S}(A))}\psi_{1}(\sigma)+o_{\beta}(e^{-\Gamma\beta})\;,
\]
where the second identity follows from Lemmas \ref{l_eqpot}. Thus,
by applying Lemma \ref{l_div7}, we can complete the proof of \eqref{e_WTS2}.
\end{proof}

\section{\label{sec11}Remarks on Open boundary Condition}

Thus far, we have only considered the models under periodic boundary
conditions. In this section, we consider the same models under open
boundary conditions. The proofs for the open boundary case differ
slightly to those of the periodic case; however, the fundamentals
of the proofs are essentially identical. Hence, we do not repeat the
detail but focus solely on the technical points producing the different
forms of the main results.

\subsubsection*{Energy Barrier}

We start by explaining that for the open boundary case, the energy
barrier is given by 
\begin{equation}
\Gamma=KL+K+1\;.\label{e_Ebopen}
\end{equation}
One can observe that the canonical path explained in Figure \ref{fig6.3}
becomes an optimal path (note that we should start from a corner of
box in this case) with height $KL+K+1$ between ground states. This
proves that the energy barrier $\Gamma$ is at most $KL+K+1$. Hence,
it remains to prove the corresponding lower bound, i.e., of the fact
that $\Gamma\ge KL+K+1$. Rigorous proof of this has been developed
in \cite{NZ} for the 2D model, and the same argument also applies
to the 3D model as well using the arguments given in Section \ref{sec8}.

\subsubsection*{Sub-exponential prefactor}

As mentioned earlier, the large deviation-type results (Theorems \ref{t_LDT results}
and \ref{t_transpath}) hold under open boundary conditions without
modification, except for the value of $\Gamma$. On the other hand,
for the precise estimates (Theorems \ref{t_EK} and \ref{t_MC}),
the prefactor $\kappa$ must be appropriately modified.

For simplicity, we assume that $q=2$ and analyze the transition from
$\mathbf{s}_{1}$ to $\mathbf{s}_{2}$. To heuristically investigate
the speed of this transition in the open boundary case via a comparison
to the periodic one, it suffices to check the bulk part of the transition,
because the edge part is negligible (as $K\rightarrow\infty$) as
in the periodic boundary case. The bulk transition must start from
a configuration filled with $\mathfrak{m}_{K}$ lines of spin $2$
at either the bottom or top of the lattice box $\Lambda$. In the
periodic case, there are $M$ choices for these starting clusters
(of spins $2$) of size $KL\times\mathfrak{m}_{K}$; thus, we can
observe that the speed of the transition is slowed by a factor of
$M/2$ under this restriction. Now, let us suppose that we are at
a configuration such that several floors of spin $2$ are located
at the bottom of the lattice, as in Figure \ref{fig6.3}. When we
expand this cluster of spin $2$ in the periodic case, there are $2$
(namely, up and down) possible choices for the next floor to be filled;
on the other hand, there is only one (namely, up) possible choice
in the open boundary case. This further slows down the transition
by a factor of $2$. Next, when we expand the floor at the top of
the cluster of spin $2$, we may again look at the bulk part of the
spin updates (cf. Definition \ref{d_typ2}). Thus, we suppose that
there are two lines filled with spin $2$ on that floor. There are
$L$ possible choices of the location in the periodic case, but just
two possible choices in the open case. Thus, this gives us a factor
of $L/2$. Moreover, we may choose one of two directions of growth
of lines in the periodic case, which gives us additional factor of
$2$. Finally, there are $K$ possible ways to form a protuberance
in the periodic case; however, we now have only two (at the corners)
possible choices. This further slows down the transition by a factor
of $K/2$. Once the protuberance has been formed, we have only one
direction in which to expand it, whereas we have two directions in
the periodic case. This slows down the transition by a factor of $2$.
Summing up, the transition on the bulk is slowed by a factor of 
\[
\frac{M}{2}\times2\times\frac{L}{2}\times2\times\frac{K}{2}\times2=KLM\;.
\]
Turning this into a rigorous argument (via the same logic applied
to the periodic case), we obtain the following Eyring--Kramers law
with a modified (compared to the periodic case) prefactor. Recall
that we assumed $K\le L\le M$.
\begin{thm}
\label{t_EK-1}Suppose that we impose open boundary conditions on
the model. Then, there exists a constant $\kappa'=\kappa'(K,\,L,\,M)>0$
such that, for all $\mathbf{s},\,\mathbf{s}'\in\mathcal{S},$
\[
\mathbb{E}_{\mathbf{s}}^{\beta}\,[\tau_{\breve{\mathbf{s}}}]=(1+o_{\beta}(1))\,\frac{\kappa'}{q-1}\,e^{\Gamma\beta}\;\;\;\;\text{and}\;\;\;\;\mathbb{E}_{\mathbf{s}}^{\beta}\,[\tau_{\mathbf{s}'}]=(1+o_{\beta}(1))\,\kappa'\,e^{\Gamma\beta}\;.
\]
Moreover, the constant $\kappa'$ satisfies
\begin{align}
\lim_{K\rightarrow\infty}KLM\cdot\kappa'(K,\,L,\,M) & =\begin{cases}
1/8 & \text{if }K<L<M\;,\\
1/16 & \text{if }K=L<M\text{ or }K<L=M\;,\\
1/48 & \text{if }K=L=M\;.
\end{cases}\label{e_kappaest-1}
\end{align}
The constant $\kappa'$ can be defined in terms of new bulk and edge
constants $\mathfrak{b}'(n)$ and $\mathfrak{e}'(n)$, in the exact
same manner as done in Section \ref{sec3.1}.
\end{thm}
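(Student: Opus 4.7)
The plan is to reproduce the entire machinery of Sections \ref{sec3}--\ref{sec10} in the open boundary setting, adjusting only those quantities that are sensitive to the geometry of $\Lambda$. First, I would redefine canonical and gateway configurations: in the open boundary case, an ``$m$-th floor'' cluster of spin $b$ inside an otherwise spin-$a$ background must be anchored at a corner of $\mathbb{T}_K\times\mathbb{T}_L$ (since a free-floating rectangle costs too much perimeter energy), and a protuberance of height $h$ on top of such a rectangle must sit against one of the two sides of the filled bridge. Replacing $\mathcal{C}^{a,b,\mathrm{2D}}$, $\mathcal{R}_v^{a,b,\mathrm{2D}}$, $\mathcal{Q}_v^{a,b,\mathrm{2D}}$, $\mathcal{B}^{a,b,\mathrm{2D}}$, $\mathcal{G}^{a,b,\mathrm{2D}}$ by their open-boundary counterparts (already worked out in \cite{KS 2D} and \cite{NZ}), and similarly taking $\mathcal{R}_i^{a,b}$ to consist of configurations whose spin-$b$ region is a union of $i$ consecutive floors \emph{starting from the bottom} (and analogously for the other directions when $K=L$ or $L=M$), I obtain the correct 3D canonical/gateway framework, with $\Gamma=KL+K+1$ and the energy counts of Proposition \ref{p_energy} reduced by half throughout.

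Next, the energy-landscape analysis of Sections \ref{sec6}--\ref{sec9} carries over almost verbatim, because all the arguments there are local and rely only on (i) the decomposition $H=\sum_m H^{\mathrm{2D}}(\sigma^{(m)})+\sum_{k,\ell}H^{\mathrm{1D}}(\sigma^{\langle k,\ell\rangle})$, which is unaffected by the boundary condition, and (ii) the 2D open-boundary analogues of Propositions \ref{p_typ2prop}, \ref{p_2lowE}, and Lemmas \ref{l_canpath2}, \ref{l_depth2}, \ref{l_gate2}, all of which are available from \cite{KS 2D,NZ}. In particular, Lemma \ref{l_Hlb2} still holds with the threshold $\mathfrak{m}_K=\lfloor K^{2/3}\rfloor$, Propositions \ref{p_depth}, \ref{p_Elb}, \ref{p_typ}, and \ref{p_disjE} remain valid, and the transition paths of Definition \ref{d_transpath} are characterized exactly as in Theorem \ref{t_trans}. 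Theorem \ref{t_LDT results} then follows from \cite{NZB} with no change.

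For the Eyring--Kramers refinement I would redefine the constants of Section \ref{sec3.1}. Let $\kappa^{\mathrm{2D,op}}(K,L)$ be the 2D open-boundary prefactor from \cite[Section 11]{KS 2D}, which satisfies $\lim_{K\to\infty}KL\cdot\kappa^{\mathrm{2D,op}}=1/4$ if $K<L$ and $1/8$ if $K=L$. Setting
\begin{equation*}
\mathfrak{b}'(n)=\frac{1}{n(q-n)}\cdot\frac{M-2\mathfrak{m}_K}{\alpha M}\cdot\kappa^{\mathrm{2D,op}}(K,L)
\end{equation*}
with $\alpha=1,1,2,6$ in the four geometric cases (reflecting that the bottom cluster has a single choice of starting floor rather than $M$, and that the number of growth directions is now $1$, $2$, $2$, or $6$), and defining $\mathfrak{e}'(n)$ by the Markov-chain capacity of Definition \ref{d_EAMc} built over the open-boundary edge typical configurations (so that Proposition \ref{p_eest} gives $\mathfrak{e}'(n)\le C/K^{1/3}$ by the same unit flow construction, now restricted to the unique anchor corner), I set $\mathfrak{c}'(n)=\mathfrak{b}'(n)+\mathfrak{e}'(n)+\mathfrak{e}'(q-n)$ and $\kappa'=(q-1)\mathfrak{c}'(1)$. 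The capacity estimate of Theorem \ref{t_Cap} then becomes $\mathrm{Cap}_\beta(\mathcal{S}(A),\mathcal{S}(B))=(1+o_\beta(1))/(q\mathfrak{c}'(|A|))\cdot e^{-\Gamma\beta}$, proved by the same $H^1$-approximation of the equilibrium potential as in Proposition \ref{p_H1approx}: the test function is constructed by replacing $\widetilde{h}^{\mathrm{2D}}$ with its open-boundary version and the Markov chain $Z^A(\cdot)$ with its open-boundary analogue, and the computations of Sections \ref{sec10.2}--\ref{sec10.3} are identical line by line since they only use the combinatorial bookkeeping of gateway layers and the local Dirichlet form. The conclusions \eqref{e_EK} with $\kappa$ replaced by $\kappa'$ follow exactly as in the proof of Theorem \ref{t_EK} in Section \ref{sec3.1}, and the asymptotics \eqref{e_kappaest-1} are then a direct computation from the open-boundary limit of $\kappa^{\mathrm{2D,op}}$ and the counting factor $\alpha\cdot M$.

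The main obstacle is verifying the 2D open-boundary versions of Propositions \ref{p_testfcn2} and \ref{p_testfcn2.2} under the precise form needed by the bulk-layer computation (Lemma \ref{l_div6}), since the 2D open-boundary test function $\widetilde h^{\mathrm{2D,op}}$ used as a ``seed'' in \eqref{e_testf3} must yield the correct Dirichlet-form normalization on every floor $m\in Q\setminus P$; this requires checking that the counting of admissible gateway shapes (which determines both $\mathfrak{b}'$ and the $\alpha$ factor) matches the 2D prefactor, and that the edge Markov chain $Z^A$, whose flow-based upper bound on $\mathfrak{e}'$ is tighter in the open case because there is only one anchor corner to build flow from, still controls the harmonic behavior near $\mathcal{S}(A)$. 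Once these 2D ingredients are in place, the rest of the proof is mechanical, and Theorem \ref{t_MC} carries over at once by the same argument as at the end of Section \ref{sec3.1}.
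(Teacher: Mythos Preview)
Your proposal is correct and takes essentially the same approach as the paper. Indeed, the paper's own treatment of Theorem \ref{t_EK-1} in Section \ref{sec11} is only a heuristic counting argument followed by the remark that the rigorous proof proceeds ``via the same logic applied to the periodic case''; your outline is a faithful (and in places more detailed) rendering of precisely that plan, including the reduction to the 2D open-boundary ingredients from \cite{KS 2D,NZ}, the unchanged decomposition $H=\sum_m H^{\mathrm{2D}}+\sum_{k,\ell}H^{\mathrm{1D}}$, and the same $H^1$-approximation machinery with suitably modified bulk and edge constants.
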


Then, Theorem \ref{t_MC} also holds for open boundary conditions
with modified limiting Markov chain $X'(\cdot)$ with rate $r_{X'}(\mathbf{s},\,\mathbf{s}')=(\kappa')^{-1}$
for all $\mathbf{s},\,\mathbf{s}'\in\mathcal{S}$. 
\begin{acknowledgement*}
SK was supported by NRF-2019-Fostering Core Leaders of the Future
Basic Science Program/Global Ph.D. Fellowship Program and the National
Research Foundation of Korea (NRF) grant funded by the Korean government
(MSIT) (No. 2022R1F1A106366811, No. 2022R1A5A6000840). IS was supported
by the National Research Foundation of Korea (NRF) grant funded by
the Korean government (MSIT) (No. 2022R1F1A106366811, 2022R1A5A6000840).
\end{acknowledgement*}

\end{document}